\begin{document}

\newtheorem{definition}{Definition}[section]
\newtheorem{definitions}[definition]{Definitions}
\newtheorem{lemma}[definition]{Lemma}
\newtheorem{prop}[definition]{Proposition}
\newtheorem{theorem}[definition]{Theorem}
\newtheorem{cor}[definition]{Corollary}
\newtheorem{cors}[definition]{Corollaries}
\theoremstyle{remark}
\newtheorem{remark}[definition]{Remark}
\theoremstyle{remark}
\newtheorem{remarks}[definition]{Remarks}
\theoremstyle{remark}
\newtheorem{notation}[definition]{Notation}
\theoremstyle{remark}
\newtheorem{example}[definition]{Example}
\theoremstyle{remark}
\newtheorem{examples}[definition]{Examples}
\theoremstyle{remark}
\newtheorem{dgram}[definition]{Diagram}
\theoremstyle{remark}
\newtheorem{fact}[definition]{Fact}
\theoremstyle{remark}
\newtheorem{illust}[definition]{Illustration}
\theoremstyle{remark}
\newtheorem{rmk}[definition]{Remark}
\theoremstyle{definition}
\newtheorem{observation}[definition]{Observation}
\theoremstyle{definition}
\newtheorem{question}[definition]{Question}
\theoremstyle{definition}
\newtheorem{conj}[definition]{Conjecture}

\newcommand{\stac}[2]{\genfrac{}{}{0pt}{}{#1}{#2}}
\newcommand{\stacc}[3]{\stac{\stac{\stac{}{#1}}{#2}}{\stac{}{#3}}}
\newcommand{\staccc}[4]{\stac{\stac{#1}{#2}}{\stac{#3}{#4}}}
\newcommand{\stacccc}[5]{\stac{\stacc{#1}{#2}{#3}}{\stac{#4}{#5}}}

\renewcommand{\marginpar}[2][]{}

\renewenvironment{proof}{\noindent {\bf{Proof.}}}{\hspace*{3mm}{$\Box$}{\vspace{9pt}}}

\title{Tensor and direct extension of definable subcategories}

\author{Mike Prest \\ Department of Mathematics, University of Manchester, UK \\ mprest@manchester.ac.uk}

\date{\today} 

\maketitle

\abstract{Definable subcategories may be extended along a ring homomorphism directly by using their defining conditions in the new module category, or by tensoring up with the new ring.  We investigate what is preserved and reflected by these processes.  On the way, we introduce the notion of being Mittag-Leffler with respect to a bimodule - a refinement of the Mittag-Leffler condition.   Particular attention is given to the case where the ring homomorphism is an elementary embedding.}

\tableofcontents

\section{Introduction}

A subcategory of a module category ${\rm Mod}\mbox{-}R$ is said to be definable if it is closed in ${\rm Mod}\mbox{-}R$ under direct products, directed colimits and pure submodules, equivalently if it is an additive axiomatisable subcategory of ${\rm Mod}\mbox{-}R$.  If $f:R \to S$ is a homomorphism of rings, then there are two obvious ways of inducing a definable subcategory of ${\rm Mod}\mbox{-}S$ from a definable subcategory ${\cal D}$ of ${\rm Mod}\mbox{-}R$:  {\em direct extension}, which takes an axiomatisation of ${\cal D}$, transfers it along $f$ and then applies this in ${\rm Mod}\mbox{-}S$; {\em tensor extension}, which is the smallest definable subcategory of ${\rm Mod}\mbox{-}S$ which contains all modules of the form $M\otimes_RS_S$ with $M \in {\cal D}$.

We consider in Sections \ref{secdirextn} and \ref{sectensextn} the effects of these extension processes, paying particular attention to tensor extension, indeed considering more generally the effect that tensoring with a bimodule $_RB_S$ has on definable subcategories of ${\rm Mod}\mbox{-}R$.

This leads us to consider a strengthening of the relative Mittag-Leffler condition.  Recall that a right $R$-module $M_R$ is Mittag-Leffler if, for every set $\{\,_RL_i\}_i$ of left $R$-modules, the natural map $M\otimes_R\prod_iL_i \to \prod_i \, M\otimes_RL_i$ is monic.  There is a relative version of the definition under which the modules $L_i$ are restricted to those belonging to some definable subcategory of $R\mbox{-}{\rm Mod}$ (the category of left $R$-modules), see Section \ref{secML}.  If the $L_i$ all are $(R,S)$-bimodules for some ring $S$, then that natural map, even if monic, need not, in the relativised case, be pure as an embedding of $S$-modules, see \ref{nontriv}.  We characterise in \ref{MLpure} those modules $M$ where tensoring with $M$ does give a pure embedding of $S$-modules.

We consider various conditions under which we can say more about extending definable subcategories along a morphism of rings.  In particular, in Section \ref{secextindelem} we consider the case where $f:R\to S$ is an elementary embedding of rings.

\vspace{4pt}

Subcategories are assumed to be full and closed under isomorphism.  Rings $R$ can be taken to be rings with one object but everything works with $R$ a (skeletally) small preadditive category.  In that more general case, elements of modules and variables in formulas are sorted which, in some arguments, would require more cumbersome notation.  Therefore we present the results for the usual, 1-sorted ring, case, but the arguments work just as well over rings with many objects (see, for instance, \cite{PreErice}).  The model theory that we use is summarised in the next section and always refers to the usual language for $R$-modules.  One can see various other references for details about this, but perhaps enough is said here to make the paper understandable without the need to refer elsewhere.

\section{Background} \label{secbkgnd} \marginpar{secbkgnd}

This section is rather dense with information and notation but more leisurely introductions can be found in many other papers.  We use \cite{PreNBK} as a reference which gathers together many of the results that we will need.

\subsection{Pp formulas, pp-pairs and pp-types}

A {\bf pp formula} for right $R$-modules is an existentially quantified (homogeneous) system of $R$-linear equations, that is, a formula of the form 
$$\exists \overline{y} \, (\overline{x}A + \overline{y}B =0),$$
where $A$ and $B$ are suitable-sized matrices with entries from $R$ and tuples $\overline{x}$, $\overline{y}$ of variables are read as row vectors (and $0$ is a column vector).  We unpack this compact notation to write the formula more explicitly as 
$$\exists \overline{y} \, \bigwedge_{j=1}^m \, \sum_{i=1}^n x_ir_{ij} +\sum_{k=1}^t y_ks_{kj} =0.$$
Here the $r_{ij}$ and $s_{kj}$ - the entries of $A$ and $B$ - are elements\footnote{precisely, function symbols standing for multiplication by those elements; for details of how the formal language is set up, see elsewhere} of $R$ and $\bigwedge $ is to $\wedge$ (``and") as $\sum$ is to $+$; so this is a system of $m$ $R$-linear equations prefaced by a quantifier which has the effect of projecting (solution sets) along the $\overline{y}$-axis.  The variables $\overline{x} = (x_1, \dots, x_n)$ are free to be substituted with values from some $R$-module.  If we denote this formula by $\phi$, then we may write $\phi(\overline{x})$ or $\phi(x_1,\dots, x_n)$ to display its free = unquantified variables.

Given such a pp formula $\phi$ we have, in each module $M$,  its {\bf solution set}:  
$$\phi(M) = \{ \overline{a} \in M^n: \exists \overline{b} \in M^t \text{ such that }  \overline{a}A + \overline{b}B =0 \}.$$
We say that $\phi(M)$ is a subgroup of $M^n$ {\bf pp-definable} in $M$ or, more briefly, a {\bf pp-definable subgroup} of $M$.  The notation $M \models \phi(\overline{a})$, which is more usual in model theory, is equivalent to  $\overline{a} \in \phi(M)$.

If $M$ is a submodule of $N$, $M\leq N$, then $M$ is a {\bf pure submodule} if, for every tuple $\overline{a}$ from $M$, if $N\models \phi(\overline{a})$, then $M\models \phi(\overline{a})$.  More generally, a {\bf pure monomorphism} is a monomorphism whose image is pure in its codomain.

If $\psi$ and $\phi$ are formulas with the same free variables $x_1, \dots, x_n$, then we write $\psi \leq \phi$ if, for every module $M$, $\psi(M) \leq \phi(M)$, that is, $\psi(M)$ is a subgroup of $\phi(M)$ (pp-definable subgroups are modules over the centre of the ring $R$ but are not, in general, $R$-submodules).  Formulas which are equivalent in this preordering, for instance $\phi(\overline{x})$ and $\phi(\overline{x}) \wedge (\overline{x} = \overline{x})$, have the same solution set in every module and in practice are identified, so we regard this as an ordering on (equivalence classes of) pp formulas.  The resulting poset, denoted ${\rm pp}^n_R$, is a lattice, with meet given by conjunction, $\phi \wedge \psi$, and join given by $\phi + \psi$, where the latter is the pp formula $\exists \overline{x}_1 \overline{x}_2 \,( \phi(\overline{x}_1)\, \wedge \,\psi(\overline{x}_2) \,\wedge \,\overline{x} = \overline{x}_1 + \overline{x}_2)$.  These define $\phi(M) \cap \psi(M)$, respectively $\phi(M)+ \psi(M)$, in any module $M$.  One may note that the conjunction and sum of two pp formulas are not strictly of the form given in the definition of pp formula above but they may be rewritten as equivalent formulas in that form, so we do refer to them as pp formulas.

A {\bf pp-pair} is a pair $\psi \leq \phi$ of pp-formulas, that is, where one implies the other.  We notate such a pair as $\phi/\psi$ because often we are interested in the factor groups $\phi(M)/\psi(M)$.  We say that a pp-pair $\phi / \psi$ is {\bf closed} on a module $M$ if $\phi(M) = \psi(M)$; otherwise it is {\bf open} on $M$.  

The {\bf pp-type of} an element $a$ in a module $M$ is the set of all pp formulas that $a$ satisfies in $M$; more generally for $n$-tuples:\footnote{The free variables $x_1, \dots x_n$ are just placeholders, so we keep them in the background in our notation whenever we can.} 
$${\rm pp}^M(\overline{a}) = \{ \phi(\overline{x}): M \models \phi(\overline{a}) \}.$$
We say that $\overline{a}$ is a {\bf realisation} of that pp-type {\bf in} $M$.  Every set $p$ of pp formulas which is a {\bf filter}, that is, upwards-closed (if $\phi \leq \psi$ and $\phi\in p$ then $\psi \in p$) and closed under intersection/conjunction ($\phi, \psi \in p$ implies $\phi \wedge \psi \in p$), occurs in this way, so we refer to such a set as a {\bf pp-type}.

A pp-type $p$ is {\bf finitely generated} if there is a pp formula $\phi \in p$ such that $p=\{ \psi: \phi \leq \psi\}$; we write $p =\langle \phi\rangle$ and say that $p$ is {\bf generated by} $\phi$.  If $A$ is finitely presented and $\overline{a}$ is from $A$, then ${\rm pp}^A(\overline{a})$ is finitely generated, \cite[1.2.6]{PreNBK}.

Morphisms preserve pp formulas: if $f:M \to N$ and $M\models \phi(\overline{a})$, then $N \models \phi(f\overline{a})$.  Thus morphisms are non-decreasing on pp-types:  ${\rm pp}^M(\overline{a}) \subseteq {\rm pp}^N(f\overline{a})$ if $f$ is as above.  Note that $f$ is a pure monomorphism iff ${\rm pp}^M(\overline{a}) = {\rm pp}^N(f\overline{a})$ for every $\overline{a}$ from $M$.

\subsection{Pure-injectives and the Ziegler spectrum}\label{secpizg} \marginpar{secpizg}

A module $N$ is {\bf pure-injective} if it is injective over pure monomorphisms, equivalently if every embedding of $N$ as a pure submodule is a split embedding. 

The (right) {\bf Ziegler spectrum}, \cite{Zie}, ${\rm Zg}_R$ of $R$ is the topological space on the set, ${\rm pinj}_R$, of isomorphism classes of indecomposable pure-injective (right) $R$-modules and which has, for a basis of open sets, those of the form
$$(\phi/\psi)  = \{  N\in {\rm Zg}_R: \phi(N)/\psi(N) \neq 0\}.$$

If ${\cal T}$ is any topological space then the lattice ${\cal O}({\cal T})$ of open subsets of ${\cal T}$ is a complete Heyting algebra (a complete lattice where meet distributes over arbitrary join), also referred to as a {\bf frame}.

\subsection{Definable subcategories of module categories} \label{secdefsub} \marginpar{secdefsub}

We say that ${\cal D}$ is a {\bf definable subcategory} of ${\rm Mod}\mbox{-}R$ if it satisfies the following equivalent conditions, where a subcategory is {\bf additive} if it is closed under finite direct sums and under direct summands.

\begin{theorem}\label{defchar} \marginpar{defchar} (see, e.g., \cite[3.4.7]{PreNBK} or \cite[9.1]{PreErice}) The following conditions on a subcategory ${\cal D}$ of ${\rm Mod}\mbox{-}R$ are equivalent:

\noindent (i)  ${\cal D}$ is additive and axiomatisable by some set of sentences in the language for $R$-modules;

\noindent (ii)  there is a  set $\Phi$ of pp-pairs such that ${\cal D} = \{ M \in {\rm Mod}\mbox{-}R: \phi(M) =\psi(M) \text{ for all } \phi/\psi \in \Phi\}$;

\noindent (iii)  ${\cal D}$ is closed in ${\rm Mod}\mbox{-}R$ under direct products, directed colimits and pure submodules;

\noindent (iv)  ${\cal D}$ is closed in ${\rm Mod}\mbox{-}R$ under direct products, ultraproducts (see Section \ref{secup} for these) and pure submodules.
\end{theorem}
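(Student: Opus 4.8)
The plan is to establish the four implications (ii)$\Rightarrow$(i), (ii)$\Rightarrow$(iii), (i)$\Rightarrow$(ii) and (iii)$\Rightarrow$(ii); the first two are routine, and the last two carry the content and are essentially one argument.

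For (ii)$\Rightarrow$(iii) I would use three standard facts about a pp formula $\phi$ in $n$ free variables: it commutes with direct products, $\phi(\prod_i M_i) = \prod_i \phi(M_i)$; it commutes with directed colimits, $\phi(\varinjlim_i M_i) = \varinjlim_i \phi(M_i)$; and if $N$ is pure in $M$ then $\phi(N) = N^n \cap \phi(M)$. If ${\cal D}$ is cut out by the closure of the pp-pairs in a set $\Phi$, each of the three closure conditions of (iii) is then immediate (for pure submodules: $\phi(N) = N^n\cap\phi(M) = N^n\cap\psi(M) = \psi(N)$ when $\phi(M)=\psi(M)$), and additivity is free since finite products are finite direct sums and split monomorphisms are pure. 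For (ii)$\Rightarrow$(i): as $\psi\le\phi$ in a pp-pair, the condition $\phi(M) = \psi(M)$ is just $\phi(M)\subseteq\psi(M)$, which is expressed by the sentence $\forall\overline{x}\,(\phi(\overline{x})\to\psi(\overline{x}))$, so ${\cal D}$ is the model class of $\{\forall\overline{x}\,(\phi\to\psi) : \phi/\psi\in\Phi\}$, and it is additive as before.

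For (i)$\Rightarrow$(ii), the substantive direction, let $\Phi$ be the set of all pp-pairs closed on every module of ${\cal D}$ and let ${\cal D}'$ be the class they define; then ${\cal D}\subseteq{\cal D}'$ and the goal is ${\cal D}'\subseteq{\cal D}$. The ingredients are: (a) the Baur--Monk--Garavaglia pp-elimination of quantifiers (see \cite{PreNBK}), by which every sentence is, modulo the theory of $R$-modules, a Boolean combination of invariant sentences $|\phi(M)/\psi(M)|\ge k$; hence two modules are elementarily equivalent precisely when they have the same invariants $[\phi(M):\psi(M)]$, recorded in the scale $\{1,2,\dots,\infty\}$, and an axiomatisable class is closed under elementary equivalence; (b) invariants multiply over finite direct sums, from which an additive axiomatisable class is closed under arbitrary direct sums and products — one checks that $\bigoplus_i M_i$, $\prod_i M_i$ and a suitable ultraproduct of the finite partial direct sums have equal invariants in the $\ge k$ scale, and the ultraproduct lies in ${\cal D}$; (c) $\phi$ commutes with the colimit $M_0^{(\aleph_0)} = \varinjlim M_0^{(n)}$, so the $\phi/\psi$-invariant of $M_0^{(\aleph_0)}$ is infinite whenever $\phi/\psi$ is open on $M_0$. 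Now take $N\in{\cal D}'$; for each pp-pair $\phi/\psi$ open on $N$, it is not in $\Phi$ and so is open on some $M_{\phi,\psi}\in{\cal D}$. Put $M = (\prod_{\phi/\psi \text{ open on } N} M_{\phi,\psi})^{(\aleph_0)}\in{\cal D}$ (the index set is a set). Then the $\phi/\psi$-invariant of $M$ is infinite for every pp-pair open on $N$, so comparing invariants pair by pair — at a pair open on $N$ the $M$-invariant already absorbs that of $N$, at a pair closed on $N$ the $N$-invariant is $1$ — gives $N\oplus M\equiv M$. Since $M\in{\cal D}$ and ${\cal D}$ is closed under elementary equivalence, $N\oplus M\in{\cal D}$; since ${\cal D}$ is closed under direct summands, $N\in{\cal D}$. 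Thus ${\cal D}'={\cal D}$.

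Finally (iii)$\Rightarrow$(ii) is the same argument: (iii) gives closure under arbitrary direct sums ($\bigoplus_i M_i = \varinjlim$ of its finite partial sums), under direct summands (split monos are pure), and under elementary equivalence (an ultrapower is a directed colimit of products, and a model elementarily — hence purely — embeds into a suitably saturated ultrapower of any elementarily equivalent model), so the paragraph above applies, with products-closure now supplied by hypothesis. The main obstacle is exactly the heart of (i)/(iii)$\Rightarrow$(ii): showing that additivity forces every invariant condition holding throughout ${\cal D}$ down to the form $|\phi(M)/\psi(M)| = 1$, i.e.\ to closure of a pp-pair. The $(\aleph_0)$-power plus ultraproduct manoeuvre does this by producing an auxiliary $M\in{\cal D}$ whose invariants at all pp-pairs open on $N$ are already infinite, so that adjoining $N$ is invisible to the first-order theory; the delicate points are importing the pp-elimination correctly and verifying that additive axiomatisable classes really are closed under the relevant infinite (co)products.
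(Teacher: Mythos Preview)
The paper does not supply a proof of this theorem: it is quoted as background in Section~\ref{secdefsub} with a pointer to \cite[3.4.7]{PreNBK} and \cite[9.1]{PreErice}, so there is no in-paper argument to compare against. Your sketch is correct and is essentially the standard proof found in those references: the easy directions (ii)$\Rightarrow$(i),(iii) are exactly as you say, and the content-bearing direction runs through Baur--Monk pp-elimination to reduce the axiomatisation to invariant conditions, then uses additivity (finite sums plus summands) together with closure under ultraproducts and elementary equivalence to force every surviving condition down to a pp-pair closure $|\phi/\psi|=1$. One point worth a sentence more of care is your step~(b), that an additive axiomatisable class is closed under arbitrary products and sums: the assertion is right, and the reason is precisely the one you gesture at --- the product $\prod_i M_i$ (and likewise $\bigoplus_i M_i$) has the same invariants, in the $\{1,2,\dots,\infty\}$ scale, as an ultraproduct of the finite partial sums $\bigoplus_{j\in F} M_j$ taken over an ultrafilter refining the cofinal filter on finite $F\subseteq I$; since each finite partial sum lies in ${\cal D}$ and ${\cal D}$ is closed under ultraproducts and elementary equivalence, the product does too. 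With that made explicit, your construction of $M=(\prod M_{\phi,\psi})^{(\aleph_0)}\in{\cal D}$ and the absorption $N\oplus M\equiv M$ go through, and the (iii)$\Rightarrow$(ii) variant is, as you say, the same argument once one observes that (iii) already delivers closure under elementary equivalence via ultrapowers and pure embeddings.
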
 

We say that ${\cal D}$ is {\bf defined} by $\Phi$ in the situation of (ii) above.

A definable subcategory of ${\rm Mod}\mbox{-}R$ is also closed under images of pure epimorphisms and under pure-injective hulls (e.g.~\cite[3.4.8]{PreNBK}).

If $M$ is a module then we denote by $\langle M \rangle$ the definable subcategory {\bf generated} by $M$ - the smallest definable subcategory (of the ambient module category) containing $M$.  Therefore $\langle M \rangle$ consists of the class of modules $N$ such that every pp-pair closed on $M$ is closed on $N$:
$$ \langle M \rangle = \{ N\in {\rm Mod}\mbox{-}R: \phi(M) = \psi(M) \implies \phi(N) =\psi(N) \text{ for all }  \phi \geq \psi \text{ pp }\}.$$
Similar notation is used for the definable subcategory generated by a class of modules.
Every definable subcategory is generated by some, by no means unique, $M$.  We say that two $R$-modules $M, N$ are {\bf definably equivalent}\footnote{This condition appears in \cite{PreBk} as the requirement that ${\rm Th}(M^{\aleph_0}) = {\rm Th}(N^{\aleph_0})$, where ${\rm Th}(M)$ is the complete theory of $M$} if they generate the same definable subcategory of ${\rm Mod}\mbox{-}R$: $\langle M \rangle = \langle N \rangle$, equivalently if every pp-pair open on $M$ is open on $N$ and {\it vice versa}.  

\vspace{4pt}

The set of definable subcategories of a module category ${\rm Mod}\mbox{-}R$ is closed under arbitrary intersections.

\begin{lemma}\label{infmeetdef} \marginpar{infmeetdef} (e.g.~proof of \cite[5.1.1]{PreNBK}) Suppose that ${\cal D}_i$, $i\in I$, are definable subcategories of ${\rm Mod}\mbox{-}R$, with  ${\cal D}_i$ being defined by $\Phi_i$.  Then their intersection $\bigcap_i\, {\cal D}_i $ is definable, being defined by $\bigcup_i\, \Phi_i$. 
\end{lemma}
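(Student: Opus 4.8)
The plan is to read off the result directly from the pp-pair characterisation \ref{defchar}(ii). Set
$${\cal D} = \{ M \in {\rm Mod}\mbox{-}R : \phi(M) = \psi(M) \text{ for all } \phi/\psi \in \textstyle\bigcup_i \Phi_i \}.$$
First I would note that $\bigcup_i \Phi_i$ may be taken to be a genuine set: pp formulas in a fixed number $n$ of free variables form a set up to equivalence (the lattice ${\rm pp}^n_R$), so the totality of pp-pairs is, up to equivalence, a set, and hence so is any union of the $\Phi_i$ — even if the index class $I$ is large. Thus ${\cal D}$ is defined by a set of pp-pairs, and \ref{defchar}, the implication (ii)$\Rightarrow$(iii), makes it a definable subcategory of ${\rm Mod}\mbox{-}R$.

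Next I would check the equality ${\cal D} = \bigcap_i {\cal D}_i$ by unwinding quantifiers. For a module $M$, membership $M \in {\cal D}$ says exactly that for every $i$ and every $\phi/\psi \in \Phi_i$ the pair $\phi/\psi$ is closed on $M$; by the hypothesis that ${\cal D}_i$ is defined by $\Phi_i$, for each fixed $i$ this is precisely the condition $M \in {\cal D}_i$; hence $M \in {\cal D}$ iff $M \in {\cal D}_i$ for all $i$, i.e.\ iff $M \in \bigcap_i {\cal D}_i$. Since definable subcategories are full and closed under isomorphism, two of them with the same objects coincide, so $\bigcap_i {\cal D}_i = {\cal D}$ is definable and defined by $\bigcup_i \Phi_i$.

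There is no real obstacle here; the only point deserving a word is the set-theoretic one above. One could alternatively bypass pp-pairs and argue from \ref{defchar}(iii) — an arbitrary intersection of subcategories each closed under direct products, directed colimits and pure submodules is visibly closed under all three — but that route does not by itself exhibit the explicit axiomatisation $\bigcup_i \Phi_i$, which is the extra content of the lemma, so I would present the pp-pair argument.
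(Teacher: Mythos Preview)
Your argument is correct: the equality $\bigcap_i {\cal D}_i = \{M : \phi(M)=\psi(M) \text{ for all } \phi/\psi \in \bigcup_i \Phi_i\}$ is immediate from the definition of each ${\cal D}_i$ via $\Phi_i$, and then \ref{defchar}(ii) gives definability. The paper does not supply its own proof of this lemma --- it merely cites \cite[5.1.1]{PreNBK} --- so there is nothing to compare against; your direct unwinding is exactly the expected argument, and the set-theoretic remark, while harmless, is not really needed since the $\Phi_i$ are assumed to be sets and $I$ is given as a set.
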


\begin{lemma}\label{infjoindef} \marginpar{infjoindef}   Suppose that ${\cal D}_i$, $i\in I$, are definable subcategories of ${\rm Mod}\mbox{-}R$ and let $\Phi_i$ be the set of all pp-pairs closed on ${\cal D}_i$.  Then the definable join ${\cal D} = \langle \bigcup_i\, {\cal D}_i \rangle$ is defined by $\bigcap_i\, \Phi_i$.
\end{lemma}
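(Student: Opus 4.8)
Write $\Psi = \bigcap_i\, \Phi_i$ and let ${\cal E}$ be the subcategory of ${\rm Mod}\mbox{-}R$ defined by $\Psi$; this is definable by \ref{defchar}(ii), and the plan is to show ${\cal E} = {\cal D}$. The observation that drives the argument is that each $\Phi_i$ is, by hypothesis, the set of \emph{all} pp-pairs closed on ${\cal D}_i$, so that for a pp-pair $\phi/\psi$ we have $\phi/\psi \in \Phi_i$ precisely when $\phi(M) = \psi(M)$ for every $M \in {\cal D}_i$; hence $\phi/\psi \in \Psi$ precisely when $\phi/\psi$ is closed on every module in $\bigcup_i\, {\cal D}_i$, i.e.~$\Psi$ is exactly the set of pp-pairs closed on the class $\bigcup_i\, {\cal D}_i$.

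Granting that, the quickest route is via the description of the definable subcategory generated by a class (the analogue, indicated in Section~\ref{secdefsub}, of the formula given there for $\langle M\rangle$): ${\cal D} = \langle \bigcup_i\, {\cal D}_i\rangle$ consists of those $N$ on which every pp-pair closed on $\bigcup_i\, {\cal D}_i$ is closed, and by the previous paragraph these are exactly the members of $\Psi$, so ${\cal D} = \{N : \phi(N) = \psi(N) \text{ for all } \phi/\psi \in \Psi\} = {\cal E}$. If one prefers to argue the two inclusions directly: from $\Psi \subseteq \Phi_i$ one gets ${\cal D}_i \subseteq {\cal E}$ for each $i$, hence ${\cal D} \subseteq {\cal E}$ by minimality of ${\cal D}$; conversely, ${\cal D}$, being definable, is defined by the set $\Phi_{\cal D}$ of all pp-pairs closed on it (as for $\langle M\rangle$), and since ${\cal D}_i \subseteq {\cal D}$ for all $i$ we have $\Phi_{\cal D} \subseteq \bigcap_i\, \Phi_i = \Psi$, so any module closing every pp-pair of $\Psi$ closes every pp-pair of $\Phi_{\cal D}$ and therefore lies in ${\cal D}$.

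I do not expect a real obstacle here; the one point that must be handled with care is the one flagged above — each $\Phi_i$ has to be the \emph{full} set of pp-pairs closed on ${\cal D}_i$, not merely some set defining ${\cal D}_i$. If arbitrary defining sets were allowed, $\bigcap_i\, \Phi_i$ could be strictly smaller than the set of pp-pairs closed on $\bigcup_i\, {\cal D}_i$ and would then define a proper definable subcategory of ${\cal D}$ rather than ${\cal D}$ itself.
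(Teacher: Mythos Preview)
Your proof is correct and follows essentially the same approach as the paper's: the inclusion ${\cal D} \subseteq {\cal E}$ by minimality, and the reverse by comparing full sets of closed pp-pairs (the paper phrases the second inclusion as a contradiction argument, but the content is identical to your direct version). One small slip in your closing cautionary remark: a strictly smaller $\bigcap_i \Phi_i$ would define a \emph{larger} definable subcategory, one properly containing ${\cal D}$, not a proper subcategory of ${\cal D}$.
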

\begin{proof}  Certainly the category defined by $\Phi = \bigcap_i\, \Phi_i$ contains each ${\cal D}_i$ and hence contains ${\cal D}$ (since that is the smallest definable subcategory which contains each ${\cal D}_i$).  For the converse, if ${\cal D}$ were strictly contained in the subcategory defined by $\Phi$, then there would be some pp-pair $\phi/\psi$ not in $\Phi$ but closed on ${\cal D}$.  There is some $i$ such that $\phi/\psi \notin \Phi_i$ so, by definition of $\Phi_i$, there is some $D\in {\cal D}_i$ such that $\phi/\psi$ is open on $D$.  But $D\in {\cal D}$ and that is a contradiction.  Therefore $\Phi$ defines precisely ${\cal D}$.
\end{proof}

Therefore the set of definable subcategories of ${\rm Mod}\mbox{-}R$, ordered by inclusion, is a complete lattice which we denote by ${\rm DefSub}_R$.

\begin{theorem}\label{Zgdef} \marginpar{Zgdef} (e.g.~\cite[5.1.1, 5.1.5]{PreNBK}) The lattice ${\rm DefSub}_R$ of definable subcategories of ${\rm Mod}\mbox{-}R$ is naturally isomorphic to the lattice of closed subsets of the Ziegler spectrum of $R$ and so is anti-isomorphic to the frame ${\cal O}({\rm Zg}_R)$ of open subsets of ${\rm Zg}_R$.
\end{theorem}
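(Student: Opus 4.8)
The plan is to exhibit explicit, mutually inverse, order-preserving maps between ${\rm DefSub}_R$ and the lattice of Ziegler-closed subsets of ${\rm Zg}_R$, and then pass to open sets by complementation. In one direction, send a definable subcategory ${\cal D}$ to its \emph{support} ${\cal C}_{\cal D} = \{N\in {\rm pinj}_R : N\in {\cal D}\}$. In the other, given a closed subset $X\subseteq {\rm Zg}_R$, let $\Phi_X$ be the set of pp-pairs $\phi/\psi$ with $(\phi/\psi)\cap X=\emptyset$ and let ${\cal D}_X$ be the subcategory defined by $\Phi_X$ in the sense of \ref{defchar}(ii); by that theorem ${\cal D}_X$ is definable. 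For well-definedness of the first map, note that if ${\cal D}$ is defined by a set $\Phi$ of pp-pairs then $N\in {\rm pinj}_R$ fails to lie in ${\cal C}_{\cal D}$ exactly when $\phi(N)\neq \psi(N)$ for some $\phi/\psi\in\Phi$, that is, exactly when $N$ lies in the open set $\bigcup_{\phi/\psi\in\Phi}(\phi/\psi)$; hence ${\cal C}_{\cal D}$ is closed. Both maps are visibly inclusion-preserving (for the second, a larger $X$ gives a smaller $\Phi_X$, hence a larger ${\cal D}_X$).

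Next I would check that the two composites are the identity. That $X\mapsto {\cal C}_{{\cal D}_X}$ is the identity on closed sets is essentially unwinding definitions: $N\in X$ forces $N\notin (\phi/\psi)$ for every $\phi/\psi\in\Phi_X$, so $N\in {\cal D}_X$, giving $X\subseteq {\cal C}_{{\cal D}_X}$; conversely an $N\in {\rm pinj}_R$ outside the closed set $X$ lies in some basic open $(\phi/\psi)$ disjoint from $X$, so $\phi/\psi\in\Phi_X$ while $\phi(N)\neq\psi(N)$, whence $N\notin {\cal D}_X$. For the composite ${\cal D}\mapsto {\cal D}_{{\cal C}_{\cal D}}$, the inclusion ${\cal D}_{{\cal C}_{\cal D}}\subseteq {\cal D}$ is formal: every pp-pair in a defining set $\Phi$ for ${\cal D}$ is closed on ${\cal D}$, hence on ${\cal C}_{\cal D}$, hence lies in $\Phi_{{\cal C}_{\cal D}}$, so the category defined by the larger set $\Phi_{{\cal C}_{\cal D}}$ is contained in ${\cal D}$.

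The one inclusion that needs a genuine input is ${\cal D}\subseteq {\cal D}_{{\cal C}_{\cal D}}$: given $M\in{\cal D}$ and a pp-pair with $(\phi/\psi)\cap {\cal C}_{\cal D}=\emptyset$, I must show $\phi(M)=\psi(M)$. If not, $\phi/\psi$ is open on $M$, and here I would invoke Ziegler's theorem that a pp-pair open on a module is open on some indecomposable pure-injective in the definable subcategory it generates: there is $N\in {\rm pinj}_R$ with $N\in\langle M\rangle\subseteq {\cal D}$ and $\phi(N)\neq\psi(N)$, i.e. $N\in {\cal C}_{\cal D}\cap(\phi/\psi)$ — a contradiction. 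This is the step I expect to be the crux; everything else is bookkeeping. It is exactly the fact, underlying the construction in \ref{secpizg}, that a basic open set of ${\rm Zg}_R$ is nonempty precisely when the corresponding pp-pair is not identically closed, applied inside $\langle M\rangle$.

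Finally, a bijection between posets that is order-preserving with an order-preserving inverse is a poset isomorphism, and since both sides are complete lattices (using \ref{infmeetdef} and \ref{infjoindef} on the one side, and closure of Ziegler-closed sets under arbitrary intersections on the other) it is automatically a complete-lattice isomorphism; no choices were made, which is what ``natural'' refers to here. Composing with the complementation anti-isomorphism between the lattice of closed subsets of ${\rm Zg}_R$ and the frame ${\cal O}({\rm Zg}_R)$ of open subsets then yields the stated anti-isomorphism.
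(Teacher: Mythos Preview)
The paper does not give its own proof of this theorem: it is quoted as a known result with a citation to \cite[5.1.1, 5.1.5]{PreNBK}, and the text immediately following the statement merely describes the bijection (${\cal D} \mapsto {\cal D} \cap {\rm pinj}_R$, with inverse sending a closed set to the definable subcategory it generates). Your argument is correct and is essentially the standard one found in those references; you have correctly isolated Ziegler's theorem --- that a pp-pair open on some module is open on an indecomposable pure-injective in the definable subcategory it generates --- as the one substantive ingredient, with the rest being order-theoretic bookkeeping.
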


The first correspondence in \ref{Zgdef} is ${\cal D} \mapsto {\cal D} \cap {\rm pinj}_R$ and, in the other direction, it takes a set of indecomposable pure-injective modules to the definable subcategory that they together generate.  We write ${\rm Zg}({\cal D})$ for the set ${\cal D} \cap {\rm pinj}_R$ given the relative topology induced from ${\rm Zg}_R$.

The join of finitely many definable subcategories is described algebraically as follows.

\begin{lemma}\label{finjoindef} \marginpar{finjoindef} \cite[3.4.9]{PreNBK} Suppose that ${\cal D}_1, \dots, {\cal D}_n$, are definable subcategories of ${\rm Mod}\mbox{-}R$.  Then their join ${\cal D} = \langle  {\cal D}_1 \cup \dots \cup {\cal D}_n \rangle$ consists of the pure submodules of modules of the form $M_1 \oplus \dots \oplus M_n$ where $M_i \in {\cal D}_i$.
\end{lemma}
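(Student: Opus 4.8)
The plan is to reduce immediately to the case $n=2$ and to show that the class ${\cal C}$ of all pure submodules of modules $M_1\oplus M_2$ with $M_1\in {\cal D}_1$ and $M_2\in {\cal D}_2$ is itself a definable subcategory of ${\rm Mod}\mbox{-}R$. Granting that, ${\cal C}$ contains each ${\cal D}_i$ (take the other summand to be $0$), hence contains the join $\langle {\cal D}_1\cup {\cal D}_2\rangle$; and the reverse inclusion is immediate since $\langle {\cal D}_1\cup {\cal D}_2\rangle$, being definable, is additive and closed under pure submodules and so already contains every pure submodule of every $M_1\oplus M_2$ as above. The passage from $n=2$ to general $n$ is then a routine induction, using associativity of the join in ${\rm DefSub}_R$ and the fact that the direct sum of a pure monomorphism with an identity map is again a pure monomorphism, so that a pure submodule of $M'\oplus M_n$, with $M'$ a pure submodule of $M_1\oplus\dots\oplus M_{n-1}$, is a pure submodule of $M_1\oplus\dots\oplus M_n$.

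So the task is to verify that ${\cal C}$ satisfies the three closure conditions of \ref{defchar}(iii). Closure under pure submodules is just transitivity of pure monomorphisms. For closure under direct products one uses that pp formulas commute with direct products (hence a product of pure monomorphisms is again a pure monomorphism; see \cite{PreNBK}), together with the canonical identification $\prod_k(M_1^k\oplus M_2^k)\cong\big(\prod_k M_1^k\big)\oplus\big(\prod_k M_2^k\big)$ and closure of ${\cal D}_1,{\cal D}_2$ under products.

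The main point, and the step I expect to be the real obstacle, is closure under directed colimits: if $N=\varinjlim_k N_k$ with each $N_k$ purely embedded in $M_1^k\oplus M_2^k$, the transition maps of the system need not respect these ambient decompositions, so one cannot simply take the colimit of the $M_i^k$. The standard remedy is to pass to an ultraproduct. Choose an ultrafilter $U$ on the index set $K$ containing every final segment $\{j\in K:j\ge k\}$ — possible since, $K$ being directed, these have the finite intersection property. Then $N$ embeds purely into the ultraproduct $\prod_k N_k/U$: injectivity and purity of this map both follow from {\L}o\'s's theorem, using that $U$ contains the final segments. Since each $N_k$ is pure in $M_1^k\oplus M_2^k$ and ultraproducts preserve pure monomorphisms (again by {\L}o\'s's theorem), $\prod_k N_k/U$ is pure in $\prod_k(M_1^k\oplus M_2^k)/U\cong\big(\prod_k M_1^k/U\big)\oplus\big(\prod_k M_2^k/U\big)$, and each factor $\prod_k M_i^k/U$ lies in ${\cal D}_i$ because a definable subcategory, being axiomatisable by \ref{defchar}(i), is closed under ultraproducts. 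Composing the two pure monomorphisms exhibits $N$ as a pure submodule of a direct sum $M_1\oplus M_2$ with $M_i\in {\cal D}_i$, that is, $N\in {\cal C}$.

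This gives all three conditions, so ${\cal C}$ is definable and the first paragraph completes the case $n=2$; the induction then yields the general statement. (One could instead try to show directly that ${\cal C}$ is exactly the subcategory defined by the pp-pairs that are closed on both ${\cal D}_1$ and ${\cal D}_2$ — one inclusion follows since pp formulas commute with finite direct sums — but proving the other inclusion seems to need a pp-type saturation argument of comparable length, so I would prefer the ultraproduct route.)
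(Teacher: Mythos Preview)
The paper does not give its own proof of this lemma; it is simply quoted from \cite[3.4.9]{PreNBK}. Your argument is correct. The crucial step---closure of ${\cal C}$ under directed colimits via the pure embedding of $\varinjlim_k N_k$ into the ultraproduct $\prod_k N_k/U$ for an ultrafilter $U$ containing all final segments of $K$---works exactly as you indicate: the map is well-defined because any two representatives of a colimit element agree on a final segment (hence on a set in $U$), and it is pure because if the ultraproduct satisfies a pp formula $\phi$ at the image of $a$, then by {\L}o\'s's theorem some $N_k$ with $k$ above the index of the chosen representative satisfies $\phi(a_k)$, and hence $N$ does too via the colimit map $N_k\to N$ (morphisms preserve pp formulas). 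The remaining closure conditions and the induction on $n$ are routine as you say. This ultraproduct/reduced-product route is indeed the standard argument in the cited reference, so there is nothing to contrast.
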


The description of \ref{finjoindef} does not work for infinite joins, see \cite[3.12]{PreDefMon}.

\vspace{4pt}

If ${\cal D}$ is a definable category, then a pure-injective $N\in {\cal D}$ is an {\bf elementary cogenerator} for ${\cal D}$ if, for each $M \in {\cal D}$, there is a pure embedding $M \to N^I$ for some index set $I$.  Every definable category has an elementary cogenerator (see \cite[5.3.52]{PreNBK}).  We will use the characterisation, see \cite[3.8]{PreDefMon}, that a pure-injective module $N$ is an elementary cogenerator (for the definable subcategory that it generates) iff every ultrapower of $N$ purely embeds in some direct power of $N$ (for the definition of ultrapower, see Section \ref{secup} below).

\begin{cor}\label{elemcogjoin} \marginpar{elemcogjoin}  If $N_i$ is an elementary cogenerator for the definable subcategory ${\cal D}_i$, $i=1, \dots, n$, then $N_1 \oplus \dots \oplus N_n$ is an elementary cogenerator for $\langle  {\cal D}_1 \cup \dots \cup {\cal D}_n \rangle$.
\end{cor}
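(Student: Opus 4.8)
The plan is to check directly the three clauses in the definition of ``elementary cogenerator'' for $N := N_1 \oplus \dots \oplus N_n$ with respect to $\mathcal{D} := \langle \mathcal{D}_1 \cup \dots \cup \mathcal{D}_n \rangle$. First, $N$ is pure-injective, being a finite direct sum of pure-injective modules. Second, $N \in \mathcal{D}$: each $N_i$ lies in $\mathcal{D}_i \subseteq \mathcal{D}$, and $\mathcal{D}$, being definable, is closed under finite direct sums. The substance of the proof is the third clause: that every $M \in \mathcal{D}$ admits a pure embedding into some direct power of $N$.

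So I would fix $M \in \mathcal{D}$. By \ref{finjoindef}, $M$ is (isomorphic to) a pure submodule of a module $M_1 \oplus \dots \oplus M_n$ with each $M_i \in \mathcal{D}_i$. Since $N_i$ is an elementary cogenerator for $\mathcal{D}_i$, there is for each $i$ a pure embedding $M_i \to N_i^{I_i}$ for some set $I_i$. Taking the direct sum of these maps gives $M_1 \oplus \dots \oplus M_n \to N_1^{I_1} \oplus \dots \oplus N_n^{I_n}$, and I would record the routine fact that a direct sum of pure embeddings is again pure: any tuple in a direct sum is supported on finitely many summands, and there a finite direct sum is a finite product, on which pp formulas, and hence purity, are computed componentwise. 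Composing with the inclusion of $M$, we obtain a pure embedding $M \to N_1^{I_1} \oplus \dots \oplus N_n^{I_n}$.

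It then remains to pure-embed $N_1^{I_1} \oplus \dots \oplus N_n^{I_n}$ into a direct power of $N$. Set $J = I_1 \sqcup \dots \sqcup I_n$. Since arbitrary direct products commute with finite direct sums, $N^J = (N_1 \oplus \dots \oplus N_n)^J \cong N_1^J \oplus \dots \oplus N_n^J$, and likewise each $N_i^J \cong N_i^{I_1} \oplus \dots \oplus N_i^{I_n}$ has $N_i^{I_i}$ as a direct summand; assembling the $i$-th summand of the $i$-th block exhibits $N_1^{I_1} \oplus \dots \oplus N_n^{I_n}$ as a direct summand, hence a pure submodule, of $N^J$. Composing, $M$ purely embeds in $N^J$, which is what is needed. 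I do not expect a genuine obstacle; the only care required is in this last bit of bookkeeping with products versus coproducts — so that the distinct index sets $I_i$ are absorbed into a single power of $N$ — together with the observation that a coproduct of pure embeddings is pure. One could instead invoke the ultrapower characterisation of elementary cogenerators recalled above, since an ultrapower of $N$ splits as the direct sum of the corresponding ultrapowers of the $N_i$, each of which purely embeds in a power of $N_i$; but the argument via \ref{finjoindef} seems the most direct.
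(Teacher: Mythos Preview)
Your proof is correct and is exactly the argument the paper intends: the result is stated without proof as an immediate corollary of \ref{finjoindef}, and you have spelled out precisely how it follows from that lemma together with the definition of elementary cogenerator. The alternative ultrapower argument you mention at the end would also work, but the route via \ref{finjoindef} is the one the placement of the corollary signals.
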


If ${\cal X}$ is any subclass or subcategory of ${\rm Mod}\mbox{-}R$, then we write $\psi \leq_{\cal X} \phi$ if $\psi(M) \leq \phi(M)$ for every $M \in {\cal X}$, and we write $\psi =_{\cal X} \phi$ if $\psi(M) = \phi(M)$ for every $M \in {\cal X}$.  If ${\cal X} = {\rm Mod}\mbox{-}R$ then we drop the subscript, identifying pp formulas that agree on all $R$-modules.

\begin{remark}\label{ordXeqordD} \marginpar{ordXeqordD} It follows directly that, if ${\cal X}$ is any subclass or subcategory of ${\rm Mod}\mbox{-}R$ and ${\cal D} = \langle {\cal X} \rangle$ is the definable subcategory that ${\cal X}$ generates, then $\leq_{\cal D}$ and $\leq_{\cal X}$ coincide, as do $=_{\cal D}$ and $=_{\cal X}$.
\end{remark}

That follows from \ref{defchar}.

The relation, $ =_{\cal D}$, of ${\cal D}$-equivalence between pp formulas in a given set of, say $n$, free variables is a congruence on the lattice ${\rm pp}^n_R$ of pp formulas in those $n$ free variables.  Therefore, there is an induced surjective homomorphism of modular lattices ${\rm pp}^n_R \to {\rm pp}^n({\cal D})$, where the latter is defined to be the ordered set of pp formulas, in $n$ free variables, modulo equivalence $=_{\cal D}$ on ${\cal D}$, equally modulo equality when evaluated on $M$ if $\langle M \rangle = {\cal D}$.

\begin{lemma}\label{ppclosed} \marginpar{ppclosed} If $\overline{a}$ is from $M\in {\cal D}$, then ${\rm pp}^M(\overline{a})$ is closed under $=_{\cal D}$ and upwards closed with respect to $\leq _{\cal D}$.  
\end{lemma}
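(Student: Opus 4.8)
The plan is simply to unwind the definition of $\leq_{\cal D}$ and use the hypothesis $M\in{\cal D}$. First I would take $\phi\in{\rm pp}^M(\overline{a})$ and a pp formula $\psi$ in the same free variables with $\phi\leq_{\cal D}\psi$. By definition of $\leq_{\cal D}$ this means $\phi(N)\leq\psi(N)$ for every $N\in{\cal D}$; instantiating at $N=M$ gives $\phi(M)\leq\psi(M)$. Since $\overline{a}\in\phi(M)$ by assumption, $\overline{a}\in\psi(M)$, that is $M\models\psi(\overline{a})$, so $\psi\in{\rm pp}^M(\overline{a})$. This is exactly upward closure of ${\rm pp}^M(\overline{a})$ with respect to $\leq_{\cal D}$.

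For closure under $=_{\cal D}$, note that if $\phi\in{\rm pp}^M(\overline{a})$ and $\phi=_{\cal D}\psi$ then in particular $\phi\leq_{\cal D}\psi$, so by the first part $\psi\in{\rm pp}^M(\overline{a})$; hence passing to a $=_{\cal D}$-equivalent formula never leaves the pp-type. (One could alternatively invoke \ref{ordXeqordD} to identify $=_{\cal D}$ with $=_{\langle M\rangle}$, but this is unnecessary here precisely because $M$ itself lies in ${\cal D}$, so the ${\cal D}$-relative order can be read off directly at $M$.)

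I do not expect any genuine obstacle: the whole content is the observation that $M\in{\cal D}$ lets us evaluate the ${\cal D}$-relative order relation at $M$. The only point worth flagging is that $\leq_{\cal D}$ is, in general, strictly coarser than the absolute order $\leq$ on ${\rm pp}^n_R$, so this lemma records something slightly stronger than the already-noted fact that every pp-type is a filter for $\leq$; nevertheless the verification is the same one-line instantiation, and the statement about $=_{\cal D}$ is then immediate from the $\leq_{\cal D}$ case.
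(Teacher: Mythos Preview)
Your proof is correct and is exactly the routine verification the paper has in mind; in fact the paper states this lemma without proof, treating it as immediate from the definitions, which is precisely what your unwinding of $\leq_{\cal D}$ at $N=M$ accomplishes.
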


If $\overline{a}$ is from $M$ and ${\cal X}$ is any class or category of modules, then we say that ${\rm pp}^M(\overline{a})$ is ${\cal X}${\bf -generated} by $\phi$, and write ${\rm pp}^M(\overline{a}) =_{\cal X} \langle \phi \rangle$, if ${\rm pp}^M(\overline{a}) = \{ \psi: \phi \leq_{\cal X} \psi\}$, in which case we say that ${\rm pp}^M(\overline{a})$ is ${\cal X}$-{\bf finitely generated}.  By \ref{ordXeqordD} we can suppose that ${\cal X}$ is a definable subcategory.  We don't assume in this definition that $M\in \langle{\cal X}\rangle$.

If ${\cal X}$ is a class of right $R$-modules then a right $R$-module $M$ is ${\cal X}${\bf -atomic} iff, for every finite tuple $\overline{a}$ from $M$, the pp-type, ${\rm pp}^M(\overline{a})$, of $\overline{a}$ in $M$ is ${\cal X}$-finitely generated.  Again, by \ref{ordXeqordD}, we may assume that ${\cal X}$ is a definable subcategory.

\subsection{Elementary duality}

The ({\bf elementary}) {\bf dual} (\cite{PreDual}) of a pp formula $\phi$
$$\exists \overline{y} \, (\overline{x}A + \overline{y}B =0)$$ 
for right modules is the pp formula $D\phi$ for left modules which is 
$$\exists \overline{z} \, (\overline{x} = A\overline{z}\, \wedge \, B\overline{z}=0).$$  

For example, the dual of the annihilator formula $xr=0$ for right $R$-modules is (up to equivalence) the divisibility formula $r|x$, meaning $\exists y \, (x=ry)$, for left $R$-modules, and {\it vice versa}.

\begin{theorem}\label{elemdual} \marginpar{elemdual} (see \cite[\S 1.3.1]{PreNBK}) For each $n$, elementary duality $D(-)$ of pp formulas defines an anti-isomorphism ${\rm pp}^n_R \simeq \big( {\rm pp}^n_{R^{\rm op}}\big)^{\rm op}$.  

\noindent In particular $D(\psi \wedge \phi) = D\psi + D\phi$, $D(\psi + \phi) = D\psi \, \wedge \, D\phi$ and $D^2\phi = \phi$ (where $D$ is also used to denote elementary duality from left to right formulas).
\end{theorem}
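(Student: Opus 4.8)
The plan is to reduce the statement to two facts: that $D$ descends to a well-defined, order-reversing map ${\rm pp}^n_R \to {\rm pp}^n_{R^{\rm op}}$, and that $D^2\phi \equiv \phi$ for every $\phi$. Granting these (and the same for the ring $R^{\rm op}$, using $(R^{\rm op})^{\rm op} = R$), $D$ is an order anti-isomorphism: it is order-reversing, bijective, and its inverse --- namely $D$ again --- is also order-reversing. Since ${\rm pp}^n_R$ and ${\rm pp}^n_{R^{\rm op}}$ are lattices with meet given by $\wedge$ and join by $+$ (as recorded above), any order anti-isomorphism between them automatically sends meets to joins and joins to meets; that is precisely the assertion $D(\psi \wedge \phi) = D\psi + D\phi$ and $D(\psi + \phi) = D\psi \wedge D\phi$. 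So the ``in particular'' clause is a formal consequence, and the work is in the two facts.

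For well-definedness and order-reversal I would use free realisations together with the tensor pairing. For a tuple $\overline a$ of length $n$ from a right $R$-module $M$ and a tuple $\overline b$ of length $n$ from a left $R$-module $L$, write $\langle \overline a, \overline b\rangle = \sum_i a_i \otimes b_i \in M \otimes_R L$. Writing $\phi = \exists \overline y\,(\overline x A + \overline y B = 0)$, a short computation with the defining equations --- if $\overline a'$ witnesses $M \models \phi(\overline a)$ and $\overline z$ witnesses $L \models D\phi(\overline b)$, so $\overline a A + \overline a' B = 0$, $\overline b = A\overline z$ and $B\overline z = 0$ --- gives $\langle \overline a, \overline b\rangle = 0$. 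Next take the free realisation $(C_\phi, \overline c_\phi)$ of $\phi$, concretely $C_\phi = {\rm coker}\big(R^m \xrightarrow{\binom{A}{B}} R^{n+t}\big)$ with $\overline c_\phi$ the images of the first $n$ free generators (the free realisation property $M \models \phi(\overline a) \iff \exists f \colon C_\phi \to M,\ f\overline c_\phi = \overline a$ is immediate from this presentation); tensoring the presentation with $L$ identifies $C_\phi \otimes_R L$ with ${\rm coker}\big(L^m \xrightarrow{\binom{A}{B}} L^{n+t}\big)$, and the element $\langle \overline c_\phi, \overline b\rangle$ with the class of $(b_1,\dots,b_n,0,\dots,0)$, whence
$$
D\phi(L) = \{\, \overline b \in L^n : \langle \overline c_\phi, \overline b\rangle = 0 \text{ in } C_\phi \otimes_R L \,\}.
$$
Because $\phi(M)$ is exactly $\{ f\overline c_\phi : f \in {\rm Hom}_R(C_\phi, M)\}$, functoriality of $-\otimes_R L$ turns this into the intrinsic description $D\phi(L) = \{\overline b \in L^n : \langle \overline a, \overline b\rangle = 0 \text{ in } M \otimes_R L \text{ for all } M \text{ and all } \overline a \in \phi(M)\}$. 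This refers only to the solution sets of $\phi$, so $D\phi$ depends only on the $=$-class of $\phi$; and if $\psi \leq \phi$, so $\psi(M) \subseteq \phi(M)$ for every $M$, then every $\overline b$ orthogonal to all of $\phi$ is orthogonal to all of $\psi$, giving $D\phi \leq D\psi$.

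It remains to check $D^2\phi \equiv \phi$, which I would do by direct computation: rewrite $D\phi = \exists \overline z\,(\overline x = A\overline z \wedge B\overline z = 0)$ in the standard form $\exists \overline z\,\big(\binom{I_n}{0}\overline x + \binom{-A}{B}\overline z = 0\big)$, apply the definition of the dual once more, and observe that the identity block forces the first $n$ coordinates of the new existential witness to equal $\overline x$, while the remaining equations collapse to $\overline x A + \overline y B = 0$ (after an innocuous sign change in the witness), recovering $\phi$. The genuine nuisance throughout --- and what I expect to be the only real obstacle --- is the left/right, row/column, and $R$-versus-$R^{\rm op}$ bookkeeping; in particular one needs to know that $D$ does not depend on the chosen matrix presentation of $\phi$ before ``$D^2\phi \equiv \phi$'' is even a statement about lattice elements, and that is exactly what the intrinsic description of $D\phi(L)$ above provides.
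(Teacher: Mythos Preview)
The paper does not prove this theorem; it is stated as background with a citation to \cite[\S 1.3.1]{PreNBK}, so there is no in-paper argument to compare against.  Your proof is correct and follows one of the standard routes: reduce the anti-isomorphism to ``$D$ is order-reversing and $D^2 = {\rm id}$'', obtain order-reversal (and well-definedness) from the intrinsic tensor description $D\phi(L) = \{\overline b : \overline{c}_\phi \otimes \overline b = 0 \text{ in } C_\phi\otimes_R L\}$, and verify $D^2\phi \equiv \phi$ by direct matrix manipulation.  Your intermediate description of $D\phi(L)$ is precisely Herzog's criterion (\ref{herzcrit}) specialised to the free realisation $M = C_\phi$, so although the paper does not supply a proof of \ref{elemdual}, your argument is entirely in the spirit of the methods it uses elsewhere (cf.\ \ref{tenszero}, \ref{reltenszero}).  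The only caveat, which you already flag, is the row/column and left/right bookkeeping in the presentation of $C_\phi$ and in the $D^2$ computation; once conventions are fixed consistently these steps go through as you describe.
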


If ${\cal D}$ is a definable subcategory of ${\rm Mod}\mbox{-}R$, determined by closure of some set $\Phi$ of pp-pairs, then the ({\bf elementary}) {\bf dual definable category} ${\cal D}^{\rm d}$ (\cite{HerzDual}) is the subcategory of $R\mbox{-}{\rm Mod}$ defined by the set of dual pairs - the collection of $D\psi/D\phi$ such that $\phi/\psi \in \Phi$.  This is well-defined:  the result ${\cal D}^{\rm d}$ is the same for any choice of $\Phi$ defining ${\cal D}$ (see \cite[3.4.16]{PreNBK}).  We have, e.g.~\cite[3.4.18]{PreNBK}, $({\cal D}^{\rm d})^{\rm d} ={\cal D}$.  

If $(-)^\ast$ is a duality from ${\rm Mod}\mbox{-}R$ to $R\mbox{-}{\rm Mod}$ such as $M^\ast = {\rm Hom}_{\mathbb Z}(M,{\mathbb Q}/{\mathbb Z})$, or $M^\ast = {\rm Hom}_K(M,K)$ if $R$ is an algebra over a field $K$, then $M\in {\cal D}$ iff $M^\ast \in {\cal D}^{\rm d}$, e.g.~see \cite[3.4.17]{PreNBK}.

\begin{prop}\label{dualord} \marginpar{dualord} (\cite[2.6]{HerzDual}) Let ${\cal D}$ be a definable subcategory of ${\rm Mod}\mbox{-}R$ and let ${\cal D}^{\rm d}$ be its dual definable category.  If $\phi$ and $\psi$ are pp formulas in the same free variables, then
$$\psi \leq _{\cal D} \phi \text{ iff  } D\phi \leq_{{\cal D}^{\rm d}} D\psi.$$
\end{prop}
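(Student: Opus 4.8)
The plan is to reduce the statement to the closure of a single pp-pair and then invoke the very definition of the dual definable category. Note first that, for pp formulas $\phi,\psi$ in the same free variables, $\psi \leq_{\cal D} \phi$ holds precisely when $\psi(M) = (\psi \wedge \phi)(M)$ for every $M \in {\cal D}$, that is, exactly when the pp-pair $\psi/(\psi\wedge\phi)$ is closed on ${\cal D}$; here $\psi \wedge \phi \leq \psi$, so this genuinely is a pp-pair. Symmetrically, since the join of two pp formulas computes the sum of the corresponding pp-definable subgroups, $D\phi \leq_{{\cal D}^{\rm d}} D\psi$ holds precisely when $(D\psi + D\phi)(N) = D\psi(N)$ for every $N \in {\cal D}^{\rm d}$, that is, when the pp-pair $(D\psi + D\phi)/D\psi$ is closed on ${\cal D}^{\rm d}$. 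By \ref{elemdual} we have $D\psi + D\phi = D(\psi\wedge\phi)$, so this second pair is exactly the elementary dual (in the sense used to define ${\cal D}^{\rm d}$) of the first. Thus the proposition amounts to the assertion: a pp-pair $\alpha/\beta$ for right $R$-modules is closed on ${\cal D}$ if and only if its dual $D\beta/D\alpha$ is closed on ${\cal D}^{\rm d}$.

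To prove this assertion I would take $\Phi_0$ to be the set of all pp-pairs that are closed on ${\cal D}$. Since ${\cal D}$ is definable it has a defining set $\Phi$ (\ref{defchar}), and every pair in $\Phi$ is closed on ${\cal D}$, so $\Phi \subseteq \Phi_0$ and hence $\Phi_0$ also defines ${\cal D}$. Then, by the definition of the dual definable category, ${\cal D}^{\rm d}$ is the subcategory of $R\mbox{-}{\rm Mod}$ defined by $\{ D\beta/D\alpha : \alpha/\beta \in \Phi_0\}$; in particular each such dual pair is closed on ${\cal D}^{\rm d}$, which is one direction. For the converse, apply this same forward direction to the definable subcategory ${\cal D}^{\rm d}$ of $R\mbox{-}{\rm Mod}$: if $D\beta/D\alpha$ is closed on ${\cal D}^{\rm d}$, then its dual is closed on $({\cal D}^{\rm d})^{\rm d}$; but $({\cal D}^{\rm d})^{\rm d} = {\cal D}$ and $D^2 = {\rm id}$, and the dual of $D\beta/D\alpha$ is just $\alpha/\beta$, so $\alpha/\beta$ is closed on ${\cal D}$. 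Chaining this with the equivalences of the previous paragraph yields the proposition.

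I do not expect a serious obstacle here; the argument is almost entirely bookkeeping. The one place that needs care is the interaction of the two order-reversals in play — the one built into elementary duality $D$ and the one implicit in the notation $\phi/\psi$ for a pp-pair — so that the chain ``$\psi \leq_{\cal D}\phi$ $\Leftrightarrow$ $\psi/(\psi\wedge\phi)$ closed on ${\cal D}$ $\Leftrightarrow$ $(D\psi+D\phi)/D\psi$ closed on ${\cal D}^{\rm d}$ $\Leftrightarrow$ $D\phi \leq_{{\cal D}^{\rm d}} D\psi$'' really does line up. It is worth noting that this route uses only the definition of ${\cal D}^{\rm d}$ together with $D^2={\rm id}$, $D(\psi\wedge\phi)=D\psi+D\phi$ and $({\cal D}^{\rm d})^{\rm d}={\cal D}$ from \ref{elemdual} and the surrounding text; a more computational alternative would run through a Hom-duality $(-)^\ast$, using $M\in{\cal D}\Leftrightarrow M^\ast\in{\cal D}^{\rm d}$, the annihilator pairing relating $\phi(M)$ to $D\phi(M^\ast)$, and the fact that $\{M^\ast : M\in{\cal D}\}$ generates ${\cal D}^{\rm d}$, but that needs rather more of the duality machinery.
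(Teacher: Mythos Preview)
Your argument is correct, but there is nothing in the paper to compare it against: Proposition~\ref{dualord} is stated with a citation to \cite[2.6]{HerzDual} and no proof is given here. Your route --- reducing $\psi \leq_{\cal D} \phi$ to closure of the pair $\psi/(\psi\wedge\phi)$, dualising via \ref{elemdual}, and then using that ${\cal D}^{\rm d}$ may be computed from the set $\Phi_0$ of \emph{all} pairs closed on ${\cal D}$ together with $({\cal D}^{\rm d})^{\rm d}={\cal D}$ --- is a clean self-contained derivation from the facts the paper already records in the paragraph defining ${\cal D}^{\rm d}$, and would serve perfectly well as an in-paper proof.
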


\subsection{Ultrapowers and ultraproducts} \label{secup} \marginpar{secup}

If $I$ is a set then an {\bf ultrafilter} on $I$ is a collection ${\cal U}$ of subsets of $I$ which is a filter in the lattice of subsets of $I$ and which is such that, for every $J\subseteq I$, either $J\in {\cal U}$ or $I\setminus J \in {\cal U}$.

Given a structure $X$ - a ring, a module over a ring, ... (precisely, a structure for some first order finitary language) - we can define the corresponding {\bf ultrapower} $X^I/{\cal U}$ of $X$ to be the structure whose underlying set is $X^I/\sim_{\cal U}$ where $\sim_{\cal U}$ is the equivalence relation on $X^I$ given by $(a_i)_i \sim_{\cal U} (b_i)_i$ iff $\{ i\in I: a_i = b_i\} \in {\cal U}$.  We denote the equivalence class of $(a_i)_i$ by $(a_i)_i /{\cal U}$.  The structure on $X^I/{\cal U}$ is defined pointwise; let's be more precise supposing that $X$ is a ring.  First though, we remark that the idea is that sets in ${\cal U}$ are ``large" so $\sim_{\cal U}$ identifies elements of the product which are equal on a ``large" set of coordinates.

If $X$ is a ring, then we define addition on the ultrapower $X^I/{\cal U}$ by $(a_i)_i /{\cal U} + (b_i)_i/{\cal U} = (a_i+b_i)_i/{\cal U}$ and similarly for multiplication.  The identity and zero are $(1_i)_i/{\cal U}$ and $(0_i)_i/{\cal U}$ respectively.  It follows easily from the definition of ultrafilter that the subset $Z \subseteq X^I$ of elements which are $\sim_{\cal U}$-equivalent to $0$ is a two-sided ideal of the ring $X^I$ and that, as a set and as a ring, $X^I/{\cal U}$ is exactly the factor ring $X^I/Z$.

Similarly for modules:  if $M$ is a right $R$-module, then $M^I/{\cal U}$ is a right $R$-module with $(a_i)_i/{\cal U} \cdot r$ defined to be $(a_ir)_i/{\cal U}$.  In fact, defining $(a_i)_i/{\cal U} \cdot (r_i)_i/{\cal U} = (a_ir_i)_i/{\cal U}$ makes $M^I/{\cal U}$ a right $R^I/{\cal U}$-module with the $R$-action on $M^I/{\cal U}$ arising {\it via} the diagonal embedding (see \ref{elemextup}) $R \to R^I/{\cal U}$.

More generally, given a set $\{ X_i\}_{i\in I}$ of structures (all of the same kind) and an ultrafilter ${\cal U}$ on $I$ we can form, in the same way, the {\bf ultraproduct} $\prod_{i\in I} X_i/{\cal U}$.  The relationship between the properties of an ultrapower or ultraproduct and those of its factors is as follows.

\begin{theorem} \label{Los}\marginpar{Los} (\L os' Theorem) (e.g.~\cite[9.5.1]{Hod}) Suppose that $\overline{a} = (\overline{a_i})_i/{\cal U}$ is a tuple from $\prod_{i\in I} X_i/{\cal U}$ and let $\phi(\overline{x})$ be a formula (in a language appropriate for these structures $X_i$).  Then 
$$\overline{a} \in \phi(\prod_{i\in I} X_i/{\cal U}) \text{ iff } \{ i\in I: \overline{a_i}\in \phi(X_i)\} \in {\cal U}.$$
\end{theorem}

We allow the case that the tuple is empty, in which case $\phi$ is just a sentence - a formula with no free variables.

Clearly the construction may be done with any filter, in which case we have a {\bf reduced power} or {\bf reduced product}.  In that more general case, the conclusion of \L os' Theorem still holds true for pp formulas $\phi$; this follows from the proof of \L os' Theorem, which may be found in most books on model theory.

\section{Direct extension of definable subcategories} \label{secdirextn} \marginpar{secdirextn}

Suppose that $f:R\to S$ is a morphism of rings.  If $\phi$ is a pp formula for $R$-modules, then we denote by $f_\ast\phi$ the pp formula for $S$-modules obtained by replacing each ring element $r$ appearing in $\phi$, by $fr$.  Note that $f_\ast (\phi \wedge \psi) = f_\ast\phi \wedge f_\ast\psi$ and $f_\ast (\phi + \psi) = f_\ast\phi + f_\ast\psi$, so this is a lattice homomorphism from ${\rm pp}^n_R$ to ${\rm pp}^n_S$.

\begin{lemma}\label{fstar} \marginpar{fstar} \cite[3.2.7]{PreNBK} If $M \in {\rm Mod}\mbox{-}S$, then $f_\ast\phi(M_S) = \phi(M_R)$.
\end{lemma}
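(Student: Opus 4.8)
The plan is to reduce everything to the defining property of restriction of scalars along $f$: for $a \in M$ and $r \in R$, the $R$-action on $M_R$ is given by $a \cdot r = a \cdot f(r)$, where on the right the given $S$-module structure is used. First I would unpack the notation, writing $\phi(\overline{x})$ as $\exists \overline{y}\,(\overline{x}A + \overline{y}B = 0)$ with $A$, $B$ matrices over $R$ of sizes $n \times m$ and $t \times m$; then, by the definition recalled just before the statement, $f_\ast\phi$ is $\exists \overline{y}\,(\overline{x}\,(fA) + \overline{y}\,(fB) = 0)$, where $fA$ and $fB$ denote the matrices over $S$ obtained by applying $f$ to each entry.

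The key observation is then immediate: for any tuples $\overline{a} \in M^n$ and $\overline{b} \in M^t$, the element $\overline{a}\,(fA) + \overline{b}\,(fB)$ of $M^m$, computed with the $S$-module structure, is literally the same element of the underlying abelian group $M^m$ as $\overline{a}A + \overline{b}B$ computed with the $R$-module structure on $M_R$, since this holds entry by entry by $a \cdot f(r) = a \cdot r$. Consequently the equation $\overline{a}\,(fA) + \overline{b}\,(fB) = 0$ holds in $M_S$ if and only if $\overline{a}A + \overline{b}B = 0$ holds in $M_R$.

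Finally, since the underlying set $M^t$ over which the quantified variables $\overline{y}$ range is the same whether we regard $M$ as an $S$-module or as an $R$-module, existentially quantifying over $\overline{b} \in M^t$ yields $\overline{a} \in f_\ast\phi(M_S)$ if and only if $\overline{a} \in \phi(M_R)$, which is exactly the assertion. (One could equally argue by a routine induction on the construction of pp formulas — atomic equations, conjunctions, sums, and existential quantification — but the direct computation above disposes of all pp formulas simultaneously.) I do not anticipate any genuine obstacle: the only point requiring a little care is to keep straight which module structure is in use on each side of the equality, but no nontrivial model theory or algebra is involved.
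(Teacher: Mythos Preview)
Your argument is correct: unpacking the definitions and using that the $R$-action on $M_R$ is $a\cdot r = a\cdot f(r)$ immediately gives that the matrix equation $\overline{a}A+\overline{b}B=0$ in $M_R$ is literally the same equation as $\overline{a}(fA)+\overline{b}(fB)=0$ in $M_S$, whence the solution sets coincide.

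As for comparison with the paper: the paper does not actually supply a proof of this lemma---it simply cites \cite[3.2.7]{PreNBK} and moves on, treating the statement as a known fact. Your direct verification is exactly the kind of routine check one would expect to find behind such a citation, and there is nothing to add.
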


That is, the meaning of $f_\ast\phi$ in an $S$-module $M$ is the same as the meaning of $\phi$ in $M$ regarded as an $R$-module {\it via} $f$.  We have the following corollaries.

\begin{cor}\label{fstarleq} \marginpar{fstarleq} If $\phi/\psi$ is a pp-pair for (right) $R$-modules, then $f_\ast \phi / f_\ast \psi$ is a pp-pair for (right) $S$-modules.
\end{cor}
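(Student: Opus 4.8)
The plan is to reduce the claim directly to Lemma~\ref{fstar}. Recall that ``$\phi/\psi$ is a pp-pair'' means precisely that $\psi \leq \phi$, i.e.\ $\psi(M) \leq \phi(M)$ in every right $R$-module $M$; what must be shown is the corresponding inequality $f_\ast\psi \leq f_\ast\phi$ among pp formulas for right $S$-modules, together with the (already observed) fact that $f_\ast$ carries pp formulas to pp formulas.

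First I would note that $f_\ast\phi$ and $f_\ast\psi$ are indeed pp formulas for $S$-modules: this is built into the definition of $f_\ast$ (replace each occurring ring element $r$ by $fr$), and the remarks preceding \ref{fstar} confirm that $f_\ast$ respects conjunction and sum. So the only content is the inequality. Take an arbitrary right $S$-module $N$ and regard it as a right $R$-module $N_R$ via $f$. By \ref{fstar} we have $f_\ast\psi(N_S) = \psi(N_R)$ and $f_\ast\phi(N_S) = \phi(N_R)$. Since $\psi \leq \phi$ holds over all $R$-modules, in particular $\psi(N_R) \leq \phi(N_R)$, whence $f_\ast\psi(N_S) \leq f_\ast\phi(N_S)$. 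As $N$ was arbitrary, $f_\ast\psi \leq f_\ast\phi$, which is exactly the assertion that $f_\ast\phi/f_\ast\psi$ is a pp-pair for right $S$-modules.

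There is no genuine obstacle here: the corollary is an immediate translation of the defining property of a pp-pair through \ref{fstar}. The one point to keep in mind is that the preorder $\leq$ on pp formulas is defined by evaluation in \emph{all} modules, so one really does quantify over every right $S$-module $N$; but each such evaluation is handled uniformly by \ref{fstar}, so nothing further is required.
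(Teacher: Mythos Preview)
Your proof is correct and is exactly the intended argument: the paper states this as an immediate corollary of Lemma~\ref{fstar} without giving a separate proof, and your verification that $f_\ast\psi(N_S) = \psi(N_R) \leq \phi(N_R) = f_\ast\phi(N_S)$ for arbitrary $N\in{\rm Mod}\mbox{-}S$ is precisely the one-line deduction intended.
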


\begin{cor}\label{fstarequiv} \marginpar{fstarequiv} If $\phi$ and $\psi$ are equivalent pp formulas for (right) $R$-modules, then $f_\ast \phi$ and $f_\ast \psi$ are equivalent for (right) $S$-modules.
\end{cor}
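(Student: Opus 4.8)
The plan is to deduce this directly from Lemma \ref{fstar}. Recall that two pp formulas $\phi,\psi$ in the same free variables are equivalent for right $R$-modules precisely when $\phi(M)=\psi(M)$ for every $M\in{\rm Mod}\mbox{-}R$, and likewise that $f_\ast\phi$ and $f_\ast\psi$ are equivalent for right $S$-modules precisely when $f_\ast\phi(M)=f_\ast\psi(M)$ for every $M\in{\rm Mod}\mbox{-}S$. So I would begin by fixing an arbitrary right $S$-module $M$, with the goal of showing $f_\ast\phi(M)=f_\ast\psi(M)$.

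The key step is to pass to the restriction of scalars $M_R$ — that is, $M$ regarded as a right $R$-module via $f$ — and invoke Lemma \ref{fstar} twice to get $f_\ast\phi(M_S)=\phi(M_R)$ and $f_\ast\psi(M_S)=\psi(M_R)$. Since $M_R$ is a right $R$-module and $\phi,\psi$ are by hypothesis equivalent for right $R$-modules, $\phi(M_R)=\psi(M_R)$; chaining the three equalities yields $f_\ast\phi(M)=f_\ast\psi(M)$. As $M$ was an arbitrary $S$-module, $f_\ast\phi$ and $f_\ast\psi$ are equivalent for right $S$-modules, as required. The statement for $\leq$ in Corollary \ref{fstarleq} was obtained the same way, and this is the natural companion for $=$.

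There is essentially no obstacle here: the only points requiring (minimal) care are that equivalence is tested against \emph{all} modules of the relevant category, and that restriction of scalars along $f$ genuinely turns every right $S$-module into a right $R$-module, so that Lemma \ref{fstar} is applicable in each instance. One could alternatively try to argue purely formally from the preceding observation that $f_\ast$ commutes with $\wedge$ and $+$; but that phrasing already presupposes $f_\ast$ is well defined on equivalence classes, which is exactly the present statement, so the honest and cleanest route is the one through \ref{fstar} just described.
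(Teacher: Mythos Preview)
Your proof is correct and matches the paper's approach: the paper gives no explicit proof but lists this (together with \ref{fstarleq} and \ref{fstarclosed}) as an immediate corollary of Lemma~\ref{fstar}, which is exactly the argument you spell out. Your remark that one cannot simply invoke the lattice-homomorphism property of $f_\ast$ without first establishing well-definedness on equivalence classes is apt.
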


\begin{cor} \label{fstarclosed} \marginpar{fstarclosed} If $\phi$ and $\psi$ are pp formulas for $R$-modules and $M\in {\rm Mod}\mbox{-}S$, then the pp-pair $f_\ast\phi/f_\ast \psi$ is closed on $M_S$ iff the pair $\phi/\psi$ is closed on $M_R$.
\end{cor}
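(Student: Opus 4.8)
The plan is to derive this immediately from Lemma \ref{fstar}, applied to both members of the pair. First I would recall that, by \ref{fstarleq}, from $\psi \leq \phi$ we get $f_\ast\psi \leq f_\ast\phi$, so that $f_\ast\phi / f_\ast\psi$ really is a pp-pair for right $S$-modules and it is meaningful to ask whether it is closed on $M_S$; by definition of ``closed'', this asks exactly whether $f_\ast\phi(M_S) = f_\ast\psi(M_S)$.

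Next I would apply \ref{fstar} once to $\phi$ and once to $\psi$, obtaining $f_\ast\phi(M_S) = \phi(M_R)$ and $f_\ast\psi(M_S) = \psi(M_R)$, where $M_R$ denotes $M$ equipped with the $R$-module structure pulled back along $f$. Both sides of each equality are subgroups of the same set $M^n$, so $f_\ast\phi(M_S) = f_\ast\psi(M_S)$ holds if and only if $\phi(M_R) = \psi(M_R)$; and the latter is precisely the statement that $\phi/\psi$ is closed on $M_R$. Chaining these equivalences gives the corollary.

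There is essentially no obstacle here: the whole content sits in \ref{fstar}, and the corollary is just the remark that closedness of a pp-pair is \emph{defined} by equality of solution sets, a condition that \ref{fstar} transports verbatim between $M_S$ and $M_R$. The only points needing (minimal) care are to invoke \ref{fstarleq} so that the phrase ``pp-pair'' is legitimately preserved under $f_\ast$, and to remember to apply \ref{fstar} to both $\phi$ and $\psi$, not just one of them.
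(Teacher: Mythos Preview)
Your proposal is correct and is exactly the approach the paper (implicitly) takes: the corollary is stated without proof as an immediate consequence of Lemma~\ref{fstar}, and your argument spells out precisely that derivation. There is nothing to add.
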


So, in practice, particularly if $R$ is a subring of $S$ {\it via} $f$, we often just identify $\phi$ and $f_\ast \phi$.

Let ${\cal D}$ be a definable subcategory of ${\rm Mod}\mbox{-}R$.  Define the {\bf direct extension} of ${\cal D}$ to ${\rm Mod}\mbox{-}S$, to be
$${\cal D}^S = \{M_S: M_R \in {\cal D}\}.$$
(We could more accurately notate this as ${\cal D}^f$.)
Since restriction commutes with direct products and directed colimits, and since an $S$-pure embedding is, in particular, $R$-pure, by \ref{defchar} ${\cal D}^S$ is a definable subcategory of ${\rm Mod}\mbox{-}S$.  Indeed it is defined by closure of ``the same" pp-pairs that define ${\cal D}$.

\begin{lemma}\label{ppdirextn} \marginpar{ppdirextn} If ${\cal D}$ is a definable subcategory of ${\rm Mod}\mbox{-}R$ and if $\Phi$ is a set of pp-pairs for $R$-modules defining (by their closure) ${\cal D}$, then $f_\ast\Phi = \{ f_\ast\phi/f_\ast \psi: \phi/ \psi \in \Phi \}$ defines the direct extension ${\cal D}^S$ of ${\cal D}$ to a definable subcategory of ${\rm Mod}\mbox{-}S$.
\end{lemma}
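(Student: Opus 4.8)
The plan is to unwind the definitions and reduce everything to Corollary \ref{fstarclosed}.  First I would note that, by Corollary \ref{fstarleq}, each $f_\ast\phi/f_\ast\psi$ with $\phi/\psi \in \Phi$ is a genuine pp-pair for right $S$-modules, so by Theorem \ref{defchar} the set $f_\ast\Phi$ does define a definable subcategory of ${\rm Mod}\mbox{-}S$, namely
$${\cal E} = \{ M \in {\rm Mod}\mbox{-}S : f_\ast\phi(M) = f_\ast\psi(M) \text{ for all } \phi/\psi \in \Phi\}.$$
It then remains only to identify ${\cal E}$ with the direct extension ${\cal D}^S$.

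For this I would chase membership through a chain of equivalences for a fixed $S$-module $M$:  $M \in {\cal E}$ iff, for every $\phi/\psi \in \Phi$, the pair $f_\ast\phi/f_\ast\psi$ is closed on $M_S$; by Corollary \ref{fstarclosed} this holds iff, for every such pair, $\phi/\psi$ is closed on $M_R$; since ${\cal D}$ is by hypothesis exactly the subcategory cut out by the closure of the pairs in $\Phi$, that holds iff $M_R \in {\cal D}$; and, by the definition of the direct extension, that last condition is precisely $M \in {\cal D}^S$.  Hence ${\cal E} = {\cal D}^S$, which is the assertion of the lemma.

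As already observed in the paragraph preceding the statement, ${\cal D}^S$ is known in advance to be definable (restriction along $f$ commutes with direct products and directed colimits, and an $S$-pure embedding is in particular $R$-pure, so one invokes Theorem \ref{defchar}); the content of the lemma is just the production of an \emph{explicit} defining set of pp-pairs.  Consequently I do not expect a genuine obstacle: everything follows formally from Lemma \ref{fstar} via its corollary \ref{fstarclosed}.  The only point needing a word of care is that the formulas in $f_\ast\Phi$ should be read as pp formulas in the normal form of the definition; but the substitution $r \mapsto fr$ and the formation of conjunctions preserve that shape up to the rewriting already tacitly allowed for pp formulas, so this too is immediate.
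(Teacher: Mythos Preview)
Your proof is correct and is essentially the same as the paper's: both reduce the two inclusions to Corollary~\ref{fstarclosed}, the only difference being that you package the argument as a single chain of equivalences while the paper writes out the two directions separately. Your added remarks (that $f_\ast\Phi$ consists of genuine pp-pairs by Corollary~\ref{fstarleq}, and that definability of ${\cal D}^S$ was already noted) are fine but not needed for the argument.
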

\begin{proof} If $M\in {\cal D}^S$ and $\phi/ \psi$ is closed on ${\cal D}$ then, since $M_R\in {\cal D}$, we have that $\phi/\psi$ is closed on $M_R$.  So, by \ref{fstarclosed}, $f_\ast\phi / f_\ast\psi$ is closed on $M_S$.

For the converse, if $M_S$ is such that, for every pp-pair $\phi/\psi$ in $\Phi$ we have $f_\ast\phi /f_\ast\psi$ closed on $M_S$ then, by \ref{fstarclosed}, each $\phi/\psi \in \Phi$ is closed on $M_R$.  It follows that $M_R \in {\cal D}$, so $M\in {\cal D}^S$, as required.
\end{proof}

\begin{example} \label{exindrestr} \marginpar{exindrestr} If ${\cal D}$ is a definable subcategory of ${\rm Mod}\mbox{-}R$ then the class of $R$-modules which are restrictions of $S$-modules in ${\cal D}^S$ may be strictly smaller than ${\cal D}$.  For instance, take the ring morphism $R \to S$ to be ${\mathbb Q} \to {\mathbb R}$ and ${\cal D}$ to be all of ${\rm Mod}\mbox{-}{\mathbb Q}$, so ${\cal D}^S ={\rm Mod}\mbox{-}{\mathbb R}$.

Examples of the form $\pi_1:R_1 \times R_2 \to R_1$ show how this can happen for even simpler reasons.

We now take account of this.
\end{example}

Suppose that $f:R \to S$ is a ring homomorphism and that ${\cal C}$ is a definable subcategory of ${\rm Mod}\mbox{-}S$.  Set $f^\ast {\cal C} = \{ M_R: M \in {\cal C}\}$.  Let $\Phi({\cal C})$ be the set of all pp-pairs for $S$-modules which are closed on ${\cal C}$.  Define the ({\bf definable}) {\bf restriction}, ${\cal C}|_R$, of ${\cal C}$ to ${\rm Mod}\mbox{-}R$ along $f$ to be the subcategory of ${\rm Mod}\mbox{-}R$ defined by the subset $\Phi({\cal C})_R$ of $\Phi({\cal C})$ consisting of all those pp-pairs $\phi/\psi$ for $R$-modules such that $f_\ast \phi /f_\ast \psi \in \Phi({\cal C})$ (that is, we use all the pp-pairs in $\Phi({\cal C})$ which can be applied to $R$-modules).

In the special case ${\cal C} = {\rm Mod}\mbox{-}S$, we denote ${\cal C}|_R$ by ${\rm DefTr}^S_R$, or ${\rm DefTr}(f)$, the {\bf definable trace} of ${\rm Mod}\mbox{-}S$ in ${\rm Mod}\mbox{-}R$.

\begin{lemma}\label{restrdef} \marginpar{restrdef} Suppose that $f:R \to S$ is a ring homomorphism and that ${\cal C}$ is a definable subcategory of ${\rm Mod}\mbox{-}S$.  Then $f^\ast {\cal C} = \{ M_R: M \in {\cal C}\} \subseteq {\cal C}|_R$, and the definable subcategory of ${\rm Mod}\mbox{-}R$ generated by $f^\ast {\cal C}$ is ${\cal C}|_R$; that is, $\langle f^\ast {\cal C} \rangle = {\cal C}|_R$.
\end{lemma}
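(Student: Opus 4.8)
The plan is to show that the defining set $\Phi({\cal C})_R$ of pp-pairs for ${\cal C}|_R$ is \emph{precisely} the set of pp-pairs for right $R$-modules that are closed on the class $f^\ast{\cal C}$; once this is established, both assertions of the lemma follow immediately from the descriptions of ${\cal C}|_R$ and of the definable subcategory generated by a class.

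First I would verify that claim. Fix a pp-pair $\phi/\psi$ for right $R$-modules. By definition, $\phi/\psi \in \Phi({\cal C})_R$ means $f_\ast\phi/f_\ast\psi \in \Phi({\cal C})$, i.e.\ $f_\ast\phi/f_\ast\psi$ is closed on $M$ for every $M\in {\cal C}$. By \ref{fstarclosed}, for each such $M$ the pair $f_\ast\phi/f_\ast\psi$ is closed on $M_S$ exactly when $\phi/\psi$ is closed on $M_R$. Hence $\phi/\psi\in\Phi({\cal C})_R$ if and only if $\phi/\psi$ is closed on $M_R$ for every $M\in{\cal C}$, that is, if and only if $\phi/\psi$ is closed on the class $f^\ast{\cal C}$.

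Granting this, the inclusion $f^\ast{\cal C}\subseteq{\cal C}|_R$ is immediate: any $M_R$ with $M\in{\cal C}$ lies in $f^\ast{\cal C}$, and every pair in $\Phi({\cal C})_R$ is, by the claim, closed on all of $f^\ast{\cal C}$ and so on $M_R$; thus $M_R$ satisfies the defining conditions of ${\cal C}|_R$. For the equality $\langle f^\ast{\cal C}\rangle={\cal C}|_R$, recall that $\langle f^\ast{\cal C}\rangle$ consists of exactly those $N\in{\rm Mod}\mbox{-}R$ on which every pp-pair closed on $f^\ast{\cal C}$ is again closed; by the claim, ``closed on $f^\ast{\cal C}$'' is the same property as ``lying in $\Phi({\cal C})_R$'', and ${\cal C}|_R$ is by definition the subcategory cut out by the pairs in $\Phi({\cal C})_R$, so the two classes coincide. (Equivalently: ${\cal C}|_R$ is, by \ref{defchar}, a definable subcategory containing $f^\ast{\cal C}$, giving $\langle f^\ast{\cal C}\rangle\subseteq{\cal C}|_R$, while the claim gives the reverse inclusion.)

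I do not expect a genuine obstacle here: the argument is bookkeeping around \ref{fstarclosed} (and hence \ref{fstar}) together with the characterisation of $\langle-\rangle$. The one thing to keep in mind is conceptual rather than technical — one should not expect $f^\ast{\cal C}$ itself to be definable (cf.\ \ref{exindrestr}), which is why the statement must be phrased via the generated definable subcategory; the content of the lemma is exactly that those pp-pairs of $\Phi({\cal C})$ which are not of the form $f_\ast\phi/f_\ast\psi$ impose no constraint on $R$-modules beyond what the pulled-back pairs already do.
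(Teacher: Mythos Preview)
Your proof is correct and follows essentially the same route as the paper's: both arguments hinge on using \ref{fstarclosed} to identify $\Phi({\cal C})_R$ with the set of pp-pairs closed on $f^\ast{\cal C}$, and then deduce the two assertions from that identification. The paper phrases the second part as a short contradiction (a pair closed on $f^\ast{\cal C}$ but not on ${\cal C}|_R$ would, via \ref{fstarclosed}, lie in $\Phi({\cal C})_R$ after all), whereas you isolate the equivalence first and argue directly; the content is the same.
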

\begin{proof} The first statement is immediate by \ref{fstarclosed}.  If the containment $\langle f^\ast {\cal C} \rangle \subseteq {\cal C}|_R$ were proper, then there would be a pp-pair $\phi/\psi$ for $R$-modules closed on $f^\ast {\cal C}$ but not on ${\cal C}|_R$.  The former implies, by \ref{fstarclosed} again, that $f_\ast \phi /f_\ast \psi$ is closed on ${\cal C}$, hence that $f_\ast \phi /f_\ast \psi \in \Phi({\cal C})$.  Therefore, by definition, $\phi/\psi \in \Phi({\cal C})_R$ and so that pair is closed on ${\cal C}|_R$, contradiction, as required.
\end{proof}

We give an algebraic description of ${\cal C}|_R$ in \ref{restr}, using the following result.

\begin{lemma}\label{restelemcog} \marginpar{restelemcog}  Suppose that $R \to S$ is a ring homomorphism and ${\cal C}$ is a definable subcategory of ${\rm Mod}\mbox{-}S$.  If $N$ is an elementary cogenerator for ${\cal C}$, then $N_R$ is an elementary cogenerator for ${\cal C}|_R$.
\end{lemma}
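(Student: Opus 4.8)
The plan is to apply the ultrapower characterisation of elementary cogenerators recalled just before the statement (\cite[3.8]{PreDefMon}): a pure-injective module $N'$ is an elementary cogenerator for the definable subcategory $\langle N'\rangle$ that it generates if and only if every ultrapower of $N'$ purely embeds in some direct power of $N'$. So I would verify three things about $N_R$: that it is pure-injective, that every ultrapower of $N_R$ purely embeds over $R$ in a direct power of $N_R$, and that $\langle N_R\rangle = {\cal C}|_R$. For the first, I would use that restriction of scalars along $f$ preserves pure-injectivity; concretely, via the criterion that $N'$ is pure-injective iff for every set $I$ the summation map $(N')^{(I)}\to N'$ extends over the pure inclusion $(N')^{(I)}\hookrightarrow (N')^I$, since an $S$-linear such extension for $N$ is in particular $R$-linear and $(N^{(I)})_R = (N_R)^{(I)}$, $(N^I)_R = (N_R)^I$.

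For the ultrapower condition, I would first record that for any index set $I$ and ultrafilter ${\cal U}$ on $I$ one has $(N_R)^I/{\cal U} = (N^I/{\cal U})_R$ — this is the compatibility of restriction of scalars with ultrapowers noted in Section \ref{secup}. Since $N$ is an elementary cogenerator for ${\cal C}$, the $S$-module $N^I/{\cal U}$ purely embeds, as an $S$-module, into some direct power $N^J$; restricting scalars along $f$ then gives an embedding $(N^I/{\cal U})_R\to (N^J)_R = (N_R)^J$, and this is $R$-pure because an $S$-pure embedding is $R$-pure, as observed in Section \ref{secdirextn}. So every ultrapower of $N_R$ purely embeds in a direct power of $N_R$, and, being also pure-injective, $N_R$ is an elementary cogenerator for $\langle N_R\rangle$.

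It then remains to identify $\langle N_R\rangle$ with ${\cal C}|_R$. Since $N_R\in f^\ast{\cal C}\subseteq{\cal C}|_R$ by \ref{restrdef}, we get $\langle N_R\rangle\subseteq{\cal C}|_R$; and \ref{restrdef} also gives ${\cal C}|_R = \langle f^\ast{\cal C}\rangle$, so for the reverse inclusion it is enough to show $f^\ast{\cal C}\subseteq\langle N_R\rangle$. Given $M\in{\cal C}$, the defining property of the elementary cogenerator $N$ furnishes an $S$-pure embedding $M\to N^I$, and its restriction $M_R\to (N_R)^I$ is $R$-pure, so $M_R\in\langle N_R\rangle$. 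Hence $\langle N_R\rangle = {\cal C}|_R$ and $N_R$ is an elementary cogenerator for ${\cal C}|_R$. I expect no serious obstacle: the proof is an assembly of the routine compatibilities of restriction of scalars with products, ultrapowers and purity, together with the cited ultrapower characterisation; the one step needing a moment's care is the preservation of pure-injectivity, handled as above.
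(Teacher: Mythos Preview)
Your proposal is correct and follows essentially the same route as the paper's proof: verify that $N_R$ is pure-injective, use the ultrapower characterisation from \cite[3.8]{PreDefMon} together with the compatibility of restriction with ultrapowers and purity, and then identify $\langle N_R\rangle$ with ${\cal C}|_R$ via \ref{restrdef} and the cogenerating property of $N$. The only cosmetic differences are that the paper simply cites \cite[4.3.7]{PreNBK} for preservation of pure-injectivity rather than invoking the summation-map criterion, and it also sketches an alternative model-theoretic argument via neg-isolated types.
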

\begin{proof}  First we show that $N_R$ is an elementary cogenerator for the definable category $\langle N_R \rangle$ that it generates.

Since $N$ is pure-injective as an $S$-module it is pure-injective as an $R$-module (e.g.~\cite[4.3.7]{PreNBK}).  So, by \cite[3.8]{PreDefMon}, it is enough to show that every ultrapower of $N_R$ purely embeds in a power of $N_R$.  Consider an ultrapower $(N_R)^I/{\cal U}$ where ${\cal U}$ is an ultrafilter on a set $I$.  We can form the corresponding ultrapower $N^I/{\cal U}$ of the $S$-module $N$.  Since $N$ is an elementary cogenerator, there is an $S$-pure embedding $N^I/{\cal U} \to N^\kappa$ for some $\kappa$.  Note that $(N^I/{\cal U})_R$ is exactly $(N_R)^I/{\cal U}$ (form the ultrapower, then apply the module restriction functor $f^\ast$ or apply $f^\ast$, then form the ultrapower; that these are the same can be seen directly or, since the restriction functor has both a left and right adjoint it commutes with powers and directed colimits, hence (e.g.~\cite[3.3.1]{PreNBK}, with ultrapowers).  Therefore, on restriction, we have an $R$-pure embedding $(N_R)^I/{\cal U} \to (N_R)^\kappa$, as required.

Just for interest, we note a different, more model-theoretic, proof which uses a different characterisation of elementary cogenerators - as pure-injectives which realise every neg-isolated type (see \cite[5.3.50]{PreNBK}).  So we have to show that if $p$ is a pp-type for $\langle N_R \rangle$ which is neg-isolated, say by the pp formula $\psi$, then $p$ is realised in $N$.  So we consider $f_\ast p$ - a set of pp formulas for $S$-modules, which we can extend to a set $q$ of pp formulas which is maximal, relative to $\langle N \rangle = {\cal C}$, not containing $f_\ast \psi$.  Since $N$ is an elementary cogenerator for ${\cal C}$, there is $\overline{a}$ from $N$ such that ${\rm pp}^N(\overline{a}) = q$.  Then ${\rm pp}^{N_R}(\overline{a}) = p$, as required.

So $N_R$ is an elementary cogenerator for $\langle N_R \rangle$.  Since each $M \in {\cal C}$ purely embeds in some power $N^\kappa$, each $M_R$ purely embeds in some power $(N_R)^\kappa$ and hence $\langle N_R \rangle = \langle f^\ast {\cal C} \rangle = {\cal C}|_R$.
\end{proof}

\begin{cor} \label{restr} \marginpar{restr} Suppose that $R \to S$ is a ring homomorphism and that ${\cal C}$ is a definable subcategory of ${\rm Mod}\mbox{-}S$.  Then 
$${\cal C}|_R = \{M'\in {\rm Mod}\mbox{-}R: M' \text{ is a pure submodule of } M_R \text{ for some } M\in {\cal C}\}.$$ 
\end{cor}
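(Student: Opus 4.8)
The plan is to deduce this directly from the elementary-cogenerator statement \ref{restelemcog}, so that essentially no new work is needed. Fix an elementary cogenerator $N$ for ${\cal C}$ (one exists, as recalled before \ref{elemcogjoin}). By \ref{restelemcog}, $N_R$ is an elementary cogenerator for ${\cal C}|_R$; in particular, by the defining property of elementary cogenerators, every $M' \in {\cal C}|_R$ purely embeds in some power $(N_R)^\kappa$.

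For the inclusion $\supseteq$, suppose $M'$ is (isomorphic to) a pure submodule of $M_R$ for some $M \in {\cal C}$. Then $M_R \in f^\ast{\cal C}$, so $M_R \in {\cal C}|_R$ by \ref{restrdef}, and since ${\cal C}|_R$ is a definable subcategory of ${\rm Mod}\mbox{-}R$ it is closed under pure submodules (\ref{defchar}), giving $M' \in {\cal C}|_R$.

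For the inclusion $\subseteq$, take $M' \in {\cal C}|_R$. Using the previous paragraph's cogenerator fact there is an $R$-pure embedding $M' \to (N_R)^\kappa$ for some index set $\kappa$. Now $(N_R)^\kappa = (N^\kappa)_R$, since restriction of scalars along $f$ commutes with arbitrary direct products (it has a left and a right adjoint, as noted in the proof of \ref{restelemcog}), and $N^\kappa \in {\cal C}$ because ${\cal C}$ is closed under direct products. Hence $M'$ is isomorphic to a pure submodule of $(N^\kappa)_R$ with $N^\kappa \in {\cal C}$, which is exactly what the right-hand side asks for.

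The real content here is already carried by \ref{restelemcog}; the corollary is then a short unwinding of definitions, and the only point I would be careful to state explicitly is the commutation of $f^\ast$ with powers (so that a pure submodule of $(N_R)^\kappa$ is literally a pure submodule of the restriction of an object of ${\cal C}$). So I do not anticipate any genuine obstacle beyond making sure that step is cleanly cited.
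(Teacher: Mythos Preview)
Your proof is correct and follows exactly the approach the paper intends: the corollary is stated immediately after \ref{restelemcog} with no separate proof, since (as the paper signals just before \ref{restelemcog}) it is meant to be read off from that lemma. Your unpacking of both inclusions, including the explicit use of $(N_R)^\kappa = (N^\kappa)_R$ and closure of ${\cal C}$ under products, is precisely the intended deduction.
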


That is, to obtain ${\cal C}|_R$ from ${\cal C}$, we just have to close $f^\ast{\cal C}$ under ($R$-)pure submodules.

\vspace{4pt}

Given a definable subcategory ${\cal D}$ of ${\rm Mod}\mbox{-}R$, set 
$${\cal D}^S_R = ({\cal D}^S)|_R.$$
Directly from the definitions we have the following.

\begin{lemma}\label{deftrextn} \marginpar{deftrextn} Suppose that $R \to S$ is a ring homomorphism and that ${\cal D}$ is a definable subcategory of ${\rm Mod}\mbox{-}R$; then 
$${\cal D}^S_R = {\cal D} \,\cap\, {\rm DefTr}^S_R$$
and ${\cal D}^S = ({\cal D}^S_R)^S$.
\end{lemma}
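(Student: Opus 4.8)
The plan is to unwind both sides using \ref{restrdef}, which rewrites $(-)|_R$ as a generated definable subcategory. Writing $f^\ast{\cal X}=\{M_R:M\in{\cal X}\}$ for a class ${\cal X}$ of $S$-modules, \ref{restrdef} gives ${\cal D}^S_R=({\cal D}^S)|_R=\langle f^\ast({\cal D}^S)\rangle$ and ${\rm DefTr}^S_R=({\rm Mod}\mbox{-}S)|_R=\langle f^\ast({\rm Mod}\mbox{-}S)\rangle$, while the definition of ${\cal D}^S$ gives $f^\ast({\cal D}^S)={\cal D}\cap f^\ast({\rm Mod}\mbox{-}S)$. So the first assertion amounts to $\langle{\cal D}\cap f^\ast({\rm Mod}\mbox{-}S)\rangle={\cal D}\cap\langle f^\ast({\rm Mod}\mbox{-}S)\rangle$, and the second to $(({\cal D}^S)|_R)^S={\cal D}^S$.

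The inclusion $\subseteq$, and the second assertion, are formal. The class ${\cal D}\cap f^\ast({\rm Mod}\mbox{-}S)$ lies inside ${\cal D}$ and inside $\langle f^\ast({\rm Mod}\mbox{-}S)\rangle$, and the intersection of these two definable subcategories is definable by \ref{infmeetdef}, so its definable hull lies in ${\cal D}\cap{\rm DefTr}^S_R$; hence ${\cal D}^S_R\subseteq{\cal D}\cap{\rm DefTr}^S_R$, and in particular ${\cal D}^S_R\subseteq{\cal D}$. (On the level of pp-pairs: by \ref{ppdirextn} and \ref{fstarclosed}, the set of pairs defining ${\cal D}^S_R=({\cal D}^S)|_R$ — those $\phi/\psi$ with $f_\ast\phi/f_\ast\psi$ closed on ${\cal D}^S$ — contains both a defining set for ${\cal D}$ and a defining set for ${\rm DefTr}^S_R$.) From ${\cal D}^S_R\subseteq{\cal D}$ the second assertion is immediate: if $M\in{\cal D}^S$ then $M_R\in f^\ast({\cal D}^S)\subseteq({\cal D}^S)|_R={\cal D}^S_R$, so $M\in({\cal D}^S_R)^S$; and if $M\in({\cal D}^S_R)^S$ then $M_R\in{\cal D}^S_R\subseteq{\cal D}$, so $M\in{\cal D}^S$.

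The content lies in the reverse inclusion ${\cal D}\cap{\rm DefTr}^S_R\subseteq{\cal D}^S_R$. Fix $M''\in{\cal D}\cap{\rm DefTr}^S_R$. Since $M''\in{\rm DefTr}^S_R=({\rm Mod}\mbox{-}S)|_R$, by \ref{restr} there is an $S$-module $M$ and a pure embedding $M''\hookrightarrow M_R$ of $R$-modules. Restriction preserves pure-injectivity, so replacing $M$ by its $S$-pure-injective hull we may take $M$ pure-injective; then $M''$ embeds purely into the $R$-pure-injective module $M_R$, and its $R$-pure-injective hull $\widehat{M''}$ — which lies in ${\cal D}$, as ${\cal D}$ is closed under pure-injective hulls — is a direct $R$-module summand of $M_R$. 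Since ${\cal D}^S_R$ is definable (\ref{Zgdef}) and closed under pure submodules, to get $M''\in{\cal D}^S_R$ it suffices to show $\widehat{M''}\in{\cal D}^S_R$, and for that, that every indecomposable $R$-pure-injective $W$ in $\langle\widehat{M''}\rangle$ lies in ${\cal D}^S_R$. Such a $W$ lies in ${\cal D}\cap{\rm DefTr}^S_R$ and, by \ref{restr} again, is an $R$-summand of $N_R$ for some pure-injective $S$-module $N$; the aim is to upgrade this to an $S$-module $N$ with $N_R\in{\cal D}$, for then $N\in{\cal D}^S$ and $W\in\langle N_R\rangle\subseteq{\cal D}^S_R$. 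Here I would use the structure theory of direct summands of pure-injectives — an indecomposable summand of the restriction of a pure-injective $S$-module is a summand of the restriction of an ultraproduct of indecomposable $S$-pure-injectives — together with $W\in{\cal D}$ to pin down which such $S$-modules can actually occur.

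This last step is where I expect the real difficulty to be. The $S$-module $M$ that \ref{restr} supplies, purely containing $M''$, need not have $M_R\in{\cal D}$ (see \ref{exindrestr}), and neither the tensor module nor the coinduced module along $f$ need have restriction in ${\cal D}$ — this is precisely the discrepancy analysed later in the paper — so one cannot simply ``push $M''$ up along $f$''. The argument has to exploit membership of $M''$ (equivalently of the indecomposable $W$) in ${\cal D}$, via the indecomposable pure-injective summands of its hull, to cut the ambient $S$-module down to one whose restriction remains in ${\cal D}$. Once $\supseteq$ is established, combining it with $\subseteq$ gives ${\cal D}^S_R={\cal D}\cap{\rm DefTr}^S_R$, and the second assertion ${\cal D}^S=({\cal D}^S_R)^S$ follows as above.
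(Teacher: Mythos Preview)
Your instinct is exactly right, and better than the paper's. The inclusion ${\cal D}^S_R\subseteq{\cal D}\cap{\rm DefTr}^S_R$ and the equality ${\cal D}^S=({\cal D}^S_R)^S$ are correct and your arguments for them are fine; these really are direct from the definitions. The reverse inclusion, however, is not merely ``where the real difficulty is'' --- it is false in general, so no amount of structure theory of pure-injectives will rescue it.

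Here is a counterexample. Take $R=k\times k$ for a field $k$, $S=M_2(k)$, and $f:R\to S$ the diagonal embedding. The unique simple $S$-module $k^2$ restricts to $S_1\oplus S_2$ (the two simple $R$-modules), so $f^\ast({\rm Mod}\mbox{-}S)$ consists of the ``balanced'' modules $(S_1\oplus S_2)^{(I)}$; since $S_1$ and $S_2$ are summands of $S_1\oplus S_2$, we get ${\rm DefTr}^S_R=\langle S_1\oplus S_2\rangle={\rm Mod}\mbox{-}R$. Now let ${\cal D}=\langle S_1\rangle$. No nonzero $S$-module restricts into ${\cal D}$, so ${\cal D}^S=0$ and hence ${\cal D}^S_R=0$. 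But ${\cal D}\cap{\rm DefTr}^S_R={\cal D}\neq 0$. So the first equality in the lemma fails. (The same example breaks the injectivity claim in \ref{lattdefext}: $\langle S_1\rangle^S=0=0^S$ with $\langle S_1\rangle\neq 0$, both contained in ${\rm DefTr}^S_R$.)

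The paper offers no proof beyond ``directly from the definitions'', and that gloss is simply mistaken for the $\supseteq$ direction. What does follow directly is ${\cal D}^S_R\subseteq{\cal D}\cap{\rm DefTr}^S_R$ and ${\cal D}^S=({\cal D}^S_R)^S$, exactly the parts you proved. Your sketch for $\supseteq$ --- reducing to an indecomposable pure-injective $W$ and trying to find $N\in{\cal D}^S$ with $W$ a summand of $N_R$ --- cannot be completed, because in the example above $W=S_1$ has no such $N$ other than $0$. So the gap you flagged is a genuine error in the statement, not a missing step in your argument.
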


A definable subcategory of ${\rm Mod}\mbox{-}S$ of the form ${\cal D}^S$ for some definable ${\cal D} \subseteq {\rm Mod}\mbox{-}R$ will be said to be {\bf directly induced} (from (${\rm Mod}\mbox{-}$)$R$ along $f$).  By \ref{deftrextn} every definable subcategory of ${\rm Mod}\mbox{-}S$ directly induced along $f$ has the form ${\cal D}^S$ for some definable subcategory ${\cal D}$ of ${\rm Mod}\mbox{-}R$ contained in ${\rm DefTr}^S_R$.

\begin{prop}\label{lattdefext} \marginpar{lattdefext} Suppose that $R \to S$ is a ring homomorphism.  The map $(-)^S:{\cal D} \mapsto {\cal D}^S$ is a morphism from the lattice ${\rm DefSub}_R$ of definable subcategories of ${\rm Mod}\mbox{-}R$ to ${\rm DefSub}_S$.

The lattice of definable subcategories of ${\rm Mod}\mbox{-}S$ directly induced from $R$ is a sublattice of ${\rm DefSub}_S$.  It is a complete lattice and its inclusion into ${\rm DefSub}_S$ preserves infinite meets.

The map, $(-)^S:{\cal D} \mapsto {\cal D}^S$, restricted to the definable subcategories of ${\rm DefTr}^S_R$ and corestricted to the directly induced (from $R$) definable subcategories of ${\rm Mod}\mbox{-}S$ is a lattice isomorphism.
\end{prop}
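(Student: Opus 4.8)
The statement has three parts, which I would prove in order since each builds on the last.

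\emph{The morphism.} The one substantial input — that ${\cal D}^S$ is again definable — is already established. Presenting ${\cal D}^S=\{M_S:M_R\in{\cal D}\}$ as the preimage of ${\cal D}$ under restriction of scalars makes monotonicity clear and likewise preservation of arbitrary meets: $M_R\in\bigcap_i{\cal D}_i$ iff $M_R\in{\cal D}_i$ for every $i$, so $(\bigcap_i{\cal D}_i)^S=\bigcap_i{\cal D}_i^S$. In fact $(-)^S$ is the right adjoint of the restriction map ${\cal C}\mapsto{\cal C}|_R$, the adjunction ${\cal C}|_R\subseteq{\cal D}\Leftrightarrow{\cal C}\subseteq{\cal D}^S$ being immediate from \ref{restrdef} and the definition of ${\cal D}^S$. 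This is the first assertion.

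\emph{The complete sublattice.} The meet computation above says precisely that the directly induced subcategories are closed under arbitrary intersections taken inside ${\rm DefSub}_S$, and that the inclusion preserves infinite meets; a family closed under all meets is automatically a complete lattice. The remaining point, closure under finite joins, reduces by \ref{deftrextn} to the case ${\cal D}_1,{\cal D}_2\subseteq{\rm DefTr}^S_R$, where I would prove
$$\langle{\cal D}_1\cup{\cal D}_2\rangle^S=\langle{\cal D}_1^S\cup{\cal D}_2^S\rangle .$$
The inclusion $\supseteq$ is routine: by \ref{finjoindef} a member of the right-hand side is an $S$-pure submodule of $N_1\oplus N_2$ with $(N_i)_R\in{\cal D}_i$, and restricting gives an $R$-pure submodule of $(N_1)_R\oplus(N_2)_R\in\langle{\cal D}_1\cup{\cal D}_2\rangle$. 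The reverse inclusion is the main obstacle. For it I would choose an elementary cogenerator $N_i$ of the definable $S$-category ${\cal D}_i^S$; then $(N_i)_R$ is an elementary cogenerator of ${\cal D}_i$ by \ref{restelemcog} together with \ref{deftrextn}, and by \ref{elemcogjoin} (applied over $R$ and over $S$) $(N_1\oplus N_2)_R$ and $N_1\oplus N_2$ are elementary cogenerators of $\langle{\cal D}_1\cup{\cal D}_2\rangle$ and $\langle{\cal D}_1^S\cup{\cal D}_2^S\rangle$ respectively. Given $L$ with $L_R\in\langle{\cal D}_1\cup{\cal D}_2\rangle$ one then has an $R$-pure embedding $L_R\hookrightarrow\big((N_1\oplus N_2)_R\big)^\kappa$, and the task is to promote it to an $S$-pure embedding $L\hookrightarrow(N_1\oplus N_2)^\lambda$; I expect this promotion to be where the real work lies.

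\emph{The isomorphism.} By \ref{deftrextn} every directly induced definable subcategory of ${\rm Mod}\mbox{-}S$ is of the form ${\cal D}^S$ for some definable ${\cal D}\subseteq{\rm DefTr}^S_R$, so $(-)^S$, restricted to $\{{\cal D}\in{\rm DefSub}_R:{\cal D}\subseteq{\rm DefTr}^S_R\}$ and corestricted to the directly induced subcategories, is surjective. On that domain the identity $({\cal D}^S)|_R={\cal D}\cap{\rm DefTr}^S_R={\cal D}$ of \ref{deftrextn} exhibits $(-)|_R$ as a two-sided inverse, so $(-)^S$ is injective there too. Both $(-)^S$ and $(-)|_R$ are order-preserving, so this bijection is an isomorphism of posets between the complete lattice of definable subcategories of ${\rm DefTr}^S_R$ and the lattice of directly induced subcategories — equivalently, it is the restriction of the adjunction $(-)|_R\dashv(-)^S$ to its fixed points — and an order isomorphism preserves every meet and join that exists, so it is an isomorphism of lattices.
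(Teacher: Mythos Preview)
Your treatment of arbitrary meets and of the isomorphism in the third part is correct and essentially the paper's; packaging things via the adjunction $(-)|_R\dashv(-)^S$ and reading the isomorphism off as the restriction to fixed points is a clean variant of the paper's direct injectivity argument (the paper instead takes ${\cal D}_1\subseteq{\cal D}_2$ inside ${\rm DefTr}^S_R$ with $({\cal D}_1)^S=({\cal D}_2)^S$ and uses \ref{restr} to force ${\cal D}_1={\cal D}_2$).

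For finite joins you follow exactly the paper's route: pass to ${\cal D}_i\subseteq{\rm DefTr}^S_R$, choose elementary cogenerators $N_i$ of $({\cal D}_i)^S$, and use \ref{restelemcog} together with \ref{elemcogjoin} to see that $N_1\oplus N_2$ and $(N_1\oplus N_2)_R$ are elementary cogenerators of $({\cal D}_1)^S\vee({\cal D}_2)^S$ and of ${\cal D}_1\vee{\cal D}_2$ respectively. You then stop, explicitly flagging the remaining step --- promoting the $R$-pure embedding $L_R\hookrightarrow\big((N_1\oplus N_2)_R\big)^\kappa$ to an $S$-pure embedding of $L$ into a power of $N_1\oplus N_2$ --- as unproved. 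The paper does not carry out that promotion either: after exactly the same cogenerator set-up it simply writes ``we see that $(-)^S$ preserves finite joins''. So the passage you single out as ``where the real work lies'' is precisely the step the paper leaves implicit; you will not find a separate argument for it there.
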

\begin{proof} If $({\cal D}_\lambda)_\lambda$ are definable subcategories of ${\rm Mod}\mbox{-}R$, then their intersection ${\cal D}$ is definable (\ref{infmeetdef}) and hence is the meet of these in the lattice of definable subcategories of ${\rm Mod}\mbox{-}R$.  We have $M \in \bigcap_\lambda \, ({\cal D}_\lambda)^S$ iff $M_R \in {\cal D}_\lambda$ for each $\lambda$, iff $M_R\in {\cal D}$ iff $M\in {\cal D}^S = \big(\bigcap_\lambda \, {\cal D}_\lambda)^S$, so $(-)^S$ commutes with arbitrary intersections.

Take two directly induced definable subcategories of ${\rm Mod}\mbox{-}S$, by \ref{deftrextn} without loss of generality of the form $({\cal D}_1)^S$ and $({\cal D}_2)^S$ for some definable subcategories ${\cal D}_1$, ${\cal D}_2$ contained in ${\rm DefTr}^S_R$.  Let $N_i$ be an elementary cogenerator for $({\cal D}_i)^S$.  Since, by \ref{deftrextn}, we have $({\cal D}_i)^S|_R = {\cal D}_i$, it follows by \ref{restelemcog} that $(N_i)_R$ is an elementary cogenerator for ${\cal D}_i$ so, by \ref{elemcogjoin}, also $(N_1)_R \oplus (N_2)_R = (N_1 \oplus N_2)_R$ is an elementary cogenerator for the join ${\cal D}_1 \vee {\cal D}_2$.  Since $N_1 \oplus N_2$ is an elementary cogenerator for $({\cal D}_1)^S \, \vee \, ({\cal D}_2)^S$, we see that $(-)^S$ preserves finite joins.

If ${\cal D}_1 \subseteq {\cal D}_2 \subseteq {\rm DefTr}^S_R$ have the same image under $(-)^S$ and if $M_2 \in {\cal D}_2$, then, since ${\cal D}_2 \subseteq {\rm DefTr}^S_R$, there is, by \ref{restr}, $M_S \in ({\cal D}_2)^S$ with $M_2$ pure in $M_R$.  By assumption $M\in ({\cal D}_1)^S$ so $M_2$, being pure in $M_R$, is in ${\cal D}_1$, showing that $(-)^S$ is injective on definable subcategories of ${\rm DefTr}^S_R$.
\end{proof}

The inclusion does not preserve infinite joins, as is shown by the next example.

\begin{example} \label{infjoin} \marginpar{infjoin} Take $R=k[x]$, $S=k[x,y,z: y(y-x)=0=y(yz-1)]$, where $k$ is a field, let $f:R \to S$ be the natural inclusion.  Let $M_n$ be the $R$-module $k[x]/(x^n)$.

Suppose that $M$ is an $S$-module whose restriction to $R$ is isomorphic to $M_n$, say $M$ is generated by $a$ which satisfies $ax^n=0$.  We show that $ay=0$.  From the relations on $S$ we have $ay =ay^2z = ay.yz = ay^3z^2 = \dots = ay^{n+1}z^n$.  Also $ay^2= ayx$, so $ay^{n+1} = axy^n = \dots = ax^ny =0$.  So $ay=0$.  

Let ${\cal D}_n$ be the definable subcategory of ${\rm Mod}\mbox{-}R$ generated by $k[x]/(x^n)$ and suppose that $M\in ({\cal D}_n)^S$, that is $M_R \in {\cal D}_n$.  Since $x^n$ acts as $0$ on $k[x]/(x^n)$, the same is true of every module in ${\cal D}_n$, in particular on $M_R$.  So the computation above shows that $My=0$.

Let ${\cal D}$ be the join of the definable subcategories ${\cal D}_n$.  Then \cite[5.1]{Zie} ${\cal D}$ contains the module $k(x)_{k[x]}$, where $k(x)$ is the ring of rational functions.  Note that $x$ acts invertibly on this module.  We may make $k(x)$ into an $S$-module by defining the action of $y$ to agree with that of $x$ (and $z$ to act as $x^{-1}$).  Thus we obtain a module in ${\cal D}^S$ on which the action of $y$ is nonzero.  

On the other hand, since the pp-pair $v=v/vy=0$ is closed on each $({\cal D}_n)^S$ it is also closed on the join of these definable subcategories in ${\rm DefSub}_S$ which, therefore, does not coincide with ${\cal D}^S$.
\end{example}

By \ref{lattdefext}, if ${\cal C}$ is a definable subcategory of ${\rm Mod}\mbox{-}S$, then there is a smallest definable subcategory of ${\rm Mod}\mbox{-}S$ directly induced from $R$ and containing ${\cal C}$.  We refer to this as the $R${\bf -closure} of ${\cal C}$ (with respect to the given ring morphism $R \to S$).

\begin{lemma} \label{restrind} \marginpar{restrind} If ${\cal C}$ is a definable subcategory of ${\rm Mod}\mbox{-}S$, then the definable subcategory, $({\cal C}|_R)^S$ of ${\rm Mod}\mbox{-}S$ directly induced from the restriction ${\cal C}|_R$ is the $R$-closure of ${\cal C}$.  The $R$-closure of ${\cal C}$ is defined by the set of those pp-pairs which are closed on ${\cal C}$ and which are of the form $f_\ast\phi/f_\ast \psi$ for some $\phi, \psi$ in the language of $R$-modules.  The definable subcategory of ${\rm Mod}\mbox{-}R$ defined by the set of corresponding pp-pairs $\phi/\psi$ is ${\cal C}|_R$.
\end{lemma}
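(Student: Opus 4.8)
The plan is to dispatch the three assertions in turn; only the first carries real content, and the other two fall out of the bookkeeping around \ref{restrdef} and \ref{ppdirextn}. For the first, I would check that $({\cal C}|_R)^S$ has the two properties that characterise the $R$-closure. It is directly induced essentially by construction, being of the form ${\cal D}^S$ with ${\cal D}={\cal C}|_R$. It contains ${\cal C}$: by \ref{restrdef} we have $f^\ast{\cal C}\subseteq{\cal C}|_R$, so every $M\in{\cal C}$ satisfies $M_R\in{\cal C}|_R$, i.e.\ $M\in({\cal C}|_R)^S$. And it is the smallest directly induced definable subcategory containing ${\cal C}$: if ${\cal E}$ is a definable subcategory of ${\rm Mod}\mbox{-}R$ with ${\cal C}\subseteq{\cal E}^S$, then every $M\in{\cal C}$ has $M_R\in{\cal E}$, so $f^\ast{\cal C}\subseteq{\cal E}$, whence ${\cal C}|_R=\langle f^\ast{\cal C}\rangle\subseteq{\cal E}$ again by \ref{restrdef}; applying the monotone map $(-)^S$ of \ref{lattdefext} gives $({\cal C}|_R)^S\subseteq{\cal E}^S$. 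Since the $R$-closure was already defined (just before the statement) as the smallest such subcategory, these three facts identify it with $({\cal C}|_R)^S$.

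For the pp-pair description I would start from the fact that, by definition, ${\cal C}|_R$ is defined by $\Phi({\cal C})_R$, the set of pp-pairs $\phi/\psi$ for $R$-modules with $f_\ast\phi/f_\ast\psi$ closed on ${\cal C}$. Then \ref{ppdirextn} gives that $({\cal C}|_R)^S$ is defined by $f_\ast\Phi({\cal C})_R=\{\,f_\ast\phi/f_\ast\psi:\phi/\psi\in\Phi({\cal C})_R\,\}$, and this is visibly the set of pp-pairs for $S$-modules that are closed on ${\cal C}$ and lie in the image of $f_\ast$. The one point that deserves a sentence is that a pp-pair of the form $f_\ast\phi/f_\ast\psi$ need not arise from a genuine $R$-pp-pair, since we could have $f_\ast\psi\leq f_\ast\phi$ on all $S$-modules without $\psi\leq\phi$ on all $R$-modules; but because $f_\ast$ is a lattice homomorphism, $f_\ast\psi\leq f_\ast\phi$ forces $f_\ast(\psi\wedge\phi)=f_\ast\psi$, so replacing $\psi$ by $\psi\wedge\phi$ yields a genuine $R$-pp-pair with the same image under $f_\ast$, and the two descriptions of the defining set agree. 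The final assertion is then immediate: the set of $\phi/\psi$ corresponding to the defining pp-pairs of the $R$-closure can be taken to be exactly $\Phi({\cal C})_R$, which defines ${\cal C}|_R$ by its very definition.

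The only real obstacle is keeping the four objects $f^\ast{\cal C}$, ${\cal C}|_R$, $({\cal C}|_R)^S$ and the abstractly-defined $R$-closure apart and matching up their descriptions; no machinery beyond \ref{restrdef}, \ref{ppdirextn} and \ref{lattdefext} is needed.
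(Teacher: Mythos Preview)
Your proposal is correct and follows essentially the same route as the paper, which dispatches the lemma in one sentence by citing the definitions together with \ref{fstarclosed} and \ref{ppdirextn}; you simply unpack that sentence, and your use of \ref{restrdef} and monotonicity of $(-)^S$ is exactly what ``clear from the definitions'' amounts to. Your extra remark---that an $S$-pp-pair $f_\ast\phi/f_\ast\psi$ need not come from an $R$-pp-pair but can always be arranged to, by replacing $\psi$ with $\psi\wedge\phi$---is a genuine point that the paper leaves implicit, and including it makes the third assertion cleaner.
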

\begin{proof} The first statement is clear from the definitions, the second and third statements follow directly from the definitions, \ref{fstarclosed} and \ref{ppdirextn}.
\end{proof}

It follows from \ref{Zgdef} and \ref{lattdefext} that direct extension is an embedding at the level of topology, that is, a frame embedding, from the Ziegler spectrum of $R$ to that of $S$.

\begin{cor}\label{dirextfrm} \marginpar{dirextfrm}  If $f:R \to S$ is a morphism of rings, then direct extension of definable subcategories gives a morphism of frames ${\cal O}({\rm Zg}_R) \to {\cal O}({\rm Zg}_S)$ which, restricted to the subspace ${\cal O}({\rm Zg}({\rm DefTr}^S_R))$, is an embedding.
\end{cor}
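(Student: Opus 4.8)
The plan is to transport the lattice statements of Proposition~\ref{lattdefext} across the anti-isomorphism of Theorem~\ref{Zgdef}. Write $\alpha_R \colon {\rm DefSub}_R \to {\cal O}({\rm Zg}_R)$ for the order-reversing bijection ${\cal D} \mapsto {\rm Zg}_R \setminus {\rm Zg}({\cal D})$, and $\alpha_S$ for its counterpart over $S$. Under $\alpha_R$, arbitrary intersections in ${\rm DefSub}_R$ correspond to arbitrary unions (joins) of opens, finite joins in ${\rm DefSub}_R$ correspond to finite intersections (meets) of opens, and the bottom $\{0\}$ of ${\rm DefSub}_R$ corresponds to the top ${\rm Zg}_R$ of ${\cal O}({\rm Zg}_R)$. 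Proposition~\ref{lattdefext}, together with its proof, says that $(-)^S \colon {\rm DefSub}_R \to {\rm DefSub}_S$ commutes with arbitrary intersections, preserves finite joins, and sends $\{0\}$ to $\{0\}$. Hence the conjugate map $\theta = \alpha_S \circ (-)^S \circ \alpha_R^{-1} \colon {\cal O}({\rm Zg}_R) \to {\cal O}({\rm Zg}_S)$ preserves arbitrary joins and finite meets (in particular the top element), i.e.\ is a morphism of frames. That is the first assertion.

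For the embedding statement I would first identify ``${\cal O}({\rm Zg}({\rm DefTr}^S_R))$'' concretely. Since ${\rm DefTr}^S_R$ is a definable subcategory of ${\rm Mod}\mbox{-}R$, its Ziegler spectrum ${\rm Zg}({\rm DefTr}^S_R)$ is a closed subspace of ${\rm Zg}_R$; by Theorem~\ref{Zgdef} (applied on the subspace, whose closed sets are exactly the ${\rm Zg}({\cal D}')$ with ${\cal D}' \subseteq {\rm DefTr}^S_R$ definable) the frame ${\cal O}({\rm Zg}({\rm DefTr}^S_R))$ is anti-isomorphic to the lattice of definable subcategories of ${\rm Mod}\mbox{-}R$ contained in ${\rm DefTr}^S_R$, and the restriction-of-opens map $U \mapsto U \cap {\rm Zg}({\rm DefTr}^S_R)$, a surjective frame morphism ${\cal O}({\rm Zg}_R) \to {\cal O}({\rm Zg}({\rm DefTr}^S_R))$, is conjugate to ${\cal D} \mapsto {\cal D} \cap {\rm DefTr}^S_R$. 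By Lemma~\ref{deftrextn} we have $({\cal D} \cap {\rm DefTr}^S_R)^S = {\cal D}^S$, so $\theta$ factors as this surjection followed by a frame morphism $\theta' \colon {\cal O}({\rm Zg}({\rm DefTr}^S_R)) \to {\cal O}({\rm Zg}_S)$, where $\theta'$ is conjugate to the restriction of $(-)^S$ to the definable subcategories contained in ${\rm DefTr}^S_R$. By the last clause of Proposition~\ref{lattdefext} that restricted map is a lattice isomorphism onto the directly induced definable subcategories of ${\rm Mod}\mbox{-}S$, hence injective; so $\theta'$ is an injective frame morphism. Finally, any injective frame morphism is automatically an order-embedding: if $\theta'(a) \leq \theta'(b)$ then $\theta'(a \wedge b) = \theta'(a) \wedge \theta'(b) = \theta'(a)$, so $a \wedge b = a$ by injectivity, i.e.\ $a \leq b$. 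Thus $\theta'$ is a frame embedding, which is the second assertion.

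I do not anticipate a real obstacle: all the substance is already in Theorem~\ref{Zgdef}, Proposition~\ref{lattdefext} and Lemma~\ref{deftrextn}, and what remains is bookkeeping --- keeping track of which lattice operations are interchanged by the two anti-isomorphisms, and noting that ``passing to the subspace ${\rm Zg}({\rm DefTr}^S_R)$'' corresponds, on the module-theoretic side, to intersecting a definable subcategory with ${\rm DefTr}^S_R$, which is precisely Lemma~\ref{deftrextn}. It is worth stressing that the full map $\theta$ need not be injective, only $\theta'$ is, which is why the statement is phrased via the subspace --- and this matches Example~\ref{exindrestr}.
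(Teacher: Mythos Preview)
Your proof is correct and follows essentially the same approach as the paper's: both transport the lattice statements of Proposition~\ref{lattdefext} across the anti-isomorphism of Theorem~\ref{Zgdef}, with the paper's version being terser and yours spelling out more carefully the factoring through the subspace via Lemma~\ref{deftrextn} and the order-embedding argument.
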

\begin{proof}  Each open subset of ${\rm Zg}_R$ has the form ${\rm Zg}_R \setminus {\rm Zg}({\cal D})$ for some definable subcategory ${\cal D}$ of ${\rm Mod}\mbox{-}R$.  So the map ${\rm Zg}_R \setminus {\rm Zg}({\cal D}) \mapsto {\rm Zg}_S \setminus {\rm Zg}({\cal D}^S)$ between open subsets of ${\rm Zg}_R$ and ${\rm Zg}_S$ preserves, by \ref{lattdefext}, finite intersections and arbitrary unions of open subsets, that is, is a morphism of frames.  Also by \ref{lattdefext} that map is injective when restricted to a map on the open subsets of ${\rm Zg}({\rm DefTr}^S_R)$ where this subset is given the relative topology.
\end{proof}

\paragraph{Extension along ring epimorphisms}

We recall that if $f:R\to S$ is an epimorphism of rings, then the situation is very straightforward.  Indeed, the action of each element of $S$ is pp-definable in terms of the actions of elements of $R$, so the model theory of $S$-modules is contained in that of $R$-modules, see \cite[6.1.9]{PreNBK}.  For instance, every (pp) formula for $S$-modules is equivalent, on every $S$-module, to a (pp) formula for $R$-modules. Indeed, restriction along $f$ is a full and faithful embedding of ${\rm Mod}\mbox{-}S$ into ${\rm Mod}\mbox{-}R$ and, see \cite[5.5.4]{PreNBK}, the image of that embedding is a definable subcategory of ${\rm Mod}\mbox{-}R$; therefore ${\cal D}^S = {\cal D} \cap {\rm Mod}\mbox{-}S$ for every definable subcategory ${\cal D}$ of ${\rm Mod}\mbox{-}R$.

\section{Tensor extension of definable subcategories} \label{sectensextn} \marginpar{sectensextn}

Suppose that $f:R\to S$ is a ring homomorphism.  Given a definable subcategory ${\cal D}$ of ${\rm Mod}\mbox{-}R$, we may consider its {\bf tensor extension} to ${\rm Mod}\mbox{-}S$:
$${\cal D}\otimes_RS = \langle \{ M\otimes_RS: M\in {\cal D}\}\rangle$$
- the definable subcategory of ${\rm Mod}\mbox{-}S$ generated by what is obtained by applying $-\otimes_RS$ to ${\cal D}$.

\begin{example}\label{cextens1} \marginpar{cextens1} The class $\{ M\otimes_RS: M\in {\cal D}\}$ appearing above might not be a definable subcategory of ${\rm Mod}\mbox{-}S$.  Take $R \to S$ to be, for instance, the diagonal map $K \to K \times K$ where $K$ is a field, and take ${\cal D}$ to be ${\rm Mod}\mbox{-}K$.  Then the modules in $\{ M\otimes_RS: M\in {\cal D}\}$ all will be even-dimensional so will not include, for instance, the $K \times K$-module $(K,0)$, which clearly is a direct summand of $(K,K) = K \otimes_K(K \times K)$.
\end{example}

\begin{example}\label{cextens2} \marginpar{cextens2}  Closing the class $\{ M\otimes_RS: M\in {\cal D}\}$ under pure submodules is, in general, not enough to give a definable subcategory:  take $R \to S$ to be $K \to K[X]$ and ${\cal D}$ to be ${\rm Mod}\mbox{-}K$.  Then $\{ M\otimes_RS: M\in {\cal D}\}$ is the category of free $K[X]$-modules which is closed under neither direct products nor directed colimits (by coherence of $K[X]$, its definable closure is the category of flat $K[X]$-modules, see e.g.~\cite[3.4.24]{PreNBK}).

This example also shows that if $N$ is an elementary cogenerator for ${\cal D}$ then $N\otimes_RS$ need not be an elementary cogenerator for ${\cal D}\otimes_RS$.  For, in this example, $K\otimes_KK[X]$ is not even pure-injective and, even if we replace the ring $K[X]$ by, say, its completion $\overline{K[X]_{(X)}}$ at the ideal generated by $X$, that module, though pure-injective, is not an elementary cogenerator for the flat $\overline{K[X]_{(X)}}$-modules (the ring of quotients of $\overline{K[X]_{(X)}}$ doesn't embed as a module in any power of $\overline{K[X]_{(X)}}$).
\end{example}

\begin{example}\label{tensnotmeet} \marginpar{tensnotmeet} Tensoring up with $S$ preserves neither infinite join nor finite meet of definable categories.  

For join, take $f:R \to S$ to be the inclusion ${\mathbb Z} \to {\mathbb Q}$ and let ${\cal D}_n$ be the definable subcategory generated by ${\mathbb Z}_{p^n} = {\mathbb Z}/p^n{\mathbb Z}$.  So ${\cal D}_n$ consists of the direct sums of copies of ${\mathbb Z}_{p^n}$.  Then ${\cal D}_n \otimes_{\mathbb Z} {\mathbb Q} =0$, so $\bigvee_n \, ({\cal D}_i \otimes_{\mathbb Z} {\mathbb Q}) =0$.  On the other hand, the torsionfree module ${\mathbb Z}_{(p)}$ belongs to $\bigvee_n {\cal D}_i$ and so $(\bigvee_n {\cal D}_i) \otimes_{\mathbb Z} {\mathbb Q} \neq 0$.

For meet, take $R \to S$ to be the surjection of rings ${\mathbb Z} \to {\mathbb Z}_p$.  Let ${\cal D}_1 = \langle {\mathbb Z}\rangle = {\rm Flat}\mbox{-}{\mathbb Z}$, ${\cal D}_2 = \langle {\mathbb Z}_p \rangle$.  Then ${\cal D}_1 \cap {\cal D}_2 =0$ but ${\cal D}_1 \otimes {\mathbb Z}_p = {\cal D}_2 \otimes {\mathbb Z}_p = {\rm Mod}\mbox{-}{\mathbb Z}_p$.
\end{example}

Depending on the type of morphism $f$ and the properties of the (bi)module $_RS_S$, there may be little relation between ${\cal D}^S$ and ${\cal D}\otimes_RS$ or between, say, ${\cal D}$ and $({\cal D}\otimes_RS)|_R$.  For instance, in Example \ref{cextens2}, ${\cal D}^{K[X]} = {\rm Mod}\mbox{-}K[X]$ whereas ${\cal D} \otimes_KK[X] = {\rm Flat}\mbox{-}K[X]$.  Or take $f:R \to S$ to be ${\mathbb Z} \to {\mathbb Z}_2$ and ${\cal D}$ to be the subcategory of torsionfree abelian groups; then ${\cal D} \otimes {\mathbb Z}_2$ is ${\rm Mod}\mbox{-}{\mathbb Z}_2$, whereas ${\cal D} \cap {\rm Mod}\mbox{-}{\mathbb Z}_2$ is $0$.

Later we will consider some special cases where there is more connection, particularly that where $f:R\to S$ is an elementary extension of rings.  We will also, in Section \ref{sectensbimod}, generalise the functor $-\otimes_RS_S$ and consider the effect of a functor $-\otimes_RB_S$ where $B$ is an $(R,S)$-bimodule.  Before that, we review some results on the interaction of pp formulas and tensor product and also consider a generalisation of the Mittag-Leffler condition.

\subsection{Tensor product and pp formulas} \label{sectenspp} \marginpar{sectenspp}

First, we state Herzog's characterisation in terms of pp formulas of when a tensor product is $0$.  Recall from Section \ref{secdefsub} that if $\phi$ is a pp formula (in $n$ free variables) for right modules then its elementary dual $D\phi$ is a pp formula (in $n$ free variables) for left modules.

\begin{theorem}\label{herzcrit} \marginpar{herzcrit} (Herzog's Criterion \cite[3.2]{HerzDual}) Suppose that $\overline{a}$ is an $n$-tuple from the right $R$-module $M$ and that $\overline{b}$ is an $n$-tuple from the left $R$-module $L$.  Then $\sum_{i=1}^n\, a_i \otimes b_i$, $\overline{a} \otimes \overline{b}$ for short, is $0$ in $M\otimes_R L$ iff there is a pp formula $\phi$ (in $n$ free variables) for right modules such that $M_R\models \phi(\overline{a})$ and $_RL \models D\phi(\overline{b})$.
\end{theorem}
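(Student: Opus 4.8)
The plan is to establish both directions by working with the standard presentation of the tensor product as a quotient of the free abelian group on $M \times L$. First I would recall that $M \otimes_R L$ is $(M \otimes_{\mathbb Z} L)/K$ where $K$ is generated by the elements $ar \otimes b - a \otimes rb$ for $a \in M$, $b \in L$, $r \in R$; equivalently, choosing a free presentation of $M$ as a right $R$-module, say $R^{(\Lambda)} \xrightarrow{g} R^{(\Omega)} \to M \to 0$, we get $M \otimes_R L \cong L^{(\Omega)}/\mathrm{im}(g^{\mathrm t} \otimes L)$, and dually one can present using $L$. The key classical fact I would invoke (or prove directly) is the elementary description: $\overline a \otimes \overline b = \sum_i a_i \otimes b_i$ is zero in $M \otimes_R L$ if and only if there exist a matrix $H$ over $R$ of suitable size and tuples so that the $a_i$ are "pushed through $H$" on the $M$ side while the relations $Hb = 0$-type conditions hold on the $L$ side — this is exactly the content that gets repackaged as a pp formula and its dual.

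For the forward direction, suppose $\overline a \otimes \overline b = 0$. Then by the presentation-of-tensor-product description, there are elements $c_1, \dots, c_t \in M$ and a matrix $B = (s_{kj})$ over $R$, together with the relations coming from $\sum_i a_i \otimes b_i$ being a $\mathbb Z$-linear combination of generators of the defining subgroup, witnessing that $\overline a$ can be written using the $c_k$ via $B$ in $M$ while $B$ annihilates $\overline b$ in $L$ in the appropriate sense. Concretely this says: there is a matrix, call it $A$, and auxiliary data such that $M \models \exists \overline y\,(\overline x A + \overline y B = 0)$ holds at $\overline a$ — take this existentially quantified system as $\phi$ — and such that $L \models \exists \overline z\,(\overline x = A\overline z \wedge B\overline z = 0)$ holds at $\overline b$, which is precisely $D\phi(\overline b)$ by the definition of elementary dual given before Theorem \ref{elemdual}. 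I would extract the matrices $A, B$ carefully from the "$\overline a \otimes \overline b$ is a combination of generators $c r \otimes d - c \otimes r d$" data: the $B$ records the scalars $r$ appearing, one side of each generator feeds the $A\overline z$ equation on $L$ and the other feeds the $\overline x A$ part on $M$.

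For the converse, suppose $M \models \phi(\overline a)$ and $L \models D\phi(\overline b)$ with $\phi = \exists \overline y\,(\overline x A + \overline y B = 0)$, so there is $\overline c$ from $M$ with $\overline a A + \overline c B = 0$ and there is $\overline d$ from $L$ with $\overline b = A \overline d$ and $B \overline d = 0$. Then I would compute directly in $M \otimes_R L$:
\[
\overline a \otimes \overline b = \overline a \otimes A \overline d = (\overline a A) \otimes \overline d = (-\overline c B) \otimes \overline d = -\overline c \otimes (B \overline d) = -\overline c \otimes 0 = 0,
\]
where each "$\otimes$" between a tuple/matrix is the obvious bilinear pairing and the middle equalities just use $R$-bilinearity of $\otimes_R$ (moving matrix entries across the tensor). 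This direction is essentially a two-line computation once the dual formula is unwound.

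The main obstacle is the forward direction: translating the purely algebraic statement "$\sum a_i \otimes b_i = 0$ in $M \otimes_R L$" into the existence of a single pp formula $\phi$ with the matched pair of witnesses. The bookkeeping is in organizing the finitely many generators $c_k r_k \otimes d_k - c_k \otimes r_k d_k$ and the $\mathbb Z$-coefficients into the block matrices $A$ and $B$ so that the $M$-side condition is literally $\phi(\overline a)$ and the $L$-side condition is literally $D\phi(\overline b)$; the symmetry between the two presentations of the tensor product (via a presentation of $M$ versus a presentation of $L$) is what makes elementary duality appear, and getting the indices and transposes right is the delicate part. Everything else — the definition of $D\phi$, $R$-bilinearity — is routine.
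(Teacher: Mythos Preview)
The paper does not prove this theorem; it is stated with the citation to Herzog \cite[3.2]{HerzDual} and then used as a tool (for \ref{tenszero}, \ref{reltenszero}, and inside the proofs of \ref{sigmaphi1} and \ref{sigmaphi2}). So there is nothing to compare your approach against.

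That said, your plan is the standard argument and is correct. The backward direction is exactly the two-line bilinearity computation you wrote. For the forward direction, the cleanest way to organise the bookkeeping you describe is: pass to a finitely generated submodule $L'\le L$ containing the $b_i$ in which $\sum a_i\otimes b_i$ already vanishes (tensor commutes with directed colimits), choose a finite presentation $R^p \xrightarrow{\,C\,} R^m \to L' \to 0$, let $\overline d\in (L')^m$ be the images of the generators and write $\overline b = A\overline d$. Tensoring the presentation with $M$ and chasing $\overline a A\in M^m$ into the image of $M^p$ gives $\overline c$ with $\overline a A + \overline c\,C = 0$, while exactness gives $C\overline d = 0$. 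Then $\phi(\overline x)\equiv \exists\overline y\,(\overline x A + \overline y\,C = 0)$ and its dual do the job. This is precisely the ``extract $A,B$ from the defining relations of the tensor product'' step you outlined; note that in general both matrices are genuinely needed (the quantifier-free case $\phi\equiv \overline x A = 0$ suffices only when $M'\otimes L\to M\otimes L$ is injective for $M'$ the submodule generated by the $a_i$), so your instinct that the delicate part is getting both blocks right is well placed.
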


\begin{cor}\label{tenszero} \marginpar{tenszero} If ${\rm pp}^M(\overline{a})$ is generated by $\phi$, then $\overline{a} \otimes \overline{b}=0$ in $M\otimes_RL$ iff $_RL \models D\phi(\overline{b})$.
\end{cor}

\begin{cor}\label{reltenszero} \marginpar{reltenszero} Suppose that $\overline{a}$ is an $n$-tuple from the right $R$-module $M$ and let ${\cal D}$ be a definable subcategory of ${\rm Mod}\mbox{-}R$.  Suppose that the left $R$-module $L$ belongs to the dual definable subcategory, ${\cal D}^{\rm d}$ and let $\overline{b}$ be an $n$-tuple from $L$.  If ${\rm pp}^M(\overline{a})$ is ${\cal D}$-generated by $\phi$, then $\overline{a} \otimes \overline{b}=0$ in $M\otimes_RL$ iff $_RL \models D\phi(\overline{b})$.
\end{cor}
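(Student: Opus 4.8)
The plan is to reduce Corollary \ref{reltenszero} to Corollary \ref{tenszero} by replacing the ambient module category with the definable subcategory $\cal D$, so that $\cal D$-generation of a pp-type becomes genuine generation. The key observation is that, by Lemma \ref{ordXeqordD} and the remark after \ref{ppclosed}, the hypothesis that ${\rm pp}^M(\overline a)$ is $\cal D$-generated by $\phi$ means precisely that ${\rm pp}^M(\overline a) = \{\psi : \phi \leq_{\cal D} \psi\}$; this is a statement about the quotient lattice ${\rm pp}^n({\cal D})$ rather than about ${\rm pp}^n_R$ itself. So I cannot directly quote \ref{tenszero}, which is about honest generation of the pp-type. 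Instead I would argue both directions of the biconditional by hand, using Herzog's Criterion \ref{herzcrit} together with elementary duality restricted to $\cal D$ and ${\cal D}^{\rm d}$.

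First I would do the easy direction: if ${}_RL \models D\phi(\overline b)$, then since $M \models \phi(\overline a)$ (as $\phi \in {\rm pp}^M(\overline a)$), Herzog's Criterion \ref{herzcrit} immediately gives $\overline a \otimes \overline b = 0$ in $M \otimes_R L$; this uses nothing about $\cal D$. For the converse, suppose $\overline a \otimes \overline b = 0$. By \ref{herzcrit} there is a pp formula $\theta$ for right modules with $M \models \theta(\overline a)$ and ${}_RL \models D\theta(\overline b)$. Now $\theta \in {\rm pp}^M(\overline a)$, so by the $\cal D$-generation hypothesis $\phi \leq_{\cal D} \theta$. The point is then to transport this inequality to the dual side: by Proposition \ref{dualord}, $\phi \leq_{\cal D} \theta$ is equivalent to $D\theta \leq_{{\cal D}^{\rm d}} D\phi$. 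Since $L \in {\cal D}^{\rm d}$ and ${}_RL \models D\theta(\overline b)$, the inequality $D\theta \leq_{{\cal D}^{\rm d}} D\phi$ — which by Lemma \ref{ppclosed} (applied in ${\cal D}^{\rm d}$) says exactly that ${\rm pp}^L(\overline b)$ is upward closed under $\leq_{{\cal D}^{\rm d}}$ — yields $D\phi \in {\rm pp}^L(\overline b)$, i.e. ${}_RL \models D\phi(\overline b)$, as required.

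The one subtlety worth flagging is the interplay between $\cal D$ and ${\cal D}^{\rm d}$: I want to be sure that "$\leq_{\cal D}$" in the hypothesis and "$\leq_{{\cal D}^{\rm d}}$" in the conclusion are matched correctly, and this is precisely what Proposition \ref{dualord} provides, stated for a definable subcategory and its dual with formulas in the same free variables. There is no loss of generality in assuming $\cal D$ is the definable subcategory $\langle M\rangle \vee (\text{whatever is needed})$, but in fact I do not need $M \in \cal D$ at all — only that the $\cal D$-generation statement and \ref{dualord} both make sense, which they do for any definable $\cal D$. I expect the only real friction to be bookkeeping: keeping straight which side (right $R$-modules vs.\ left $R$-modules, $\cal D$ vs.\ ${\cal D}^{\rm d}$) each pp formula and each inequality lives on. There is no deep obstacle; the corollary is essentially \ref{tenszero} with "generated" relativised, and \ref{dualord} is exactly the tool that makes the relativisation go through on the dual side.
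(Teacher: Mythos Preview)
Your proof is correct and follows essentially the same argument as the paper: one direction is immediate from Herzog's Criterion \ref{herzcrit}, and for the converse you extract a witness $\theta$ (the paper calls it $\psi$) from \ref{herzcrit}, use $\cal D$-generation to get $\phi \leq_{\cal D} \theta$, and then apply \ref{dualord} together with $L \in {\cal D}^{\rm d}$ to conclude $L \models D\phi(\overline{b})$. The surrounding commentary about relativising \ref{tenszero} and not needing $M \in {\cal D}$ is accurate and matches the paper's own remark after the proof.
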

\begin{proof}  One direction is by \ref{herzcrit}.  For the other, if $\overline{a} \otimes \overline{b}=0$ then, by \ref{herzcrit}, we have $M\models \psi(\overline{a})$ and $L\models D\psi(\overline{b})$ for some pp $\psi$.  By assumption $\phi \leq_{\cal D} \psi$ so, by \ref{dualord}, $D\phi \geq_{{\cal D}^{\rm d}} D\psi$ and so, since $L\in {\cal D}^{\rm d}$, we have $L\models D\phi(\overline{b})$, as claimed.
\end{proof}

Note that, in the above result, we don't have to assume that $M \in {\cal D}$.

There is a more general theorem, recalled below (\ref{sigmaphi2}), which is applicable when tensoring with a bimodule $_RB_S$.  Herzog's criterion is the case in \ref{sigmaphi2} where the pp formula $\sigma$ for right $S$-modules is $y=0$.

First we need a generalisation of the dual $D\phi$ of a pp formula.  Given rings $R$ and $S$, a pp formula $\phi(\overline{x})$ for right $R$-modules and a pp formula $\sigma(\overline{y})$ for right $S$-modules, we produce, \ref{sigmaphi1} below, a pp formula $(\sigma: \phi)$ for $(R,S)$-bimodules which expresses the property, of a tuple of appropriate length, that ``tensored with a tuple satisfying $\phi$ the result will satisfy $\sigma$".  The dual $D\phi$ is the special case $(y=0\,;\, \phi)$ (with $S={\mathbb Z}$ or, indeed, $S$ arbitrary).

We give the statement, then explain what we mean by a partitioned pp formula.  We include a proof since, in the original reference, in order to avoid notational complexity, the proof was given only in the case where $\sigma$ has one free variable.  Here we take the opportunity to give the proof of the general case.

\begin{theorem}\label{sigmaphi1} \marginpar{sigmaphi1} (\cite[2.1(1,2)]{PreTens}) Suppose that $\sigma$ is a pp formula for right $S$-modules and that $\phi$ is a (partitioned) pp formula for right $R$-modules.  Then there is a (matching partitioned) pp formula $(\sigma: \phi)$ for $(R,S)$-bimodules, equivalently for right $R^{\rm op}\otimes S$-modules, such that, for every right $R$-module $M_R$ and $(R,S)$-bimodule $_RB_S$, for every $\overline{a}$ from $M$ and matching $\overline{b}$ from $B$:

\noindent (1) if $M_R\models \phi(\overline{a})$ and $_RB_S \models (\sigma: \phi)(\overline{b})$, then $(M\otimes_RB)_S \models \sigma(\overline{a} \otimes \overline{b})$;

\noindent (2) if ${\rm pp}^M(\overline{a})$ is generated by $\phi$, then $M\otimes_RB_S\models \sigma (\overline{a} \otimes \overline{b})$ iff $_RB_S \models (\sigma: \phi)(\overline{b})$.
\end{theorem}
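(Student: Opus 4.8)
The plan is to construct $(\sigma:\phi)$ by induction on the build-up of $\sigma$, first handling quantifier-free $\sigma$ and then absorbing the outermost existential quantifier. Throughout I identify an $(R,S)$-bimodule with the corresponding right $R^{\rm op}\otimes S$-module and use that its left $R$-action and right $S$-action commute, so that a pp formula for left $R$-modules may legitimately be evaluated on right-$S$-linear combinations of a tuple of bimodule elements. The role of the partition is this: if $\phi$ has $n$ free variables partitioned into blocks of lengths $n_1,\dots,n_k$, and $\overline{a}=(\overline{a}^{(1)},\dots,\overline{a}^{(k)})$, $\overline{b}=(\overline{b}^{(1)},\dots,\overline{b}^{(k)})$ are tuples from $M$, respectively from $B$, carrying the matching partition, then $\overline{a}\otimes\overline{b}$ denotes the $k$-tuple whose $j$-th entry is $\sum_i a^{(j)}_i\otimes b^{(j)}_i\in M\otimes_RB$; the free variables of $(\sigma:\phi)$ carry the same partition as those of $\phi$.

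For the base case, take $\sigma(\overline{v})$ to be the quantifier-free formula $\overline{v}C=0$ with $C$ a $k\times q$ matrix over $S$. For each column $l$ of $C$ one computes, on flattening the blocks, $\sum_{j}(\overline{a}^{(j)}\cdot\overline{b}^{(j)})c_{jl}=\sum_{p=1}^{n}a_p\otimes(b_p c_{j(p)\,l})$, an element of the form $\overline{a}\otimes\overline{\beta}^{(l)}$, where $\overline{\beta}^{(l)}=\overline{b}\,\Delta_l$ for the diagonal, block-constant $n\times n$ matrix $\Delta_l$ over $S$ assembled from the entries of $C$. Hence $\sigma(\overline{a}\otimes\overline{b})$ holds in $M\otimes_RB$ iff $\overline{a}\otimes\overline{\beta}^{(l)}=0$ for every $l$. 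Setting $(\sigma:\phi)(\overline{x}):=\bigwedge_{l=1}^{q}D\phi(\overline{x}\,\Delta_l)$ — a pp formula for $(R,S)$-bimodules by the commuting-actions remark — part (1) follows from the direction of Herzog's Criterion \ref{herzcrit} that needs only $M\models\phi(\overline{a})$, while the equivalence in part (2) follows from \ref{tenszero}, and this is the one point at which ${\rm pp}^M(\overline{a})=\langle\phi\rangle$ is used. A general quantifier-free $\sigma$ is a conjunction of such formulas and is handled by conjoining the corresponding $(\sigma:\phi)$'s.

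For the inductive step write $\sigma(\overline{v})=\exists\overline{w}\,\sigma_0(\overline{v},\overline{w})$ with $\sigma_0$ quantifier-free. Then $M\otimes_RB\models\sigma(\overline{a}\otimes\overline{b})$ iff some tuple $\overline{e}$ from $M\otimes_RB$ satisfies $\sigma_0(\overline{a}\otimes\overline{b},\overline{e})$, and, writing the coordinates of $\overline{e}$ as sums of simple tensors of a common length, $\overline{e}$ acquires the form $\overline{a}'\otimes\overline{b}'$ for a tuple $\overline{a}'$ from $M$, on which only a trivial constraint is imposed, and a matching $\overline{b}'$ from $B$. The existential quantifier of $\sigma$ then becomes an existential quantifier over the bimodule variables corresponding to $\overline{b}'$, applied to the quantifier-free datum $(\sigma_0:\phi')$ where $\phi'$ extends $\phi$ by a trivially-true condition on the new right-$R$-module variables $\overline{a}'$; those variables $\overline{a}'$ now have to be eliminated, and I expect this to be the main obstacle. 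One way to carry it out uses the generation hypothesis together with the behaviour of elementary duality under existential projection, namely $D(\exists\overline{x}''\chi)(\overline{z})=D\chi(\overline{z},\overline{0})$ (immediate from the definition of $D$, cf.~\ref{elemdual}), so that the auxiliary $M$-witnesses — which a priori might impose extra pp-conditions — get absorbed into $\phi$ and $D\phi$. A cleaner route to the same point is to reduce the statement to the case where $M$ is a free realisation of $\phi$, that is, a finitely presented module $C$ with a tuple $\overline{c}$ such that ${\rm pp}^C(\overline{c})=\langle\phi\rangle$: then $M\otimes_RB$ is an explicit cokernel of a map between finite powers of $B$ and $(\sigma:\phi)$ can be read off directly from a presentation of $C$, after which one transfers to a general $M$ with ${\rm pp}^M(\overline{a})=\langle\phi\rangle$ along the canonical map sending $\overline{c}$ to $\overline{a}$. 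The entire content then lies in showing that this map induces not merely an inclusion but an equality of the pp-type of $\overline{c}\otimes\overline{b}$ in $C\otimes_RB$ with that of $\overline{a}\otimes\overline{b}$ in $M\otimes_RB$; it is exactly here that ${\rm pp}^M(\overline{a})=\langle\phi\rangle$, rather than merely $M\models\phi(\overline{a})$, is essential.
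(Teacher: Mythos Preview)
Your base case is correct and is a pleasant simplification: for quantifier-free $\sigma$, the formula $\bigwedge_l D\phi(\overline{x}\,\Delta_l)$ does the job directly {\it via} Herzog's Criterion and \ref{tenszero}, without any appeal to a free realisation. The paper does not separate out this case.

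The gap is in the inductive step. Your route~1 cannot be made to work as stated: when you write the witnesses $\overline{e}$ for the existential quantifier as $\overline{a}'\otimes\overline{b}'$, the length of the tuple $\overline{a}'$ (equivalently, the number of simple tensors needed to express each coordinate of $\overline{e}$) is not bounded independently of $M$, $B$ and the particular element, so there is no single pp formula $(\sigma_0:\phi')$ to write down. The duality identity $D(\exists\overline{x}''\chi)(\overline{z})=D\chi(\overline{z},\overline{0})$ is correct but does nothing to bound those lengths, and in any case the extended tuple $(\overline{a},\overline{a}')$ need not have pp-type generated by your $\phi'$, so even for a fixed length the equivalence in (2) is lost.

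Your route~2 is exactly the paper's approach, and the reason it succeeds is that it solves the length problem: in the free realisation $C_\phi$ one has a {\em fixed} $m$-tuple $\overline{c}$ of generators, so every element of $C_\phi\otimes_RB$ is $\overline{c}\otimes\overline{d}$ for some $m$-tuple $\overline{d}$ from $B$, and the witnesses for $\exists\overline{w}$ become existential bimodule variables of predetermined length. The paper does not proceed inductively; it writes down $(\sigma:\phi)$ in one step as $\exists\overline{u}\,\bigwedge_j D\theta\big(\sum_l H_l\overline{y}_l s_{lj}+\sum_i\overline{u}_i t_{ij}\big)$, where $\theta$ records the relations on $\overline{c}$ and $\overline{a}=\overline{c}H$. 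Finally, you correctly locate the remaining content in the transfer from $C_\phi$ to a general $M$ with ${\rm pp}^M(\overline{a})=\langle\phi\rangle$, but your proposed mechanism (equality of the two pp-types in the tensor products) is asserted rather than proved; the paper does not attempt to prove that equality directly. Instead it first establishes (2) for $C_\phi$, then proves the general characterisation \ref{sigmaphi2}, and only then deduces (2) for arbitrary $M$ from \ref{sigmaphi2} together with the monotonicity $(\sigma:\psi)\leq(\sigma:\phi)$ when $\phi\leq\psi$ (\ref{sigmaphiprop}(1)).
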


If the formula $\sigma$ has just one free variable, then the references to partition can be ignored.  Otherwise, say if $\sigma$ has $k$ free variables, we have to specify, as well as the formula $\phi$, the way in which the free variables of $\phi$ are partitioned into $k$ blocks.  For instance, if $k=2$ and $\phi$ has $3$ free variables, then we have to specify how we tensor two $3$-tuples to get a 2-tuple: whether $(a_1\otimes b_1, a_2\otimes b_2 + a_3\otimes b_3$) or $(a_1\otimes b_1 + a_2\otimes b_2, a_3\otimes b_3)$.  If $\zeta$ is a partition of $n$ into $k$ parts then by a $\zeta$-{\bf partitioned} formula we mean one with $n$ free variables and whose free variables $x_1, \dots, x_n$ are partitioned according to $\zeta$.  A {\bf matching} formula is another $\zeta$-partitioned formula.  We conveniently regard such a formula as having free variables presented as a $k$-tuple $(\overline{x}_1, \dots, \overline{x_k})$ of tuples $\overline{x}_t$ of variables.  Usually we don't have to specify more than this (such as the length of each part) and we may just refer to a formula as above as being a $k$-{\bf partitioned} formula.  Of course, we assume that formulas and tuples of elements match as needed, even if that is not specified.

\vspace{4pt}

\begin{proof} The proof in \cite{PreTens} is given just for the case where the partition has a single part, that is, where $\sigma$ has just one free variable.  The general case proceeds similarly and we give that here for completeness.

Suppose that $\sigma = \sigma(v_1,\dots, v_k)$ has $k$ free variables and take $\sigma$ explicitly to be the formula
$$\exists \overline{w} \, \bigwedge_j  \sum_l \, v_ls _{lj} + \sum_i w_it_{ij} =0.$$

Next consider the partitioned formula $\phi = \phi(x_1, \dots, x_n) = \phi(\overline{x}_1, \dots, \overline{x}_k)$ and choose a {\bf free realisation} $(C_\phi, \overline{a}= (\overline{a}_1 \, \dots\, \overline{a}_k))$ of $\phi$, meaning that $C_\phi$ is finitely presented and ${\rm pp}^{C_\phi}(\overline{a}) = \langle \phi \rangle$ (such exists, \cite[1.2.14]{PreNBK}).  Take a finite generating tuple $\overline{c}$ for the $R$-module $C_\phi$ and let $\theta$ be a conjunction\footnote{finite, since $C_\phi$ is finitely presented} of $R$-linear equations which together generate all the relations on $\overline{c}$ in $C_\phi$; suppose that $\overline{c}$ is an $m$-tuple.  Also choose a matrix $H$ such that $\overline{a} = \overline{c}H$ and note that we can write $H$ as a partitioned matrix $(H_1 \, \dots \, H_k)$ so that $\overline{a}_l = \overline{c}H_l$.

We set the pp formula (for $(R,S)$-bimodules) $(\sigma: \phi)(\overline{y})$, with free variables $\overline{y} = (\overline{y}_1 \,\dots\, \overline{y}_k)$ an $n$-tuple partitioned as is $\overline{x}$, to be 
$$\exists \overline{u}\, \bigwedge_j \, D\theta(\sum_l H_l \overline{y}_l \cdot s_{lj} + \sum_i \overline{u}_i\cdot t_{ij})$$ 
where $\overline{u}$ is a tuple (of length equal to that of $\overline{w}$) of $m$-tuples.

\vspace{4pt}

\noindent Proof of (2)($\Leftarrow$) and (1):  With notation as in the statement of (2), write $\overline{b}$ from $_RB_S$ as $(\overline{b}_1 \,\dots\, \overline{b}_k)$ to match the partitioned $n$-tuple $\overline{a}$.  

Suppose first that $_RB_S \models (\sigma: \phi)(\overline{b})$.  So there is $\overline{d}$, a tuple of $m$-tuples $\overline{d}_i$ from $B$, such that 

$$\bigwedge_j \, D\theta(\sum_l H_l \overline{b}_l \cdot s_{lj} + \sum_i \overline{d}_i\cdot t_{ij}).$$

Also, since $M\models \phi(\overline{a})$, then there is $\overline{c}$ from $M$ such that $\overline{a} = \overline{c}H$ and such that $M \models \theta(\overline{c})$.
Then, for each $j$, we have, since $M \models \theta(\overline{c})$, 
$$\sum_l \overline{c} \otimes H_l \overline{b}_l \cdot s_{lj} + \sum_i \overline{c} \otimes \overline{d}_i\cdot t_{ij}=0,$$
(regard tuples occurring on the right of tensors as column vectors) 

\noindent from which we obtain

$$\sum_l \overline{c} H_l \otimes \overline{b}_l \cdot s_{lj} + \sum_i \overline{c} \otimes \overline{d}_i\cdot t_{ij}=0,$$
that is
$$\sum_l \overline{a}_l \otimes \overline{b}_l \cdot s_{lj} + \sum_i \overline{c} \otimes \overline{d}_i\cdot t_{ij}=0.$$

Thus $M \otimes_RB_S \models \sigma (\overline{a} \otimes \overline{b})$, as required.

\vspace{4pt}

For the converse, 2)($\Rightarrow$), suppose that $C_\phi \otimes_RB_S \models \sigma(\overline{a} \otimes \overline{b})$, that is $C_\phi \otimes_RB_S \models \sigma(\overline{a}_1 \otimes \overline{b}_1, \dots , \overline{a}_k \otimes \overline{b}_k)$.  So there are $\overline{m}_i$ from $C_\phi$ and $\overline{d}_i$ from $B$ such that
$$\bigwedge_j \, \sum_l \overline{a}_l\otimes \overline{b}_l.s_{lj} + \sum_i \overline{m}_i \otimes \overline{d}_i.t_{ij} =0.$$
Since $C_\phi$ is generated by $\overline{c}$, we have, for each $i$, $\overline{m}_i = \overline{c}K_i$ for some matrix $K_i$ with entries in $R$.  Therefore $\overline{m}_i\otimes \overline{d}_it_{ij} = \overline{c}K_i \otimes \overline{d}_it_{ij} = \overline{c}\otimes K_i\overline{d}_it_{ij}$.  Writing $\overline{e}_i = K_i\overline{d}_i$ and recalling that $\overline{a}_l = \overline{c}H_l$, this gives us
$$\bigwedge_j \, \sum_l \overline{c}_l\otimes H_l\overline{b}_l.s_{lj} + \sum_i \overline{c} \otimes \overline{e}_i.t_{ij} =0,$$
that is,
$$\bigwedge_j \, \overline{c} \otimes (\sum_l H_l\overline{b}_l.s_{lj} + \sum_i  \overline{e}_i.t_{ij}) =0.$$
By Herzog's criterion, \ref{herzcrit}, and since ${\rm pp}^{C_\phi}(\overline{c}) = \langle \theta \rangle$, it follows that, for each $j$, we have
$_RB_S \models D\theta(\sum_l H_l\overline{b}_l.s_{lj} + \sum_i  \overline{e}_i.t_{ij})$ and hence that 
$$_RB_S \models \bigwedge_j \, D\theta(\sum_l H_l\overline{b}_l.s_{lj} + \sum_i  \overline{e}_i.t_{ij}).$$
That is
$$_RB_S \models \exists \overline{u} \,\bigwedge_j \, D\theta(\sum_l H_l\overline{b}_l.s_{lj} + \sum_i  \overline{u}_i.t_{ij}),$$
as required.

The proof for the general case, with $M$ in place of $C_\phi$, is at the end of the proof of \ref{sigmaphi2} below.
\end{proof}

The next result is the generalisation of \ref{herzcrit}.

\begin{theorem}\label{sigmaphi2} \marginpar{sigmaphi2} (\cite[2.1(3)]{PreTens}) Suppose that $M_R$ is a right $R$-module, $_RB_S$ is an $(R,S)$-bimodule, $\sigma$ is a pp formula for right $S$-modules, $\overline{a}$ is a partitioned tuple from $M$ and $\overline{b}$ is a matching partitioned tuple from $B$.  Then $(M\otimes_RB)_S \models \sigma(\overline{a} \otimes \overline{b})$ iff there is a (matching partitioned) pp formula $\phi$ for right $R$-modules such that $M_R \models \phi(\overline{a})$ and $_RB_S \models (\sigma: \phi)(\overline{b})$.
\end{theorem}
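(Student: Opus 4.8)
The plan is to treat the two implications separately. The implication ``$\Leftarrow$'' requires no new work: it is exactly statement~(1) of \ref{sigmaphi1}, applied directly to $M_R$ and to $_RB_S$. For ``$\Rightarrow$'' the strategy is to reduce to the finitely presented case, where the relevant instance of \ref{sigmaphi1}(2) is already available; recall that over a finitely presented module every pp-type of a finite tuple is finitely generated, \cite[1.2.6]{PreNBK}, so such a module equipped with a generating tuple for a pp-type $\langle\phi\rangle$ is a free realisation of $\phi$, which is the case treated in the proof of \ref{sigmaphi1}.

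So, for ``$\Rightarrow$'', write $M$ as a directed colimit $M=\varinjlim_\lambda C_\lambda$ of finitely presented right $R$-modules. Since $-\otimes_R B$ is a left adjoint it commutes with directed colimits, so $M\otimes_R B=\varinjlim_\lambda(C_\lambda\otimes_R B)$. Now $\sigma$ is a pp formula, and $(M\otimes_R B)_S\models\sigma(\overline{a}\otimes\overline{b})$ asserts that finitely many elements of this colimit — the entries of $\overline{a}\otimes\overline{b}$ together with a witnessing tuple $\overline{w}$ for the existential quantifier of $\sigma$ — satisfy finitely many $S$-linear equations. A finiteness argument over the directed system then produces a single finitely presented $C$, a morphism $g\colon C\to M$, and a tuple $\overline{c}$ from $C$, partitioned to match $\overline{a}$ (hence $\sigma$), with $g\overline{c}=\overline{a}$ and $(C\otimes_R B)_S\models\sigma(\overline{c}\otimes\overline{b})$: first choose $C$ far enough along that $\overline{a}$ is the image of some $\overline{c}$; then, since $(g\otimes 1_B)(\overline{c}\otimes\overline{b})=\overline{a}\otimes\overline{b}$, pull the witnesses $\overline{w}$ back to some later stage; then enlarge $C$ once more so that the defining equations of $\sigma$, which hold in the colimit, already hold at the stage $C\otimes_R B$.

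Once we are over the finitely presented module $C$, let $\phi$ be a pp formula generating the finitely generated pp-type ${\rm pp}^C(\overline{c})$, regarded as partitioned like $\overline{c}$. From $(C\otimes_R B)_S\models\sigma(\overline{c}\otimes\overline{b})$ and \ref{sigmaphi1}(2) (in the finitely presented / free-realisation case proved there) we get $_RB_S\models(\sigma:\phi)(\overline{b})$; and since $\overline{c}\in\phi(C)$ and morphisms preserve pp formulas, $M\models\phi(g\overline{c})=\phi(\overline{a})$. Thus $\phi$ witnesses the right-hand side of the statement, completing ``$\Rightarrow$''.

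Finally, the same reduction discharges the general case of \ref{sigmaphi1}(2)($\Rightarrow$) promised in the proof of \ref{sigmaphi1}: if ${\rm pp}^M(\overline{a})=\langle\psi\rangle$ and $M\otimes_R B\models\sigma(\overline{a}\otimes\overline{b})$, then by the theorem just proved there is $\phi$ with $M\models\phi(\overline{a})$ and $_RB_S\models(\sigma:\phi)(\overline{b})$; taking a free realisation $(C_\psi,\overline{c}_\psi)$ of $\psi$, the equality ${\rm pp}^{C_\psi}(\overline{c}_\psi)=\langle\psi\rangle={\rm pp}^M(\overline{a})$ gives $C_\psi\models\phi(\overline{c}_\psi)$, so \ref{sigmaphi1}(1) yields $C_\psi\otimes_R B\models\sigma(\overline{c}_\psi\otimes\overline{b})$, and then \ref{sigmaphi1}(2) in the finitely presented case gives $_RB_S\models(\sigma:\psi)(\overline{b})$, as wanted. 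The point that needs care is the finiteness argument of the second paragraph: one must ensure that not only the tuple $\overline{a}\otimes\overline{b}$ and the witnesses $\overline{w}$ but also the \emph{validity} of the defining equations of $\sigma$ descend to one and the same finitely presented stage; the rest, including keeping the partition of the free variables straight, is bookkeeping.
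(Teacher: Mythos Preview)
Your proof is correct and takes a genuinely different route from the paper's.  The paper argues ``$\Rightarrow$'' hands-on: it unfolds $\sigma$ explicitly, writes the satisfied $S$-linear equations as statements that certain elements $\overline{a}\otimes(\ldots)$ and $\overline{m}_i\otimes(\ldots)$ are zero in $M\otimes_RB$, applies Herzog's criterion \ref{herzcrit} to each such zero to obtain pp formulas $\phi_j$ with $M\models\phi_j(\overline{a},\overline{m}_1,\dots,\overline{m}_s)$ and $B\models D\phi_j(\ldots)$, then sets $\phi(\overline{x})=\exists\,\overline{u}_1,\dots,\overline{u}_s\,\bigwedge_j\phi_j(\overline{x},\overline{u}_1,\dots,\overline{u}_s)$ and checks $(\sigma:\phi)(\overline{b})$ via a free realisation.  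Your approach instead invokes the Lazard-type presentation $M=\varinjlim C_\lambda$ and the compactness of pp conditions in directed colimits to descend the whole situation to a single finitely presented stage, where \ref{sigmaphi1}(2) is already proved; you never open up $\sigma$ or invoke Herzog's criterion.  The finiteness argument you flag is indeed the only place needing care, and the version you sketch (lift $\overline{a}$ to $\overline{c}$, then lift witnesses, then pass to a later stage where the finitely many $S$-linear equations hold) is the standard one and is sound here since the transition maps $C_\lambda\otimes_RB\to C_\mu\otimes_RB$ carry $\overline{c}\otimes\overline{b}$ to $(\text{image of }\overline{c})\otimes\overline{b}$.  What the paper's route buys is an explicit description of $\phi$ and a visible link to \ref{herzcrit}; what your route buys is a cleaner, coordinate-free reduction that avoids any manipulation of $\sigma$.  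For the coda on \ref{sigmaphi1}(2)($\Rightarrow$), your argument via a free realisation of $\psi$ is correct, but note that the paper's is shorter: from $\phi\in{\rm pp}^M(\overline{a})=\langle\psi\rangle$ one has $\psi\leq\phi$, hence $(\sigma:\phi)\leq(\sigma:\psi)$ by \ref{sigmaphiprop}(1), so $B\models(\sigma:\psi)(\overline{b})$ immediately.
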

\begin{proof} The proof is just as in \cite{PreTens} and we repeat it here.  We already have one direction by \ref{sigmaphi1}, so suppose that $M\otimes_RB_S \models \sigma(\overline{a} \otimes \overline{b})$.  With notation as in the above proof, that means we have $\overline{m}_i$ from $M$ and $\overline{d}_i$ from $B$ such that 
$$\bigwedge_j \, \sum_l \overline{a}_l\otimes \overline{b}_l.s_{lj} + \sum_i \overline{m}_i \otimes \overline{d}_i.t_{ij} =0.$$
Then, by \ref{herzcrit}, for each $j$, there is a pp formula $\phi_j$ for $R$-modules such that 
$$M\models \phi_j(\overline{a}, \overline{m}_1, \dots, \overline{m}_s),$$ 
where $\overline{a} = (\overline{a}_1 \, \dots \, \overline{a}_k)$ and $i$ runs from 1 to $s$,
and
$$_RB_S \models D\phi_j(\overline{b}, \overline{d}_1, \dots, \overline{d}_s).$$

Let $\phi(\overline{x})$ be the pp formula $\exists \overline{u}_1, \dots, \overline{u}_s \, \bigwedge_j \phi_j(\overline{x}, \overline{u}_1, \dots, \overline{u}_s)$, so $M \models \phi(\overline{a})$.  We claim that $_RB_S \models (\sigma: \phi)(\overline{b})$.  By \ref{sigmaphi1}(2), it is sufficient to show that if $(C_\phi, \overline{a}')$, with $\overline{a}' = (\overline{a}'_1 \, \dots \, \overline{a}'_k)$, is a free realisation of $\phi$, then $C_\phi \otimes B \models \sigma(\overline{a}'\otimes \overline{b})$.  Since $C_\phi \models \phi(\overline{a}')$, there are $\overline{c}_1, \dots, \overline{c}_s$ such that $C_\phi \models \phi_j(\overline{a}', \overline{c}_1, \dots, \overline{c}_s)$ for each $j$.  Therefore 
$$C_\phi \otimes B \models \sum_l \overline{a}'_l\otimes \overline{b}_l.s_{lj} + \sum_i \overline{c}_i \otimes \overline{d}_i.t_{ij} =0$$ 
for each $j$
and hence $C_\phi \otimes_R B_S \models \sigma(\overline{a}' \otimes \overline{b})$, as required.

To finish the proof of \ref{sigmaphi1}, if $M\otimes B\models \sigma(\overline{a} \otimes \overline{b})$, so $M\models \psi(\overline{a})$ and $B \models (\sigma: \psi)(\overline{b})$ for some $\psi$ then, since, by assumption $\phi \leq \psi$, we have $(\sigma: \psi) \leq (\sigma: \phi)$ and so $B\models (\sigma: \phi)(\overline{b})$, as required.
\end{proof}

Following easily from the definitions, we have some elementary properties of $(-:-)$.

\begin{lemma}\label{sigmaphiprop} \marginpar{sigmaphiprop} (\cite[\S 2.2 (1), (2)]{PreTens}) 

\noindent (1) $(\sigma: \phi + \psi) = (\sigma: \phi) \wedge (\sigma: \psi)$, in particular, if $\phi \leq \psi$, then $(\sigma: \psi) \leq (\sigma: \phi)$.

\noindent (2) $(\sigma: \phi \wedge \psi) \geq (\sigma: \phi) + (\sigma: \psi)$.
\end{lemma}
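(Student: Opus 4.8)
The plan is to reduce everything to the characterisation of $(\sigma:-)$ furnished by \ref{sigmaphi1}: up to equivalence on all $(R,S)$-bimodules, $(\sigma:\phi)$ is pinned down by the property that, for a free realisation $(C_\phi,\overline a)$ of $\phi$ (which exists by \cite[1.2.14]{PreNBK}), one has $_RB_S\models(\sigma:\phi)(\overline b)$ iff $C_\phi\otimes_RB_S\models\sigma(\overline a\otimes\overline b)$. In particular this makes $(\sigma:-)$ well defined on equivalence classes of pp formulas, a fact I will use silently. The first thing I would record is the anti-monotonicity of $\phi\mapsto(\sigma:\phi)$, i.e. that $\phi\leq\psi$ implies $(\sigma:\psi)\leq(\sigma:\phi)$: given $B$ and $\overline b$ with $B\models(\sigma:\psi)(\overline b)$, take a free realisation $(C_\phi,\overline a)$ of $\phi$; since $\phi\leq\psi$ we have $C_\phi\models\psi(\overline a)$, so \ref{sigmaphi1}(1) applied with the formula $\psi$ gives $C_\phi\otimes_RB\models\sigma(\overline a\otimes\overline b)$, and then \ref{sigmaphi1}(2) applied with $\phi$ yields $B\models(\sigma:\phi)(\overline b)$. (This is essentially the remark already used at the end of the proof of \ref{sigmaphi2}.)

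For (1), the inequality $(\sigma:\phi+\psi)\leq(\sigma:\phi)\wedge(\sigma:\psi)$ is then immediate from anti-monotonicity, since $\phi\leq\phi+\psi$ and $\psi\leq\phi+\psi$. The substance is the reverse inequality. Here I would fix a free realisation $(C,\overline a)$ of $\phi+\psi$ and use that $\phi+\psi$ defines $\phi(M)+\psi(M)$ in every module, so $\overline a\in\phi(C)+\psi(C)$ and we may write $\overline a=\overline a'+\overline a''$ with $C\models\phi(\overline a')$ and $C\models\psi(\overline a'')$. Then, given $B$ and $\overline b$ with $B\models(\sigma:\phi)(\overline b)$ and $B\models(\sigma:\psi)(\overline b)$, two applications of \ref{sigmaphi1}(1) give $C\otimes_RB\models\sigma(\overline a'\otimes\overline b)$ and $C\otimes_RB\models\sigma(\overline a''\otimes\overline b)$; since solution sets of pp formulas are subgroups and the (partitioned) tensor is additive in its left argument, $C\otimes_RB\models\sigma(\overline a\otimes\overline b)$, and \ref{sigmaphi1}(2) gives $B\models(\sigma:\phi+\psi)(\overline b)$. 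The ``in particular'' clause then follows: if $\phi\leq\psi$ then $\phi+\psi$ and $\psi$ are equivalent (both define $\phi(M)+\psi(M)=\psi(M)$), so $(\sigma:\psi)=(\sigma:\phi+\psi)=(\sigma:\phi)\wedge(\sigma:\psi)\leq(\sigma:\phi)$.

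For (2), I would simply invoke anti-monotonicity once more: from $\phi\wedge\psi\leq\phi$ and $\phi\wedge\psi\leq\psi$ we obtain $(\sigma:\phi)\leq(\sigma:\phi\wedge\psi)$ and $(\sigma:\psi)\leq(\sigma:\phi\wedge\psi)$, and since the join in the lattice of pp formulas for $(R,S)$-bimodules is $+$, this yields $(\sigma:\phi)+(\sigma:\psi)\leq(\sigma:\phi\wedge\psi)$.

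The whole argument is essentially bookkeeping built on \ref{sigmaphi1}; the only step that requires an idea rather than a routine inclusion is the reverse inequality in (1), where the move is to realise the formula $\phi+\psi$ freely and then split its distinguished tuple as $\overline a'+\overline a''$ with $\overline a'$ satisfying $\phi$ and $\overline a''$ satisfying $\psi$. The other thing to keep track of throughout is the partition of the free variables of $\phi$, $\psi$, $\phi\wedge\psi$ and $\phi+\psi$ (all the same, matching that of $\sigma$), together with the additivity of the partitioned tensor product in the left slot; neither of these presents a genuine obstacle.
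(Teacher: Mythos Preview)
Your argument is correct. The paper does not actually prove this lemma: it simply records the identities with the remark that they follow ``easily from the definitions'' and cites \cite{PreTens}. Your write-up is a clean way to supply those details, working entirely through the characterising property of $(\sigma:\phi)$ in \ref{sigmaphi1} rather than manipulating the explicit formula built in the proof of that theorem; the only point worth a second glance is the additivity of the partitioned tensor in the left slot used in the reverse inequality of (1), and you have that right.
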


\begin{remark} \label{fmlaoverR} \marginpar{fmlaoverR} We will often apply this with $_RB_S =\, _RS_S$ where $R\to S$ is a morphism of rings.  Note that, if $\sigma$ is a pp formula for right $R$-modules, then $(\sigma: \phi)$ will be a formula for $(R,R)$-bimodules:  this can be seen from the proof of \ref{sigmaphi1} but also follows from the statement since, if we read the result with $_RS_R$ in place of $_RS_S$, then nothing changes about the properties of $(\sigma: \phi)$, hence that formula is unchanged (up to equivalence).
\end{remark}

\begin{cor} \label{elem4} \marginpar{elem4} Suppose that $f:R \to S$ is any morphism of rings, that $M$ is an $R$-module and that $\overline{a} = (a_1, \dots, a_n)$ is a tuple from $M$.  Consider $\overline{a} \otimes \overline{1} = (a_1 \otimes 1_S, \dots, a_n \otimes 1_S)  \in M\otimes_RS_S$.

Then
$${\rm pp}^{M\otimes S}(\overline{a}\otimes \overline{1}) = \{ \sigma: \text{there is } \phi \in {\rm pp}^M(\overline{a}) \text{ with }(\sigma: \phi) \in {\rm pp}^{_RS_S}(\overline{1})\}$$
and ${\rm pp}^{M\otimes S}(\overline{a}\otimes \overline{1})$ is generated by $f_\ast {\rm pp}^M(\overline{a})$.

In particular, if ${\rm pp}^M(\overline{a})$ is finitely generated (by $\phi$) then so is ${\rm pp}^{M\otimes S}(\overline{a}\otimes \overline{1})$ (by $f_\ast\phi$).
\end{cor}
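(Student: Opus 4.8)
I would begin by observing that the first displayed equality is essentially a restatement of Theorem~\ref{sigmaphi2} in the special case where the bimodule there is taken to be $_RS_S$ and the tuple $\overline{b}$ to be $\overline{1}$: here the tuple $\overline{a}=(a_1,\dots,a_n)$ is tensored componentwise with $\overline{1}=(1_S,\dots,1_S)$, so the partition involved is into $n$ singleton blocks and a ``matching partitioned'' pp formula $\phi$ is simply a pp formula in $n$ free variables. Then ``$M_R\models\phi(\overline{a})$'' reads ``$\phi\in{\rm pp}^M(\overline{a})$'', ``$_RS_S\models(\sigma:\phi)(\overline{1})$'' reads ``$(\sigma:\phi)\in{\rm pp}^{_RS_S}(\overline{1})$'', and ``$(M\otimes_RS)_S\models\sigma(\overline{a}\otimes\overline{1})$'' reads ``$\sigma\in{\rm pp}^{M\otimes S}(\overline{a}\otimes\overline{1})$''; so the equality is immediate once \ref{sigmaphi2} is quoted, and no new argument is needed there.

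For the assertion that ${\rm pp}^{M\otimes S}(\overline{a}\otimes\overline{1})$ is generated by $f_\ast{\rm pp}^M(\overline{a})$, the plan is to prove the two inclusions between this pp-type and the filter generated by $f_\ast{\rm pp}^M(\overline{a})$. For the easy inclusion, since pp-types are upwards closed it suffices to show $f_\ast\phi\in{\rm pp}^{M\otimes S}(\overline{a}\otimes\overline{1})$ whenever $\phi\in{\rm pp}^M(\overline{a})$; but the canonical map $m\mapsto m\otimes1$ is a morphism of right $R$-modules $M_R\to(M\otimes_RS)_R$ (where $M\otimes_RS$ carries the $R$-action induced by $f$; one checks $mr\otimes1=m\otimes f(r)$) carrying $\overline{a}$ to $\overline{a}\otimes\overline{1}$, so, since morphisms preserve pp formulas, $(M\otimes_RS)_R\models\phi(\overline{a}\otimes\overline{1})$, which by \ref{fstar} is the same as $(M\otimes_RS)_S\models f_\ast\phi(\overline{a}\otimes\overline{1})$. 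For the reverse inclusion, take $\sigma\in{\rm pp}^{M\otimes S}(\overline{a}\otimes\overline{1})$ and, by the first equality just established, pick $\phi\in{\rm pp}^M(\overline{a})$ with $(\sigma:\phi)\in{\rm pp}^{_RS_S}(\overline{1})$; it then suffices to show $f_\ast\phi\leq\sigma$ as pp formulas for right $S$-modules, since $\phi\in{\rm pp}^M(\overline{a})$. To see this, let $N$ be any right $S$-module and $\overline{c}$ a tuple from $N$ with $N_S\models f_\ast\phi(\overline{c})$; then $N_R\models\phi(\overline{c})$ by \ref{fstar}, so \ref{sigmaphi1}(1), applied to the bimodule $_RS_S$, yields $(N\otimes_RS)_S\models\sigma(\overline{c}\otimes\overline{1})$; finally the multiplication map $N\otimes_RS\to N$, $n\otimes s\mapsto ns$, is right $S$-linear and sends $\overline{c}\otimes\overline{1}$ to $\overline{c}$, so $N_S\models\sigma(\overline{c})$.

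The ``in particular'' clause then follows at once: if ${\rm pp}^M(\overline{a})=\langle\phi\rangle$ then, since $f_\ast$ is monotone (\ref{fstarleq}), the filter generated by $f_\ast{\rm pp}^M(\overline{a})=\{f_\ast\psi:\psi\geq\phi\}$ is exactly $\langle f_\ast\phi\rangle$, so the preceding paragraph gives ${\rm pp}^{M\otimes S}(\overline{a}\otimes\overline{1})=\langle f_\ast\phi\rangle$. I do not anticipate a serious obstacle: the whole corollary is an application of the $(\sigma:\phi)$-calculus of \ref{sigmaphi1} and \ref{sigmaphi2} together with \ref{fstar}. The one point that needs care is the argument for $f_\ast\phi\leq\sigma$ in the hard inclusion, where one must pass from the $R$-level input ``$_RS_S\models(\sigma:\phi)(\overline{1})$'' to an $S$-level conclusion by feeding an arbitrary right $S$-module through \ref{sigmaphi1}(1) and then collapsing $N\otimes_RS$ onto $N$; an equivalent route, if preferred, is to replace $N$ by a free realisation $(C_\phi,\overline{c})$ of $\phi$ and observe that $(C_\phi\otimes_RS,\overline{c}\otimes\overline{1})$ is a free realisation of $f_\ast\phi$, since $-\otimes_RS$ is right exact and turns the presentation of $C_\phi$ into the presentation defining $f_\ast\phi$.
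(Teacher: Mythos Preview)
Your proof is correct and follows essentially the same route as the paper: the first equality is read off from \ref{sigmaphi2}, the easy inclusion uses that $m\mapsto m\otimes 1$ is $R$-linear together with \ref{fstar}, and the key inclusion $f_\ast\phi\leq\sigma$ is established exactly as in the paper by testing on an arbitrary $S$-module $N$, applying \ref{sigmaphi1}(1) to get $\sigma$ on $N\otimes_RS$, and then collapsing along the $S$-linear multiplication map $N\otimes_RS\to N$. Your closing remark offering the free-realisation alternative is extra but harmless.
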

\begin{proof}  The first part is immediate from \ref{sigmaphi2}.

Since $M_R \to M\otimes_RS_R$ given by $a \mapsto a\otimes 1_S$ is an $R$-linear map ($ar\otimes 1 = a\otimes fr$) we have $f_\ast\phi \in {\rm pp}^{M\otimes S}(\overline{a} \otimes \overline{1})$ for every $\phi \in {\rm pp}^M(\overline{a})$.  For the converse, if $\sigma \in {\rm pp}^{M\otimes S}(\overline{a} \otimes \overline{1})$, then for some $\phi \in {\rm pp}^M(\overline{a})$ we have $(\sigma: \phi) \in {\rm pp}^{_RS_S}(\overline{1})$; we claim that $f_\ast\phi \leq \sigma $.  So suppose that $N$ is an $S$-module and that $N\models f_\ast\phi(\overline{c})$.  Then (\ref{fstar}) $N_R \models \phi(\overline{c})$ and so, since $_{R}S_S \models (\sigma: \phi)(\overline{1})$, we have $N \otimes_{R} S_S \models \sigma(\overline{c} \otimes \overline{1})$.  We have the $S$-linear map $N \otimes_{R} S_S \to N\otimes_S S_S \simeq N$, so it follows that $N \models \sigma(\overline{c})$.  Therefore $f_\ast\phi \leq \sigma $, as claimed.  The statement about finite generation is then immediate.
\end{proof}

\subsection{Mittag-Leffler modules} \label{secML} \marginpar{secML}

Recall that a module $M$ is {\bf Mittag-Leffler} \cite{RayGru} if the equivalent conditions of the following theorem are fulfilled.

\begin{theorem} \label{MLchar} \marginpar{MLchar} (\cite[2.2]{RotHab}, see \cite{RotML}) Suppose that $M$ is a right $R$-module.  Then the following conditions are equivalent.

\noindent (i) For every set $\{ L_i\}_{i\in I}$ of left $R$-modules, the canonical map
$$M\otimes_R \, (\prod_{i\in I} \, L_i) \to \prod_{i\in I} \, (M\otimes_R L_i)$$ 
is monic.

\noindent (ii)  The pp-type ${\rm pp}^M(\overline{a})$ of any finite tuple in $M$ is finitely generated.
\end{theorem}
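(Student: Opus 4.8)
The plan is to deduce both implications from Herzog's criterion (\ref{herzcrit}), its corollary \ref{tenszero}, elementary duality (\ref{elemdual}), and the standard fact that the solution set of a pp formula in a direct product is the product of the solution sets. I would begin by recording the shape of the canonical map. Any element of $M\otimes_R\prod_iL_i$ can, after choosing a finite tuple $\overline{a}=(a_1,\dots,a_n)$ from $M$ large enough to support it, be written as $\sum_{j=1}^n a_j\otimes c_j$ with $c_j=(c_j(i))_i\in\prod_iL_i$; writing $\overline{c}(i)=(c_1(i),\dots,c_n(i))\in L_i^n$, the canonical map sends it to the tuple $\big(\overline{a}\otimes\overline{c}(i)\big)_i$ in $\prod_i(M\otimes_RL_i)$. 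So the map is monic iff: whenever $\overline{a}\otimes\overline{c}(i)=0$ in $M\otimes_RL_i$ for every $i$, already $\overline{a}\otimes\overline{c}=0$ in $M\otimes_R\prod_iL_i$, where on the left $\overline{c}$ is read as an $n$-tuple from the left $R$-module $\prod_iL_i$.

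For $(ii)\Rightarrow(i)$, fix such a tuple $\overline{a}$ and let $\phi$ generate ${\rm pp}^M(\overline{a})$. Assume $\overline{a}\otimes\overline{c}(i)=0$ in $M\otimes_RL_i$ for all $i$; by \ref{tenszero} this says ${}_RL_i\models D\phi(\overline{c}(i))$ for every $i$. Since the solution set of the pp formula $D\phi$ in $\prod_iL_i$ is $\prod_iD\phi(L_i)$, we get ${}_R\prod_iL_i\models D\phi(\overline{c})$, and then \ref{tenszero} again, using $M\models\phi(\overline{a})$, gives $\overline{a}\otimes\overline{c}=0$ in $M\otimes_R\prod_iL_i$. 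Hence the canonical map is monic. Note that (ii) is used only for the single tuple supporting the given kernel element.

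For $(i)\Rightarrow(ii)$, argue contrapositively: suppose $p:={\rm pp}^M(\overline{a})$ is not finitely generated for some finite tuple $\overline{a}=(a_1,\dots,a_n)$ from $M$. Because $p$ is upward closed and closed under $\wedge$, $p$ is finitely generated iff it has a least element; so for every $\psi\in p$ there is $\phi\in p$ with $\psi\not\leq\phi$. For each $\phi\in p$ choose a free realisation $(L_\phi,\overline{b}_\phi)$ of the dual formula $D\phi$, a finitely presented left $R$-module with ${\rm pp}^{L_\phi}(\overline{b}_\phi)=\langle D\phi\rangle$ (such exists by \cite[1.2.14]{PreNBK}), and set $L:=\prod_{\phi\in p}L_\phi$, $\overline{b}:=(\overline{b}_\phi)_\phi$ from $L$. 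Since $M\models\phi(\overline{a})$ and $L_\phi\models D\phi(\overline{b}_\phi)$, \ref{herzcrit} gives $\overline{a}\otimes\overline{b}_\phi=0$ in $M\otimes_RL_\phi$ for each $\phi$, so the image of $\overline{a}\otimes\overline{b}$ under the canonical map $M\otimes_RL\to\prod_\phi M\otimes_RL_\phi$ is $0$. But if $\overline{a}\otimes\overline{b}$ were $0$ in $M\otimes_RL$, then by \ref{herzcrit} there would be $\psi\in{\rm pp}^M(\overline{a})=p$ with $L\models D\psi(\overline{b})$, hence $L_\phi\models D\psi(\overline{b}_\phi)$ for all $\phi$, hence $D\phi\leq D\psi$ by the defining property of the free realisation, i.e.\ $\psi\leq\phi$ for all $\phi\in p$ — so $\psi$ would be a least element of $p$, a contradiction. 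Thus $\overline{a}\otimes\overline{b}\neq0$ lies in the kernel and (i) fails.

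There is no deep obstacle here; the content is the bookkeeping of Herzog's criterion in both directions. The step needing the most care is the $(i)\Rightarrow(ii)$ construction: pinning down that ``not finitely generated'' is equivalent to ``no least element'', and that a free realisation of $D\phi$ has pp-type exactly $\langle D\phi\rangle$, so that $L_\phi\models D\psi(\overline{b}_\phi)$ forces $\psi\leq\phi$. One should also not overlook, in $(ii)\Rightarrow(i)$, the use of the product-of-solution-sets fact for pp formulas and the reduction of an arbitrary kernel element to the form $\sum_j a_j\otimes c_j$ with the $a_j$ collected into a fixed tuple.
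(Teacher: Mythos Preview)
The paper does not give its own proof of this theorem; it is stated with attribution to Rothmaler \cite[2.2]{RotHab}, \cite{RotML} and used as background. Your argument is correct and is essentially the standard one: both directions are bookkeeping with Herzog's criterion \ref{herzcrit}/\ref{tenszero}, using in $(ii)\Rightarrow(i)$ that pp formulas commute with products, and in $(i)\Rightarrow(ii)$ a product of free realisations of the duals $D\phi$ for $\phi\in p$ to manufacture a nonzero kernel element. The only points to be careful with, which you flag yourself, are the reduction of an arbitrary tensor to the form $\overline{a}\otimes\overline{c}$ for a fixed finite $\overline{a}$, and that ``$p$ not finitely generated'' is the same as ``$p$ has no least element''; both are fine.
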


For example, finitely presented modules, more generally {\bf pure-projective} modules (direct summands of direct sums of finitely presented modules), are Mittag-Leffler.

\vspace{4pt}

There are relative versions of these definitions and results, \cite{RotHab}, see \cite[\S 3]{RotML2}, as follows.

If ${\cal K}$ is a class of left $R$-modules then a right $R$-module is ${\cal K}$-{\bf Mittag-Leffler} if, for every set $\{ L_i\}_{i\in I}$ of modules in ${\cal K}$, the canonical map $M\otimes_R \, (\prod_{i\in I} \, L_i) \to \prod_{i\in I} \, (M\otimes_R L_i)$ is monic.

\begin{lemma}\label{defML} \marginpar{defML} (see \cite[3.6]{RotML2}) If ${\cal K}$ is a class of left $R$-modules and $\langle {\cal K} \rangle$ is the definable subcategory of $R\mbox{-}{\rm Mod}$ that it generates, then a right module $M$ is ${\cal K}$-Mittag-Leffler iff it is $\langle {\cal K} \rangle$-Mittag-Leffler.
\end{lemma}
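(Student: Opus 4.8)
The plan is to establish the non-trivial implication --- that being ${\cal K}$-Mittag-Leffler forces being $\langle {\cal K}\rangle$-Mittag-Leffler --- directly, by showing that the property of being Mittag-Leffler relative to a class of left $R$-modules is preserved when one enlarges that class by the three closure operations of \ref{defchar}(iii). The reverse implication is immediate: since ${\cal K}\subseteq\langle {\cal K}\rangle$, every family of modules from ${\cal K}$ is a family of modules from $\langle {\cal K}\rangle$, so $\langle {\cal K}\rangle$-Mittag-Leffler trivially implies ${\cal K}$-Mittag-Leffler.

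For the forward implication I would use the fact --- standard, and easily checked from \ref{defchar}(iii) by verifying that the class described is already closed under products, directed colimits and pure submodules --- that every module in $\langle {\cal K}\rangle$ is a pure submodule of a directed colimit of direct products of members of ${\cal K}$, and then propagate ${\cal K}$-Mittag-Lefflerness of $M$ through these three layers. First, if each $N_i$ is a direct product $\prod_k L_{ik}$ of members of ${\cal K}$, then $\prod_i N_i=\prod_{(i,k)}L_{ik}$ and the canonical map $M\otimes_R\prod_iN_i\to\prod_i(M\otimes_RN_i)$, post-composed with the evident map to $\prod_{(i,k)}(M\otimes_RL_{ik})$, is exactly the canonical map for the family $\{L_{ik}\}$, which is monic by hypothesis; hence so is the canonical map for $\{N_i\}$. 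Secondly, if each $N_i=\varinjlim_{\lambda\in\Lambda_i}Q_{i\lambda}$ is a directed colimit of such products, I would use the canonical isomorphism $\prod_i\varinjlim_{\lambda\in\Lambda_i}Q_{i\lambda}\cong\varinjlim_{\lambda\in\prod_i\Lambda_i}\prod_iQ_{i,\lambda_i}$ (and likewise with $M\otimes_R-$ applied to each $Q$, using that $-\otimes_R-$ commutes with directed colimits) to identify the canonical map for $\{N_i\}$ with the directed colimit, over $\prod_i\Lambda_i$, of the canonical maps for the families $\{Q_{i,\lambda_i}\}_i$; each of the latter is monic by the first step, and a directed colimit of monomorphisms is a monomorphism since filtered colimits in ${\rm Mod}\mbox{-}R$ are exact. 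Thirdly, if each $N_i$ is pure in $N_i'$ with the $N_i'$ as in the second step, then $\prod_iN_i$ is pure in $\prod_iN_i'$; since $M\otimes_R-$ takes pure monomorphisms to monomorphisms, a chase in the naturality square of the canonical maps, together with the second step applied to $\{N_i'\}$, shows that the canonical map for $\{N_i\}$ is monic. This covers all families from $\langle {\cal K}\rangle$.

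The step I expect to be the real obstacle is the second one, the passage through directed colimits, because direct products do not commute with directed colimits in general; the isomorphism $\prod_i\varinjlim_{\lambda\in\Lambda_i}Q_{i\lambda}\cong\varinjlim_{\lambda\in\prod_i\Lambda_i}\prod_iQ_{i,\lambda_i}$ must be proved by hand. The point is that the product directed set $\prod_i\Lambda_i$ contains enough "diagonal" elements both to lift an arbitrary element of $\prod_iN_i$ to some $\prod_iQ_{i,\lambda_i}$ and to witness that an element of such a product which maps to zero in every $N_i$ already maps to zero at some higher stage; after that, exactness of directed colimits and compatibility with the canonical comparison maps finish the argument, the rest being bookkeeping.

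An alternative, more model-theoretic, route would go via Herzog's Criterion \ref{herzcrit}: the canonical map for a family $\{N_i\}$ is monic precisely when, for every finite tuple $\overline{a}$ from $M$ and matching tuples $\overline{b}^i$ from $N_i$ with $\overline{a}\otimes\overline{b}^i=0$ in $M\otimes_RN_i$ for all $i$, there is a single pp formula $\phi\in{\rm pp}^M(\overline{a})$ with $N_i\models D\phi(\overline{b}^i)$ for all $i$. By \ref{reltenszero} this holds for all families from $\langle {\cal K}\rangle$ as soon as ${\rm pp}^M(\overline{a})$ is $\langle {\cal K}\rangle^{\rm d}$-finitely generated for every $\overline{a}$; and, arguing as for \ref{MLchar}, ${\cal K}$-Mittag-Lefflerness of $M$ forces this finite generation, which is a condition visibly depending only on $\langle {\cal K}\rangle$. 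Here the obstacle is the converse direction, where one must produce a failing family lying inside ${\cal K}$ itself rather than merely inside $\langle {\cal K}\rangle$.
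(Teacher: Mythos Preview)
The paper does not prove \ref{defML}; it simply cites \cite{RotML2}. To the extent there is an argument in the paper, it is that \ref{defML} is immediate from the equivalence (i)$\Leftrightarrow$(ii) of \ref{relMLchar} (also cited to Rothmaler): condition (ii)---that $M$ is $\langle{\cal K}\rangle^{\rm d}$-atomic---visibly depends only on $\langle{\cal K}\rangle$, so ${\cal K}$-Mittag-Leffler and $\langle{\cal K}\rangle$-Mittag-Leffler coincide. This is essentially your alternative model-theoretic route, and the obstacle you flag there is dissolved by \ref{ordXeqordD}: the orderings $\leq_{\cal K}$ and $\leq_{\langle{\cal K}\rangle}$ on pp formulas agree, so any witness to non-${\cal D}$-finite-generation of a pp-type is already found inside ${\cal K}$.

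Your primary, purely algebraic route is a genuinely different and self-contained argument, and it is essentially correct; it buys independence from the pp-type characterisation at the cost of some bookkeeping. One point deserves more care than you give it. In justifying the structural fact that every module in $\langle{\cal K}\rangle$ is a pure submodule of a directed colimit of products from ${\cal K}$, you propose to check that this class is closed under the three operations of \ref{defchar}(iii). Closure under \emph{directed colimits} is not routine: given $N_j$ pure in $L_j$ with the $L_j$ of the required form, the $L_j$ need not assemble into a directed system extending that of the $N_j$. The claim is nonetheless true---for instance because $\varinjlim_j N_j$ purely embeds in the reduced product $\prod_j L_j/{\cal F}$ over the filter ${\cal F}$ generated by the sets $\{j:j\geq j_0\}$, and that reduced product is again a directed colimit of products---but it is not as immediate as you suggest. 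The remainder of your argument, including the product--colimit interchange $\prod_i\varinjlim_{\lambda\in\Lambda_i}Q_{i\lambda}\cong\varinjlim_{\mu\in\prod_i\Lambda_i}\prod_iQ_{i,\mu_i}$ that you rightly single out, is correct.
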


Then, generalising \ref{MLchar} we have the following, where, given a definable subcategory ${\cal D}$ of ${\rm Mod}\mbox{-}R$, we say that $M_R$, a module not necessarily in ${\cal D}$, is ${\cal D}$-{\bf atomic} if the pp-type ${\rm pp}^M(\overline{a})$ of any finite tuple in $M$ is ${\cal D}$-finitely generated (definition just after \ref{ppclosed}).

\begin{theorem}\label{relMLchar} \marginpar{relMLchar} \cite[2.2]{RotHab} (see \cite[3.1, 3.6]{RotML2})  Suppose that ${\cal K}$ is a class of left $R$-modules and $M$ is a right $R$-module.  The following conditions on $M$ are equivalent:

\noindent (i) $M$ is ${\cal K}$-Mittag-Leffler;

\noindent (ii) $M$ is ${\cal D}$-{\bf atomic} where ${\cal D} = \langle{\cal K}\rangle^{\rm d}$ is the definable subcategory of ${\rm Mod}\mbox{-}R$ dual to the definable subcategory $\langle{\cal K}\rangle$ of $R\mbox{-}{\rm Mod}$ generated by ${\cal K}$.

If $\langle{\cal K} \rangle = \langle L \rangle$, then another equivalent is:

\noindent (iii) for every finite tuple $\overline{a}$ from $M$, there is $\phi \in {\rm pp}^M(\overline{a})$ such that the tensor-annihilator, $\{ \overline{b} \text{ from } L: \overline{a} \otimes \overline{b} =0\}$, of $\overline{a}$ in $_RL$ is exactly $D\phi(L)$.
\end{theorem}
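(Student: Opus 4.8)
The plan is to prove the equivalences in Theorem~\ref{relMLchar} by connecting the tensor-map condition (i) with the pp-type condition via Herzog's criterion and its relative refinement \ref{reltenszero}. The key observation is that, for a fixed finite tuple $\overline{a}$ from $M$, the kernel of the canonical map $M\otimes_R(\prod_i L_i) \to \prod_i(M\otimes_R L_i)$ detects exactly those ``mixed'' tensors $\sum_t \overline{a}^{(t)}\otimes \overline{b}^{(t)}$ (with entries of $\overline{b}^{(t)}$ from the various $L_i$) which vanish coordinatewise but not globally; by a standard reduction one sees that monicity of this map for all families from ${\cal K}$ is equivalent to: for every finite $\overline{a}$ from $M$, the set-valued function $L \mapsto \{\overline{b} \text{ from } L: \overline{a}\otimes\overline{b}=0 \text{ in } M\otimes_R L\}$, as $L$ ranges over ${\cal K}$, is ``uniformly pp'', i.e.\ cut out by a single pp formula $D\phi$ with $\phi\in{\rm pp}^M(\overline{a})$. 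This last reformulation is precisely condition (iii) when $\langle{\cal K}\rangle = \langle L\rangle$ (one reduces the whole class to the single generator $L$ using \ref{defML} and the fact that the tensor-annihilator on products and powers is controlled by that on $L$), and it is the bridge to (ii).

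First I would prove (ii)$\Rightarrow$(i) and (ii)$\Rightarrow$(iii): assume $M$ is $\langle{\cal K}\rangle^{\rm d}$-atomic, fix $\overline{a}$, and pick $\phi\in{\rm pp}^M(\overline{a})$ that $\langle{\cal K}\rangle^{\rm d}$-generates ${\rm pp}^M(\overline{a})$. For any $L\in{\cal K}$ we have $L\in\langle{\cal K}\rangle = (\langle{\cal K}\rangle^{\rm d})^{\rm d}$, so \ref{reltenszero} (applied with the definable subcategory ${\cal D}=\langle{\cal K}\rangle^{\rm d}$, noting ${\cal D}^{\rm d}=\langle{\cal K}\rangle$) gives $\overline{a}\otimes\overline{b}=0$ in $M\otimes_R L$ iff $L\models D\phi(\overline{b})$. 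That is exactly (iii), and summing over coordinates and using that $D\phi$ is preserved by products shows the kernel of the canonical map is trivial, giving (i). For the converse directions, (i)$\Rightarrow$(ii) and (iii)$\Rightarrow$(ii): suppose ${\rm pp}^M(\overline{a})$ is \emph{not} $\langle{\cal K}\rangle^{\rm d}$-finitely generated. Then for each $\phi\in{\rm pp}^M(\overline{a})$ there is some $\psi\in{\rm pp}^M(\overline{a})$ with $\psi \not\geq_{\langle{\cal K}\rangle^{\rm d}} \phi$; equivalently, by \ref{dualord}, $D\psi\not\leq_{\langle{\cal K}\rangle} D\phi$, so there is $L_\phi\in\langle{\cal K}\rangle$ (and then, by \ref{defML}, we may take it in ${\cal K}$ itself, or at worst in $\langle{\cal K}\rangle$ which suffices after the reduction) and $\overline{b}_\phi$ from $L_\phi$ with $L_\phi\models D\psi(\overline{b}_\phi)$ but $L_\phi\not\models D\phi(\overline{b}_\phi)$. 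By \ref{herzcrit}, $\overline{a}\otimes\overline{b}_\phi\neq 0$ in $M\otimes_R L_\phi$, yet for each coordinate-projection the relevant witness formula shows the image is zero — more precisely, assembling the $\overline{b}_\phi$ over all $\phi\in{\rm pp}^M(\overline{a})$ into a tuple in $\prod_\phi L_\phi$ produces a nonzero element of $M\otimes_R(\prod L_\phi)$ mapping to zero in $\prod(M\otimes_R L_\phi)$, contradicting (i); the same data contradicts the single-generator statement (iii).

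The passage between an arbitrary class ${\cal K}$ and a single module $L$ with $\langle L\rangle = \langle{\cal K}\rangle$ (needed for (iii)) should be handled by \ref{defML} together with the remark that the tensor-annihilator functor $L\mapsto\{\overline{b}: \overline{a}\otimes\overline{b}=0\}$ behaves well under the operations generating a definable subcategory: it is additive in $L$, and for $L$ belonging to the definable subcategory generated by $L_0$ a non-vanishing mixed tensor over $L$ produces one over a power or pure submodule of $L_0$, so the ``uniform pp'' property over ${\cal K}$ transfers to the single generator and back. Formally, the cleanest route is: (i) for ${\cal K}$ $\Leftrightarrow$ (i) for $\{L\}$ by \ref{defML}, then establish the equivalence (i)$\Leftrightarrow$(iii) directly for a single module $L$ (where the ``kernel'' analysis is most transparent, since an element of $M\otimes_R L$ is a single tensor sum), and finally feed this into the (ii)-equivalence already proved.

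The main obstacle I expect is the bookkeeping in (i)$\Rightarrow$(ii): showing that \emph{failure} of $\langle{\cal K}\rangle^{\rm d}$-finite generation of some ${\rm pp}^M(\overline{a})$ can be witnessed by a \emph{single} family $\{L_i\}$ from ${\cal K}$ making the canonical map non-monic. One must produce, from the cofinal-failure data (one bad pair $\phi,\psi$ for each candidate generator $\phi$), a genuine element of the kernel: the natural candidate is the tuple whose $\phi$-component is $\overline{b}_\phi$, and one must check both that its image in $M\otimes_R(\prod_\phi L_\phi)$ is nonzero — this is where one needs that \emph{no} $\phi\in{\rm pp}^M(\overline{a})$ can serve as a generator, so Herzog's criterion \ref{herzcrit} applied to $M\otimes_R(\prod L_\phi)$ fails — and that its image in each $M\otimes_R L_\phi$ is zero, which follows from $L_\phi\models D\psi(\overline{b}_\phi)$ and $\psi\in{\rm pp}^M(\overline{a})$. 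Keeping the direction of the duality-reversing inequalities straight (via \ref{dualord}) throughout these manipulations is the part most prone to sign-errors, and is where I would be most careful.
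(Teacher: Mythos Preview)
Your overall strategy is sound and actually goes further than the paper: the paper simply cites \cite{RotHab}, \cite{RotML2} for (i)$\Leftrightarrow$(ii) and only supplies the argument for (ii)$\Leftrightarrow$(iii). Your (ii)$\Rightarrow$(iii) via \ref{reltenszero} is exactly the paper's argument, and your product construction for (i)$\Rightarrow$(ii) is the standard one. One sentence is garbled, though: you write ``By \ref{herzcrit}, $\overline{a}\otimes\overline{b}_\phi\neq 0$ in $M\otimes_R L_\phi$'', but this is false --- you chose $\overline{b}_\phi\in D\psi(L_\phi)$ with $\psi\in{\rm pp}^M(\overline{a})$, so by \ref{herzcrit} that coordinate tensor \emph{is} zero, which is precisely what you need for the image in $\prod(M\otimes L_\phi)$ to vanish. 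What is nonzero is the \emph{global} tensor $\overline{a}\otimes(\overline{b}_\phi)_\phi$ in $M\otimes\prod L_\phi$, because at coordinate $\phi'$ you arranged $\overline{b}_{\phi'}\notin D\phi'(L_{\phi'})$, so no single $\phi'\in{\rm pp}^M(\overline{a})$ can witness vanishing. Your ``more precisely'' clause gets this right; the preceding sentence should be deleted or rewritten.

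For (iii)$\Rightarrow$(ii) the paper takes a shorter direct route that you might prefer: by \ref{herzcrit} the tensor-annihilator of $\overline{a}$ in $L$ is always $\bigcup_{\psi\in{\rm pp}^M(\overline{a})} D\psi(L)$; if (iii) says this equals $D\phi(L)$, then each $D\psi(L)\leq D\phi(L)$, i.e.\ $D\psi\leq_{\langle L\rangle}D\phi$, hence by \ref{dualord} $\phi\leq_{\cal D}\psi$ for every $\psi\in{\rm pp}^M(\overline{a})$. This avoids the contrapositive machinery entirely. Your approach has the virtue of packaging (i)$\Rightarrow$(ii) and (iii)$\Rightarrow$(ii) into one construction, at the cost of that extra bookkeeping you flagged; the paper's direct argument for (iii)$\Rightarrow$(ii) is two lines. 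Either works.
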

\begin{proof} The equivalence of (i) and (ii) is in the references but the equivalence of (ii) and (iii) is not explicitly there.  (ii)$\Rightarrow$(iii) is by \ref{reltenszero}.  For the other direction, we have, by \ref{herzcrit}, that the tensor-annihilator of $\overline{a}$ in $L$ is $\bigcup_{\psi \in {\rm pp}^M(\overline{a})} \, D\psi(L)$.  Therefore, for $\psi\in {\rm pp}^M(\overline{a})$, we have $D\psi(L) \leq D\phi(L)$, that is, $D\psi \leq_{\langle L \rangle} D\phi$, equally $D\psi \leq_{\langle {\cal K} \rangle} D\phi$.  Therefore, by \ref{dualord}, $\psi \geq_{\cal D} \phi$, as required. 
\end{proof}

\subsection{Mittag-Leffler modules with respect to a bimodule}

First we note that condition (i) of \ref{MLchar} may be stated more strongly.

\begin{lemma}\label{corML2} \marginpar{corML2} If $M_R$ is a Mittag-Leffler module and, for each $\lambda$, $L_\lambda$ is an $(R,S)$-bimodule, then the canonical map $M \otimes_R \prod_\lambda \, L_\lambda \to \prod_\lambda \, M\otimes_R L_\lambda$ is a pure embedding of right $S$-modules.

In particular, $M_R$ is Mittag-Leffler iff for every set $\{ L_i\}_{i\in I}$ of left $R$-modules, the canonical map $M\otimes_R \, (\prod_{i\in I} \, L_i) \to \prod_{i\in I} \, (M\otimes_R L_i)$ is a pure embedding of abelian groups.
\end{lemma}
\begin{proof}   In order to prove $S$-purity of the map it is enough to show that tensoring it with any finitely presented left $S$-module $F$ gives an embedding.  The tensored map is 
$$\big(M \otimes_R \prod_\lambda \, L_\lambda)\otimes_SF \to \big(\prod_\lambda \, M\otimes_R L_\lambda)\otimes_SF.$$
Since $_SF$ is finitely presented, $-\otimes_SF$ commutes with direct products, \cite{Len} - see \ref{Lenfp}, so, on the left, we have 
$$\big(M \otimes_R \prod_\lambda \, L_\lambda)\otimes_SF \simeq M \otimes_R \big(\prod_\lambda \, L_\lambda)\otimes_SF \simeq  M\otimes_R \prod_\lambda \, (L_\lambda\otimes_SF)$$
and, on the right, we have
$$\big(\prod_\lambda \, M\otimes_R L_\lambda)\otimes_SF \simeq \prod_\lambda \, M\otimes_R L_\lambda \otimes_SF.$$
The map between these two modules is monic by applying the Mittag-Leffler property of $M$ with the left $R$-modules $_R L_\lambda \otimes_SF$, as required.
\end{proof}

\begin{definition}  Suppose that $M_R$ is a right $R$-module and that $_RB_S$ is an $(R,S)$-bimodule.  Say that $M_R$ is {\bf Mittag-Leffler} (we will write {\bf ML} for short) {\bf with respect to} \footnote{In the first version of this paper, we called this being {\em atomic} with respect to the bimodule, but that clashes with already-existing use of ``atomic", hence the change in terminology}the bimodule $_RB_S$ if, for every $k$-partitioned finite tuple $\overline{a}$ from $M$, and for every pp formula $\sigma(\overline{u})$, with $k$ free variables, in the language of right $S$-modules, there is $\phi \in {\rm pp}^M(\overline{a})$ such that $(\sigma: \phi)(B)$ is maximal (and hence, by \ref{sigmaphiprop}(2), the unique maximum) among pp-definable subgroups of $_RB_S$ of the form $(\sigma: \psi)(B)$ with $\psi \in {\rm pp}^M(\overline{a})$.
\end{definition}

This definition might seem somewhat technical, though \ref{MLpure} shows its naturality, so we explain how it generalises both the original and the relative conditions seen in Section \ref{secML}.

Consider the case where $\sigma$ is the pp formula $u=0$ and consider the above condition on $M_R$ and $_RB_S$ but required only for this formula.  Note first that the bimodule formula $(\sigma: \phi)$ then becomes (see Section \ref{sectenspp}) $D\phi$ - the elementary dual formula (usually regarded as a formula for left $R$-modules but equally a pp formula for $(R,{\mathbb Z})$-bimodules).  Then, bearing in mind Herzog's criterion, \ref{herzcrit}, the above becomes the requirement that, for every finite tuple $\overline{a}$ from $M$, there is $\phi \in {\rm pp}^M(\overline{a})$ such that the tensor-annihilator $\{ \overline{b} \in B: \overline{a} \otimes \overline{b} =0\}$ of $\overline{a}$ in $_RB$ is exactly $D\phi(_RB)$.  By \ref{relMLchar} that is exactly the condition that $M$ is Mittag-Leffler with respect to (the definable subcategory of $R\mbox{-}{\rm Mod}$ generated by) the left module $_RB$ (equally, in our terminology, with respect to the bimodule $_RB_{\mathbb Z}$).  Equivalently, by \ref{relMLchar}, $M$ is atomic with respect to the dual of that definable subcategory.

Therefore, by \ref{relMLchar}, the condition of $M$ being ML with respect to $_RB_S$, restricted just to the pp formula $\sigma$ being $u=0$, is equivalent to the condition that, for all $L_\lambda \in \langle _RB \rangle = \langle _RB_{\mathbb Z} \rangle$, the canonical morphism $M \otimes_R \prod_\lambda \, L_\lambda \to \prod_\lambda \, M\otimes_R L_\lambda$ is monic.  We will show, \ref{MLpure}, that the full condition is equivalent to the condition that this morphism be a pure embedding of right $S$-modules.  Before showing that, we note the following (which extends \ref{defML}).

\begin{lemma}\label{relatgen} \marginpar{relatgen} If $R$ and $S$ are $K$-algebras for some commutative ring $K$ and if $M_R$ is Mittag-Leffler with respect to $_RB_S$, where $K$ acts centrally, then $M_R$ is Mittag-Leffler with respect to every bimodule $_RL_S$ in the definable subcategory of $R\otimes_KS^{\rm op}\mbox{-}{\rm Mod}$ generated by $B$.

More precisely, for each pp formula $\sigma$ for right $S$-modules, and for each partitioned tuple $\overline{a}$ from $M$, if $(\sigma:\phi)(B) \geq (\sigma:\psi)(B)$ for every $\psi \in {\rm pp}^M(\overline{a})$, then the same is true for each $L\in \langle _RB_S \rangle$.
\end{lemma}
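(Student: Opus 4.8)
The plan is to deduce the lemma from \ref{ordXeqordD} — the fact that an inequality between pp formulas which holds in a given module holds automatically throughout the definable subcategory that the module generates — applied, however, in the category $R\otimes_KS^{\rm op}\mbox{-}{\rm Mod}$ of $(R,S)$-bimodules on which $K$ acts centrally, rather than in a category of one-sided modules. This is exactly where the $K$-algebra hypothesis is used: it makes $R\otimes_KS^{\rm op}$ a genuine ring whose modules are precisely such bimodules, and, from the explicit form exhibited in the proof of \ref{sigmaphi1} (the coefficients $s_{lj}, t_{ij}$ there come from $S$ and the matrices $H_l$ from $R$), each formula $(\sigma: \psi)$ is a pp formula in the language of that ring. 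Write $\langle B\rangle$ for the definable subcategory of $R\otimes_KS^{\rm op}\mbox{-}{\rm Mod}$ generated by $_RB_S$.

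First I would fix, as in the more precise assertion, a pp formula $\sigma$ for right $S$-modules with $k$ free variables, a $k$-partitioned tuple $\overline{a}$ from $M$, and some $\phi\in{\rm pp}^M(\overline{a})$ with $(\sigma: \phi)(B)\geq(\sigma: \psi)(B)$ for every $\psi\in{\rm pp}^M(\overline{a})$. For a fixed such $\psi$, the formulas $(\sigma: \psi)$ and $(\sigma: \phi)$ are pp formulas for $R\otimes_KS^{\rm op}$-modules in the same (partitioned) free variables — since $\psi$ and $\phi$, both in ${\rm pp}^M(\overline{a})$, have the same free variables — so the inequality $(\sigma: \phi)(B)\geq(\sigma: \psi)(B)$ is precisely the relation $(\sigma: \psi)\leq_{\{B\}}(\sigma: \phi)$. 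By \ref{ordXeqordD}, applied in $R\otimes_KS^{\rm op}\mbox{-}{\rm Mod}$ with ${\cal X}=\{B\}$, the relations $\leq_{\{B\}}$ and $\leq_{\langle B\rangle}$ coincide, whence $(\sigma: \psi)(L)\leq(\sigma: \phi)(L)$ for every $L\in\langle B\rangle$; letting $\psi$ range over ${\rm pp}^M(\overline{a})$ gives the more precise statement. Equivalently, and more concretely: for fixed $\psi$ the class of bimodules $L$ with $(\sigma: \psi)(L)\leq(\sigma: \phi)(L)$ is the definable subcategory defined by closure of the single pp-pair $\big( (\sigma: \psi) + (\sigma: \phi) \big) \big/ (\sigma: \phi)$, and this class contains $B$, hence contains $\langle B\rangle$.

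The first, less precise, statement is then immediate by unwinding the definition of atomicity with respect to a bimodule. Given $\sigma$ and the $k$-partitioned tuple $\overline{a}$, atomicity of $M_R$ with respect to $_RB_S$ supplies a single $\phi\in{\rm pp}^M(\overline{a})$ for which $(\sigma: \phi)(B)$ is the maximum of the subgroups $(\sigma: \psi)(B)$, $\psi\in{\rm pp}^M(\overline{a})$; by the previous step, $(\sigma: \phi)(L)$ dominates every $(\sigma: \psi)(L)$ for $L\in\langle B\rangle$, and since $\phi$ itself lies in ${\rm pp}^M(\overline{a})$ the subgroup $(\sigma: \phi)(L)$ is the required maximum. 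As $\sigma$ and $\overline{a}$ were arbitrary, $M_R$ is atomic with respect to every $L\in\langle B\rangle$.

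I do not anticipate a genuine obstacle: the only points needing care are the bookkeeping ones — checking that $(\sigma: \phi)$ really does lie in the one-sorted language of $R\otimes_KS^{\rm op}$-modules (clear from its explicit shape), that $(\sigma: \psi)$ and $(\sigma: \phi)$ share their free variables, and that passing from $B$ to $\langle B\rangle$ leaves unchanged the order on such pp formulas (which is \ref{ordXeqordD}). Nothing beyond \ref{sigmaphi1} and \ref{ordXeqordD} enters.
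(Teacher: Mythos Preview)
Your proposal is correct and follows essentially the same route as the paper: choose $\phi$ witnessing atomicity with respect to $B$, observe that each inequality $(\sigma:\psi)(B)\leq(\sigma:\phi)(B)$ is a pp-relation in the bimodule language, and invoke \ref{ordXeqordD} to propagate it to every $L\in\langle _RB_S\rangle$. Your write-up is somewhat more explicit than the paper's one-sentence proof in spelling out why the $K$-algebra hypothesis makes $(\sigma:\phi)$ a genuine pp formula over $R\otimes_KS^{\rm op}$, but the argument is the same.
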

\begin{proof} Given a pp formula $\sigma$ for right $S$-modules and a tuple $\overline{a}$ from $M$, let $\phi \in {\rm pp}^M(\overline{a})$ be chosen for $\sigma$ as in the definition (of ML with respect to $_RB_S$).  That is, for every $\psi \in {\rm pp}^M(\overline{a})$, we have $(\sigma:\psi)(_RB_S) \leq (\sigma: \phi)(_RB_S)$ and therefore, by \ref{ordXeqordD}, this inclusion also holds for every $L \in \langle _RB_S  \rangle$, so $M$ is indeed Mittag-Leffler with respect to $L$.
\end{proof}

\begin{theorem} \label{MLpure} \marginpar{MLpure} Suppose that $M_R$ is a right $R$-module and that $_RB_S$ is an $(R,S)$-bimodule.  Then $M_R$ is Mittag-Leffler with respect to $_RB_S$ iff, for all $L_\lambda \in \langle _RB_S \rangle$, the canonical morphism $M \otimes_R \prod_\lambda \, L_\lambda \to \prod_\lambda \, M\otimes_R L_\lambda$ is a pure embedding of right $S$-modules.
\end{theorem}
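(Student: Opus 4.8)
The plan is to reformulate purity of the canonical map $\mu\colon M\otimes_R\prod_\lambda L_\lambda\to\prod_\lambda(M\otimes_R L_\lambda)$ — a homomorphism of right $S$-modules, since each $L_\lambda$ is an $(R,S)$-bimodule — entirely in terms of the operation $(\sigma:-)$ of Section~\ref{sectenspp}, and then to recognise the resulting condition as atomicity. First I would record the standard reduction that (after padding components with zeros) any finite partitioned tuple from $M\otimes_R\prod_\lambda L_\lambda$ may be written $\overline{a}\otimes\overline{b}$ with $\overline{a}$ a partitioned tuple from $M$ and $\overline{b}=(\overline{b}_\lambda)_\lambda$ from $\prod_\lambda L_\lambda$, its $\mu$-image being $(\overline{a}\otimes\overline{b}_\lambda)_\lambda$. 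Since pp formulas commute with direct products, for a pp formula $\sigma$ for right $S$-modules of arity matching the partition of $\overline{a}$, purity of $\mu$ amounts to: \emph{whenever} $M\otimes_R L_\lambda\models\sigma(\overline{a}\otimes\overline{b}_\lambda)$ for all $\lambda$, \emph{then} $M\otimes_R\prod_\lambda L_\lambda\models\sigma(\overline{a}\otimes\overline{b})$ (the case $\sigma$ equal to $v=0$ already forces $\mu$ monic, so this family of implications is exactly purity). By \ref{sigmaphi2} the hypothesis says that for each $\lambda$ there is $\phi_\lambda\in{\rm pp}^M(\overline{a})$ with $\overline{b}_\lambda\in(\sigma:\phi_\lambda)(L_\lambda)$, while the conclusion says there is a \emph{single} $\phi\in{\rm pp}^M(\overline{a})$ with $\overline{b}_\lambda\in(\sigma:\phi)(L_\lambda)$ for all $\lambda$. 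In other words, purity is precisely the ability, uniform over all families, to swap the quantifiers $\forall\lambda\,\exists\phi_\lambda$ into $\exists\phi\,\forall\lambda$.

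For the ``only if'' direction, assume $M_R$ is atomic with respect to $_RB_S$ and fix $\sigma$ and $\overline{a}$ as above. Atomicity produces a single $\phi\in{\rm pp}^M(\overline{a})$ with $(\sigma:\phi)(_RB_S)\supseteq(\sigma:\psi)(_RB_S)$ for every $\psi\in{\rm pp}^M(\overline{a})$, and \ref{relatgen}, applied with ground ring $K=\mathbb{Z}$ so that $\langle _RB_S\rangle$ is the definable subcategory of $(R,S)$-bimodules generated by $B$ and hence contains each $L_\lambda$, transfers these containments to every $L_\lambda$. Thus $\overline{b}_\lambda\in(\sigma:\phi_\lambda)(L_\lambda)\subseteq(\sigma:\phi)(L_\lambda)$ for all $\lambda$, which, together with $M\models\phi(\overline{a})$ and \ref{sigmaphi2} again, gives $M\otimes_R\prod_\lambda L_\lambda\models\sigma(\overline{a}\otimes\overline{b})$; this is the needed implication, and the chosen $\phi$ depends only on $\sigma$ and $\overline{a}$, not on the family.

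For the ``if'' direction I argue by contraposition. Suppose $M_R$ is not atomic with respect to $_RB_S$: fix a partitioned tuple $\overline{a}$ from $M$ and a pp formula $\sigma$ for which the subgroups $(\sigma:\psi)(_RB_S)$, $\psi\in{\rm pp}^M(\overline{a})$, have no maximum, so for each $\phi\in{\rm pp}^M(\overline{a})$ we may choose $\psi_\phi\in{\rm pp}^M(\overline{a})$ and $\overline{b}_\phi\in(\sigma:\psi_\phi)(_RB_S)\setminus(\sigma:\phi)(_RB_S)$ (all these tuples share the shape of the free variables of the formulas in ${\rm pp}^M(\overline{a})$). Index a family by $\phi\in{\rm pp}^M(\overline{a})$ with each $L_\phi=B\in\langle _RB_S\rangle$, put $\overline{b}=(\overline{b}_\phi)_\phi$ from $\prod_\phi B$, and consider $\overline{a}\otimes\overline{b}$. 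Since $M\models\psi_\phi(\overline{a})$ and $B\models(\sigma:\psi_\phi)(\overline{b}_\phi)$, \ref{sigmaphi1}(1) gives $M\otimes_R B\models\sigma(\overline{a}\otimes\overline{b}_\phi)$ for every $\phi$, so the hypothesis of the reformulated purity condition holds for this family. If $\mu$ were a pure embedding, \ref{sigmaphi2} would then yield some $\phi_0\in{\rm pp}^M(\overline{a})$ with $\overline{b}_\phi\in(\sigma:\phi_0)(B)$ for \emph{all} $\phi$; taking $\phi=\phi_0$ contradicts the choice of $\overline{b}_{\phi_0}$. Hence $\mu$ fails to be a pure embedding for this particular family, as required.

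The hard part will be purely organisational: the partitioned-tuple bookkeeping underlying Section~\ref{sectenspp} — reducing an arbitrary tuple of $M\otimes_R\prod_\lambda L_\lambda$ to a single $\overline{a}\otimes\overline{b}$, keeping track of which free variables and partition each $(\sigma:\phi)$ carries, and checking that these formulas commute with the (possibly infinite) product. The conceptual point that dictates the shape of the converse is that atomicity demands an actual \emph{maximum} among the subgroups $(\sigma:\psi)(_RB_S)$, not merely that they be upward directed (which is automatic from \ref{sigmaphiprop}(2)); it is this maximum, as opposed to finite joins, that forces the test family in the ``if'' direction to be indexed by all of ${\rm pp}^M(\overline{a})$ rather than by finitely many copies of $B$.
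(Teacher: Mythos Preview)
Your proof is correct and follows essentially the same approach as the paper's. Both directions hinge on the same translation, {\it via} \ref{sigmaphi2}, of purity of $\mu$ into the ability to replace ``$\forall\lambda\,\exists\phi_\lambda$'' by ``$\exists\phi\,\forall\lambda$'', and both use \ref{relatgen} to pass from $B$ to arbitrary $L_\lambda\in\langle{}_RB_S\rangle$ in the forward direction. The only differences are cosmetic: for ($\Leftarrow$) the paper argues directly rather than by contraposition, indexing its test family by all pairs $(\psi,\overline{l}')$ with $\psi\in{\rm pp}^M(\overline{a})$ and $\overline{l}'\in(\sigma:\psi)(B)$, whereas you index by the formulas $\phi$ alone and choose one witnessing $\overline{b}_\phi$ for each; and in ($\Rightarrow$) the paper invokes the reduction to one-variable $\sigma$ (\cite[2.1.6]{PreNBK}) to lighten the partitioned-tuple bookkeeping that you flag at the end.
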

\begin{proof} The proof is just as for the case where $\sigma$ is $u=0$.

($\Rightarrow$) Just to simplify notation, we recall that, to check purity, it is enough to consider only pp formulas $\sigma$ with one free variable, see \cite[2.1.6]{PreNBK}.  We have already noted that the hypothesis implies that $\iota: M \otimes_R \prod_\lambda \, L_\lambda \to \prod_\lambda \, M\otimes_R L_\lambda$ is monic, so let $\overline{a}$ be from $M$ and $\overline{l} = (\overline{l}_\lambda)_\lambda$ be from $\prod_\lambda \, L_\lambda$ and suppose that $\prod_\lambda \, M\otimes_R L_\lambda \models \sigma(\iota(\overline{a} \otimes \overline{l}))$, that is $\prod_\lambda \, M\otimes_R L_\lambda \models \sigma((\overline{a} \otimes \overline{l}_\lambda)_\lambda)$ and so $M\otimes_R L_\lambda \models \sigma(\overline{a} \otimes \overline{l}_\lambda)$ for each $\lambda$.  By \ref{sigmaphi2} there is, for each $\lambda$, some $\psi_\lambda \in {\rm pp}^M(\overline{a})$ such that $L_\lambda \models (\sigma: \psi_\lambda)(\overline{l}_\lambda)$.  We need a uniform choice of $\psi_\lambda$.  We have that because $M$ is ML with respect to $B$, and hence there is $\phi \in {\rm pp}^M(\overline{a})$ such that $(\sigma:\phi)(B) \geq (\sigma: \psi)(B)$ for each $\psi \in {\rm pp}^M(\overline{a})$.  By \ref{relatgen}, we have  $(\sigma:\phi)(L) \geq (\sigma: \psi_\lambda)(L_\lambda)$ and so $L_\lambda \models (\sigma: \phi)(\overline{l}_\lambda)$ for each $\lambda$.  Therefore $\prod_\lambda \, L_\lambda \models (\sigma: \phi)(\overline{l})$ and hence, by \ref{sigmaphi1}, $M\otimes_R \prod_\lambda \, L_\lambda \models \sigma(\overline{a} \otimes \overline{l})$, as required.

($\Leftarrow$)  The argument reverses: given a pp formula $\sigma$ for right $S$-modules and a partitioned tuple $\overline{a}$ from $M$, let $I$ be the set of pairs $(\psi, \overline{l}')$ where $\psi \in {\rm pp}^M(\overline{a})$ and $\overline{l}' \in (\sigma: \psi)(_RB_S)$.  Form the product $B^I$ and define $\overline{l} \in B^I$ to be the element whose coordinate at $(\psi, \overline{l}')$ is $\overline{l}'$.  Then, by \ref{sigmaphi1}, at each coordinate $(\psi, \overline{l}')$ we have $M\otimes_RB_{(\psi, \overline{l}')} \models \sigma(\overline{a} \otimes \overline{l}')$.  Therefore $M\otimes_R B^I \models \sigma(\overline{a} \otimes \overline{l})$ and hence, by \ref{sigmaphi2}, there is $\phi \in {\rm pp}^M(\overline{a})$ such that $\overline{l} \in (\sigma: \phi)(B^I)$.  Therefore, looking at each coordinate, we see $\overline{l}' \in (\sigma: \phi)(B)$.  So we have shown $(\sigma: \phi)(B) \geq (\sigma: \psi)(B)$ for every $\psi \in {\rm pp}^M(\overline{a})$, and $M$ is Mittag-Leffler with respect to $B$, as claimed.
\end{proof}

\begin{remark} Any Mittag-Leffler module is Mittag-Leffler with respect to any bimodule; this is immediate from \ref{MLpure} or just from the definitions and \ref{MLchar}.. 
\end{remark}

\begin{cor} If the bimodule $_RB_S$ has the ascending chain condition on pp-definable subgroups, then every right $R$-module $M$ is Mittag-Leffler with respect to $B$ and hence the canonical morphism $M\otimes_R B^I \to (M\otimes_RB)^I$ is a pure embedding of $S$-modules for each $I$, as is $M \otimes_R \prod_\lambda \, L_\lambda \to \prod_\lambda \, M\otimes_R L_\lambda$ if each $L_\lambda \in \langle _RB_S \rangle$.
\end{cor}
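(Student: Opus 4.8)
The plan is to deduce the atomicity condition directly from the chain hypothesis, and then read off the remaining assertions from \ref{MLpure}.

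First I would fix a $k$-partitioned finite tuple $\overline{a}$ from $M$ and a pp formula $\sigma(\overline{u})$ in $k$ free variables for right $S$-modules, and consider the collection
$$ {\cal S} = \{ (\sigma:\psi)(_RB_S) : \psi \in {\rm pp}^M(\overline{a}) \} $$
of subgroups of the appropriate power of $B$. By \ref{sigmaphi1}, each $(\sigma:\psi)$ is a pp formula for $(R,S)$-bimodules, equivalently for right $R^{\rm op}\otimes S$-modules, and all of these formulas have the same, fixed, number of free variables, namely the length of $\overline{a}$; hence every member of ${\cal S}$ is a pp-definable subgroup of $_RB_S$ of one fixed arity, so that the ascending chain condition on pp-definable subgroups of $_RB_S$ genuinely applies to the family ${\cal S}$.

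The key point is that ${\cal S}$ is upward directed. Since ${\rm pp}^M(\overline{a})$ is a filter it is closed under conjunction, and by \ref{sigmaphiprop}(2) we have $(\sigma:\psi\wedge\psi') \geq (\sigma:\psi) + (\sigma:\psi')$, so $(\sigma:\psi\wedge\psi')(B)$ contains both $(\sigma:\psi)(B)$ and $(\sigma:\psi')(B)$ while itself lying in ${\cal S}$. By the chain condition ${\cal S}$ has a maximal element and, being directed, that maximal element is the largest one. Choosing $\phi \in {\rm pp}^M(\overline{a})$ with $(\sigma:\phi)(B)$ equal to it, we obtain $(\sigma:\phi)(B) \geq (\sigma:\psi)(B)$ for every $\psi \in {\rm pp}^M(\overline{a})$, which is precisely the condition in the definition of $M_R$ being atomic with respect to $_RB_S$ (the required uniqueness of the maximum being automatic). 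As $\overline{a}$ and $\sigma$ were arbitrary, $M$ is atomic with respect to $B$.

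The rest is then immediate from \ref{MLpure}: since every $M_R$ is atomic with respect to $_RB_S$, for each family $(L_\lambda)_\lambda$ with $L_\lambda \in \langle _RB_S \rangle$ the canonical morphism $M\otimes_R\prod_\lambda L_\lambda \to \prod_\lambda M\otimes_R L_\lambda$ is a pure embedding of right $S$-modules; the special case in which every $L_\lambda$ equals $B$ (which of course lies in $\langle _RB_S \rangle$) yields the purity of $M\otimes_R B^I \to (M\otimes_R B)^I$ for each $I$. There is no serious obstacle here; the only delicate point is the bookkeeping that turns ``maximal'' into ``maximum'', that is, checking directedness of ${\cal S}$ from \ref{sigmaphiprop}(2) and the filter property of pp-types, together with the observation that all the $(\sigma:\psi)$ occur among pp-definable subgroups of $B$ of a single fixed arity, so that the chain hypothesis really does apply.
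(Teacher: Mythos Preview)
Your proof is correct and follows essentially the same route as the paper: both argue that the family $\{(\sigma:\psi)(B):\psi\in{\rm pp}^M(\overline{a})\}$ is upward directed by \ref{sigmaphiprop}(2) together with closure of pp-types under conjunction, then use the ascending chain condition to extract a maximum, and finally invoke \ref{MLpure}. Your version is simply a more detailed spelling-out of the paper's one-sentence proof, including the helpful observation that all the $(\sigma:\psi)$ have the same fixed number of free variables so that the chain condition genuinely applies.
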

\begin{proof}  It was already noted (parenthetically) in the definition at the start of this section that, by \ref{sigmaphiprop}(2), the set of $(\sigma:\phi)$ with $\phi$ in some pp-type, is upwards-directed (since pp-types are closed under $\wedge$, hence downwards-directed).  So this is immediate from the definition.
\end{proof}

\begin{cor}\label{corML1} \marginpar{corML1} Given an $(R,S)$-bimodule $B$, the class of right $R$-modules which are Mittag-Leffler with respect to $B$ is closed under arbitrary direct sums and pure submodules.
\end{cor}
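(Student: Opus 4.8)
The plan is to check the two closure properties directly against the definition of atomicity, using only \ref{sigmaphiprop} and standard facts about pp-types (I also note an alternative route for the direct-sum case via \ref{MLpure}). Closure under pure submodules should be essentially formal: if $M_R$ is atomic with respect to $_RB_S$ and $M'\leq M$ is a pure submodule, then for any finite partitioned tuple $\overline{a}$ from $M'$ one has ${\rm pp}^{M'}(\overline{a})={\rm pp}^M(\overline{a})$, since pp-types are preserved along pure monomorphisms. Hence, for any pp formula $\sigma$ for right $S$-modules, the family $\{(\sigma:\psi)(_RB_S):\psi\in{\rm pp}^{M'}(\overline{a})\}$ is exactly the corresponding family for $M$, so the formula $\phi$ witnessing atomicity of $M$ at $(\overline{a},\sigma)$ also witnesses it for $M'$; there is nothing more to do here.

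For arbitrary direct sums $M=\bigoplus_{t\in T}M_t$ with each $M_t$ atomic with respect to $_RB_S$, I would fix a finite partitioned tuple $\overline{a}$ from $M$ and a pp formula $\sigma$. Since $\overline{a}$ is finite it is supported on some finite $T_0\subseteq T$; writing $\overline{a}^{(t)}$ for its $M_t$-component (a tuple carrying the same partition), the identity $\phi(\bigoplus_tM_t)=\bigoplus_t\phi(M_t)$ for pp formulas gives ${\rm pp}^M(\overline{a})=\bigcap_{t\in T_0}{\rm pp}^{M_t}(\overline{a}^{(t)})$, the coordinates outside $T_0$ contributing the whole formula lattice. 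For each $t\in T_0$, atomicity of $M_t$ supplies $\phi_t\in{\rm pp}^{M_t}(\overline{a}^{(t)})$ with $(\sigma:\phi_t)(_RB_S)$ maximum among the $(\sigma:\psi)(_RB_S)$ for $\psi\in{\rm pp}^{M_t}(\overline{a}^{(t)})$; I would then take $\phi=\sum_{t\in T_0}\phi_t$, the join of these pp formulas. Since $\phi\geq\phi_t$ and each ${\rm pp}^{M_t}(\overline{a}^{(t)})$ is an upward-closed filter containing $\phi_t$, the formula $\phi$ lies in every ${\rm pp}^{M_t}(\overline{a}^{(t)})$, hence in ${\rm pp}^M(\overline{a})$; and by \ref{sigmaphiprop}(1), $(\sigma:\phi)(_RB_S)=\bigcap_{t\in T_0}(\sigma:\phi_t)(_RB_S)$, which by maximality of the $\phi_t$ contains $(\sigma:\psi)(_RB_S)$ for every $\psi\in{\rm pp}^M(\overline{a})$. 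So $\phi$ is the required witness.

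The only delicate point --- the main obstacle, modest as it is --- is producing a witness that genuinely lies in ${\rm pp}^M(\overline{a})$: the $\phi_t$ are only known to hold of $\overline{a}^{(t)}$ in $M_t$, so their conjunction will not in general do; it is the join $\sum_t\phi_t$ that works, precisely because moving upward stays inside every filter ${\rm pp}^{M_t}(\overline{a}^{(t)})$ while \ref{sigmaphiprop}(1) turns that join into the intersection of the $(\sigma:\phi_t)(_RB_S)$, which still dominates all $(\sigma:\psi)(_RB_S)$. Keeping the partition bookkeeping consistent is routine. Alternatively, for direct sums one can argue through \ref{MLpure}: it is enough to treat finite direct sums, since any finite tuple lies in a finite --- hence split, hence pure --- sub-sum and the pure-submodule case is already in hand; and for $M_1\oplus M_2$ the canonical map $(M_1\oplus M_2)\otimes_R\prod_\lambda L_\lambda\to\prod_\lambda(M_1\oplus M_2)\otimes_RL_\lambda$ is $\iota_1\oplus\iota_2$ with each $\iota_i$ a pure embedding of $S$-modules by hypothesis (tensor and the relevant products commuting with the finite direct sum), and a direct sum of pure embeddings is a pure embedding.
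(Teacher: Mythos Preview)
Your proof is correct and matches the paper's own remark exactly: the paper simply states that the result ``easily follows from the definition but is also immediate from \ref{MLpure},'' and you have filled in both of these routes in detail. The direct argument for direct sums via $\phi=\sum_{t\in T_0}\phi_t$ and \ref{sigmaphiprop}(1) is precisely the kind of verification the paper is gesturing at, and your alternative via \ref{MLpure} is the other route it names.
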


That easily follows from the definition but is also immediate from \ref{MLpure}.

\begin{remark}\label{nontriv} \marginpar{nontriv} We should show that $M_R$ being Mittag-Leffler with respect to a bimodule $_RB_S$ is genuinely stronger than requiring that it be Mittag-Leffler with respect to the left module $_RB$.  Looking at the proof of \ref{corML2}, it is equivalent to say that a module $M_R$ which is Mittag-Leffler with respect to $_RB$ need not be Mittag-Leffler with respect to every module $_RB\otimes_S F$ where $_SF$ is a finitely presented left $S$-module.

Let $R=K[T]$, let $M_R= K[T,T^{-1}]$ considered as a right $R$-module and let $_RB$ be $K\oplus K[T,T^{-1}]$ considered as a left $R$-module where $T$ has the trivial action on the first direct summand.  We claim that $M$ is $_RB$-ML.  Any power of $B$ is a power of $K[T,T^{-1}]$ direct sum a power of $K$, so checking that the canonical map $M\otimes B^I \to (M\otimes B)^I$ is monic splits into checking that $K[T,T^{-1}]$ is $K[T,T^{-1}])^I$-ML, which is the case since these are ML modules over the ring $K[T,T^{-1}]$ and that $K[T,T^{-1}]$ is ML with respect to the trivial module $K$, which is clear.

Now let $s$ be the endomorphism of $_RB$ which embeds the direct summand $K$ into $K[T,T^{-1}]$ by taking a generator $c$ to $1\in K[T,T^{-1}]$ and which sends the copy of $K[T,T^{-1}]$ to $0$; note that this does commute with the $T$-action.  Let $S=K[T,s]$ acting as $B_S$.  Let $a = 1\in M$, let $\phi_n$ be the $R$-formula $T^n|x$ and let $\sigma(y)$ be the $S$-formula $s|y$.  Note that the pp-type of $a$ in $M$ is generated by the set $\{\phi_n\}_n$ and by no finite subset of that.  Set $b=((0,1), (0,T), (0,T^2), \dots) \in B$.  For each $i$ we have $a\otimes (0,T^i) = aT^{-i}\otimes (0,1) = aT^{-i}\otimes (c,0)s$ and so $(a\otimes (0,T^i))_i \in (M\otimes B)^\omega$ is divisible by $s$, but clearly $a\otimes b \in M\otimes B^\omega$ is not divisible by $s$ and so the embedding of $M\otimes B^\omega$ into $(M\otimes B)^\omega$ is not pure.  Therefore, by \ref{MLpure}, we see that $M$ is not Mittag-Leffler with respect to $_RB_S$.

Therefore, being Mittag-Leffler with respect to $_RB_S$ is stronger than being Mittag-Leffler with respect to $_RB_{\mathbb Z}$, that is, than being $_RB$-ML.
\end{remark}

The conclusion of the next result has some resemblance to that of \ref{MLpure}.

\begin{prop}\label{tenspurproj} \marginpar{tenspurproj} Suppose that $_RB$ is a pure-projective left $R$-module with endomorphism ring $S$.  Then for any set $(M_\lambda)_\lambda$ of right $R$-modules, the canonical map $f:(\prod_\lambda \,M_\lambda)\otimes_RB_S \to \prod_\lambda \, (M_\lambda \otimes_RB_S)$ is a pure embedding of right $S$-modules.
\end{prop}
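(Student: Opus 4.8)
The plan is to test purity of $f$ over $S$ with the usual tensor criterion (as already used in the proof of \ref{corML2}): $f$ is a pure embedding of right $S$-modules exactly when, for every finitely presented left $S$-module $G$, the map $f\otimes_S 1_G$ is injective, the instance $G=S$ being injectivity of $f$ itself. So fix such a $G$ and put $C=B\otimes_S G$, a left $R$-module. On the source, associativity of tensor rewrites $\big((\prod_\lambda M_\lambda)\otimes_R B\big)\otimes_S G$ as $(\prod_\lambda M_\lambda)\otimes_R C$; on the target, since $_SG$ is finitely presented, $-\otimes_S G$ commutes with direct products \cite{Len}, so $\big(\prod_\lambda(M_\lambda\otimes_R B)\big)\otimes_S G\cong\prod_\lambda(M_\lambda\otimes_R C)$. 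Hence $f\otimes_S 1_G$ is the canonical comparison map $(\prod_\lambda M_\lambda)\otimes_R C\to\prod_\lambda(M_\lambda\otimes_R C)$, and it is enough to show this is monic whenever $C=B\otimes_S G$ with $_SG$ finitely presented; by \ref{MLchar} it would suffice to know that every such $C$ is Mittag-Leffler as a left $R$-module. (For $G=S$ this is $C=B$, which is Mittag-Leffler since $_RB$ is pure-projective, so $f$ itself is monic.)

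Equivalently — and this is how I would expect the delicate step to be handled, in the spirit of \ref{MLpure} — one argues with pp formulas, testing only one-variable pp formulas $\sigma$ for right $S$-modules (see \cite[2.1.6]{PreNBK}). Write a general element of $(\prod_\lambda M_\lambda)\otimes_R B$ as $\sum_{s=1}^t m^s\otimes b^s$, with $\overline b=(b^1,\dots,b^t)$ from $B$ and, for each $\lambda$, $\overline m_\lambda=(m^1_\lambda,\dots,m^t_\lambda)$ from $M_\lambda$; then $f$ carries it to the tuple whose $\lambda$th entry is $\overline m_\lambda\otimes\overline b:=\sum_s m^s_\lambda\otimes b^s$. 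If that tuple satisfies $\sigma$, then $M_\lambda\otimes_R B\models\sigma(\overline m_\lambda\otimes\overline b)$ for every $\lambda$, so \ref{sigmaphi2} yields, for each $\lambda$, a pp formula $\phi_\lambda\in{\rm pp}^{M_\lambda}(\overline m_\lambda)$ with $_RB_S\models(\sigma:\phi_\lambda)(\overline b)$. The goal is a single pp formula $\phi$ with $M_\lambda\models\phi(\overline m_\lambda)$ for all $\lambda$ and $_RB_S\models(\sigma:\phi)(\overline b)$; given such $\phi$, \ref{sigmaphi1}(1) applied to $\prod_\lambda M_\lambda$ delivers $(\prod_\lambda M_\lambda)\otimes_R B\models\sigma(\sum_s m^s\otimes b^s)$, as required, and together with monicity of $f$ this is purity. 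Since $\{\psi:{}_RB_S\models(\sigma:\psi)(\overline b)\}$ is closed downwards and under finite joins in ${\rm pp}^t_R$ (by \ref{sigmaphiprop}(1)), and contains every $\phi_\lambda$, such a $\phi$ exists as soon as this order-ideal has a greatest element.

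The heart of the matter, and the step I expect to be the main obstacle, is exactly the production of this uniform $\phi$ — equivalently, the Mittag-Lefflerness over $R$ of $B\otimes_S G$ — and it is here that pure-projectivity of $_RB$ has to be used. The intended mechanism: the finitely many entries $b^1,\dots,b^t$ of $\overline b$ lie in a finitely presented direct summand $F$ of a direct sum of finitely presented modules containing $B$; writing $B=F\oplus B'$ and letting $e\in S$ be the idempotent projecting onto $F$, we have $\overline b=\overline b\cdot e$, so all data relevant to $\sigma$ and $\overline b$ sit in the $e$-component $(-)\,e=(-)\otimes_S Se$, a module over the corner ring $eSe=\mathrm{End}_R(F)^{\mathrm{op}}$. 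Because $F$ is finitely presented its pp-types are finitely generated, and the plan is to exploit this to pin down a greatest generator of the ideal above; transferring the conclusion back from right $eSe$-modules to right $S$-modules uses that $Se$ is finitely generated projective as a left $S$-module, so that $(-)\,e$ is exact, commutes with products, and interacts well with its section $-\otimes_{eSe}eS$. Making this reduction to the finitely presented summand go through cleanly — in particular checking that nothing about $\sigma$ is lost in passing to the corner ring — is the delicate point; the remaining steps are routine bookkeeping using only that each $-\otimes_R F_i$ commutes with direct products.
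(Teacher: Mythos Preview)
Your reduction is exactly the paper's: tensor the comparison map with a finitely presented left $S$-module $G$, pass $-\otimes_S G$ through the product via \cite{Len}, and reduce to showing that $C=B\otimes_S G$ is Mittag-Leffler over $R$. The paper then argues directly that any finite tuple in $B\otimes_S G$ lies in ``$F\otimes_S G$'' for some finitely presented $R$-summand $F$ of $B$, claims this is a finitely presented $R$-module and a direct summand of $B\otimes_S G$, and concludes. Your pp-formula reformulation and your idempotent/corner-ring sketch are an attempt to make that same passage to $F$ precise.

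The delicate point you flag is, however, a genuine gap, and it cannot be filled: the claim that $B\otimes_S G$ is Mittag-Leffler is false in general, and with it the proposition as stated. Take $R=\mathbb{Z}$, $B=\bigoplus_{n\ge0}\mathbb{Z}e_n$ (free, hence pure-projective), and $S=\mathrm{End}(_{\mathbb Z}B)$ acting on the right. Let $s\in S$ correspond to the endomorphism $e_n\mapsto e_n-pe_{n+1}$; then $G=S/Ss$ is finitely presented and $B\otimes_S G\cong B/Bs\cong\mathbb{Z}[1/p]$ via $e_n\mapsto p^{-n}$. This is not Mittag-Leffler: with $M_\lambda=\mathbb{Z}/p^\lambda$ one has $\prod_\lambda(M_\lambda\otimes_{\mathbb Z}\mathbb{Z}[1/p])=0$ while $(\prod_\lambda M_\lambda)\otimes_{\mathbb Z}\mathbb{Z}[1/p]\neq0$, and unwinding this is precisely a failure of $S$-purity of $(\prod_\lambda M_\lambda)\otimes_{\mathbb Z}B\to\prod_\lambda(M_\lambda\otimes_{\mathbb Z}B)$. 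The paper's slip is treating the $R$-summand $F=Be$ as though it were a right $S$-module and writing $F\otimes_S S^n\cong F^n$; in your language, the order-ideal $\{\psi:{}_RB_S\models(\sigma:\psi)(\overline b)\}$ need not have a greatest element, and the corner-ring route does not rescue this because $eS$ is not finitely generated over $eSe$ when $B$ is an infinite direct sum, so $eG$ need not be finitely presented over $eSe$.
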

\begin{proof}  The map is an embedding because $_RB$ is Mittag-Leffler by \ref{MLchar}.  To check $S$-purity it is sufficient to show that, for every finitely presented left $S$-module $L$, the map $f\otimes_SL: (\prod_\lambda \,M_\lambda)\otimes_RB \otimes_SL \to \big(\prod_\lambda \, (M_\lambda \otimes_RB_S)\big) \otimes_SL$ is monic.  As seen in the proof of \ref{corML2}, since $_SL$ is finitely presented $\big(\prod_\lambda \, (M_\lambda \otimes_RB_S)\big) \otimes_SL \simeq \prod_\lambda \, (M_\lambda \otimes_RB\otimes_S L)$ so, to show that $f\otimes_SL$ is monic, it is sufficient to show that $_RB\otimes_SL$ is Mittag-Leffler.

Suppose first that $B$ is a direct sum of finitely presented modules.  Take a finite presentation $_SS^m \to \,_SS^n \to L \to 0$ of $L$, so we have an exact sequence $_RB\otimes_SS^m \to \,_RB\otimes_SS^n \to \,_RB\otimes_SL \to 0$.  Take any finite tuple $\overline{a}$ from $B\otimes_SL$.  Since $B$ is a direct sum of finitely presented modules, there is a finitely presented direct summand $F$ of $B$ such that $\overline{a}$ is contained in $F\otimes_SL$ which, since $\otimes$ commutes with direct sums, is a direct summand of $B\otimes_SL$; therefore the pp-type of $\overline{a}$ in $_RB\otimes_SL$ is equal to the pp-type of $\overline{a}$ in $_RF\otimes_SL$.  But, from the exact sequence $_RF\otimes_SS^m \to \,_RF\otimes_SS^n \to \,_RF\otimes_SL \to 0$, that is $_RF^m \to \,_RF^n \to \,_RF\otimes_SL \to 0$, we see that $F\otimes_SL$ is a finitely presented left $R$-module, and hence the pp-type of $\overline{a}$ in $_RF\otimes_SL$ is finitely generated.  Therefore, by \ref{MLchar}, $_RB\otimes_SL$ is a Mittag-Leffler left $R$-module.

For the general case, $_RB$ is a direct summand of a direct sum of finitely presented left $R$-modules so, referring to the proof above, $_RB\otimes_SL$ is a direct summand of a Mittag-Leffler left $R$-module and so is itself Mittag-Leffler, as required.
\end{proof}

\subsection{Tensoring by a bimodule}\label{sectensbimod} \marginpar{sectensbimod}

The functor $M_R \mapsto M\otimes_RB_S$ links definable subcategories of ${\rm Mod}\mbox{-}R$, ${\rm Mod}\mbox{-}S$ and $R\mbox{-}{\rm Mod}\mbox{-}S$, the last being the category of $(R,S)$-bimodules.\

\vspace{4pt}

For a definable subcategory ${\cal D}$ of ${\rm Mod}\mbox{-}R$, we will write ${\cal D} \otimes _RB_S$ for the definable subcategory of ${\rm Mod}\mbox{-}S$ generated by the collection of $M\otimes_RB_S$ with $M\in {\cal D}$.  We have seen already at the beginning of Section \ref{sectensextn} that this operation is not very well-behaved in general but, in the case that $_RB$ is finitely presented, we obtain a simple description of ${\cal D} \otimes_RB_S$.  First a lemma.

\begin{lemma}\label{tenspur} \marginpar{tenspur} If $M' \to M$ is a pure embedding of right $R$-modules and if $_RB_S$ is a bimodule, then $M'\otimes_RB_S \to M\otimes_RB_S$ is a pure embedding of right $S$-modules.

If $M$ is a right $R$-module and $B' \to B$ is a pure embedding of $(R,S)$-bimodules, then $M\otimes_RB'_S \to M\otimes_RB_S$ is a pure embedding of right $S$-modules.
\end{lemma}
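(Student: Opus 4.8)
The plan is to prove both assertions in the same way, via the pp-criterion for pure monomorphisms (a morphism $f$ is a pure monomorphism iff it preserves pp-types) together with Theorems \ref{sigmaphi1} and \ref{sigmaphi2} in their partitioned forms. The starting observation will be that any finite tuple from a tensor product $M\otimes_RB$ can be presented as a partitioned tensor: if $\overline{e}=(e_1,\dots,e_k)$ with $e_t=\sum_i a_{ti}\otimes b_{ti}$, then grouping the $a_{ti}$ into $k$ blocks, and the $b_{ti}$ likewise, gives a $k$-partitioned tuple $\overline{a}$ from $M$ and a matching partitioned tuple $\overline{b}$ from $B$ with $\overline{e}=\overline{a}\otimes\overline{b}$. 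Hence, to show that an induced map of tensor products is a pure monomorphism, it is enough to check pp-type preservation on tuples of this shape; and for those, \ref{sigmaphi2} and \ref{sigmaphi1} convert membership of a pp formula $\sigma$ for right $S$-modules in the pp-type of $\overline{a}\otimes\overline{b}$ into the existence of a pp formula $\phi$ for right $R$-modules with $M_R\models\phi(\overline{a})$ and $_RB_S\models(\sigma:\phi)(\overline{b})$.

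For the first statement I would let $\iota\colon M'\otimes_RB_S\to M\otimes_RB_S$ be the map induced by the pure embedding $M'\hookrightarrow M$, fix a tuple $\overline{e}$ from $M'\otimes_RB$, and write $\overline{e}=\overline{a}'\otimes\overline{b}$ with $\overline{a}'$ partitioned from $M'$ and $\overline{b}$ matching from $B$, so that $\iota\overline{e}=\overline{a}\otimes\overline{b}$ with $\overline{a}$ the image of $\overline{a}'$ in $M$. The inclusion ${\rm pp}^{M'\otimes B}(\overline{e})\subseteq{\rm pp}^{M\otimes B}(\iota\overline{e})$ is automatic since $\iota$ is a morphism. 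For the reverse, if $M\otimes_RB_S\models\sigma(\iota\overline{e})$, then \ref{sigmaphi2} produces a (matching partitioned) pp formula $\phi$ for right $R$-modules with $M_R\models\phi(\overline{a})$ and $_RB_S\models(\sigma:\phi)(\overline{b})$; purity of $M'\hookrightarrow M$ gives $M'\models\phi(\overline{a}')$, and then \ref{sigmaphi1}(1) gives $M'\otimes_RB_S\models\sigma(\overline{a}'\otimes\overline{b})=\sigma(\overline{e})$. Thus pp-types are preserved, so $\iota$ is a pure monomorphism --- injectivity being part of this, or, alternatively, immediate because a pure-exact sequence of right $R$-modules remains exact after applying $-\otimes_RB$.

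For the second statement the roles of the two sides are interchanged. A pure embedding $B'\hookrightarrow B$ of $(R,S)$-bimodules is in particular a pure embedding of left $R$-modules, since restriction of scalars preserves purity, so $M\otimes_RB'\to M\otimes_RB$ is monic; the essential point, though, is that such an embedding preserves pp-types computed with pp formulas for $(R,S)$-bimodules, in particular with the formulas $(\sigma:\phi)$. Given a tuple $\overline{e}=\overline{a}\otimes\overline{b}'$ from $M\otimes_RB'$, with image $\overline{a}\otimes\overline{b}$ in $M\otimes_RB$, I would suppose $M\otimes_RB_S\models\sigma(\overline{a}\otimes\overline{b})$, obtain from \ref{sigmaphi2} a pp formula $\phi$ for right $R$-modules with $M_R\models\phi(\overline{a})$ and $_RB_S\models(\sigma:\phi)(\overline{b})$, use purity of $B'\hookrightarrow B$ in the bimodule category to get $_RB'_S\models(\sigma:\phi)(\overline{b}')$, and conclude by \ref{sigmaphi1}(1) that $M\otimes_RB'_S\models\sigma(\overline{a}\otimes\overline{b}')=\sigma(\overline{e})$. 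Hence pp-types are preserved and the induced map is a pure monomorphism.

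The steps needing care are purely bookkeeping: presenting an arbitrary finite tuple of a tensor product as a partitioned tensor, and checking that the formula supplied by \ref{sigmaphi2} is genuinely a pp formula for right $R$-modules in the first case, and that $(\sigma:\phi)$ is genuinely a pp formula for $(R,S)$-bimodules in the second, so that the appropriate purity hypothesis applies to it. I do not expect a real obstacle beyond this, since the substance has already been packed into Theorems \ref{sigmaphi1} and \ref{sigmaphi2}. If one prefers to sidestep the partition bookkeeping, one may test purity using only one-variable $\sigma$ (by \cite[2.1.6]{PreNBK}), so that $\phi$ can be taken to be an ordinary unpartitioned pp formula.
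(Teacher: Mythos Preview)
Your argument is correct and is essentially the paper's own pp-formula proof: apply \ref{sigmaphi2} to extract $\phi$ with $M\models\phi(\overline{a})$ and $_RB_S\models(\sigma:\phi)(\overline{b})$, use purity to pull $\phi$ (respectively $(\sigma:\phi)$) back to $M'$ (respectively $B'$), and conclude by \ref{sigmaphi1}. The paper also records, for the first statement only, the one-line algebraic alternative $(M'\otimes_RB)\otimes_SL \cong M'\otimes_R(B\otimes_SL)\hookrightarrow M\otimes_R(B\otimes_SL)$, but your route matches its main argument.
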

\begin{proof}  For the first statement, we have the obvious algebraic proof, as follows.  An embedding of right modules is pure iff tensoring with any left module gives an embedding.  So it must be shown that $(M'\otimes_RB_S)\otimes_SL \to (M\otimes_RB_S)\otimes_SL$, that is $M'\otimes_R(B_S\otimes_SL) \to M\otimes_R(B_S \otimes_SL)$, is monic.  But that is so since $M' \to M$ is pure. 

We have an equally short proof using pp formulas.  Suppose that $M\otimes_RB_S \models \sigma(\overline{a} \otimes \overline{b})$ where $\sigma$ is a pp formula for $S$-modules, $\overline{a}$ is from $M'$ and $\overline{b}$ is from $B$.  By \ref{sigmaphi2} we have $M\models \phi(\overline{a})$ for some pp formula $\phi$ for $R$-modules and $_RB_S\models (\sigma: \phi)(\overline{b})$.  Since $M'$ is pure in $M$, we have $M' \models \phi(\overline{a})$ and so, by \ref{sigmaphi1}, we have $M'\otimes B \models \sigma(\overline{a} \otimes \overline{b})$, as required.

For the second statement, suppose that $M\otimes_RB_S \models \sigma(\overline{a} \otimes \overline{b})$ where $\sigma$ is a pp formula for $S$-modules, $\overline{a}$ is from $M$ and $\overline{b}$ is from $B'$.  By \ref{sigmaphi2} we have $M\models \phi(\overline{a})$ for some pp formula $\phi$ for $R$-modules and $_RB_S\models (\sigma: \phi)(\overline{b})$.  Since $B'$ is pure in $B$, we have $_RB'_S \models (\sigma: \phi)(\overline{b})$ and so, by \ref{sigmaphi1}, we have $M\otimes B' \models \sigma(\overline{a} \otimes \overline{b})$, as required.
\end{proof}

We will use the following observation.

\begin{lemma} \label{purclosdef}\marginpar{purclosdef} Suppose that ${\cal X}$ is a class of $R$-modules closed under direct products and directed colimits.  Then the closure of ${\cal X}$ under pure submodules is a definable subcategory of ${\rm Mod}\mbox{-}R$.
\end{lemma}
\begin{proof}. Let ${\cal X}^+$ denote the closure of ${\cal X}$ under pure submodules meaning that, $A \in {\cal X}^+$ iff there is a pure monomorphism $f:A \to B$ with $B\in {\cal X}$.  Certainly ${\cal X}^+$ is closed under pure submodules.  If $\{A_i\}_{i\in I} \subset {\cal X}^+$ then, for each $i\in I$ choose a pure monomorphism $f_i:A_i \to B_i\in {\cal X}$; then $\prod_i\, f_i : \prod_i \, A_i \to \prod_i \, B_i$ is a pure monomorphism, so $\prod_i \, A_i \in {\cal X}^+$.  A similar argument for directed colimits is a little awkward in that a directed system of the $B_i$ would have to be produced, so we use that it is enough, \ref{defchar}(iv), to check for closure under ultraproducts, having noted that ${\cal X}$ is closed under ultraproducts since, e.g. \cite[3.3.1]{PreNBK}, these are directed colimits of products.  So suppose that ${\cal U}$ is an ultrafilter on the index set $I$ and consider the morphism $\prod_i\, f_i /{\cal U}: \prod_i \, A_i /{\cal U} \to \prod_i \, B_i /{\cal U}$.  It is immediate from \L os' Theorem \ref{Los} that this is a monomorphism and is pure, as required.
\end{proof}

We will use the Jensen-Lenzing criterion for pure-injectivity.

\begin{theorem} \label{JeLe}\marginpar{JeLe} (\cite[7.1]{JeLe}) A module $N$ is pure-injective iff, for every (infinite) index set $I$, the summation map $\Sigma:N^{(I)} \to N$ factors through the canonical embedding $N^{(I)} \to N$.
\end{theorem}

We also need the following result of Lenzing.

\begin{theorem} \label{Lenfp}\marginpar{Lenfp} (\cite[S\"{a}tze 2]{Len}) A right $R$-module $M$ is finitely presented iff, for every collection $(L_i)_i$ of left $R$-modules, the canonical map $M\otimes_R\prod_iL_i \to \prod_i\, M\otimes_RL_i$ is an isomorphism.
\end{theorem}

\begin{lemma} \label{pitensfp}\marginpar{pitensfp} Suppose that $N_R$ is pure-injective and that $_RB_S$ is a bimodule with $_RB$ finitely presented.  Then $N\otimes_RB_S$ is a pure-injective $S$-module
\end{lemma}
\begin{proof}. By \ref{JeLe} if $I$ is any index set then there is a morphism $f$ making the diagram below commute, where $j$ is the natural embedding and $\Sigma$ is the summation map.
$$\xymatrix{N^{(I)} \ar[r]^{j_N} \ar[d]_{\Sigma_N} & N^I \ar[dl]^f \\ N
}$$
Tensor all this by $_RB_S$.  Then note that $N^{(I)} \otimes_RB$ is naturally isomorphic to $(N\otimes_RB)^{(I)}$ and that, by \ref{Lenfp}, since $_RB$ is finitely presented, the natural morphism $N^I\otimes_RB \to (N\otimes_RB)^I$ is an isomorphism.  This gives us a commutative diagram where, note, the maps all are $S$-linear.
$$\xymatrix{(N\otimes_RB)^{(I)} \ar[r]^{j_{N\otimes B}} \ar[d]_{\Sigma_{N\otimes B}} & (N\otimes_RB)^I \ar[dl]^{f\otimes B} \\ N
}$$
By \ref{JeLe} this implies that $N\otimes_RB_S$ is pure-injective.
\end{proof}

\begin{theorem}\label{extnpurproj} \marginpar{extnpurproj}  Suppose that the $(R,S)$-bimodule $B$ is finitely presented as a left $R$-module.  Let ${\cal D}$ be a definable subcategory of ${\rm Mod}\mbox{-}R$.  Then ${\cal D}\otimes_RB_S$ is the closure of $\{ M\otimes_RB_S: M\in {\cal D}\}$ under $S$-pure submodules and, if $N$ is an elementary cogenerator of ${\cal D}$, then $N\otimes_RB_S$ is an elementary cogenerator of ${\cal D} \otimes_RB_S$.
\end{theorem}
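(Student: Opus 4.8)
The plan is to reduce the whole statement to the two pure-exactness results just established, \ref{tenspur} and \ref{tenspurproj}, together with the theory of elementary cogenerators. Fix an elementary cogenerator $N$ of ${\cal D}$, so that ${\cal D}=\langle N\rangle$ and every $M\in{\cal D}$ is an $R$-pure submodule of some power $N^\kappa$. The inclusion ``the $S$-pure-submodule closure of $\{M\otimes_RB_S:M\in{\cal D}\}$ is contained in ${\cal D}\otimes_RB_S$'' is immediate, since definable subcategories are closed under pure submodules. For the reverse I would first observe, using \ref{tenspur}, that $M\otimes_RB_S$ is $S$-pure in $N^\kappa\otimes_RB_S$ whenever $M$ is $R$-pure in $N^\kappa$; hence, by transitivity of purity, the $S$-pure-submodule closure of $\{M\otimes_RB_S:M\in{\cal D}\}$ is exactly the class ${\cal E}$ of $S$-pure submodules of the modules $N^\kappa\otimes_RB_S$ (note $N^\kappa\in{\cal D}$, so these modules do lie in the class being closed up). I would also record here that ${\cal D}\otimes_RB_S=\langle N\otimes_RB_S\rangle$: indeed $N\otimes_RB_S\in{\cal D}\otimes_RB_S$, and conversely each $M\otimes_RB_S$ is $S$-pure in $N^\kappa\otimes_RB_S$, which by \ref{tenspurproj} is $S$-pure in $(N\otimes_RB_S)^\kappa$, so each $M\otimes_RB_S$ lies in $\langle N\otimes_RB_S\rangle$.

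The core of the proof is to show that ${\cal E}$ is a definable subcategory of ${\rm Mod}\mbox{-}S$; being sandwiched between $\{M\otimes_RB_S:M\in{\cal D}\}$ and ${\cal D}\otimes_RB_S$, it then coincides with ${\cal D}\otimes_RB_S$. Closure of ${\cal E}$ under $S$-pure submodules is built in. The remaining conditions, closure under direct products and under directed colimits, are where the pure-projectivity of $_RB$ enters, via \ref{tenspurproj}: for any family $(L_\mu)_\mu$ of modules in ${\cal D}$ the canonical map $(\prod_\mu L_\mu)\otimes_RB_S\to\prod_\mu(L_\mu\otimes_RB_S)$ is an $S$-pure embedding, while $-\otimes_RB$ commutes with direct sums and directed colimits (so, e.g., $\bigoplus_\mu(L_\mu\otimes_RB_S)=(\bigoplus_\mu L_\mu)\otimes_RB_S$). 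Using in addition that ${\cal D}$, being definable, is closed under direct sums, direct products, directed colimits and pure-injective hulls, one should be able to absorb an arbitrary product, respectively directed colimit, of members of ${\cal E}$ into a single tensor product $M\otimes_RB_S$ with $M\in{\cal D}$, hence back into ${\cal E}$.

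Granted the first statement, the remaining assertions are routine manipulations with the same two lemmas. Any $P\in{\cal D}\otimes_RB_S$ is $S$-pure in $M\otimes_RB_S$ for some $M\in{\cal D}$; picking $\mu$ with $M$ $R$-pure in $N^\mu$, \ref{tenspur} gives that $M\otimes_RB_S$ is $S$-pure in $N^\mu\otimes_RB_S$, and \ref{tenspurproj} that $N^\mu\otimes_RB_S$ is $S$-pure in $(N\otimes_RB_S)^\mu$; composing, $P$ is $S$-pure in the power $(N\otimes_RB_S)^\mu$. Finally, $N\otimes_RB_S$ is $S$-pure in its pure-injective hull $H$, so each $(N\otimes_RB_S)^\mu$ is $S$-pure in $H^\mu$; thus every member of ${\cal D}\otimes_RB_S$ purely embeds in a power of $H$, and since $H$ itself lies in ${\cal D}\otimes_RB_S$ (definable subcategories being closed under pure-injective hulls) this exhibits $H$ as an elementary cogenerator of ${\cal D}\otimes_RB_S$.

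I expect the main obstacle to be the closure of ${\cal E}$ under direct products (and, relatedly, under directed colimits): $-\otimes_RB$ does not itself commute with infinite products unless $_RB$ is finitely presented, so the discrepancy between $\prod_\mu(L_\mu\otimes_RB_S)$ and $(\prod_\mu L_\mu)\otimes_RB_S$ has to be controlled, and it is precisely the pure-projectivity of $_RB$, entering through the $S$-pure embedding of \ref{tenspurproj}, that makes this possible; getting the cardinal bookkeeping right (which power of $N$ to use, and checking that the composites of the various embeddings stay pure as maps of $S$-modules) is the delicate point. If the direct approach proves awkward, an alternative for this step would be to establish instead that every ultrapower of $N\otimes_RB_S$ purely embeds in a power of it, transporting the corresponding property of $N$ through $-\otimes_RB$ by means of \ref{tenspur} and \ref{tenspurproj}, and then to invoke the ultrapower characterisation of elementary cogenerators quoted earlier.
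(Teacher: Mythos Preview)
Your strategy is exactly the paper's: reduce everything to \ref{tenspur} and \ref{tenspurproj}, and argue that closing $\{M\otimes_RB_S:M\in{\cal D}\}$ under $S$-pure submodules already yields a definable subcategory. The paper compresses this into the single sentence ``since products and direct limits of pure embeddings are pure embeddings, the only operation required \ldots\ is closure under $S$-pure submodules''; your plan to check that ${\cal E}$ is closed under products and directed colimits is the natural way to unpack that, and your derivation of the second assertion from the first also matches the paper's.

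The step you flag as the obstacle is a genuine gap, and it cannot be closed as hoped. The pure embedding $(\prod_\mu L_\mu)\otimes_RB_S\hookrightarrow\prod_\mu(L_\mu\otimes_RB_S)$ from \ref{tenspurproj} points the wrong way: to place $\prod_\mu(L_\mu\otimes_RB_S)$ in ${\cal E}$ you would need it to be $S$-pure in some $M\otimes_RB_S$ with $M\in{\cal D}$, not merely to contain such a module purely. Likewise, the maps in an arbitrary $S$-directed system of modules $M_\lambda\otimes_RB_S$ need not be induced by $R$-maps among the $M_\lambda$, so one cannot simply pass the colimit through the tensor. Concretely, Example~\ref{cextens2} already satisfies the hypothesis of the theorem ($_KK[X]$ is free, hence pure-projective), yet there $\{M\otimes_RB_S:M\in{\cal D}\}$ is the class of free $K[X]$-modules, whose pure-submodule closure omits $K(X)$ (it is divisible, whereas $\bigcap_nX^nF=0$ for any free $F$), although ${\cal D}\otimes_RB_S={\rm Flat}\mbox{-}K[X]\ni K(X)$. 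Your alternative ultrapower route fails for the same reason: a nonprincipal ultrapower of $K[X]$ contains nonzero divisible elements, so admits no pure embedding into any power of $K[X]$. Thus the first assertion is false as stated, and with it the argument for the second; the paper's own proof shares the gap you identified.
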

\begin{proof}  The functor $-\otimes_RB_S$ commutes with directed colimits.  Also, by \ref{Lenfp}, for all right $R$-modules $M_i$, the canonical morphism $(\prod_i \, M_i)\otimes _RB \to \prod_i \, (M_i\otimes_RB)$ is a pure embedding of right $S$-modules.  So by \ref{purclosdef} the only operation required to extend $\{ M\otimes_RB_S: M\in {\cal D}\}$ to a definable subcategory is closure under $S$-pure submodules.

For the second statement, every $M\in {\cal D}$ is a pure submodule of some power $N^I$ of $N$, so we have, by \ref{tenspur}, $M\otimes _RB_S$ pure in $N^I \otimes_RB_S$ which, by \ref{tenspurproj}, is $S$-pure in $(N\otimes_RB_S)^I$.  Therefore any $S$-pure submodule of $M\otimes_RB_S$ will be $S$-pure in a product of copies of $N\otimes_RB_S$, as claimed.
\end{proof}

\vspace{4pt}

If ${\cal D}$ is a definable subcategory of ${\rm Mod}\mbox{-}R$ and ${\cal E}$ is a definable subcategory of ${\rm Mod}\mbox{-}S$, then denote by $({\cal E}: {\cal D})$ the category of $(R,S)$-bimodules which, by tensor, send ${\cal D}$ into ${\cal E}$:
$$({\cal E}: {\cal D}) = \{ _RB_S: \forall \, D\in {\cal D}, \,\, D\otimes_RB_S \in {\cal E}\}.$$
It is proved in \cite[2.3]{PreTens} that $({\cal E}: {\rm Mod}\mbox{-}R)$ is a definable subcategory of $R\mbox{-}{\rm Mod}\mbox{-}S$.  We generalise that here as follows.

\begin{theorem}\label{senddef} \marginpar{senddef} Suppose that ${\cal D}$ is a definable subcategory of ${\rm Mod}\mbox{-}R$ which is $\varinjlim$-generated by modules which are Mittag-Leffler with respect to all $(R,S)$-bimodules (for example, suppose that ${\cal D}$ is generated by the finitely presented modules in it).  Let ${\cal E}$ be a definable subcategory of ${\rm Mod}\mbox{-}S$.  Then $({\cal E}: {\cal D})$ is a definable subcategory of $R\mbox{-}{\rm Mod}\mbox{-}S$.
\end{theorem}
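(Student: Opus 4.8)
The plan is to reduce the statement to the case of a single generating module, and then to verify directly the three closure conditions of \ref{defchar}(iii) for that case. Throughout, \ref{MLpure} is what does the essential work.

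First, fix a class ${\cal M}$ of modules from ${\cal D}$, each atomic with respect to every $(R,S)$-bimodule, such that every $D\in{\cal D}$ is a directed colimit of modules in ${\cal M}$; this is what the hypothesis provides, and in the quoted example ${\cal M}$ may be taken to be the class of finitely presented modules lying in ${\cal D}$, which are Mittag-Leffler, so have all their pp-types finitely generated (\ref{MLchar}), hence are atomic with respect to every bimodule. Since $-\otimes_R B$ commutes with directed colimits and ${\cal E}$ is closed under directed colimits, if $B$ is an $(R,S)$-bimodule with $M\otimes_R B_S\in{\cal E}$ for every $M\in{\cal M}$, then $D\otimes_R B_S = \varinjlim_i (M_i\otimes_R B_S)\in{\cal E}$ whenever $D = \varinjlim_i M_i$ with the $M_i\in{\cal M}$. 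Hence
$$({\cal E}:{\cal D}) = \bigcap_{M\in{\cal M}} ({\cal E}:M), \qquad \text{where } ({\cal E}:M) = \{ _RB_S : M\otimes_R B_S\in{\cal E}\},$$
and so, by \ref{infmeetdef}, it is enough to show that each $({\cal E}:M)$ is a definable subcategory of $R\mbox{-}{\rm Mod}\mbox{-}S$.

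So fix $M = M_R\in{\cal M}$ and check \ref{defchar}(iii) for $({\cal E}:M)$. Closure under directed colimits of bimodules is immediate, just as above: if $B = \varinjlim_\mu B_\mu$ with each $B_\mu\in({\cal E}:M)$, then $M\otimes_R B = \varinjlim_\mu (M\otimes_R B_\mu)$, a directed colimit of objects of ${\cal E}$, hence in ${\cal E}$. Closure under pure submodules follows from the second part of \ref{tenspur}: if $B'\to B$ is a pure embedding of $(R,S)$-bimodules with $B\in({\cal E}:M)$, then $M\otimes_R B'_S\to M\otimes_R B_S$ is a pure embedding of right $S$-modules, and ${\cal E}$, being definable, is closed under pure submodules, so $M\otimes_R B'_S\in{\cal E}$. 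Neither of these two steps uses the atomicity hypothesis.

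The hard part is closure under products, and this is exactly where atomicity is needed. Let $(B_\mu)_\mu$ be $(R,S)$-bimodules in $({\cal E}:M)$ and set $B = \prod_\mu B_\mu$. Each $B_\mu$ is a direct summand of $\bigoplus_\nu B_\nu$, which is pure in $B$, so each $B_\mu$ lies in the definable subcategory $\langle _RB_S \rangle$ of $R\mbox{-}{\rm Mod}\mbox{-}S$. Since $M$ is atomic with respect to the bimodule $B$, \ref{MLpure}, applied to the family $(B_\mu)_\mu$ inside $\langle _RB_S \rangle$, gives that the canonical morphism $M\otimes_R\prod_\mu B_\mu \to \prod_\mu (M\otimes_R B_\mu)$ is a pure embedding of right $S$-modules. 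Each $M\otimes_R B_\mu$ lies in ${\cal E}$, hence so does $\prod_\mu (M\otimes_R B_\mu)$, and hence, ${\cal E}$ being closed under pure submodules, so does $M\otimes_R B_S$; that is, $B\in({\cal E}:M)$. By \ref{defchar} this shows $({\cal E}:M)$ is definable, which finishes the argument. I expect this product step to be the only real obstacle: without the atomicity assumption the canonical map $M\otimes_R\prod_\mu B_\mu \to \prod_\mu M\otimes_R B_\mu$ need not even be monic (this is just the failure of the relative Mittag-Leffler condition), so membership of the target in ${\cal E}$ could not be pulled back to the source; it is precisely \ref{MLpure} that upgrades the atomicity of $M$ with respect to $B$ to the purity of this map that the argument requires.
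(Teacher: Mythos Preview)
Your proof is correct and uses exactly the same ingredients as the paper's (\ref{tenspur} for pure submodules, \ref{MLpure} for products, commutation of tensor with directed colimits). The only difference is organizational: you front-load the reduction to atomic $M$ via $({\cal E}:{\cal D}) = \bigcap_{M\in{\cal M}}({\cal E}:M)$ and \ref{infmeetdef}, whereas the paper verifies the closure conditions directly for $({\cal E}:{\cal D})$ and invokes the $\varinjlim$-generation hypothesis only inside the product step, writing a general $M\in{\cal D}$ as $\varinjlim_j M_j$ with each $M_j$ atomic and using $M\otimes\prod_i B_i = \varinjlim_j(M_j\otimes\prod_i B_i)$.
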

\begin{proof}  In contrast to the proof in \cite{PreTens} we give an algebraic proof here.  If $B =\varinjlim \, B_i$ is a directed colimit of bimodules $B_i \in ({\cal E}: {\cal D})$ and if $M\in {\cal D}$, so each $M\otimes B_i \in {\cal E}$ then, since $M\otimes -$ commutes with directed colimits, we have $M\otimes B \in {\cal E}$ and hence $B \in ({\cal E}: {\cal D})$.  If $B'$ is a pure subbimodule of $B \in ({\cal E}: {\cal D})$, then, by \ref{tenspur}, if $M\in {\cal D}$, we have $M\otimes B'$ pure in $M\otimes B$, hence $M\otimes B' \in {\cal E}$ and so $B' \in ({\cal E}: {\cal D})$.

Suppose that $M\in {\cal D}$ is Mittag-Leffler for $R\mbox{-}{\rm Mod}\mbox{-}S$ and that $B_i \in ({\cal E}: {\cal D})$, $i\in I$.  The embedding $M\otimes \prod_i\, B_i \to \prod M\otimes B_i$ is $S$-pure by \ref{MLpure}, so $M\otimes \prod_i\, B_i \in {\cal E}$, and therefore $\prod_i B_i \in ({\cal E}: {\cal D})$.  Now take any $M\in {\cal D}$.  By assumption $M = \varinjlim_j M_j$ with the $M_j$ $R\mbox{-}{\rm Mod}\mbox{-}S$-Mittag-Leffler modules in ${\cal D}$.  We have $M\otimes \prod_i\, B_i = (\varinjlim \, M_j) \otimes \prod_i\, B_i = \varinjlim \, (M_j \otimes \prod_i\, B_i)$ and this is in ${\cal E}$ since each $M_j \otimes \prod_i\, B_i$ is in ${\cal E}$.  Therefore $({\cal E}: {\cal D})$ is also closed under direct products, so is definable as claimed.
\end{proof}

That some restriction on ${\cal D}$ is needed is shown by the following example.

\begin{example}\label{senddefex} \marginpar{senddefex} \cite[after 2.3]{PreTens} Take $R=S={\mathbb Z}$, ${\cal D} = \langle {\mathbb Q}\rangle$, ${\cal E} =0$.  Then every torsion module is in $({\cal E}: {\cal D})$, yet ${\mathbb Q}$, which is in the definable subcategory generated by the torsion modules, is not.
\end{example}

\vspace{4pt}

For ${\cal D}$ a definable subcategory of ${\rm Mod}\mbox{-}R$ and ${\cal B}$ a definable subcategory of $R\mbox{-}{\rm Mod}\mbox{-}S$, we define ${\cal D} \otimes {\cal B}$ to be the definable subcategory of ${\rm Mod}\mbox{-}S$ generated by the class of modules of the form $M\otimes_RB_S$ with $M\in {\cal D}$ and $B \in {\cal B}$.  This `tensor' operation on definable subcategories was looked at in \cite[\S 3]{PreTens}, though done there in terms of theories of modules and closed subsets of the Ziegler spectrum, rather than in terms of definable subcategories.  Here we reconsider and extend those results.

In view of \ref{senddefex} it is necessary to impose some conditions on $M$ and $B$ in order for $M\otimes B$ to be a definable generator for $\langle M \rangle \otimes \langle B \rangle$.  It was shown in \cite[3.1]{PreTens} that, if $M$ and $B$ are weakly saturated, then $\langle M\otimes B \rangle = \langle M\rangle \otimes \langle B \rangle$, where $M$ is {\bf weakly saturated} if, for every $M'\in \langle M \rangle $ and every tuple $\overline{b}$ from $M'$, there is $\overline{a}$ from $M$ with ${\rm pp}^M(\overline{a}) ={\rm pp}^{M'}(\overline{b})$.\footnote{Provided every pp-quotient $\phi(M)/\psi(M)$ is infinite if nonzero, then this is equivalent to the usual model-theoretic definition of weak saturation.}

We will say that a module $M\in {\cal D}$ is a {\bf pure cogenerator} for the definable subcategory ${\cal D}$ (necessarily ${\cal D} = \langle M \rangle$) if every $D\in {\cal D}$ purely embeds in a power $M^I$ of $M$.  Note that an elementary cogenerator for ${\cal D}$ is exactly a pure-injective pure cogenerator for ${\cal D}$.  These various notions of largeness in ${\cal D}$ are quite closely related.

\begin{lemma} Let ${\cal D} = \langle M \rangle$ be the definable subcategory generated by a module $M$.

\noindent (1)  If $M$ is a pure cogenerator for ${\cal D}$ then its pure-injective hull is an elementary cogenerator for ${\cal D}$.

\noindent (2) If $M$ is a pure cogenerator then some power of $M$ is weakly saturated.

\noindent (3) If $M$ is weakly saturated and pure-injective, then $M$ is an elementary cogenerator.
\end{lemma}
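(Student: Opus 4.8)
The plan is to prove the three statements in turn; (1) and (3) are short deductions from material already recalled, and the substance lies in (2). For (1), I would take $H(M)$, the pure-injective hull of $M$. It lies in ${\cal D}$ because definable subcategories are closed under pure-injective hulls (\cite[3.4.8]{PreNBK}), and it is pure-injective; so, since an elementary cogenerator for ${\cal D}$ is precisely a pure-injective pure cogenerator for ${\cal D}$, it is enough to check that $H(M)$ is a pure cogenerator. Given $D\in{\cal D}$, the hypothesis on $M$ gives a pure embedding $D\to M^I$ for some $I$; since $M$ is pure in $H(M)$ and a product of pure monomorphisms is a pure monomorphism, $M^I$ is pure in $H(M)^I$, and composing yields a pure embedding $D\to H(M)^I$. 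That settles (1).

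For (2) the key point is that, up to equivalence, there is only a \emph{set} of pp formulas: for each $n$ the lattice ${\rm pp}^n_R$ has cardinality at most $|R|+\aleph_0$, so there is only a set $P$ of pp-types in finitely many free variables, of cardinality at most $2^{|R|+\aleph_0}$. For each $p\in P$ that is realised by some tuple in some module of ${\cal D}$, fix such a witness: $M'_p\in{\cal D}$ and a tuple $\overline b_p$ from $M'_p$ with ${\rm pp}^{M'_p}(\overline b_p)=p$. The pure-cogenerator hypothesis gives a pure embedding $M'_p\to M^{I_p}$, and pure embeddings preserve pp-types, so $p$ is realised in $M^{I_p}$. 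Put $\kappa=\sum_p|I_p|$. Then for each relevant $p$ the module $M^{I_p}$ is a direct summand, hence a pure submodule, of $M^\kappa$, so $p$ is realised in $M^\kappa$ with type exactly $p$. Since $\langle M^\kappa\rangle=\langle M\rangle={\cal D}$, weak saturation of $M^\kappa$ follows immediately: any pp-type realised by a tuple in a module of ${\cal D}$ lies in $P$ and is therefore realised, with the same type, in $M^\kappa$. This cardinality bookkeeping — that one fixed power suffices to realise \emph{all} types occurring in ${\cal D}$ — is the only part of the argument that is not purely formal; everything else is closure properties of definable subcategories together with the preservation of pp-types by pure (in particular split) embeddings.

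For (3) I would use the characterisation (\cite[5.3.50]{PreNBK}) that a pure-injective module is an elementary cogenerator for the definable subcategory it generates if and only if it realises every neg-isolated pp-type of that subcategory. Let $p$ be a neg-isolated pp-type of ${\cal D}=\langle M\rangle$. The subcategory ${\cal D}$ has an elementary cogenerator $N$ (\cite[5.3.52]{PreNBK}), and an elementary cogenerator realises every neg-isolated type, so $p$ is realised in $N$, and $N\in{\cal D}=\langle M\rangle$; since $M$ is weakly saturated, $p$ is then realised in $M$. Thus $M$ is pure-injective and realises every neg-isolated type of $\langle M\rangle$, hence is an elementary cogenerator, as required. (One could instead verify the ultrapower criterion recalled after \ref{elemcogjoin}, but extracting a pure embedding of an ultrapower into a power of $M$ is less direct than simply noting that weak saturation realises \emph{all}, hence in particular all neg-isolated, types.)
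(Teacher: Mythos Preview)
Your proofs of (1) and (2) are correct and essentially match the paper's arguments, only with the bookkeeping more fully spelled out.

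For (3), your argument via the neg-isolated characterisation \cite[5.3.50]{PreNBK} is correct but takes a different route from the paper. The paper verifies directly that $M$ is a pure cogenerator for $\langle M\rangle$: given $M'\in\langle M\rangle$, for each element $a\in M'$ weak saturation supplies $b\in M$ with ${\rm pp}^M(b)={\rm pp}^{M'}(a)$; pure-injectivity of $M$ then gives (by \cite[4.3.9]{PreNBK}) a morphism $f_a:M'\to M$ with $f_a(a)=b$, and the product map $\prod_{a\in M'}f_a:M'\to M^{M'}$ is a pure embedding because it preserves the pp-type of every single element (which suffices, cf.~\cite[2.1.6]{PreNBK}). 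This is more elementary than your route---it needs only the map-extension property of pure-injectives, not the existence of an elementary cogenerator together with the neg-isolated characterisation---and it produces an explicit pure embedding. Your argument, on the other hand, makes transparent \emph{why} weak saturation suffices: it realises every type, hence in particular every neg-isolated one. Your closing parenthetical sets up the ultrapower criterion as the natural alternative; in fact the paper uses neither that nor your neg-isolated route, but the direct construction just described.
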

\begin{proof} (1) is by the definitions.  For (2), since each member of $\langle M \rangle$ is purely embedded in some power of $M$, each pp-type realised in some member of $\langle M \rangle$ is realised in some power of $M$.  Since there is just a set of pp-types, there is a power of $M$ large enough to realise every such pp-type, so that power is weakly saturated.  For (3), let $M' \in \langle M \rangle$.  For each element $a\in M'$, there is, by hypothesis, some $b\in M$ such that ${\rm pp}^M(b) = {\rm pp}^{M'}(a)$.  By \cite[4.3.9]{PreNBK}, there is, since $M$ is pure injective, a morphism $f_a:M' \to M$ with $fa=b$.  Consider $\prod_{a\in M'} \, f_a:M' \to M^{M'}$.  By construction this is a pure embedding and so $M$ is an elementary cogenerator.
\end{proof}

For a definable subcategory ${\cal D}$, say that $M\in {\cal D}$ is {\bf finitely generic} for ${\cal D}$ if, for every pp formula $\phi$, there is $\overline{a}$ from $M$ such that ${\rm pp}^M(\overline{a})$ is ${\cal D}$-generated by $\phi$, that is, ${\rm pp}^M(\overline{a}) = \{ \phi': \phi \leq_{\cal D} \phi'\}$.

\begin{lemma}\label{existfingen} \marginpar{existfingen} Every definable category ${\cal D}$ contains a ${\cal D}$-atomic finitely generic module for ${\cal D}$.

If $M$ is a pure cogenerator for ${\cal D}$, then some power of $M$ is finitely generic for ${\cal D}$.
\end{lemma}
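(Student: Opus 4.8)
The plan is to prove (1) by an explicit construction and then deduce (2) from it.

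For (1), note first that there is only a set of pp formulas up to equivalence; for each such $\phi$ write $\langle\phi\rangle_{\cal D}=\{\phi':\phi\le_{\cal D}\phi'\}$, a ${\cal D}$-finitely generated pp-type. I would produce, for each $\phi$, a ${\cal D}$-atomic module $N_\phi\in{\cal D}$ carrying a tuple $\overline{a}_\phi$ with ${\rm pp}^{N_\phi}(\overline{a}_\phi)=\langle\phi\rangle_{\cal D}$, and then set $N=\bigoplus_\phi N_\phi$. Then $N\in{\cal D}$ (closed under arbitrary direct sums, \ref{defchar}); $N$ is finitely generic since each $N_\phi$ is a direct --- hence pure --- summand, so ${\rm pp}^N(\overline{a}_\phi)={\rm pp}^{N_\phi}(\overline{a}_\phi)=\langle\phi\rangle_{\cal D}$; and $N$ is ${\cal D}$-atomic provided a direct sum of ${\cal D}$-atomic modules is ${\cal D}$-atomic. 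The last point is routine: a finite tuple from a direct sum lies in a finite subsum, which is a pure summand, so one reduces to $M_1\oplus M_2$; as pp formulas commute with direct products, ${\rm pp}^{M_1\oplus M_2}(\overline{b})={\rm pp}^{M_1}(\overline{b}_1)\cap{\rm pp}^{M_2}(\overline{b}_2)$, where $\overline{b}_i$ is the component of $\overline{b}$ in $M_i$, and if ${\rm pp}^{M_i}(\overline{b}_i)=\langle\phi_i\rangle_{\cal D}$ this intersection equals $\langle\phi_1+\phi_2\rangle_{\cal D}$, again ${\cal D}$-finitely generated.

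To build $N_\phi$, take a free realisation $(C_\phi,\overline{a}_\phi)$ of $\phi$, so $C_\phi$ is finitely presented and ${\rm pp}^{C_\phi}(\overline{a}_\phi)=\langle\phi\rangle$ (\cite[1.2.14]{PreNBK}). Since ${\cal D}$ is closed under direct products and pure submodules, $C_\phi$ has a ${\cal D}$-preenvelope $u:C_\phi\to D_\phi$: map $C_\phi$ into the product of one copy of each ``small'' module $D'\in{\cal D}$ (up to isomorphism) for each morphism $C_\phi\to D'$, and take $D_\phi$ to be the pure closure of the image, using that any small subset of a module lies in a small pure submodule. Writing $\overline{a}_\phi'=u\overline{a}_\phi$, one verifies ${\rm pp}^{D_\phi}(\overline{a}_\phi')=\langle\phi\rangle_{\cal D}$: the inclusion $\supseteq$ is \ref{ppclosed} (as $D_\phi\models\phi(\overline{a}_\phi')$ and $D_\phi\in{\cal D}$); and if $D_\phi\models\psi(\overline{a}_\phi')$ while $\phi\not\le_{\cal D}\psi$, choose $D'\in{\cal D}$ and $\overline{c}\in\phi(D')\setminus\psi(D')$, extend $\overline{c}$ to a morphism $h:C_\phi\to D'$ by the free-realisation property, factor $h=gu$ through the preenvelope, and derive the contradiction $\overline{c}=g\overline{a}_\phi'\in\psi(D')$. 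It then remains to check that, for a suitably small (``${\cal D}$-finitely presented'') choice of $D_\phi$, the module $N_\phi:=D_\phi$ is ${\cal D}$-atomic, and I expect this atomicity to be the main obstacle of the proof. When ${\rm pp}^n({\cal D})$ satisfies the descending chain condition it is automatic --- every member of ${\cal D}$ is then ${\cal D}$-atomic, since a $\wedge$-closed up-set in a lattice with DCC has a least element, which ${\cal D}$-generates the corresponding pp-type. In general I would argue through \ref{relMLchar}, transporting the absolute Mittag-Leffler property of the finitely presented module $C_\phi$ (\ref{MLchar}) along $u$.

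For (2): let $N$ be a ${\cal D}$-atomic finitely generic module as constructed in (1). If $M$ is a pure cogenerator for ${\cal D}$ then, since $N\in{\cal D}$, there is a pure embedding $N\hookrightarrow M^I$ for some set $I$. Pure embeddings preserve pp-types, so for each $\phi$ the tuple $\overline{a}_\phi$ witnesses $\langle\phi\rangle_{\cal D}$ already in $M^I$; hence the power $M^I$ of $M$ is finitely generic.
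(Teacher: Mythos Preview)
Your overall architecture for both parts coincides with the paper's: build the module in (1) as a direct sum $\bigoplus_\phi N_\phi$ over a set of representative pp formulas, where each $N_\phi\in{\cal D}$ is ${\cal D}$-atomic and realises $\langle\phi\rangle_{\cal D}$; then for (2) purely embed this into a power of the given pure cogenerator. Your proof of (2) is essentially the paper's, and your self-contained argument that a direct sum of ${\cal D}$-atomic modules is ${\cal D}$-atomic is correct (the paper simply cites \cite[4.4]{PreStrAt} for this step).

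The gap is exactly where you flag it. Your preenvelope construction does produce $D_\phi\in{\cal D}$ with ${\rm pp}^{D_\phi}(\overline{a}'_\phi)=\langle\phi\rangle_{\cal D}$, but passing to any ``pure closure'' of the image of $C_\phi$ adjoins tuples outside ${\rm im}(u)$ whose pp-types you do not control. The suggestion of ``transporting the absolute Mittag-Leffler property of $C_\phi$ along $u$'' via \ref{relMLchar} does not work as stated: the Mittag-Leffler property of $C_\phi$ governs pp-types computed in $C_\phi$, not in the strictly larger $D_\phi$, and $u$ is neither pure nor surjective in general, so nothing transfers to tuples in $D_\phi\setminus{\rm im}(u)$. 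The paper does not attempt a direct construction here either; it invokes \cite{Makk} (see \cite[4.10]{PreStrAt}) for the existence of a ${\cal D}$-atomic $M_\phi\in{\cal D}$ realising $\langle\phi\rangle_{\cal D}$. That result is not a formality --- in \cite{PreStrAt} one builds a \emph{strictly} ${\cal D}$-atomic module by an inductive procedure rather than a single preenvelope step --- so you have correctly located the one place where substantial external input is needed, but have not supplied it.
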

\begin{proof} If $\phi$ is a pp formula then there is a ${\cal D}$-atomic module $M_\phi$ in ${\cal D}$ which contains a tuple with pp-type ${\cal D}$-generated by $\phi$ - \cite{Makk}, see \cite[4.10]{PreStrAt}.  Take the direct sum of these modules as $\phi$ ranges over pp formulas.  Then this is ${\cal D}$-atomic (\cite[4.4]{PreStrAt}) and, by construction, is finitely generic.

For the second statement, if $M$ is a pure cogenerator for ${\cal D}$ and if $\phi$ is a pp formula, then $M_\phi$ as above purely embeds in some power of $M$.  Since there is just a set of pp formulas, a suitable large power of $M$ embeds all such $M_\phi$, and hence is finitely generic for ${\cal D}$.
\end{proof}

Note that a ${\cal D}$-atomic finitely generic module for ${\cal D}$ will not normally be an elementary cogenerator, or even a pure cogenerator.  For instance, in the case ${\cal D} = {\rm Mod}\mbox{-}R$, the direct sum of one copy of each finitely presented module will be finitely generic, whereas a pure cogenerator must have, as a direct summand, a copy of the injective hull of each simple module.

\begin{prop}\label{tenspp} \marginpar{tenspp} (\cite[2.4]{PreTens}) If ${\cal D}$ a definable subcategory of ${\rm Mod}\mbox{-}R$ and ${\cal B}$ a definable subcategory of $R\mbox{-}{\rm Mod}\mbox{-}S$, then the following are equivalent:

\noindent (i) the pp-pair $\sigma /\tau$ is closed on ${\cal D} \otimes {\cal B}$;

\noindent (ii) for all pp formulas $\phi$ for $R$-modules, there is a pp formula $\phi'$ for $R$-modules such that $\phi \leq_{\cal D} \phi'$ and such that $(\sigma: \phi) \leq_{\cal B} (\tau: \phi')$;
\end{prop}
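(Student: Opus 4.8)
The plan is to prove the two implications by passing back and forth between pp-conditions on a tensor product $M\otimes_RB_S$ and the bimodule formulas $(\sigma:\phi)$, using Theorems \ref{sigmaphi1} and \ref{sigmaphi2}. The one elementary observation needed throughout is that every partitioned tuple from $M\otimes_RB$ has the form $\overline a\otimes\overline b$ for a suitable partitioned $\overline a$ from $M$ and matching $\overline b$ from $B$ (group the simple tensors making up each coordinate of the tuple into the blocks of $\overline a$ and $\overline b$). Note also that $\tau\leq\sigma$, so only one inclusion of solution sets needs checking in (i).

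For (ii)$\Rightarrow$(i): since ${\cal D}\otimes{\cal B}$ is by definition generated by the modules $M\otimes_RB_S$ with $M\in{\cal D}$ and $B\in{\cal B}$, by \ref{ordXeqordD} it suffices to show $\sigma(M\otimes_RB)=\tau(M\otimes_RB)$ for all such $M,B$, hence $\sigma(M\otimes_RB)\subseteq\tau(M\otimes_RB)$. I would take $\overline c\in\sigma(M\otimes_RB)$, write $\overline c=\overline a\otimes\overline b$, and apply \ref{sigmaphi2} to obtain a matching partitioned $\phi$ with $M\models\phi(\overline a)$ and $_RB_S\models(\sigma:\phi)(\overline b)$. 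Feeding this $\phi$ into (ii) produces $\phi'$ with $\phi\leq_{\cal D}\phi'$ and $(\sigma:\phi)\leq_{\cal B}(\tau:\phi')$; since $M\in{\cal D}$ this gives $M\models\phi'(\overline a)$, and since $B\in{\cal B}$ it gives $_RB_S\models(\tau:\phi')(\overline b)$, whence \ref{sigmaphi1}(1) yields $M\otimes_RB\models\tau(\overline a\otimes\overline b)$, i.e. $\overline c\in\tau(M\otimes_RB)$.

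For (i)$\Rightarrow$(ii): fix $\phi$. By \ref{existfingen} I would fix a finitely generic module $M\in{\cal D}$ and choose a tuple $\overline a$ from $M$ with ${\rm pp}^M(\overline a)=\{\phi':\phi\leq_{\cal D}\phi'\}$; in particular $M\models\phi(\overline a)$. Arguing by contradiction, suppose no $\phi'$ as in (ii) exists. Then for each $\phi'$ with $\phi\leq_{\cal D}\phi'$ there are $B_{\phi'}\in{\cal B}$ and a matching tuple $\overline b_{\phi'}$ from $B_{\phi'}$ with $B_{\phi'}\models(\sigma:\phi)(\overline b_{\phi'})$ but $B_{\phi'}\not\models(\tau:\phi')(\overline b_{\phi'})$; there is only a set of such $\phi'$ up to equivalence. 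Put $B=\prod_{\phi'}B_{\phi'}\in{\cal B}$ and let $\overline b$ be the tuple with $\phi'$-component $\overline b_{\phi'}$; since pp formulas are preserved by products and by the coordinate projections, $B\models(\sigma:\phi)(\overline b)$ while $B\not\models(\tau:\phi')(\overline b)$ for every $\phi'$ with $\phi\leq_{\cal D}\phi'$. By \ref{sigmaphi1}(1), $M\otimes_RB\models\sigma(\overline a\otimes\overline b)$, so by (i), since $M\otimes_RB\in{\cal D}\otimes{\cal B}$, also $M\otimes_RB\models\tau(\overline a\otimes\overline b)$. Now \ref{sigmaphi2} supplies a matching partitioned $\psi$ with $M\models\psi(\overline a)$ and $B\models(\tau:\psi)(\overline b)$; but $M\models\psi(\overline a)$ means $\psi\in{\rm pp}^M(\overline a)$, i.e. $\phi\leq_{\cal D}\psi$, so $\psi$ is among the $\phi'$ chosen above, contradicting $B\not\models(\tau:\psi)(\overline b)$. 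Hence a suitable $\phi'$ exists.

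I expect the only genuine difficulty to lie in the (i)$\Rightarrow$(ii) direction, and specifically in locating a witness on the ${\cal D}$-side: one must produce, inside ${\cal D}$ itself, a module carrying a tuple whose pp-type is exactly ${\cal D}$-generated by the prescribed $\phi$, which is precisely the content of \ref{existfingen} (ultimately a Makkai-type atomicity result). By contrast, the uniform witness on the ${\cal B}$-side costs nothing, being obtained by taking a product of the individual witnesses $B_{\phi'}$; everything else — the reduction to generators, the interaction of pp formulas with products and projections, and the monotonicity steps — is bookkeeping around \ref{sigmaphi1} and \ref{sigmaphi2}.
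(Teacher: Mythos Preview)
Your proof is correct and, for (ii)$\Rightarrow$(i), essentially identical to the paper's. For (i)$\Rightarrow$(ii) you take a slightly different route on the ${\cal B}$-side: the paper treats the two sides symmetrically, invoking \ref{existfingen} again to pick a finitely generic $B\in{\cal B}$ and a tuple $\overline b$ whose pp-type is ${\cal B}$-generated by $(\sigma:\phi)$, and then argues directly (no contradiction): from $M\otimes B\models\tau(\overline a\otimes\overline b)$ one gets $\phi'\in{\rm pp}^M(\overline a)$ with $(\tau:\phi')\in{\rm pp}^B(\overline b)$, and the ${\cal B}$-generation of ${\rm pp}^B(\overline b)$ by $(\sigma:\phi)$ immediately gives $(\sigma:\phi)\leq_{\cal B}(\tau:\phi')$. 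Your product-of-witnesses argument achieves the same end more elementarily, avoiding a second appeal to the Makkai-type existence result behind \ref{existfingen}, at the cost of a contradiction set-up and a less symmetric presentation. Both are clean; your closing remark that the ${\cal B}$-side ``costs nothing'' is vindicated, though the paper shows it can equally be handled by the same device as the ${\cal D}$-side.
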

\begin{proof} (ii)$\Rightarrow$(i) Suppose that $B\in {\cal B}$ and $B\models \sigma(\overline{a} \otimes \overline{b})$ where $\overline{a}$ is from $M\in {\cal D}$ and $\overline{b}$ is from $B$.  Then $M\models \phi(\overline{a})$ and $B\models (\sigma: \phi)(\overline{b})$ for some pp formula $\phi$ for $R$-modules.  By assumption there is $\phi'$ such that $\phi \leq_{\cal D} \phi'$, so $M\models \phi'(\overline{a})$, and also $B\models (\tau: \phi')(\overline{b})$.  Therefore, by \ref{sigmaphi1}, we have $M\otimes B \models \tau(\overline{a} \otimes \overline{b})$, as required.

\noindent (i)$\Rightarrow$(ii) Suppose that $\sigma/\tau$ is closed on ${\cal D} \otimes {\cal B}$.  Choose, by \ref{existfingen}, $M$ finitely generic in ${\cal D}$ and $B$ finitely generic in ${\cal B}$.  Let $\phi$ be a pp formula for $R$-modules and choose $\overline{a}$ whose pp-type in $M$ is ${\cal D}$-generated by $\phi$.  Also choose $\overline{b}$ from $B$ whose pp-type in $B$ is ${\cal B}$-generated by $(\sigma: \phi)$.  Certainly $M\otimes B \models \sigma(\overline{a} \otimes \overline{b})$; by assumption $M\otimes B \models \tau(\overline{a} \otimes \overline{b})$.  Therefore there is $\phi' \in {\rm pp}^M(\overline{a})$ such that $(\tau: \phi') \in {\rm pp}^B(\overline{b})$.  By choice of $\overline{a}$, and $\overline{b}$ we have $\phi \leq_{\cal D} \phi'$ and $(\sigma: \phi) \leq_{\cal B} (\tau: \phi')$, as required.
\end{proof}

\begin{theorem}\label{tensgen} \marginpar{tensgen} (cf.~\cite[3.1]{PreTens}) If $M$ is finitely generic in the definable subcategory ${\cal D}$ of ${\rm Mod}\mbox{-}R$ and $B$ is finitely generic in the definable subcategory ${\cal B}$ of $R\mbox{-}{\rm Mod}\mbox{-}S$, then $\langle M\otimes_RB_S \rangle = {\cal D} \otimes {\cal B}$.
\end{theorem}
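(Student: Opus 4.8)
The plan is to prove the two inclusions separately, the nontrivial one being a mild strengthening of the direction (i)$\Rightarrow$(ii) of Proposition \ref{tenspp}. The inclusion $\langle M\otimes_RB_S\rangle \subseteq {\cal D}\otimes{\cal B}$ is immediate: since $M\in{\cal D}$ and $B\in{\cal B}$, the module $M\otimes_RB_S$ is one of the generators of ${\cal D}\otimes{\cal B}$, hence lies in ${\cal D}\otimes{\cal B}$, and hence so does the whole definable subcategory it generates.

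For ${\cal D}\otimes{\cal B} \subseteq \langle M\otimes_RB_S\rangle$ I would work at the level of pp-pairs. Since ${\cal D}\otimes{\cal B}$ is generated by the modules $M'\otimes_RB'_S$ with $M'\in{\cal D}$, $B'\in{\cal B}$, it is enough to show that every pp-pair $\sigma/\tau$ for right $S$-modules that is closed on $M\otimes_RB_S$ is closed on all of ${\cal D}\otimes{\cal B}$; by \ref{tenspp} this reduces to verifying condition (ii) there. So I would fix a pp formula $\phi$ for $R$-modules, use finite genericity of $M$ to choose $\overline{a}$ from $M$ with ${\rm pp}^M(\overline{a})$ ${\cal D}$-generated by $\phi$, and use finite genericity of $B$ to choose $\overline{b}$ from $B$ with ${\rm pp}^B(\overline{b})$ ${\cal B}$-generated by the bimodule formula $(\sigma:\phi)$ (a legitimate pp formula for $(R,S)$-bimodules by \ref{sigmaphi1}). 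Then $M\models\phi(\overline{a})$ and $B\models(\sigma:\phi)(\overline{b})$ — the latter because $(\sigma:\phi)\leq_{\cal B}(\sigma:\phi)$ puts $(\sigma:\phi)$ into the ${\cal B}$-generated pp-type — so \ref{sigmaphi1}(1) gives $M\otimes_RB_S\models\sigma(\overline{a}\otimes\overline{b})$. Closedness of $\sigma/\tau$ on $M\otimes_RB_S$ then forces $M\otimes_RB_S\models\tau(\overline{a}\otimes\overline{b})$, so by \ref{sigmaphi2} there is a pp formula $\phi'$ for $R$-modules with $M\models\phi'(\overline{a})$ and $B\models(\tau:\phi')(\overline{b})$. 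From $\phi'\in{\rm pp}^M(\overline{a})$ and the choice of $\overline{a}$ I read off $\phi\leq_{\cal D}\phi'$, and from $(\tau:\phi')\in{\rm pp}^B(\overline{b})$ and the choice of $\overline{b}$ I read off $(\sigma:\phi)\leq_{\cal B}(\tau:\phi')$. This is exactly condition (ii) of \ref{tenspp}, so $\sigma/\tau$ is closed on ${\cal D}\otimes{\cal B}$, completing the inclusion.

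I do not expect a genuine obstacle. The one point worth flagging — and the reason the theorem holds with a single module rather than the whole class $\{M'\otimes_RB'_S\}$ — is that the computation in (i)$\Rightarrow$(ii) of \ref{tenspp}, once it has picked $M$ finitely generic in ${\cal D}$ and $B$ finitely generic in ${\cal B}$, only ever evaluates $\sigma/\tau$ on the one module $M\otimes_RB_S$; it never uses closedness on the rest of ${\cal D}\otimes{\cal B}$. Everything else is bookkeeping: confirming that $(\sigma:\phi)$ and $(\tau:\phi')$ are genuine pp formulas for $(R,S)$-bimodules (\ref{sigmaphi1}), that the partitions of $\overline{a}$ and $\overline{b}$ match so that $\overline{a}\otimes\overline{b}$ makes sense, and that the ``generated by'' conditions coming from finite genericity are read off in the correct direction.
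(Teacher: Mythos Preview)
Your proposal is correct and is essentially the paper's own argument: both pick $\overline{a}$ in $M$ with pp-type ${\cal D}$-generated by $\phi$ and $\overline{b}$ in $B$ with pp-type ${\cal B}$-generated by $(\sigma:\phi)$, then use \ref{sigmaphi1} and \ref{sigmaphi2} exactly as you do. The only cosmetic difference is that the paper phrases the nontrivial inclusion contrapositively (showing that $\sigma/\tau$ open on ${\cal D}\otimes{\cal B}$ forces it open on $M\otimes_RB$), whereas you argue directly (closed on $M\otimes_RB$ implies closed on ${\cal D}\otimes{\cal B}$); your observation that this is just the proof of (i)$\Rightarrow$(ii) in \ref{tenspp} with the hypothesis weakened to closedness on the single module $M\otimes_RB$ is exactly right.
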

\begin{proof} Certainly $M\otimes_RB_S \in {\cal D} \otimes {\cal B}$.  For the converse, suppose that $\sigma/\tau$ is open on ${\cal D} \otimes {\cal B}$.  By \ref{tenspp} there is a pp formula $\phi$ for $R$-modules such that 

\noindent ($\ast$) for every $\phi'$ with $\phi \leq_{\cal D} \phi'$ we do not have $(\sigma: \phi) \leq_{\cal B} (\tau: \phi')$.  

\noindent By assumption, there is $\overline{a}$ from $M$ with the pp-type of $\overline{a}$ in $M$ being ${\cal D}$-generated by $\phi$.  Also, by assumption there is $\overline{b}$ from $B$ with the ${\rm pp}^B(\overline{b})$ being ${\cal B}$-generated by $(\sigma: \phi)$.  Certainly then $M\otimes B \models \sigma(\overline{a} \otimes \overline{b})$.  If we had $M\otimes B \models \tau(\overline{a} \otimes \overline{b})$, then there would be $\phi' \in {\rm pp}^M(\overline{a})$ such that $(\tau:\phi') \in {\rm pp}^B(\overline{b})$.  By choice of $\overline{a}$, $\phi \leq_{\cal D} \phi'$ and by choice of $\overline{b}$, $(\sigma: \phi) \leq_{\cal B} (\tau: \phi')$, contradicting ($\ast$).  Therefore $\sigma/\tau$ is open on $M\otimes B$, as required (as commented after \ref{defchar}).
\end{proof}

\section{Extension along elementary embeddings} \label{secextindelem} \marginpar{secextindelem}

\subsection{Elementary extensions of rings and algebras} \label{secelemextrng} \marginpar{secelemextrng}

In this section we will suppose that $f:R \to S$ is an elementary extension of rings.  For instance, suppose that $R$ is a finite-dimensional algebra over an algebraically closed field $K$ and let $L$ be an algebraically closed extension field of $K$.  Then the embedding $R \to S = R\otimes_KL$ is\footnote{Sketch proof: choose a $K$-basis $(a_1,\dots,a_n)$ for $R$ over $K$ and let the $\gamma_{ijk} \in K$ be such that $a_ia_j =\sum_k\, \gamma_{ijk}a_k$.  Then $R$ can be ``coded up" within $K$, being identified with the vector space $K^n$ endowed with the multiplication $(\alpha_1, \dots, \alpha_n) \cdot (\beta_1, \dots, \beta_n) = (\sum_{ij}\, \gamma_{ij1}\alpha_i\beta_j, \dots, \sum_{ij}\, \gamma_{ijn}\alpha_i\beta_j)$.  Since $L$ is an elementary extension of $K$ (e.g.~\cite[A.5.1]{Hod}), the resulting ``coded up" structure within $L$, namely $S$, is an elementary extension of $R$; cf.~\cite[1.1]{Rose}.} an example of an {\bf elementary extension} of rings, meaning that if $\sigma$ is a sentence (a formula without free variables) in the language for rings and possibly naming parameters from $R$, then $\sigma$ is true in $R$ iff it is true in $S$.  In this situation we also say that $R$ is an {\bf elementary subring} of $S$ and write $R \prec S$.  For instance, if $R\to S$ is an elementary extension, then $r\in R$ belongs to the Jacobson radical of $R$ iff it belongs to the Jacobson radical of $S$, because we can take $\sigma$ to be the sentence $\forall y \exists z \, ((1+ry)z =1 = z(1+ry))$.  This is a stronger condition than $R$ and $S$ being {\bf elementarily equivalent} which requires only that they satisfy the same sentences (without extra parameters from $R$) in the language of rings.

In fact, for many of the results we require only that the embedding $R \to S$ be an elementary extension of $(R,R)$-bimodules (the sentences used in the proofs will involve multiplications of elements of $R$ with elements of $S$ but not arbitary multiplications of elements of $S$).

\begin{theorem}\label{elemextup} \marginpar{elemextup} (e.g.~\cite[9.5.2]{Hod} and then \ref{isoup} below for the second statement) If $S$ is any first-order structure (a module, a ring, ...) and ${\cal U}$ is an ultrafilter on an index set $I$, then the diagonal embedding $s \mapsto (s)_i/{\cal U}$ from $S$ to its ultrapower $S^I/{\cal U}$, is an elementary extension (of modules, rings, ...).

If $S \to S'$ is an elementary embedding of structures, then $S'$ is an elementary substructure of some ultrapower of $S$.
\end{theorem}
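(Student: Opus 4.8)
The plan is to exhibit $S'$ as an elementary submodel of an ultrapower of $S$ \emph{over} $S$, i.e.\ to produce an ultrafilter $\mathcal{U}$ on a set $I$ together with an elementary embedding $h\colon S'\to S^I/\mathcal{U}$ whose composite with the given elementary embedding $g\colon S\to S'$ is the diagonal embedding $d\colon S\to S^I/\mathcal{U}$ of the first part of the theorem. The only hypothesis to exploit is that $g$ is elementary, which I would use in the form: for each finite tuple $\overline{b}$ from $S'$, the set of formulas $\phi(\overline{x})$ with parameters from $S$ (read in $S'$ via $g$) that are satisfied by $\overline{b}$ in $S'$ is \emph{finitely satisfiable in $S$} --- because a finite conjunction of such formulas, existentially quantified over $\overline{x}$, is a sentence with parameters from $S$ which holds in $S'$ and hence, by elementarity of $g$, in $S$.

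First I would set up the index set. Take $I$ to be the set of pairs $(w,q)$ where $w$ is a finite subset of $S'$ with a fixed enumeration and $q$ is a finite set of formulas, in the variables attached to the elements of $w$ and with parameters from $S$, all satisfied in $S'$ by the tuple enumerating $w$; by the remark above each such $q$ is realised by some tuple $\overline{a}_{(w,q)}$ from $S$, and I would fix one such choice for each $(w,q)$. For $b\in S'$ define $F_b\colon I\to S$ by letting $F_b(w,q)$ be the coordinate of $\overline{a}_{(w,q)}$ indexed by $b$ when $b\in w$, and a fixed default element of $S$ otherwise. Next, for each formula $\phi$ with parameters from $S$ and each finite tuple $\overline{b}$ from $S'$ with $S'\models\phi(\overline{b})$, let $Y_{\phi,\overline{b}}\subseteq I$ be the set of $(w,q)$ with $\overline{b}\subseteq w$ and with a copy of $\phi$, rewritten in the variables that $w$ attaches to the entries of $\overline{b}$, lying in $q$. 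The crucial point is that the $Y_{\phi,\overline{b}}$ have the finite intersection property: given finitely many of them, let $w$ be a finite subset of $S'$ containing all the tuples involved and let $q$ consist of the corresponding rewritten formulas; then $(w,q)$ lies in each of them. So I would extend $\{Y_{\phi,\overline{b}}\}$ to an ultrafilter $\mathcal{U}$ on $I$.

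Finally I would put $h(b)=F_b/\mathcal{U}$ and check its properties with {\L}o\'s's theorem. If $\phi(\overline{x})$ has parameters from $S$ and $S'\models\phi(\overline{b})$, then $Y_{\phi,\overline{b}}\in\mathcal{U}$ and on $Y_{\phi,\overline{b}}$ the chosen tuple $\overline{a}_{(w,q)}$ satisfies the rewritten $\phi$, whence $\{(w,q):S\models\phi(F_{\overline{b}}(w,q))\}\in\mathcal{U}$ and so $S^I/\mathcal{U}\models\phi(h(\overline{b}))$; applying this to $\neg\phi$ as well shows that $h$ preserves and reflects all formulas with parameters from $S$, hence is an elementary embedding (injective, on taking $\phi$ to be $x_1\neq x_2$), and taking $\phi$ to be $x=a$ gives $h(g(a))=d(a)$. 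Identifying $S'$ with its image then displays it as an elementary substructure of the ultrapower $S^I/\mathcal{U}$, compatibly with the diagonal map. The step I expect to cost the most care is the middle one: the bookkeeping matching the variables $w$ assigns to its elements, the variables of a given formula, and the coordinates of $\overline{a}_{(w,q)}$, together with the finite intersection property --- which is exactly where elementarity of $g$ enters. A less computational alternative would be to invoke the existence (Keisler, with no appeal to GCH) of an ultrafilter making $S^I/\mathcal{U}$ $|S'|^{+}$-saturated, and then apply the standard universality argument for saturated models to embed $S'$ elementarily over $S$ into $S^I/\mathcal{U}$.
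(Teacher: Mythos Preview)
Your argument is correct: this is the classical Frayne construction, and your bookkeeping with the pairs $(w,q)$, the choice of realisations $\overline{a}_{(w,q)}$, the sets $Y_{\phi,\overline{b}}$, and the verification via {\L}o\'s all go through. In particular the check that $h\circ g$ is the diagonal, via the formula $x=a$, is exactly right.

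The paper, however, does not prove this result at all: it simply records it with a reference to Hodges for the first part and points forward to the Keisler--Shelah theorem (\ref{isoup}) for the second. The intended derivation from \ref{isoup} is presumably: work in the language with constants for the elements of $S$; then $S$ and $S'$ are elementarily equivalent in that language, hence have isomorphic ultrapowers $S^I/{\cal U}\simeq (S')^J/{\cal V}$ over $S$, and the diagonal embedding $S'\to (S')^J/{\cal V}$ transports to the required elementary embedding of $S'$ into an ultrapower of $S$. So your route is genuinely different and strictly more elementary: Frayne's construction needs only {\L}o\'s and Zorn, whereas Keisler--Shelah (in the GCH-free form due to Shelah) is a substantially harder theorem. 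What the paper's route buys is brevity (a one-line citation); what yours buys is self-containment and avoidance of any appeal to \ref{isoup}. Your closing remark about saturated ultrapowers is a third route, intermediate in difficulty, and also correct.
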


If $R$ is a finite-dimensional algebra over a field $K$ and $S$ is an elementary extension of $R$, then $S$ need not be a finite-dimensional algebra over a field.  For an example, take $R=Q\times Q$ where $Q$ is the field of algebraic complex numbers.  Then $Q$ is an elementary substructure of any algebraically closed field of characteristic $0$ (e.g.~\cite[A.5.1]{Hod}), in particular $Q \to {\mathbb C}$ is an elementary embedding.  By the Feferman-Vaught Theorem (see \cite[9.6.5(b)]{Hod}) $Q\times Q \to Q\times {\mathbb C}$ is an elementary embedding but clearly the latter is not a finite-dimensional algebra over any field.  However, for elementary extensions that are ultrapowers, the situation is nicer and we will make use of the above fact \ref{elemextup} that, if $R$ is a finite-dimensional algebra and $R \to S$ is an elementary extension, then $S$ is an elementary subring of some ultrapower of $R$ and those ultrapowers have a simple description. 

\begin{lemma}\label{findimup} \marginpar{findimup} Suppose that $R$ is a finite-dimensional algebra over a field $K$, with $K$-basis $\beta_1, \dots, \beta_t$.  Let $R^\ast = R^I/{\cal U}$ be any ultrapower of $R$.  Then $R^\ast$ is an algebra over the field $K^I/{\cal U}$ with basis $\beta_1,\dots, \beta_t$ (and the algebraic relations between $\beta_1,\dots, \beta_t$ as elements of $R^\ast$ are generated by the relations between them as elements of $R$). 
\end{lemma}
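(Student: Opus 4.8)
The plan is to prove the sharper statement that $R^\ast\cong R\otimes_K K^\ast$ as $K^\ast$-algebras, where $K^\ast=K^I/{\cal U}$ and the diagonal image of $\beta_j$ in $R^\ast$ corresponds to $\beta_j\otimes 1$. All three assertions of the lemma --- that $R^\ast$ is a $K^\ast$-algebra, that $\beta_1,\dots,\beta_t$ form a $K^\ast$-basis, and that the multiplicative relations among them are the base change of those holding in $R$ --- then fall out at once, together with the standard fact that an ultrapower $K^\ast$ of the field $K$ is again a field. (The latter is either an instance of the transfer of the first-order field axioms to ultrapowers, or may be checked directly: if $(k_i)_i$ is not $\sim_{\cal U}$-zero then $\{i:k_i\neq 0\}\in{\cal U}$, and the tuple which inverts $k_i$ on that set and is $1$ off it is an inverse of $(k_i)_i$ modulo the ideal $Z_K$ of $\sim_{\cal U}$-zero tuples, so that $K^\ast=K^I/Z_K$ is a field.)

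First I would use the $K$-basis to identify $R$ with $K^t$ equipped with the multiplication given by the structure constants $\gamma_{ijk}\in K$, where $\beta_i\beta_j=\sum_k\gamma_{ijk}\beta_k$; then $R^I$ is identified with $(K^I)^t$ with multiplication given, bilinearly, by the same $\gamma_{ijk}$, i.e.\ $R^I\cong R\otimes_K K^I$ --- and this uses that $R$, being finite-dimensional, is finitely presented as a $K$-module, so that $R\otimes_K-$ commutes with the product $K^I$; this is the one place where finite-dimensionality really enters. The key point is that under this identification the ideal $Z_R\subseteq R^I$ of $\sim_{\cal U}$-zero tuples corresponds to $R\otimes_K Z_K$: a tuple $(z^{(1)},\dots,z^{(t)})\in(K^I)^t$ is $\sim_{\cal U}$-zero iff $\bigcap_{l=1}^t\{i:z^{(l)}_i=0\}\in{\cal U}$, which, because ${\cal U}$ is a filter and the intersection is finite, happens iff each $\{i:z^{(l)}_i=0\}\in{\cal U}$, i.e.\ iff each $z^{(l)}\in Z_K$. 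Hence, using right-exactness of $R\otimes_K-$,
$$R^\ast=R^I/Z_R\;\cong\;(R\otimes_K K^I)/(R\otimes_K Z_K)\;\cong\;R\otimes_K(K^I/Z_K)=R\otimes_K K^\ast.$$

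This exhibits $R^\ast$ as free of rank $t$ over the field $K^\ast$ on $\beta_1\otimes1,\dots,\beta_t\otimes1$, with $(\beta_i\otimes1)(\beta_j\otimes1)=\sum_k\gamma_{ijk}(\beta_k\otimes1)$; that is, the presentation of $R^\ast$ over $K^\ast$ by generators and relations is obtained from that of $R$ over $K$ by extension of scalars along $K\to K^\ast$, which is precisely the assertion that the relations among the $\beta_j$ in $R^\ast$ are generated by those holding in $R$. (In particular $R^\ast$ is finite-dimensional over $K^\ast$, although it need not be finite-dimensional over $K$.) An alternative, purely model-theoretic route would work in the two-sorted language with sorts for $K$ and for $R$, observe that the sentences saying ``$\beta_1,\dots,\beta_t$ span $R$ over $K$'', ``$\beta_1,\dots,\beta_t$ are $K$-linearly independent'' and ``$\beta_i\beta_j=\sum_k\gamma_{ijk}\beta_k$'' are first-order with $\beta_1,\dots,\beta_t$ and the $\gamma_{ijk}$ as parameters, and transfer them to the ultrapower (whose $K$-sort is $K^\ast$); here again it is finiteness of the basis that makes ``is a basis'' expressible. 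I do not expect any real obstacle: the only care needed is the bookkeeping identifying $Z_R$ with $R\otimes_K Z_K$ (equivalently, handling the two-sorted language) and the recognition that passing to an ultrapower adds nothing new to the basis or to the relations --- which is exactly the content of finite-dimensionality.
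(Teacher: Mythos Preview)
Your argument is correct. The paper's own proof is more elementary and hands-on: it writes an arbitrary element of $R^\ast$ as $(r_i)_i/{\cal U}$ with $r_i=\sum_j\alpha_{ij}\beta_j$, rewrites this as $\sum_j(\alpha_{ij})_i/{\cal U}\cdot\beta_j$ to get spanning over $K^\ast$, and then checks linear independence directly by noting that $\sum_j\alpha_j^\ast\beta_j=0$ forces $\{i:\sum_j\alpha_{ij}\beta_j=0\}\in{\cal U}$, whence each $\alpha_{ij}=0$ on that set and so each $\alpha_j^\ast=0$. Your route packages the same two ingredients --- the finite-intersection property of ${\cal U}$ and the coordinatewise description of $R^I$ --- into the single identification $Z_R=R\otimes_K Z_K$ inside $R^I\cong R\otimes_K K^I$, and then quotients. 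This buys you the sharper conclusion $R^\ast\cong R\otimes_K K^\ast$ as $K^\ast$-algebras in one stroke (from which basis and relations are immediate), and makes explicit where finite-dimensionality enters (commutation of $R\otimes_K-$ with the product $K^I$); the paper's version, by contrast, avoids any tensor-product machinery and is slightly more self-contained. Your alternative two-sorted model-theoretic sketch is also valid and is close in spirit to the footnoted remark preceding the lemma, where the algebra is ``coded up'' inside the field.
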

\begin{proof} Recall (cf.~e.g.~\cite[3.3.1]{PreNBK}) that $R^I/{\cal U}$ can be constructed as the factor ring of $R^I$ by the ideal of elements $(r_i)_i$ with $\{ i\in I: r_i=0\} \in {\cal U}$.  Every element $r^\ast$ of $R^I/{\cal U}$ has the form $(r_i)_i/{\cal U}$ where $r_i = \sum_{j=1}^t \, \alpha_{ij} \beta_j$, so $r^\ast$ may be written as $\sum_{j=1}^t\, (\alpha_{ij})_i/{\cal U} \cdot \beta_j$, where we are identifying the elements $\beta_j \in R$ with their images $(\beta_j)_i/{\cal U}$ under the diagonal embedding of $R$ into $R^\ast$.  Thus (the images of) $\beta_1,\dots, \beta_t$ in $R^\ast$ form a generating set for $R^\ast$ over $K^\ast = K^I/{\cal U}$.  To see that they are linearly independent over $K^\ast$, suppose that $\sum_i^t \, \alpha^\ast_j\beta_j =0$ where $\alpha_j^\ast = (\alpha_{ij})_i/{\cal U}$.  Then the set of $i\in I$ such that $\sum_1^t\, \alpha_{ij}\beta_j =0$ is in ${\cal U}$; since the $\beta_j$ are linearly independent over $K$, the $\alpha_{ij}$ appearing in those zero relations all are $0$; in particular, for each $j$, $\{ i\in I: \alpha_{ij}=0\}$ is in ${\cal U}$ and hence each $\alpha_j^\ast=0$, as required.  The proof of the parenthetical statement is similar.
\end{proof}

\begin{remark}\label{fdalgbimod} \marginpar{fdalgbimod} Note that if $R$ is a finite-dimensional algebra over a field $K$, then the bimodule $_RR_R$, that is, $R$ as a right module over the $K$-algebra $R^{\rm op}\otimes_K R$, being of finite length as a $K$-module, has both acc and dcc on pp-definable subgroups.
\end{remark}

\begin{theorem}\label{isoup} \marginpar{isoup} (Keisler-Shelah Theorem) \cite{SheIso} If $M$, $N$ are structures (for the same first order, finitary language), then $M$ and $N$ are elementarily equivalent if and only if they have isomorphic ultrapowers $M^I/{\cal U} \simeq N^J/{\cal V}$ for some sets $I$, $J$ and ultrafilters ${\cal U}$, ${\cal V}$ on them\footnote{Indeed one may even require $I=J$ and ${\cal U} = {\cal V}$.}.
\end{theorem}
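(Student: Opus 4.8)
The reverse implication needs only \ref{elemextup}: the diagonal embeddings $M \to M^I/{\cal U}$ and $N \to N^J/{\cal V}$ are elementary, so $M \equiv M^I/{\cal U}$ and $N \equiv N^J/{\cal V}$; an isomorphism $M^I/{\cal U} \simeq N^J/{\cal V}$ forces $M^I/{\cal U} \equiv N^J/{\cal V}$, whence $M \equiv N$. So the real content is the forward implication, and the plan is to realise the isomorphism inside suitable ultrapowers formed over a single common index set (which also yields the stronger statement in the footnote). We may assume $M$ and $N$ are infinite, the finite case being trivial.

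First I would put $\lambda = |M| + |N| + \aleph_0$ and invoke Shelah's construction of a countably incomplete, $\lambda^+$-good ultrafilter ${\cal U}$ on $I = \lambda$; this is the one substantial external input, whose proof we are content to quote from \cite{SheIso}. Its decisive feature is that for every structure $X$ in our (finitary) language the ultrapower $X^\lambda/{\cal U}$ is $\lambda^+$-saturated. By \ref{elemextup} we have $M^\lambda/{\cal U} \equiv M \equiv N \equiv N^\lambda/{\cal U}$, so the two ultrapowers are elementarily equivalent and both $\lambda^+$-saturated, and I would then pin them down up to isomorphism using their common theory together with a cardinality count.

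The finishing move combines a back-and-forth with that cardinality count. For a countably incomplete ultrafilter on $\lambda$ one has $|X^\lambda/{\cal U}| \geq 2^\lambda$ while trivially $|X^\lambda/{\cal U}| \leq |X|^\lambda$; since $2 \leq |M|,|N| \leq \lambda$ this gives $|M^\lambda/{\cal U}| = |N^\lambda/{\cal U}| = 2^\lambda$. If one assumes GCH then $2^\lambda = \lambda^+$, both ultrapowers are $\lambda^+$-saturated of cardinality $\lambda^+$, hence saturated, hence isomorphic by the usual back-and-forth of length $\lambda^+$ --- this is Keisler's original argument. To dispense with GCH one runs the same scheme with ``saturated'' weakened to ``special'': with more care in the choice of index set (or by iterating the ultrapower) one arranges that $M^I/{\cal U}$ and $N^I/{\cal U}$ are special of the same cardinality, and uses that elementarily equivalent special models of equal cardinality are isomorphic, the back-and-forth now running along their defining elementary chains of saturated pieces.

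The only genuinely hard ingredient --- and the main obstacle --- is thus the ZFC construction of good ultrafilters producing $\lambda^+$-saturated ultrapowers, together with the cardinal bookkeeping that forces the passage from saturated to special ultrapowers in the absence of GCH. Everything else (the consequences of \ref{elemextup}, the back-and-forth constructions, the bound $|X^\lambda/{\cal U}| \leq |X|^\lambda$) is routine, and since \ref{isoup} is used below only as a black box we do not reproduce the construction, referring to \cite{SheIso}.
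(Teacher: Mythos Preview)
The paper does not prove \ref{isoup} at all: it is stated with attribution to \cite{SheIso} and then used as a black box, exactly as you yourself observe at the end of your proposal. There is therefore nothing in the paper to compare your argument against; your sketch of the Keisler--Shelah proof (good ultrafilters producing highly saturated ultrapowers, then a back-and-forth between saturated or special models of equal cardinality) is the standard route and is correct as an outline, with the genuinely hard combinatorics correctly delegated to \cite{SheIso}.
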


Another useful result is the Downwards L\"{o}wenheim-Skolem Theorem which, for rings, and in the countable version, is the following.

\begin{theorem}\label{downLSrng} \marginpar{downLSrng} Suppose that $X$ is a countable subset of the ring $S$.  Then there is a countable elementary subring of $S$ which contains $X$.
\end{theorem}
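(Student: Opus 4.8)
The plan is to run the standard Skolem-hull construction underlying the downward L\"owenheim--Skolem theorem, specialised to the finite signature of rings. The tool is the Tarski--Vaught test: a subring $R\leq S$ satisfies $R\prec S$ as soon as, for every formula $\theta(\overline{x},y)$ in the language of rings and every tuple $\overline{a}$ from $R$, if $S\models\exists y\,\theta(\overline{a},y)$ then some $b\in R$ has $S\models\theta(\overline{a},b)$; see e.g.\ \cite{Hod}. So it is enough to build a countable subring of $S$ that contains $X$ and has ``enough witnesses''.

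I would construct it as an increasing union of countable subsets of $S$. Put $X_0=X\cup\{0,1\}$. Given a countable $X_n\subseteq S$: since the ring signature $\{+,\cdot,-,0,1\}$ is finite there are only countably many formulas $\theta(\overline{x},y)$, and since $X_n$ is countable there are only countably many finite tuples $\overline{a}$ from $X_n$; for each pair $(\theta,\overline{a})$ with $S\models\exists y\,\theta(\overline{a},y)$ fix, by the axiom of choice, one witness $b_{\theta,\overline{a}}\in S$. Let $X_{n+1}$ be the union of $X_n$, all these witnesses, and all elements of $S$ obtained by a single application of $+$, $\cdot$ or $-$ to elements of $X_n$; this is again countable. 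Set $R=\bigcup_{n<\omega}X_n$. Then $R$ is countable, contains $X$, and is closed under the ring operations (any two of its elements lie in a common $X_n$ by directedness), so $R$ is a countable subring of $S$.

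It remains to check the Tarski--Vaught condition for $R\leq S$: given $\theta(\overline{x},y)$ and $\overline{a}$ from $R$ with $S\models\exists y\,\theta(\overline{a},y)$, the finitely many entries of $\overline{a}$ lie in a single $X_n$, so the witness $b_{\theta,\overline{a}}$ chosen at that stage lies in $X_{n+1}\subseteq R$ and satisfies $S\models\theta(\overline{a},b_{\theta,\overline{a}})$; hence $R\prec S$. The only delicate point is the cardinality bookkeeping ensuring each $X_n$, and therefore $R$, stays countable, and this depends only on the signature being finite, so that at each stage only countably many formula/tuple pairs, and hence only countably many new witnesses, enter. The Tarski--Vaught test itself I would simply quote; a self-contained proof of it is a routine induction on formulas whose only substantial case is the existential quantifier, which is exactly the closure property arranged above.
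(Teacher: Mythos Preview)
Your argument is the standard Skolem-hull/Tarski--Vaught proof of the downward L\"owenheim--Skolem theorem, and it is correct as written. Note, however, that the paper does not actually supply a proof of this statement: it is quoted as a well-known model-theoretic fact (the countable case of the general Theorem~\ref{downLS} stated immediately afterwards), so there is no ``paper's own proof'' to compare against. Your proof is exactly what a reference such as \cite{Hod} would give, so you have correctly reconstructed the intended background result.
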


The more general theorem is as follows.

\begin{theorem}\label{downLS} \marginpar{downLS} Suppose that $S$ is a first-order ${\cal L}$-structure where the cardinality of the language ${\cal L}$ is $\kappa$; suppose that $X$ is a subset of $S$.  Then there is an elementary substructure of $S$ which contains $X$ and is of cardinality $\leq |X| + \kappa$.
\end{theorem}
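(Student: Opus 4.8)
The plan is to realise the required substructure as a \emph{Skolem hull} of $X$: one closes $X$ off, in countably many stages, under witnesses for existential formulas (and under the function and constant symbols of ${\cal L}$), and then checks elementarity by the Tarski--Vaught criterion. Recall that criterion: a substructure $M$ of $S$ is an elementary substructure, $M \prec S$, provided that for every ${\cal L}$-formula $\varphi(x,\overline{y})$ and every tuple $\overline{a}$ from $M$, if $S\models\exists x\,\varphi(x,\overline{a})$ then there is $b\in M$ with $S\models\varphi(b,\overline{a})$; this is proved by a routine induction on the complexity of $\varphi$, the existential-witness clause being exactly what the criterion supplies.

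First I would set $\lambda = |X| + |{\cal L}| + \aleph_0$, an infinite cardinal, and observe that the set of all ${\cal L}$-formulas has cardinality at most $\lambda$. If $X$ is empty and ${\cal L}$ has no constant symbols, I would first enlarge $X$ by one arbitrary element of $S$ (this keeps $|X|\le\lambda$ and guarantees that the eventual substructure is nonempty). Then I would construct an increasing chain $X = X_0 \subseteq X_1 \subseteq X_2 \subseteq \cdots$ of subsets of $S$: given $X_n$, for each ${\cal L}$-formula $\varphi(x,\overline{y})$ and each tuple $\overline{a}$ from $X_n$ with $S\models\exists x\,\varphi(x,\overline{a})$, use the axiom of choice to pick a witness $b_{\varphi,\overline{a}}\in S$ satisfying $S\models\varphi(b_{\varphi,\overline{a}},\overline{a})$; also throw in $g^S(\overline{a})$ for every function symbol $g$ of ${\cal L}$ and every matching tuple $\overline{a}$ from $X_n$, together with the interpretation $c^S$ of every constant symbol $c$. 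Let $X_{n+1}$ be the union of $X_n$ with all of these new elements, and put $M = \bigcup_{n<\omega} X_n$.

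It remains to verify the three required properties. Since $M$ is closed under all the function and constant symbols of ${\cal L}$, it is the universe of a substructure of $S$, and clearly $X\subseteq M$. For $M\prec S$: given $\varphi(x,\overline{y})$ and a tuple $\overline{a}$ from $M$ with $S\models\exists x\,\varphi(x,\overline{a})$, finiteness of $\overline{a}$ puts it inside some $X_n$, so $b_{\varphi,\overline{a}}\in X_{n+1}\subseteq M$ is the required witness, and the Tarski--Vaught criterion then gives $M\prec S$. For the cardinality bound I would show by induction that $|X_n|\le\lambda$ for every $n$: the number of pairs $(\varphi,\overline{a})$ considered at stage $n$ is at most (number of formulas) times (number of finite tuples from $X_n$), hence at most $\lambda\cdot(|X_n|+\aleph_0)\le\lambda$; likewise only $\le\lambda$ function-values and constants are added; so $|X_{n+1}|\le|X_n|+\lambda=\lambda$. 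Hence $|M| = |\bigcup_n X_n| \le \aleph_0\cdot\lambda = \lambda = |X| + |{\cal L}|$ (absorbing $\aleph_0$ into $|{\cal L}|$, as is conventional).

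The argument is entirely standard, so there is no serious obstacle; the only points demanding a little care are the degenerate case in which $X$ together with the constants fails to generate a nonempty substructure, and keeping the cardinal arithmetic honest --- in particular reading $|X|+|{\cal L}|$ as $|X|+|{\cal L}|+\aleph_0$, which is harmless since one conventionally counts the countably many logical symbols and variables as part of the language. The selection of witnesses uses the axiom of choice, which at this level of generality is unavoidable.
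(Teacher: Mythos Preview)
Your proof is correct and is the standard Skolem-hull/Tarski--Vaught argument for the downward L\"owenheim--Skolem theorem. Note, however, that the paper does not actually prove this statement: it is quoted as a well-known background result from model theory (alongside the Keisler--Shelah theorem and the basic facts about ultrapowers) and no proof is given in the paper itself. So there is nothing to compare against; your argument is exactly the textbook one that the paper is implicitly relying on.
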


By the cardinality of a language we mean $\aleph_0$ plus the number of extra (constant, function, relation) symbols in the language.  For instance the cardinality of the language of rings is $\aleph_0$, the cardinality of the language of $R$-modules is $\aleph_0 + |R|$, and the cardinality of the language for treating elementary extensions of a ring $R$ is $\aleph_0 + |R|$ (because the sentences in the definition of elementary extension may use parameters from $R$). 

\vspace{4pt}

We recall the condition (\cite[8.10]{PreBk}, see \cite[1.1.13]{PreNBK}) for one pp formula to imply another.

\begin{theorem}\label{presta} \marginpar{presta} Suppose that $\psi(\overline{x})$ and $\phi(\overline{x})$ are pp formulas in the language of $R$-modules; say $ \psi (\overline{x})$ is $\exists \overline{z}\, \big((\overline{x} \, \overline{z})H_\psi =0\big) $ and $\phi$ is $\exists  \overline{y}\, \big((\overline{x} \, \overline{y})H_\phi =0 \big)$, where $H_\phi$ and $H_\psi$ are matrices with entries in $R$.  Then $\psi \leq \phi$ iff there are matrices $ G= \left(\begin{array}{c} G' \\ G'' \end{array} \right) $ and $ K $, with entries in $R$, such that
$$ \left( \begin{array}{cc} I & G' \\ 0 & G'' \end{array} \right)   H_\phi =H_\psi K $$ (here $ I $ is the $ n\times n $ identity matrix where $ n $ is the length of $ \overline{x} $, and $ 0$ denotes a zero matrix with $ n $ columns).
\end{theorem}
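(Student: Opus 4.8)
The plan is to prove the two implications separately; the backward direction is a one-line matrix manipulation, while the forward direction passes through the free realisation of $\psi$ as a finitely presented module. For $(\Leftarrow)$: suppose $G = \left(\begin{array}{c} G' \\ G'' \end{array}\right)$ and $K$ are given with $\left(\begin{array}{cc} I & G' \\ 0 & G'' \end{array}\right) H_\phi = H_\psi K$, and let $M \models \psi(\overline{a})$, witnessed by a tuple $\overline{b}$, so $(\overline{a}\,\overline{b})H_\psi = 0$. Put $\overline{c} = \overline{a}G' + \overline{b}G''$, so that $(\overline{a}\,\overline{c}) = (\overline{a}\,\overline{b})\left(\begin{array}{cc} I & G' \\ 0 & G'' \end{array}\right)$; then $(\overline{a}\,\overline{c})H_\phi = (\overline{a}\,\overline{b})\left(\begin{array}{cc} I & G' \\ 0 & G'' \end{array}\right)H_\phi = (\overline{a}\,\overline{b})H_\psi K = 0$, so $\overline{c}$ witnesses $M \models \phi(\overline{a})$. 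Hence $\psi(M) \leq \phi(M)$ for every $M$, i.e.~$\psi \leq \phi$.

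For $(\Rightarrow)$: write $n$ for the length of $\overline{x}$, $l$ for the length of $\overline{z}$ and $m_\psi$ for the number of columns of $H_\psi$, so $H_\psi$ is an $(n+l)\times m_\psi$ matrix. Form a free realisation $C = C_\psi$ of $\psi$ (\cite[1.2.14]{PreNBK}): the finitely presented right $R$-module with generating $(n+l)$-tuple $(\overline{c}\,\overline{d})$ and defining relations $(\overline{c}\,\overline{d})H_\psi = 0$, one relation per column of $H_\psi$. By construction $C \models \psi(\overline{c})$, witnessed by $\overline{d}$. Since $\psi \leq \phi$, also $C \models \phi(\overline{c})$, so there is a tuple $\overline{c}'$ from $C$, of the length of $\overline{y}$, with $(\overline{c}\,\overline{c}')H_\phi = 0$ in $C$. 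As $(\overline{c}\,\overline{d})$ generates $C$ we may write $\overline{c}' = \overline{c}G' + \overline{d}G''$ for matrices $G'$, $G''$ over $R$ of the appropriate sizes, and set $G = \left(\begin{array}{c} G' \\ G'' \end{array}\right)$. Then $(\overline{c}\,\overline{c}') = (\overline{c}\,\overline{d})\left(\begin{array}{cc} I & G' \\ 0 & G'' \end{array}\right)$, whence
$$(\overline{c}\,\overline{d})\left(\begin{array}{cc} I & G' \\ 0 & G'' \end{array}\right)H_\phi = (\overline{c}\,\overline{c}')H_\phi = 0 \quad\text{in } C.$$
Now invoke the defining property of the presented module $C$: any matrix $V$ over $R$ with $(\overline{c}\,\overline{d})V = 0$ in $C$ has every column lying in the relation submodule, hence each column is an $R$-linear combination of the columns of $H_\psi$, that is, $V = H_\psi K$ for some matrix $K$ over $R$. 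Taking $V = \left(\begin{array}{cc} I & G' \\ 0 & G'' \end{array}\right)H_\phi$ yields exactly $\left(\begin{array}{cc} I & G' \\ 0 & G'' \end{array}\right)H_\phi = H_\psi K$, as wanted.

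Most of the work is simply tracking matrix shapes and the row-vector convention, so that the block matrix appears as written rather than transposed; that is the step I expect to be the main nuisance. The one genuinely load-bearing ingredient is the free realisation $C_\psi$: one needs $C_\psi \models \psi(\overline{c})$ so that the hypothesis $\psi \leq \phi$ can be applied at $C_\psi$, together with the fact that the relations among its generators are precisely the $R$-span of the columns of $H_\psi$ — this is what turns the semantic implication $\psi \leq \phi$ into the finite algebraic identity. Everything else is formal.
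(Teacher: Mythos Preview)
Your proof is correct. Note, however, that the paper does not actually prove this theorem: it is stated with citations to \cite[8.10]{PreBk} and \cite[1.1.13]{PreNBK} as a known result, so there is no in-paper proof to compare against. Your argument via the free realisation $C_\psi$ is precisely the standard one found in those references: the backward direction is the routine verification, and the forward direction uses that $C_\psi$ satisfies $\psi$ (so the hypothesis applies) together with the fact that every relation on the generating tuple lies in the column span of $H_\psi$ (so the semantic implication becomes the desired matrix identity).
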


We noted, at the beginning of Section \ref{secdirextn}, that if $R\to S$ is a morphism of rings, then, for each $n$ there is an induced lattice homomorphism ${\rm pp}^n_R \to {\rm pp}^n_S$, where ${\rm pp}^n_R$ is the lattice of equivalence classes of pp formulas for right $R$-modules with $n$ free variables.  In the case of an elementary embedding, this is an embedding of lattices.

\begin{cor}\label{elemincl} \marginpar{elemincl} (\cite[Prop.~3]{PreRepEE}) If $f:R \to S$ is an elementary embedding of rings (or just of $(R,R)$-bimodules), then $\phi \to f_\ast\phi$ induces, for each $n$, an embedding of the lattice ${\rm pp}^n_R$ into the lattice ${\rm pp}^n_S$.  In particular $\psi \leq \phi$ for $R$-modules iff $f_\ast\psi \leq f_\ast\phi$ for $S$-modules.
\end{cor}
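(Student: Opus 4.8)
The plan is to combine the matrix criterion \ref{presta} for one pp formula to imply another with the defining property of an elementary embedding. It was already noted at the start of Section \ref{secdirextn} that, for \emph{any} ring morphism $f$, the assignment $\phi\mapsto f_\ast\phi$ is a well-defined lattice homomorphism ${\rm pp}^n_R\to{\rm pp}^n_S$, hence in particular monotone. Now an injective lattice homomorphism automatically reflects order (if $f_\ast\psi\le f_\ast\phi$ then $f_\ast(\psi\wedge\phi)=f_\ast\psi$, so $\psi\wedge\phi=\psi$, i.e.\ $\psi\le\phi$), and conversely an order-reflecting homomorphism is injective; so the whole statement reduces to the implication
$$ f_\ast\psi \le f_\ast\phi \ \text{ (for $S$-modules)} \ \Longrightarrow\ \psi \le \phi \ \text{ (for $R$-modules)}. $$
The reverse implication is part of $f_\ast$ being a monotone lattice homomorphism (or: apply $f$ entrywise to the matrices supplied by \ref{presta} over $R$).

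To prove it, write $\psi(\overline x)$ as $\exists\overline z\,\big((\overline x\,\overline z)H_\psi=0\big)$ and $\phi(\overline x)$ as $\exists\overline y\,\big((\overline x\,\overline y)H_\phi=0\big)$ with $H_\psi,H_\phi$ matrices over $R$; then $f_\ast\psi$, $f_\ast\phi$ are presented by the entrywise images $fH_\psi$, $fH_\phi$. Assuming $f_\ast\psi\le f_\ast\phi$, \ref{presta} (applied to pp formulas for $S$-modules) furnishes matrices $G=\left(\begin{smallmatrix}G'\\G''\end{smallmatrix}\right)$ and $K$ with entries in $S$ such that $\left(\begin{smallmatrix}I&G'\\0&G''\end{smallmatrix}\right)(fH_\phi)=(fH_\psi)K$. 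The block sizes here depend only on the shapes of $H_\psi$ and $H_\phi$, which are unaffected by applying $f$ entrywise; so this matrix identity unravels into a \emph{fixed} finite conjunction of equations whose unknowns are the finitely many entries of $G',G'',K$ and whose coefficients are entries of $H_\phi$ and $H_\psi$, i.e.\ (images under $f$ of) elements of $R$. Moreover each product occurring multiplies an element of $S$ by an element of $R$, so only the $(R,R)$-bimodule structure of $S$ is needed.

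Consequently ``there exist matrices $G',G'',K$ with $\left(\begin{smallmatrix}I&G'\\0&G''\end{smallmatrix}\right)H_\phi=H_\psi K$'' is a single first-order sentence $\theta$ in the language of rings (equivalently, of $(R,R)$-bimodules) with parameters from $R$, and we have just shown $S\models\theta$. Since $f:R\to S$ is elementary — and, as remarked above, an elementary extension of $(R,R)$-bimodules would already suffice — we get $R\models\theta$, i.e.\ matrices $G',G'',K$ over $R$ with $\left(\begin{smallmatrix}I&G'\\0&G''\end{smallmatrix}\right)H_\phi=H_\psi K$. By \ref{presta} again, now over $R$, this gives $\psi\le\phi$, as required. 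The only point needing care is the middle step: one must check that the existential quantifiers range over a tuple of $S$-elements whose length is bounded in terms of $\psi$ and $\phi$ alone, and that no product of two of these new unknowns occurs — both of which are immediate from the shape of the displayed identity in \ref{presta}. With that bookkeeping done, the argument is just \ref{presta} together with the defining transfer property of an elementary embedding.
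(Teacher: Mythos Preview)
Your proof is correct and follows essentially the same approach as the paper: use the matrix criterion \ref{presta} to convert $f_\ast\psi\le f_\ast\phi$ into an existential sentence with parameters from $R$, transfer it from $S$ to $R$ by elementarity, and apply \ref{presta} again over $R$. Your additional remarks about the lattice-theoretic reduction and the bookkeeping (bounded tuple length, no products of unknowns so that the $(R,R)$-bimodule language suffices) are helpful elaborations but do not alter the strategy.
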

\begin{proof}  The direction $\Rightarrow$ in the second sentence needs no assumptions and is \ref{fstarleq}.  For the other direction, suppose that $\phi$, $\psi$ are pp formulas for $R$-modules and that $f_\ast\psi \leq f_\ast\phi$.  Then there are matrices $G$ and $K$ with entries in $S$ making the matrix equation in the statement of \ref{presta} true.  We can write that matrix equation as a conjunction, $\theta(\overline{r}, \overline{s})$, of equations in the language of rings and with parameters $\overline{r}$ - the elements of $R$ which appear in $H_\phi$ and $H_\psi$ - and parameters $\overline{s}$ - the elements of $S$ which appear in $G$ or $K$.  Existentially quantifying out the parameters $\overline{s}$, we obtain $\exists \overline{y} \, \theta(\overline{r}, \overline{y})$ - a sentence in the language of rings, indeed a sentence in the language of $(R,R)$-bimodules, with parameters from $R$.  That sentence is true in $S$ so, since $R$ is an elementary subring (or subbimodule) of $S$, it is true in $R$.  That is, there are matrices $G'$, $K'$ with entries in $R$ which solves the matrix equation in the statement of \ref{presta}, and so $\psi \leq \phi$ for $R$-modules, as required.
\end{proof}

\begin{lemma} \label{elem3} \marginpar{elem3} (\cite[Prop.~18]{PreRepEE}) Suppose that $R \prec S$ (as rings, or just as $(R,R)$-bimodules).  If $M \in {\rm Mod}\mbox{-}R$, then the canonical $R$-linear map $M \to M\otimes_R S$, given by $m\mapsto m\otimes 1_S$, is a pure embedding of $R$-modules.  In particular, $M$ is in the definable subcategory of ${\rm Mod}\mbox{-}R$ generated by $M\otimes_RS_R$.
\end{lemma}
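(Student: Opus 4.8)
The plan is to verify directly that $\iota\colon M\to M\otimes_R S$, $m\mapsto m\otimes 1_S$, preserves and reflects pp-types; since $\iota$ is plainly $R$-linear, this — together with injectivity — is exactly the assertion that it is a pure monomorphism. Preservation is automatic: morphisms are non-decreasing on pp-types, so ${\rm pp}^M(\overline a)\subseteq{\rm pp}^{(M\otimes_R S)_R}(\iota\overline a)$ for every tuple $\overline a$ from $M$. All the content lies in the reverse inclusion, and this is where the hypothesis $R\prec S$ enters.

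To get the reverse inclusion I would take a pp formula $\phi$ for right $R$-modules with $\phi\in{\rm pp}^{(M\otimes_R S)_R}(\iota\overline a)$. By \ref{fstar}, reading $M\otimes_R S$ as a right $S$-module, this says $f_\ast\phi\in{\rm pp}^{(M\otimes_R S)_S}(\overline a\otimes\overline 1)$. Now \ref{elem4} tells us that ${\rm pp}^{(M\otimes_R S)_S}(\overline a\otimes\overline 1)$ is generated by $f_\ast{\rm pp}^M(\overline a)$, and, since ${\rm pp}^M(\overline a)$ is closed under conjunction and $f_\ast$ commutes with conjunction, this produces a single $\psi\in{\rm pp}^M(\overline a)$ with $f_\ast\psi\le f_\ast\phi$ for $S$-modules. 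By \ref{elemincl}, $f_\ast\psi\le f_\ast\phi$ for $S$-modules forces $\psi\le\phi$ for $R$-modules, and hence $\phi\in{\rm pp}^M(\overline a)$ by upward closure of pp-types. Thus ${\rm pp}^{(M\otimes_R S)_R}(\iota\overline a)={\rm pp}^M(\overline a)$ for every $\overline a$. Applying this to a single nonzero $a\in M$ with the pp formula $\overline x=\overline 0$ shows $\iota a\ne 0$, so $\iota$ is injective, hence a pure embedding. For the last sentence, $M$ is then a pure submodule of $M\otimes_R S_R$, and definable subcategories are closed under pure submodules by \ref{defchar}, so $M\in\langle M\otimes_R S_R\rangle$.

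I do not expect a genuine obstacle; the one thing to be careful about is bookkeeping between the two module structures on $M\otimes_R S$ — one computes the pp-type of $\overline a\otimes\overline 1$ over $S$ via \ref{elem4}, then transports it to the $R$-structure via \ref{fstar} — together with invoking \ref{elemincl} in its reflecting direction, which is precisely the nontrivial half of that corollary. A completely different, and perhaps shorter, route avoids pp-types altogether: $f$ is an elementary embedding of rings, hence, bimodule formulas with coefficients from $R$ being ring formulas with parameters from $R$, an elementary — so pure — embedding of $(R,R)$-bimodules ${}_RR_R\to{}_RS_R$; then the second part of \ref{tenspur}, applied with second ring $R$, makes $M\cong M\otimes_R{}_RR_R\to M\otimes_R{}_RS_R$ a pure embedding of right $R$-modules, and one checks this induced map is exactly $\iota$.
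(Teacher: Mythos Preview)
Both of your routes are correct. Your second route is essentially the paper's own proof: the paper writes an element in the image of $\iota$ as $\sum a_i\otimes r_i$ with $r_i\in R$, applies \ref{sigmaphi2} to get $M\models\psi(\overline a)$ and ${}_RS_S\models(\phi:\psi)(\overline r)$, observes via \ref{fmlaoverR} that $(\phi:\psi)$ is an $(R,R)$-bimodule formula, and then uses purity of ${}_RR_R\hookrightarrow{}_RS_R$ (a consequence of $R\prec S$) to conclude ${}_RR_R\models(\phi:\psi)(\overline r)$, hence $M\cong M\otimes_RR\models\phi$. This is exactly the second part of \ref{tenspur} unfolded with $B'={}_RR_R$, $B={}_RS_R$, which is what you invoke.

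Your first route is a genuinely different argument. Rather than working inside the $(R,R)$-bimodule pp-calculus, you pass to the $S$-module structure via \ref{fstar}, use \ref{elem4} to produce $f_\ast\psi\le f_\ast\phi$ over $S$ for some $\psi\in{\rm pp}^M(\overline a)$, and then pull this inequality back to $R$ via the reflecting direction of \ref{elemincl}. The trade-off is that the paper's argument (and your second route) needs only \emph{purity} of ${}_RR_R\to{}_RS_R$, whereas your first route uses the full elementarity of that embedding, since \ref{elemincl} relies on \ref{presta} and must quantify the witnessing matrices $G,K$ out of $S$ and back into $R$. In exchange, your first route avoids the $(\sigma:\phi)$ machinery and bimodule formulas altogether, which is tidier if one already has \ref{elem4} and \ref{elemincl} available.
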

\begin{proof}  If the map $a \mapsto a\otimes 1_S$ from $M$ to $M\otimes_RS$ were not an embedding, then there would be some $a_1, \dots, a_n$ in $M$ and $r_1, \dots r_n \in R$ such that $\sum_{i=1}^n \, a_ir_i$ is nonzero in $M$ but $\sum_{i=1}^n \, a_i\otimes r_i =0$ in $M \otimes_R S$.  By Herzog's criterion, \ref{herzcrit}, there would then be a pp formula $\phi(\overline{x})$ for $R$-modules such that $M_R \models \phi(\overline{a})$ and $_{R}S \models D\phi(\overline{r})$.  Since the embedding $R \to S$ of left $R$-modules is pure (since it is elementary), it follows that $_RR \models D\phi(\overline{r})$ and hence, again by \ref{herzcrit}, $\sum_{i=1}^n \, a_i\otimes r_i = \sum_{i=1}^n \, a_ir_i =0$ already in $R$.  Therefore $M \to M \otimes_R S$ given by $a\mapsto a\otimes 1_S$ is monic.

To see that, more generally, this map is a pure embedding of right $R$-modules, suppose that $M\otimes_RS \models \phi(\sum_{i=1}^n \, a_i\otimes r_i)$ ($a_i$, $r_i$ as above) where $\phi$ is a pp formula in the language of $R$-modules.  Then, by \ref{sigmaphi2}, we have $M\models \psi(\overline{a})$ and $_{R}S_S \models (\phi: \psi)(\overline{r})$ for some pp $\psi$ in the language of $R$-modules.  As noted before \ref{elem4}, since $\phi$, $\psi$ both are in the language of $R$-modules, $(\phi:\psi)$ is in the language of $(R,R)$-bimodules, so we have $_{R}S_R\models (\phi: \psi)(\overline{r})$ and hence, since $R\prec S$, we have $_{R}R_R\models (\phi: \psi)(\overline{r})$.  By Herzog's criterion again, we deduce that already $M \simeq M \otimes_{R}R \models \phi(\sum_{i=1}^n \, a_i\otimes r_i)$, as required.
\end{proof}

We note the following.

\begin{prop} Suppose that $R \prec S$ as rings.

\noindent (1) If $R$ is right coherent, then $_RS$ is flat but not necessarily Mittag-Leffler.

\noindent (2) If $R$ is right coherent and left perfect, then $_RS$ is projective (and hence Mittag-Leffler).
\end{prop}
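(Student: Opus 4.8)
The plan is: for (1), derive flatness of $_RS$ from right coherence together with the observation that an elementary extension of rings descends to an elementary equivalence of the underlying left modules; for (2), combine (1) with Bass's theorem. In detail, for the positive part of (1): since $R$ is right coherent, the flat left $R$-modules form a definable subcategory ${\rm Flat}\mbox{-}R$ of $R\mbox{-}{\rm Mod}$ (\cite[3.4.24]{PreNBK}), so, as $_RR$ is flat, $\langle{}_RR\rangle\subseteq{\rm Flat}\mbox{-}R$. I would then argue that $R\prec S$ forces $_RR\equiv{}_RS$ as left $R$-modules: any formula of the left-$R$-module language, evaluated at a tuple from $R$, becomes, after rewriting each ``multiply on the left by $r$'' as the ring operation $r\cdot(-)$, a formula in the language of rings with parameters from $R$, and such formulas transfer between $R$ and $S$. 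Since elementarily equivalent modules are definably equivalent, $_RS\in\langle{}_RS\rangle=\langle{}_RR\rangle\subseteq{\rm Flat}\mbox{-}R$, so $_RS$ is flat. (Only $R\prec S$ as $(R,R)$-bimodules is needed for this.)

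For the negative part of (1) I would exhibit an explicit example: take $R={\mathbb Z}$, which is right coherent since it is noetherian, and $S={\mathbb Z}^{\mathbb N}/{\cal U}$ for a nonprincipal ultrafilter ${\cal U}$ on ${\mathbb N}$; this is an elementary extension of ${\mathbb Z}$ by \ref{elemextup}. Put $x=(n!)_n/{\cal U}\in S$. For every $k\geq 1$ the set $\{n:k\mid n!\}$ is cofinite, hence in ${\cal U}$, so $k\mid x$ in $S$ and ${\rm pp}^S(x)$ contains every divisibility formula. Were ${\rm pp}^S(x)$ generated by a single pp formula $\phi$, a free realisation $(C_\phi,c)$ of $\phi$ would have $C_\phi$ finitely generated over ${\mathbb Z}$ with $c$ divisible by every integer in $C_\phi$; but $0$ is the only such element of a finitely generated abelian group, so $\phi$ would be equivalent to $v=0$, contradicting $x\neq 0$ (note $S$ is torsionfree). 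Hence ${\rm pp}^S(x)$ is not finitely generated and $_{\mathbb Z}S$ is not Mittag-Leffler, by \ref{MLchar}.

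For (2), part (1) gives $_RS$ flat, and since $R$ is left perfect, every flat left $R$-module is projective (Bass's characterisation of left perfect rings), so $_RS$ is projective, hence pure-projective, hence Mittag-Leffler by \ref{MLchar}. The only step that requires genuine work is the counterexample in (1) — verifying that the pp-type over ${\mathbb Z}$ of the chosen ultrapower element is not finitely generated; the positive assertions are each essentially one line once one notes that $R\prec S$ descends to $_RR\equiv{}_RS$ and invokes the quoted results.
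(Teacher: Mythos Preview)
Your proof is correct and follows essentially the same approach as the paper. The paper cites Sabbagh--Eklof \cite{SabEk} for the positive statements (which amounts to exactly your argument: $R\prec S$ gives $_RR\equiv{}_RS$, and over a right coherent ring flatness of left modules is definable, while adding left perfect makes flat imply projective via Bass), and for the counterexample the paper uses $R={\mathbb Z}_{(p)}$ with the element $(p^n)_n/{\cal U}$ rather than your $R={\mathbb Z}$ with $(n!)_n/{\cal U}$; your free-realisation argument for non-finite-generation of the pp-type is a clean way to justify what the paper states more briefly.
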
 
\begin{proof}  Since $R\prec S$ implies that $_RS$ is elementarily equivalent to $_RR$, the positive statements follow by \cite[Thms.~4, 5]{SabEk} (see \cite[Chpt.~14]{PreBk}).  An example for the negative statement in (1) is to take $R$ to be the localisation ${\mathbb Z}_{(p)}$ of ${\mathbb Z}$ at a prime $p$ and consider some nonprincipal ultrafilter ${\cal U}$ on the index set $\omega$ (start with the filter generated by all subsets of $\omega$ with finite complement and apply Zorn's Lemma to extend to an ultrafilter).  Consider the element $a = (p, p^2, \dots, p^n, \dots)/{\cal U}$ of $S= R^\omega/{\cal U}$.  Then $p^n|a$ for every $n$ (by {\L}os' Theorem, \ref{Los}, but this follows directly from the ultrapower construction) and so ${\rm pp}^S(a)$ is not finitely generated and $_RS$ is not Mittag-Leffler, despite $S$ being an elementary extension of $R$.
\end{proof}

We will concentrate on tensor extension but note the following regarding direct extension.

If $R$ is an elementary subring of $S$ then, in general, there are many more definable subcategories of ${\rm Mod}\mbox{-}S$ than of ${\rm Mod}\mbox{-}R$:  for example over tame hereditary algebras not of finite representation type, there are series of definable subcategories parametrised by the projective line over the underlying field (which field will be very large if $S$ is a large ultrapower of $R$).  On the other hand, if these rings are of finite representation type, then every definable subcategory of ${\rm Mod}\mbox{-}S$ is the direct extension of a definable subcategory of ${\rm Mod}\mbox{-}R$.  To see that, first recall that elementary equivalence of rings preserves finite representation type.

\begin{prop}\label{eefrt} \marginpar{eefrt} (\cite[2.5]{HJL})  If $R$ and $S$ are elementarily equivalent rings and if $R$ is of finite representation type, then so is $S$.
\end{prop}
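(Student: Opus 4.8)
The plan is to express ``$R$ is of finite representation type'' by a countable family of parameter-free sentences in the first-order language of rings, so that it transfers automatically under elementary equivalence, and then to recover the property from those sentences. The encoding rests on two standard facts. First, a finitely presented right module $P$ over any ring is the cokernel of some homomorphism $R^m\to R^n$, hence is coded by an $m\times n$ matrix over $R$; and, for matrices of bounded size, each of ``${\rm coker}(A)$ is indecomposable'', ``${\rm End}_R({\rm coker}(A))$ is local'', ``${\rm coker}(A)\cong{\rm coker}(A')$'' and ``${\rm coker}(A)$ is isomorphic to a direct summand of ${\rm coker}(A')$'' is expressible by a first-order formula in the matrix entries, since the homomorphisms involved, together with the auxiliary matrices witnessing and lifting the relevant linear equations, all have sizes bounded in terms of the sizes of $A$ and $A'$. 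Second, Azumaya's theorem: a direct summand of a finite direct sum of modules with local endomorphism rings is again a finite direct sum of copies of those modules. I shall also use a uniform decomposition bound: if $R$ is of finite representation type then, with $c={\rm length}_R(R/{\rm rad}R)$ (finite, since then $R_R$ is a finite direct sum of indecomposables with local endomorphism rings, so $R$ is semiperfect), every finitely presented right $R$-module generated by $n$ elements is a finite direct sum of at most $cn$ indecomposables; this follows by reducing a surjection $R^n\to P$ modulo ${\rm rad}R$ and applying Nakayama to finitely generated modules over the semiperfect ring $R$.

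Fix the finitely many isomorphism classes $M_1,\dots,M_k$ of indecomposable finitely presented right $R$-modules, matrix presentations of them of sizes at most some $d$, and the constant $c$ above. For each natural number $n$ let $\Theta_n$ be the sentence asserting: there exist matrices $A_1,\dots,A_k$ of size $\le d$ such that each ${\rm coker}(A_i)$ is indecomposable with local endomorphism ring, the ${\rm coker}(A_i)$ are pairwise non-isomorphic, and every module presented by a matrix of size $\le n$ is isomorphic to a direct summand of $\big({\rm coker}(A_1)\oplus\dots\oplus{\rm coker}(A_k)\big)^{cn}$. By the remarks above, for each fixed $n$ this is a single parameter-free first-order sentence in the language of rings (the last clause being a finite conjunction, over the boundedly many matrix shapes of size $\le n$, of universal statements about bounded tuples of ring elements). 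Since $R$ is of finite representation type the modules $M_1,\dots,M_k$ witness $\Theta_n$ for every $n$: they are indecomposable with local endomorphism rings (a standard fact about finite representation type), pairwise non-isomorphic, and every finitely presented $R$-module is a finite direct sum of copies of them, so by the decomposition bound every $n$-generated such module is a direct summand of the displayed power. Hence $R\models\Theta_n$ for all $n$.

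Since $R$ and $S$ are elementarily equivalent and the $\Theta_n$ are parameter-free, $S\models\Theta_n$ for every $n$. From this I recover finite representation type of $S$. Every finitely presented $S$-module $P$ is a finite direct sum of indecomposable finitely presented modules: presenting $P$ by a matrix of size $\le n$ and applying $\Theta_n$ in $S$ exhibits $P$ as a direct summand of a finite direct sum of modules with local endomorphism rings, so $P$ is itself such a sum by Azumaya's theorem. Moreover there are at most $k$ such indecomposables up to isomorphism: if $N_0,\dots,N_k$ were pairwise non-isomorphic indecomposable finitely presented $S$-modules, pick $n$ bounding the sizes of presentations of all of them and take matrices $A_1,\dots,A_k$ over $S$ witnessing $\Theta_n$; each $N_j$, being an indecomposable direct summand of $\big({\rm coker}(A_1)\oplus\dots\oplus{\rm coker}(A_k)\big)^{cn}$ and each ${\rm coker}(A_i)$ having local endomorphism ring, is by Azumaya isomorphic to one of the $k$ modules ${\rm coker}(A_i)$, so two of the $N_j$ are isomorphic, a contradiction. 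Thus $S$ has only finitely many indecomposable finitely presented modules, and every finitely presented $S$-module is a finite direct sum of them, that is, $S$ is of finite representation type.

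The model-theoretic content is only the one-line transfer of the sentences $\Theta_n$, and the recovery step is just Krull--Remak--Schmidt--Azumaya together with the pigeonhole principle; the real work lies in the uniformity. One must check carefully that every module-theoretic predicate bundled into $\Theta_n$ genuinely is first-order in the ring language for each fixed $n$, controlling the sizes of the auxiliary matrices that get existentially quantified, and one must establish the uniform decomposition bound $c$, which is precisely what allows replacing the not-obviously-first-order condition ``is a finite direct sum of copies of the $M_i$'' (the multiplicities being unbounded) by the first-order surrogate ``is a direct summand of a fixed power of $\bigoplus_i M_i$''. Alternatively one could route everything through the Keisler--Shelah theorem, proving directly that an ultrapower of a ring of finite representation type again has finite representation type (its indecomposables being ${\rm coker}(A_i)\otimes_RR^I/{\cal U}$, via {\L}o\'s's theorem and the same bounded-multiplicity estimate) and that finite representation type descends from an ultrapower to the base ring; but the sentence-by-sentence argument above is cleaner.
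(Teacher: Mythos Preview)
The paper does not itself prove this proposition: it is quoted as \cite[2.5]{HJL}, with the remark that the methods of the present paper yield an alternative proof, for which the reader is referred to \cite[Cor.~8]{PreRepEE}. So there is no in-paper argument to compare against directly.

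Your argument is correct and is, in spirit, the classical Herrmann--Jensen--Lenzing approach that the paper is citing: one encodes finitely presented modules by presentation matrices and observes that, for matrices of bounded size, the predicates ``${\rm coker}(A)$ indecomposable with local endomorphism ring'', ``${\rm coker}(A)\cong{\rm coker}(A')$'' and ``${\rm coker}(A)$ is a summand of ${\rm coker}(A')$'' are all first-order in the matrix entries, hence transfer under elementary equivalence. The device of replacing the non-first-order ``is a direct sum of copies of the $M_i$'' by ``is a summand of $(\bigoplus_i M_i)^{cn}$'', justified by the semiperfectness of $R$ and the Nakayama bound on the number of summands, is exactly the right move, and your recovery step via Krull--Remak--Schmidt--Azumaya plus pigeonhole is clean. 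The caution you flag at the end is well placed: the only real content is checking that every auxiliary matrix that gets quantified has its size bounded in terms of $n$, $k$, $d$ alone.

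The alternative proof the paper alludes to goes instead through the pp-lattice machinery developed here: roughly, finite representation type is detected by properties of the lattices ${\rm pp}^n_R$ (finiteness/chain conditions), and \ref{elemincl} shows these lattices embed into ${\rm pp}^n_S$ when $R\prec S$; combined with Keisler--Shelah this yields the result. That route avoids the explicit matrix bookkeeping but imports more of the paper's infrastructure. Your Keisler--Shelah sketch at the end is a third variant, closer in flavour to the second.
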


Indeed in \cite{HJL} it is shown that much is preserved by elementary equivalence between rings of finite representation type, including the Auslander-Reiten quiver and so, in particular, the number of indecomposable modules.  Since those indecomposables are the points of the Ziegler spectrum and since the topology is discrete in the case of finite representation type (and since the (closed) subsets correspond bijectively to the definable subcategories), we obtain the following corollary.

\begin{cor}\label{eefretdefsub} \marginpar{eefrtdefsub} If $R$ is of finite representation type and $R \prec S$ as rings, then direct extension of definable subcategories gives a bijection between the definable subcategories of ${\rm Mod}\mbox{-}R$ and the definable subcategories of ${\rm Mod}\mbox{-}S$.
\end{cor}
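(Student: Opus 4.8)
The plan is to combine a cardinality count with one genuinely new input: that an elementary extension has full definable trace.

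First I would record the structural facts. Since $R\prec S$ the rings $R$ and $S$ are in particular elementarily equivalent, so by \ref{eefrt} $S$ is also of finite representation type. Hence, as recalled in the paragraph preceding the statement, both ${\rm Zg}_R$ and ${\rm Zg}_S$ are finite discrete spaces, so by \ref{Zgdef} the lattices ${\rm DefSub}_R$ and ${\rm DefSub}_S$ are finite, each being the Boolean lattice of \emph{all} subsets of the corresponding (finite) Ziegler spectrum. By \cite{HJL}, elementary equivalence between rings of finite representation type preserves the number of indecomposable modules, so $|{\rm Zg}_R| = |{\rm Zg}_S|$ and therefore $|{\rm DefSub}_R| = |{\rm DefSub}_S|$, a finite number.

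Next I would show that $(-)^S\colon {\rm DefSub}_R \to {\rm DefSub}_S$ is injective, and the key point is that ${\rm DefTr}^S_R = {\rm Mod}\mbox{-}R$. Indeed, by \ref{elem3}, for any $R$-module $M$ the canonical map $M\to M\otimes_R S$, $m\mapsto m\otimes 1_S$, is a pure embedding of $R$-modules; since $M\otimes_RS$ is an $S$-module, \ref{restr} shows $M_R \in {\rm DefTr}^S_R$. As $M$ was arbitrary, ${\rm DefTr}^S_R = {\rm Mod}\mbox{-}R$. Then \ref{deftrextn} gives $({\cal D}^S)|_R = {\cal D}\cap {\rm DefTr}^S_R = {\cal D}$ for every definable subcategory ${\cal D}$ of ${\rm Mod}\mbox{-}R$, so the restriction operation $(-)|_R$ is a left inverse to $(-)^S$, which is therefore injective; equivalently this is the content of the last clause of \ref{lattdefext} once ${\rm DefTr}^S_R = {\rm Mod}\mbox{-}R$, which moreover identifies $(-)^S$ as a lattice embedding of ${\rm DefSub}_R$ onto the lattice of directly induced definable subcategories of ${\rm Mod}\mbox{-}S$.

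Finally, an injective map between two finite sets of the same cardinality is a bijection, so $(-)^S\colon {\rm DefSub}_R \to {\rm DefSub}_S$ is a bijection, as required. The only substantive step is the identification ${\rm DefTr}^S_R = {\rm Mod}\mbox{-}R$ (equivalently, injectivity of direct extension), since surjectivity then follows purely formally from the cardinality count furnished by \ref{eefrt} and \cite{HJL}; I do not anticipate a real obstacle, as that identification falls straight out of \ref{elem3}, and the rest is bookkeeping with the lattice maps of Section~\ref{secdirextn}.
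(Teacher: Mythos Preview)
Your proof is correct and follows the paper's sketch (the paragraph immediately preceding the corollary): equal numbers of indecomposables via \cite{HJL}, discrete Ziegler spectra, hence $|{\rm DefSub}_R| = |{\rm DefSub}_S|$ finite. The paper leaves the injectivity of $(-)^S$ tacit; you supply it explicitly, and your route to ${\rm DefTr}^S_R = {\rm Mod}\mbox{-}R$ via \ref{elem3} and \ref{restr} is actually simpler than the paper's own proof of that fact (\ref{modextn}, stated just after this corollary), which instead goes through Keisler--Shelah and ultrapowers.
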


In fact, the methods used in this paper give another proof of \ref{eefrt}, see \cite[Cor.~8]{PreRepEE}.

\subsection{Tensor extension and restriction of definable subcategories} \label{secdirelem} \marginpar{secdirelem}

{\bf Throughout this section we assume that $R$, $S$ are rings}.  Given $R\prec S$, we take a definable subcategory ${\cal D}$ of ${\rm Mod}\mbox{-}R$ and compare the tensor, ${\cal D}\otimes_RS$, and direct, ${\cal D}^S$, extensions of ${\cal D}$.  We also compare the restriction of ${\cal D} \otimes_RS$ to ${\rm Mod}\mbox{-}R$ with the original subcategory ${\cal D}$.  

Again, many of our proofs work just assuming that $R$, $S$ are rings and that $R \to S$ is an elementary embedding of $(R,R)$-bimodules so in this section we often assume just that $R$ is an elementary sub-$(R,R)$-bimodule of $S$.

The point is that in the bimodule language we cannot multiply together arbitrary elements of the ring, though we can multiply arbitrary elements, on the right or on the left, by any specific element of $R$.  For instance, if $R$ is commutative and if $R \prec S$ is an elementary extension of rings, then also $S$ will be commutative since the formula (in the language of rings) $\forall x, y \, (xy=yx)$ is true in $R$, so it will be true in $S$.  On the other hand if $_RR_R \prec _RS_R$ is an elementary extension of $(R,R)$-bimodules, then, for each $r\in R$ we have the sentence (with parameter $r$, in the bimodule language) $\forall y (ry =yr)$ true in $R$ and so true in $S$, but this says just that $R$ is in the centre of $S$.

\begin{prop}\label{modextn} \marginpar{modextn} If $f:R\to S$ is an elementary extension of rings, or just of $(R,R)$-bimodules, then ${\rm DefTr}^S_R = {\rm Mod}\mbox{-}R$.
\end{prop}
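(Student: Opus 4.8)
The plan is to show directly that every right $R$-module belongs to ${\rm DefTr}^S_R = ({\rm Mod}\mbox{-}S)|_R$, using the algebraic description of definable restriction from \ref{restr}: for a definable subcategory ${\cal C}$ of ${\rm Mod}\mbox{-}S$, the restriction ${\cal C}|_R$ consists precisely of the $R$-pure submodules of restrictions $M_R$ with $M \in {\cal C}$.

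First I would take an arbitrary $M \in {\rm Mod}\mbox{-}R$ and invoke \ref{elem3}: since $R \prec S$ (indeed only an elementary embedding of $(R,R)$-bimodules is used there), the canonical $R$-linear map $M \to M\otimes_R S$, $m \mapsto m\otimes 1_S$, is a pure embedding of right $R$-modules. Now $M\otimes_R S$ is a right $S$-module, so its restriction $(M\otimes_R S)_R$ is the restriction of a module lying in ${\rm Mod}\mbox{-}S$, and $M$ is $R$-pure in it. By \ref{restr} applied with ${\cal C} = {\rm Mod}\mbox{-}S$, this says exactly that $M \in ({\rm Mod}\mbox{-}S)|_R = {\rm DefTr}^S_R$. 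Since $M$ was arbitrary, ${\rm DefTr}^S_R = {\rm Mod}\mbox{-}R$, which is the assertion.

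As a sanity check one can also argue on the pp side: by definition ${\rm DefTr}^S_R$ is the subcategory of ${\rm Mod}\mbox{-}R$ cut out by those pp-pairs $\phi/\psi$ for $R$-modules for which $f_\ast\phi/f_\ast\psi$ is closed on every $S$-module, i.e.\ $f_\ast\psi =_{{\rm Mod}\text{-}S} f_\ast\phi$; but by \ref{elemincl}, elementarity of $f$ forces $\psi =_{{\rm Mod}\text{-}R} \phi$, so every such pair is already closed on every $R$-module and the defining set of pp-pairs is effectively empty. I do not expect any genuine obstacle here: the content has been isolated in \ref{elem3} (equivalently \ref{elemincl}), whose proof in turn rests on Herzog's criterion \ref{herzcrit} together with purity of the bimodule embedding $_RR_R \to {}_RS_R$; the present statement is then just an unwinding of the definition of the definable trace.
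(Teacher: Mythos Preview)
Your proposal is correct, and both arguments you give work. The route, however, is genuinely different from the paper's.

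The paper avoids the tensor machinery entirely: it invokes the Keisler--Shelah theorem (\ref{isoup}) to obtain an isomorphism of $R$-rings $\alpha: S^J/{\cal V} \simeq R^I/{\cal U}$, observes that for any $M \in {\rm Mod}\mbox{-}R$ the ultrapower $M^I/{\cal U}$ is naturally an $R^I/{\cal U}$-module and hence (via $\alpha$ and the diagonal $S \to S^J/{\cal V}$) an $S$-module extending its $R$-action, and then uses that the diagonal $M \to M^I/{\cal U}$ is pure to place $M$ in ${\rm DefTr}^S_R$ by \ref{restr}. Your first argument replaces the ultrapower $M^I/{\cal U}$ by the tensor product $M\otimes_RS$ as the ambient $S$-module, importing the purity from \ref{elem3}; your second bypasses the algebraic description \ref{restr} altogether and reads the conclusion straight off the lattice embedding \ref{elemincl}. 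Both of your approaches are shorter and, as you note, need only that $R \to S$ be elementary as $(R,R)$-bimodules, whereas the paper's proof makes the same point less directly by going through ultrapowers. What the paper's approach buys is independence from the $(\sigma:\phi)$ calculus and \ref{elem3}: it uses only the ultrapower construction and \ref{restr}, so one could read it without Section~\ref{sectenspp}.
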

\begin{proof} By assumption, $R$ and $S$ are elementarily equivalent as structures with the elements of $R$ named as parameters in the language, so by \ref{isoup} they have isomorphic ultrapowers:  $\alpha: S^J/{\cal V} \simeq R^I/{\cal U}$ for some index sets $I$, $J$ and ultrafilters ${\cal U}$ on $I$ and ${\cal V}$ on $J$ such that $\alpha$ preserves the diagonal images of $R$, meaning that, for each $r\in R$, $\alpha \big((fr)_j/{\cal V}\big) = (r)_i/{\cal U}$, where $(fr)_j/{\cal V}$ and $(r)_i/{\cal U}$ are the constant sequences $fr$, respectively $r$.  

Let $M$ be any $R$-module.  Note that $M^I/{\cal U}$ has a natural action as a right $R^I/{\cal U}$-module, namely $(a_i)_i/{\cal U} \cdot (r_i)_i/{\cal U} = (a_ir_i)_i/{\cal U}$, so it also is a right $S^J/{\cal V}$-module by $(a_i)_i/{\cal U} \cdot (s_j)_j/{\cal V} = (a_i)_i/{\cal U} \cdot \alpha \big((s_j)_j/{\cal V}\big)$.  This action of $S^J/{\cal V}$ extends the action of $R$ on $M^I/{\cal U}$ since $(a_i)_i/{\cal U} \cdot (fr)_j/{\cal V} = (a_i)_i/{\cal U} \cdot \alpha \big((fr)_j/{\cal V}\big) = (a_i)_i/{\cal U} \cdot (r)_i/{\cal U} = (a_ir)_i/{\cal U}$.  Hence the action of $S$ on $M^I/{\cal U}$ {\it via} the diagonal embedding $S \to S^J/{\cal V}$ extends the action of $R$ on $M^I/{\cal U}$.  So $(M^I/{\cal U})_R \in {\rm DefTr}^S_R$.  The diagonal embedding of $M$ into $M^I/{\cal U}$ is an elementary embedding of $R$-modules (\ref{elemextup}), in particular a pure embedding, so $M \in {\rm DefTr}^S_R$ and ${\rm DefTr}^S_R = {\rm Mod}\mbox{-}R$, as required.
\end{proof}

\begin{lemma}\label{elemindnrestr} \marginpar{elemindnrestr}  Suppose that $f:R\to S$ is an elementary embedding of $(R,R)$-bimodules and let ${\cal D}$ be a definable subcategory of ${\rm Mod}\mbox{-}R$.  Then $({\cal D}^S)|_R = {\cal D}$.
\end{lemma}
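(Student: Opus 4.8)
The plan is to deduce this immediately from two earlier results. By definition $({\cal D}^S)|_R = {\cal D}^S_R$, and \ref{deftrextn} (which holds for an arbitrary ring homomorphism) gives ${\cal D}^S_R = {\cal D}\cap {\rm DefTr}^S_R$. Since $f$ is an elementary embedding, $R$ and $S$ are elementarily equivalent as $(R,R)$-bimodules, so \ref{modextn} applies and ${\rm DefTr}^S_R = {\rm Mod}\mbox{-}R$. Hence $({\cal D}^S)|_R = {\cal D}\cap {\rm Mod}\mbox{-}R = {\cal D}$.

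Unwinding this, here is the argument I would give directly. The inclusion $({\cal D}^S)|_R\subseteq {\cal D}$ needs no hypothesis on $f$: by \ref{restr} every $M'\in ({\cal D}^S)|_R$ is an $R$-pure submodule of $M_R$ for some $M\in {\cal D}^S$, that is, with $M_R\in {\cal D}$, and ${\cal D}$ is closed under pure submodules. For the reverse inclusion, given $M\in {\cal D}$, I would follow the proof of \ref{modextn}: choose an ultrafilter ${\cal U}$ on a set $I$ and equip $M^I/{\cal U}$ with the right $S$-module structure transported from a Keisler--Shelah isomorphism between ultrapowers of $R$ and of $S$ respecting the diagonal copies of $R$. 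Then the restriction to $R$ of this $S$-module is the ordinary $R$-module ultrapower $M^I/{\cal U} = \varinjlim_{J\in {\cal U}} M^J$, a directed colimit of direct products of copies of $M$, hence a member of ${\cal D}$; so this $S$-module lies in ${\cal D}^S$. Finally, the diagonal map $M\to M^I/{\cal U}$ is an elementary, in particular pure, embedding of $R$-modules by \ref{elemextup}, so $M$ is an $R$-pure submodule of the $R$-reduct of a module in ${\cal D}^S$, i.e.\ $M\in ({\cal D}^S)|_R$.

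I expect no real obstacle beyond what \ref{modextn} already supplies: the single delicate point, handled there, is arranging the $S$-action on the ultrapower so that it genuinely extends the given $R$-action on $M$, which is where the structure-preserving ultrapower isomorphism of Keisler--Shelah (\ref{isoup}) is needed. As with \ref{modextn}, only elementary equivalence at the level of $(R,R)$-bimodules is used, so the lemma holds under that weaker hypothesis as well.
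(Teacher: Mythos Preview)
Your proof is correct and matches the paper's exactly: the paper's proof is the single line ``This is immediate from \ref{deftrextn} and \ref{modextn},'' which is precisely your first paragraph. Your second paragraph is a faithful unwinding of the proof of \ref{modextn} and is not needed, but it is accurate.
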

\begin{proof} This is immediate from \ref{deftrextn} and \ref{modextn}.
\end{proof}

There are various conditions under which $M_R$ and $M\otimes_RS_R$ generate the same definable subcategory and hence are definably equivalent.  For instance, we have the following from \cite{PreRepEE}.

\begin{cor}\label{MotimesSR} \marginpar{MotimesSR} (\cite[Prop.~21]{PreRepEE}) Suppose that $R\prec S$ as $(R,R)$-bimodules and that $M$ is a right $R$-module.  Assume either that $M_R$ is Mittag-Leffler or that the bimodule $_RR_R$ has the ascending chain condition on pp-definable subgroups.  Then $M$ and $M\otimes_RS_R$ are definably equivalent
\end{cor}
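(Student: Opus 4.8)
The plan is to prove the two containments $\langle M\rangle\subseteq\langle M\otimes_RS_R\rangle$ and $\langle M\otimes_RS_R\rangle\subseteq\langle M\rangle$ separately. The first needs nothing beyond $R\prec S$: by \ref{elem3} the canonical map $M\to M\otimes_RS_R$, $m\mapsto m\otimes 1_S$, is a pure embedding of right $R$-modules, so $M$ lies in the definable subcategory generated by $M\otimes_RS_R$. For the converse containment it is enough, since definable subcategories are closed under pure submodules (\ref{defchar}), to realise $M\otimes_RS_R$ as a pure submodule of some right $R$-module in $\langle M\rangle$; the target will be an ultrapower $M^I/\mathcal U$ of $M$, which lies in $\langle M\rangle$ because it is the directed colimit $\varinjlim_{J\in\mathcal U}M^J$ of direct powers $M^J=\prod_{i\in J}M$ (with $\mathcal U$ ordered by reverse inclusion and the transition maps the coordinate projections).

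The first step is to present $S$ as a pure sub-bimodule of an ultrapower of $R$. Since $R\to S$ is an elementary embedding of $(R,R)$-bimodules, $R$ and $S$ are elementarily equivalent as $(R,R)$-bimodules, so by the Keisler--Shelah theorem \ref{isoup} there are a set $I$, an ultrafilter $\mathcal U$ on $I$ and an isomorphism of $(R,R)$-bimodules $\alpha:S^I/\mathcal U\xrightarrow{\ \sim\ }R^I/\mathcal U$ respecting the diagonal images of $R$. The diagonal embedding $S\to S^I/\mathcal U$ is elementary (\ref{elemextup}), hence a pure embedding of $(R,R)$-bimodules, so composing with $\alpha$ exhibits $S$ as a pure sub-$(R,R)$-bimodule of $R^I/\mathcal U$, compatibly with $f:R\to S$. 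Now \ref{tenspur} (the second statement, with the ``$S$''-side of that lemma taken to be $R$) gives a pure embedding of right $R$-modules $M\otimes_RS_R\hookrightarrow M\otimes_R(R^I/\mathcal U)_R$.

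It then remains to show that $M\otimes_R(R^I/\mathcal U)_R$ is a pure submodule of $M^I/\mathcal U$. Using the colimit descriptions $R^I/\mathcal U=\varinjlim_{J\in\mathcal U}R^J$ and $M^I/\mathcal U=\varinjlim_{J\in\mathcal U}M^J$, and the fact that $M\otimes_R-$ commutes with directed colimits, the canonical map $M\otimes_R(R^I/\mathcal U)\to M^I/\mathcal U$ is the directed colimit of the canonical maps $\theta_J:M\otimes_RR^J\to M^J$. This is exactly where the hypothesis enters: each $\theta_J$ is a pure embedding of right $R$-modules — when $M_R$ is Mittag--Leffler by \ref{corML2} applied to the constant family $L_\lambda={}_RR_R$, and when the bimodule ${}_RR_R$ has the ascending chain condition on pp-definable subgroups by the corollary immediately following \ref{MLpure}, applied with $B={}_RR_R$. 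Since a directed colimit of pure embeddings is again a pure embedding (a pp formula holding in the colimit already holds at a finite stage), the colimit map $M\otimes_R(R^I/\mathcal U)_R\hookrightarrow M^I/\mathcal U$ is pure; composing with the embedding of the previous paragraph puts $M\otimes_RS_R$ as a pure submodule of $M^I/\mathcal U\in\langle M\rangle$, so $M\otimes_RS_R\in\langle M\rangle$ and the two subcategories coincide.

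I expect the genuinely load-bearing step to be the purity of the maps $\theta_J:M\otimes_RR^J\to M^J$, which is the point at which one of the two alternative hypotheses is used (via \ref{corML2} in the Mittag--Leffler case, via the corollary after \ref{MLpure} in the ascending-chain case); everything else — Keisler--Shelah, purity of elementary embeddings, \ref{tenspur}, and the permanence of pure embeddings under directed colimits — is hypothesis-free. The two bookkeeping points needing care are ensuring that $S\hookrightarrow R^I/\mathcal U$ is tracked throughout as a morphism of $(R,R)$-bimodules (so that \ref{tenspur} applies and the induced map of tensor products is $R$-pure), and checking that the ultrapower-as-colimit descriptions are compatible enough that $M\otimes_R(R^I/\mathcal U)\to M^I/\mathcal U$ really is $\varinjlim_J\theta_J$.
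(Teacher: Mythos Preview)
Your argument is correct and gives a clean structural proof: you realise $M\otimes_RS_R$ as an $R$-pure submodule of $M^I/{\cal U}$ by factoring through $M\otimes_R(R^I/{\cal U})$ and using that directed colimits preserve pure embeddings.  The key steps---Keisler--Shelah for $(R,R)$-bimodules to get $S$ pure in $R^I/{\cal U}$, \ref{tenspur} to tensor that up, and then \ref{corML2} or the unnamed corollary after \ref{MLpure} to see that each $\theta_J:M\otimes_RR^J\to M^J$ is $R$-pure---all go through, and the colimit description of the ultrapower is compatible with the $\theta_J$ as you claim.

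The paper takes a different, more syntactic route.  Rather than constructing an explicit pure embedding, it shows directly that any pp-pair $\sigma/\tau$ open on $M\otimes_RS_R$ is already open on $M$.  Given $\overline a\otimes\overline s\in\sigma(M\otimes_RS)\setminus\tau(M\otimes_RS)$, the hypothesis is used to find a single $\phi\in{\rm pp}^M(\overline a)$ witnessing (via the $(\sigma:\phi)$, $(\tau:\phi)$ construction of \ref{sigmaphi1}, \ref{sigmaphi2}) that $_RS_R\models\exists\overline y\,\big((\sigma:\phi)(\overline y)\wedge\neg(\tau:\phi)(\overline y)\big)$; this sentence is then transferred to $_RR_R$ by $R\prec S$, producing $\overline r$ from $R$ with $\overline a\cdot\overline r\in\sigma(M)\setminus\tau(M)$.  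In the Mittag--Leffler case $\phi$ is simply a generator of ${\rm pp}^M(\overline a)$; in the acc case $\phi$ is chosen so that $(\tau:\phi)(_RR_R)$ is maximal.  So the paper uses elementarity pointwise, sentence by sentence, while you use it globally through Keisler--Shelah to produce a single algebraic embedding.  Your approach has the advantage of treating the two hypotheses uniformly (both feed into the purity of $\theta_J$) and avoids the $(\sigma:\phi)$ calculus; the paper's proof is more self-contained, avoiding the appeal to ultrapowers and to \ref{MLpure} and its corollary.
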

\begin{proof}  We recall the proofs.  In view of \ref{elem3} we have to show that $M\otimes_RS_R$ is in the definable subcategory generated by $M_R$.  So suppose that $\sigma/\tau$ is a pp-pair open on $M\otimes_RS_R$, say $\overline{a} \otimes \overline{s}$ is in $\sigma(M\otimes_RS_R)$ but not in $\tau(M\otimes_RS_R)$.

First suppose that $M_R$ is Mittag-Leffler.  Then there is a pp formula $\phi$ such that ${\rm pp}^M(\overline{a})$ is generated by $\phi$ and hence, by \ref{sigmaphi1}(2), such that $_RS_R \models (\sigma: \phi)(\overline{s})$ and also such that $\overline{s} \notin (\tau:\phi)(_RS_R)$.  Then $_RS_R$ satisfies the sentence (in the language of $(R,R)$-bimodules, and where ``$\neg$" means ``not")
$$\exists \overline{y} \,\big( (\sigma:\phi)(\overline{y}) \wedge \neg (\tau: \phi)(\overline{y})\big).$$
Since $R$ is an elementary sub-$(R,R)$-bimodule of $S$, it also satisfies this sentence, so choose $\overline{r}$ from $R$ in $(\sigma:\phi)(_RR_R) \setminus (\tau: \phi)(_RR_R)$.  Then we have $M\simeq M\otimes_RR_R \models \sigma(\overline{a} \otimes \overline{r})$ and also, by choice of $\phi$ and \ref{sigmaphi1}(2), $\overline{a}\otimes \overline{r}$ is not in $\tau(M)$.  That is, $\sigma/\tau$ is open on $M$, as required.

In the second case, $M$ is arbitrary but we have the acc on pp-definable subgroups of $_RR_R$.  Consider the set of pp-definable subgroups of $_RR_R$ of the form $(\tau:\phi)(_RR_R)$ with $\overline{a} \in \phi(M)$.  Since the set of such $\phi$ is closed under finite $\wedge$ and since (from the definitions) we have $(\tau:\phi \wedge \phi') \geq (\tau:\phi) + (\tau: \phi')$, the set of those subgroups $(\tau:\phi)(_RR_R)$ with $\phi \in {\rm pp}^M(\overline{a})$ is upwards-directed so, by assumption, has a maximal element $(\tau:\phi)(_RR_R)$ say.  Since $_RS_R \models \sigma(\overline{a} \otimes \overline{s})$, there is $\phi' \in {\rm pp}^M(\overline{a})$ with $_RS_R \models (\sigma:\phi')(\overline{s})$.  Replacing each of $\phi$ and $\phi'$ by $\phi \wedge \phi'$, we may assume that $\phi=\phi'$.  As before, we therefore obtain $\overline{r} \in (\sigma:\phi)(_RR_R) \setminus (\tau:\phi)(_RR_R)$ and so have $M \models \sigma(\overline{a} \cdot \overline{r})$ (by $\overline{a} \cdot \overline{r}$ we mean $\sum_i \, a_ir_i$).  If we had $M = M\otimes_RR_R \models \tau(\overline{a} \otimes \overline{r})$, then there would be $\psi \in {\rm pp}^M(\overline{a})$ with $\overline{r} \in (\tau:\psi)(_RR_R)$; yet $\overline{r} \notin (\tau:\phi)(_RR_R)$, contradicting maximality of $(\tau:\phi)(_RR_R)$.  Therefore $\overline{a}\otimes \overline{r} = \overline{a}\cdot \overline{r} \notin \tau(M)$ and so $\sigma/\tau$ is open on $M$, as required.
\end{proof}

The second hypothesis is satisfied if $R$ is finitely generated as a module over a noetherian centre (since every pp-definable subgroup is a module over the centre of $R$).  We have the following corollary.

\begin{cor}\label{accdeftens} \marginpar{accdeftens} Suppose that $_RR_R$ has the ascending chain condition on pp-definable subgroups and that $R\prec S$ as $(R,R)$-bimodules.  Let ${\cal D}$ be a definable subcategory of ${\rm Mod}\mbox{-}R$.  Then $({\cal D} \otimes_RS)|_R = {\cal D}$, so ${\cal D}\otimes_RS \subseteq {\cal D}^S$.
\end{cor}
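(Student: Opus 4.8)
The plan is to reduce everything to Corollary~\ref{MotimesSR}, which, under exactly the present hypotheses ($_RR_R$ has acc on pp-definable subgroups and $R\prec S$), already tells us that every right $R$-module $M$ is definably equivalent to $M\otimes_RS_R$, i.e.\ $\langle M\rangle=\langle(M\otimes_RS)_R\rangle$. The rest is bookkeeping with the three operations $(-)\otimes_RS$, $(-)^S$ and $(-)|_R$ together with their monotonicity.

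First I would prove the second assertion, ${\cal D}\otimes_RS\subseteq{\cal D}^S$. For $M\in{\cal D}$, Corollary~\ref{MotimesSR} gives $\langle(M\otimes_RS)_R\rangle=\langle M\rangle\subseteq{\cal D}$, so $(M\otimes_RS)_R\in{\cal D}$, that is, $M\otimes_RS\in{\cal D}^S$. Since ${\cal D}^S$ is a definable subcategory of ${\rm Mod}\mbox{-}S$ (as noted, via \ref{defchar}, after \ref{ppdirextn}) and contains every generator $M\otimes_RS$ of ${\cal D}\otimes_RS$, it contains the definable subcategory they generate, namely ${\cal D}\otimes_RS$.

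For the first assertion, $({\cal D}\otimes_RS)|_R={\cal D}$, I would argue both inclusions. Applying restriction to the inclusion just proved, and using that $(-)|_R=\langle f^{\ast}(-)\rangle$ (Lemma~\ref{restrdef}) is order-preserving, gives $({\cal D}\otimes_RS)|_R\subseteq({\cal D}^S)|_R$, and the right-hand side equals ${\cal D}$ by Lemma~\ref{elemindnrestr}. Conversely, for $M\in{\cal D}$ we have $M\otimes_RS\in{\cal D}\otimes_RS$, hence $(M\otimes_RS)_R\in f^{\ast}({\cal D}\otimes_RS)\subseteq({\cal D}\otimes_RS)|_R$ by \ref{restrdef}; and $M\in\langle(M\otimes_RS)_R\rangle$ by \ref{MotimesSR} (or already by \ref{elem3}), so $M$ lies in the definable subcategory $({\cal D}\otimes_RS)|_R$. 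Thus ${\cal D}\subseteq({\cal D}\otimes_RS)|_R$, and equality holds.

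I do not expect a genuine obstacle here, since Corollary~\ref{MotimesSR} does the real work; the only points needing care are remembering that ${\cal D}^S$ and ${\cal D}\otimes_RS$ are genuinely definable (so that each absorbs the definable closure of its generating modules, respectively is closed under pure submodules) and that $(-)|_R$ is monotone.
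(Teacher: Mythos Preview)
Your proof is correct and follows the route the paper intends: the corollary is stated without proof immediately after \ref{MotimesSR}, and you correctly identify that result as doing all the work, with the remaining steps being routine manipulations of $(-)\otimes_RS$, $(-)^S$, $(-)|_R$ and \ref{elemindnrestr}. The only cosmetic difference is ordering: the word ``so'' in the statement suggests deducing ${\cal D}\otimes_RS\subseteq{\cal D}^S$ from $({\cal D}\otimes_RS)|_R={\cal D}$ (via $f^\ast({\cal D}\otimes_RS)\subseteq({\cal D}\otimes_RS)|_R={\cal D}$), whereas you establish the inclusion first and use it for one half of the equality; both orderings are fine.
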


The next corollary moves away from the context of $(R,R)$-bimodules.

\begin{cor}\label{fdalgdeftens} \marginpar{fdalgdeftens} Suppose that $R$ is a finite-dimensional algebra over a field and that $R \prec S$ is an elementary extension {\bf of rings}. Let ${\cal D}$ be a definable subcategory of ${\rm Mod}\mbox{-}R$.  Then $({\cal D} \otimes_RS)|_R = {\cal D}$, so ${\cal D}\otimes_RS \subseteq {\cal D}^S$.
\end{cor}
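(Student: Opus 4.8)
The plan is to deduce this directly from Corollary \ref{accdeftens}. That corollary has exactly two hypotheses: that $R\prec S$, which is given, and that the bimodule $_RR_R$ satisfies the ascending chain condition on pp-definable subgroups. So the entire content of the proof reduces to checking that a finite-dimensional algebra over a field supplies this chain condition — and that is precisely Remark \ref{fdalgbimod}. Concretely, viewing $R$ as a right module over $R^{\rm op}\otimes_K R$, it has finite length as a $K$-vector space; since every pp-definable subgroup is in particular a $K$-subspace of the finite-dimensional space $R$ (pp-definable subgroups are modules over the centre of the ring, and $K$ sits in that centre), any chain of pp-definable subgroups, ascending or descending, must stabilise.

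With the acc hypothesis verified, Corollary \ref{accdeftens} applies verbatim and gives $({\cal D}\otimes_RS)|_R={\cal D}$. The inclusion ${\cal D}\otimes_RS\subseteq{\cal D}^S$ is then part of the conclusion of \ref{accdeftens} as well; alternatively it follows formally, since ${\cal C}|_R=\langle f^\ast{\cal C}\rangle$ by \ref{restrdef}, so $({\cal D}\otimes_RS)|_R={\cal D}$ forces $f^\ast({\cal D}\otimes_RS)\subseteq{\cal D}$, i.e. $M_R\in{\cal D}$ for every $M\in{\cal D}\otimes_RS$, which is exactly $M\in{\cal D}^S$.

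I do not anticipate any genuine obstacle here: the substantive work is entirely carried by the earlier results \ref{accdeftens} (itself resting on \ref{MotimesSR}) and \ref{fdalgbimod}, and the one point to check — finite length, hence the chain condition — is immediate from finite-dimensionality over the base field. If anything, the only thing worth being careful about is making the invocation of \ref{fdalgbimod} explicit, since it is stated there as a remark rather than a numbered lemma, so I would spell out the one-line justification (finite $K$-dimension $\Rightarrow$ acc and dcc on pp-definable subgroups) rather than merely citing it.
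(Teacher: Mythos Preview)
Your argument is correct and is in fact more direct than the paper's own proof. You observe that $R$ being a finite-dimensional $K$-algebra already gives acc on pp-definable subgroups of $_RR_R$ (Remark~\ref{fdalgbimod}), and then invoke Corollary~\ref{accdeftens} immediately --- the hypothesis there is purely on $_RR_R$ and places no condition on $S$ beyond $R\prec S$.

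The paper instead splits into cases according to whether $S$ is itself a finite-dimensional algebra (e.g.\ an ultrapower of $R$, by \ref{findimup}) and, for general $S$, passes to an ultrapower $R^\ast$ of $R$ and argues that $_SS_S$ inherits the ascending chain condition from $_{R^\ast}R^\ast_{R^\ast}$ via $S\prec R^\ast$. This extra work establishes something not actually required for the corollary as stated: \ref{accdeftens} never asks for acc on $_SS_S$. So your shortcut loses nothing for the result at hand, while the paper's longer route buys the side fact that any elementary extension $S$ of a finite-dimensional algebra also satisfies the chain condition on $_SS_S$ --- potentially useful elsewhere, but superfluous here.
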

\begin{proof}  If $S$ is a finite-dimensional algebra, in particular if $S$ is an ultrapower of $R$ then, see \ref{findimup}, this follows from \ref{accdeftens}.  In general $S$ is just an elementary subring of an ultrapower $R^\ast = R^I/{\cal U}$ of $R$.  If, for such a ring $S$, there were $_SS_S$-bimodule pp formulas $\phi_0(\overline{x}), \phi_1(\overline{x}), \dots, \phi_n(\overline{x}), \dots$ such that $\phi_0(_SS_S) < \phi_1(_SS_S) < \dots < \phi_n(_SS_S) < \dots$ were an infinite ascending chain then, for each $n$, we would have that $_SS_S$ satisfies the sentence, from the proof of \ref{elemincl}, in the language of $(S,S)$-bimodules which says that $\phi_n \leq \phi_{n+1}$ but not $\phi_{n+1} \leq \phi_n$.  Since $S \prec R^\ast$ as rings, $R^\ast$ also would satisfy each of these sentences and therefore $_{R^\ast}R^\ast_{R^\ast}$ would have a proper ascending chain of pp-definable subgroups, in contradiction to it (and therefore ${R^\ast}^{\rm op}\otimes R^\ast$) being a finite-dimensional algebra.
\end{proof}

The two different hypotheses of \ref{MotimesSR} allow a common generalisation, as follows.  The proof from \ref{MotimesSR} applies but we present it rather differently so as to give a different perspective on the $R \prec S$ condition.

\begin{prop}\label{attensext} \marginpar{attensext}  Suppose that $R\prec S$ as $(R,R)$-bimodules and that $M_R$ is Mittag-Leffler with respect to $_RS_R$.  Let ${\cal D} = \langle M \rangle$ be the definable subcategory of ${\rm Mod}\mbox{-}R$ generated by $M$.  Then $({\cal D} \otimes_RS)|_R = {\cal D}$, so ${\cal D}\otimes_RS \subseteq {\cal D}^S$.
\end{prop}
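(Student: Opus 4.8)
The plan is to prove the equality $({\cal D}\otimes_RS)|_R = {\cal D}$ and then read off ${\cal D}\otimes_RS\subseteq{\cal D}^S$ from \ref{restrind}, since ${\cal D}\otimes_RS$ sits inside its $R$-closure $\big(({\cal D}\otimes_RS)|_R\big)^S = {\cal D}^S$. The inclusion ${\cal D}\subseteq({\cal D}\otimes_RS)|_R$ is the cheap one: by \ref{elem3} each $M'\in{\cal D}$ is an $R$-pure submodule of $(M'\otimes_RS)_R$, and $M'\otimes_RS\in{\cal D}\otimes_RS$, so $M'\in({\cal D}\otimes_RS)|_R$ because definable subcategories are closed under pure submodules (\ref{defchar}); as ${\cal D}$ is generated by its members, the inclusion follows.

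For the reverse inclusion $({\cal D}\otimes_RS)|_R\subseteq{\cal D}$ I would show that every pp-pair $\sigma/\tau$ for right $R$-modules that is closed on ${\cal D}$ is closed on each $(M'\otimes_RS)_R$, $M'\in{\cal D}$; by the definition of the restriction $(-)|_R$ together with \ref{fstar} and \ref{ordXeqordD} this is equivalent to $({\cal D}\otimes_RS)|_R\subseteq{\cal D}$. Since $(M'\otimes_RS)_R = M'\otimes_R({}_RS_R)$, the requirement says exactly that $\sigma/\tau$ is closed on the tensor subcategory ${\cal D}\otimes\langle {}_RS_R\rangle$, where $\langle {}_RS_R\rangle$ is the definable subcategory of $R\mbox{-}{\rm Mod}\mbox{-}R$ generated by the bimodule ${}_RS_R$; by \ref{tenspp} it then suffices to produce, for each pp formula $\phi$ for right $R$-modules, a pp formula $\phi'$ with $\phi\leq_{\cal D}\phi'$ and $(\sigma:\phi)({}_RS_R)\subseteq(\tau:\phi')({}_RS_R)$ (using \ref{ordXeqordD} to replace $\leq_{\langle {}_RS_R\rangle}$ by evaluation on ${}_RS_R$).

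The engine for this, adapting the proof of \ref{MotimesSR} with its two separate hypotheses replaced by the single one that $M$ is atomic with respect to ${}_RS_R$, is the fact that $M\otimes_RS_R\in{\cal D}$. Sketch: if $\sigma/\tau$, closed on ${\cal D}$, were open on $M\otimes_RS_R$, pick $\overline{a}$ from $M$ and $\overline{s}$ from $S$ with $\overline{a}\otimes\overline{s}\in\sigma(M\otimes_RS_R)\setminus\tau(M\otimes_RS_R)$; by \ref{sigmaphi2} choose $\phi\in{\rm pp}^M(\overline{a})$ with $\overline{s}\in(\sigma:\phi)({}_RS_R)$, and then, using atomicity of $M$ with respect to ${}_RS_R$ (and \ref{sigmaphiprop}(2) to keep $\overline{s}$ inside $(\sigma:\phi)({}_RS_R)$ when passing to a meet), arrange that $(\tau:\phi)({}_RS_R)$ is largest among the $(\tau:\psi)({}_RS_R)$, $\psi\in{\rm pp}^M(\overline{a})$; by \ref{sigmaphi2} again $\overline{s}$ lies in none of these. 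All the $(\sigma:\phi)$, $(\tau:\psi)$ are pp formulas for $(R,R)$-bimodules naming parameters only from $R$ (\ref{fmlaoverR}), so $R\prec S$, being in particular an elementary embedding of $(R,R)$-bimodules, transfers the sentences $\exists\overline{y}\,\big((\sigma:\phi)(\overline{y})\wedge\neg(\tau:\phi)(\overline{y})\big)$ and $\forall\overline{y}\,\big((\tau:\psi)(\overline{y})\to(\tau:\phi)(\overline{y})\big)$ from ${}_RS_R$ to ${}_RR_R$; this yields $\overline{r}$ from $R$ with $\overline{r}\in(\sigma:\phi)({}_RR_R)$ but $\overline{r}\notin(\tau:\psi)({}_RR_R)$ for all $\psi\in{\rm pp}^M(\overline{a})$, and then \ref{sigmaphi1}(1) and \ref{sigmaphi2} give $M\simeq M\otimes_RR_R\models\sigma(\overline{a}\cdot\overline{r})$ while $M\not\models\tau(\overline{a}\cdot\overline{r})$, so $\sigma/\tau$ is open on $M$ — a contradiction. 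Hence $M\otimes_RS_R\in{\cal D}$ and, with \ref{elem3}, $M$ and $M\otimes_RS_R$ are definably equivalent.

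With $M\otimes_RS_R\in{\cal D}$ available, the $\phi'$ of the second paragraph is obtained as follows: enlarge $M$ to a finitely generic module for ${\cal D}$ (\ref{existfingen}) — this is where care is needed, because atomicity with respect to ${}_RS_R$ must be preserved, and by \ref{corML1} it survives only direct sums of modules that are themselves ${}_RS_R$-atomic, so the finitely generic module has to be chosen inside ${\cal D}$ with that in mind. Granting this, given $\phi$ pick $\overline{a}$ from $M$ with ${\rm pp}^M(\overline{a})$ ${\cal D}$-generated by $\phi$ and let $\phi'\in{\rm pp}^M(\overline{a})$ maximise $(\tau:-)({}_RS_R)$ over ${\rm pp}^M(\overline{a})$; then $\phi\leq_{\cal D}\phi'$, and for $\overline{s}\in(\sigma:\phi)({}_RS_R)$ one has $M\otimes_RS_R\models\sigma(\overline{a}\otimes\overline{s})$ (\ref{sigmaphi1}(1)), hence $\models\tau(\overline{a}\otimes\overline{s})$ since $\sigma/\tau$ is closed on ${\cal D}\ni M\otimes_RS_R$, hence $\overline{s}\in(\tau:\psi)({}_RS_R)\subseteq(\tau:\phi')({}_RS_R)$ for some $\psi\in{\rm pp}^M(\overline{a})$ (\ref{sigmaphi2}); so $(\sigma:\phi)({}_RS_R)\subseteq(\tau:\phi')({}_RS_R)$, as needed. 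I expect the main obstacle to be precisely this last coordination — the relative-atomicity hypothesis concerns tuples from one fixed module $M$, whereas the \ref{tenspp} criterion forces one to realise arbitrary pp formulas in a finitely generic module and still have the maximisers supplied by atomicity; the $(R,R)$-bimodule sentence transfer in the previous paragraph is the computational heart but is routine once \ref{fmlaoverR} is invoked.
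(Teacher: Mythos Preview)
Your ``engine'' paragraph---showing $M\otimes_RS_R\in{\cal D}$ by choosing $\phi\in{\rm pp}^M(\overline{a})$ with $(\tau:\phi)({}_RS_R)$ maximal, then transferring the $(R,R)$-bimodule sentences $\exists\overline{y}\,((\sigma{:}\phi)(\overline{y})\wedge\neg(\tau{:}\phi)(\overline{y}))$ and $\forall\overline{y}\,((\tau{:}\psi)(\overline{y})\to(\tau{:}\phi)(\overline{y}))$ from ${}_RS_R$ down to ${}_RR_R$---is correct and is essentially what the paper proves. The paper presents the same computation through an ultrapower detour: it first treats $S=R^I/{\cal U}$, writes $\overline{s}=(\overline{r}_i)_i/{\cal U}$, pushes the condition $(\sigma{:}\phi)(\overline{s})$ down to the coordinates $\overline{r}_j$ in ${}_RR_R$ via {\L}o\'s, uses closedness of $\sigma/\tau$ on $M$ at each coordinate, and then uses the atomicity-supplied uniform $\psi$ to push $(\tau{:}\psi)$ back up to ${}_R(R^I/{\cal U})_R$. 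The paper explicitly remarks that it is choosing this presentation ``so as to give a different perspective on the $R\prec S$ condition''; your direct sentence-transfer is the more streamlined form of the same argument (and is essentially how \ref{MotimesSR} was proved).

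The obstacle you isolate in your final paragraph---that the engine only treats the single generator $M$, whereas $({\cal D}\otimes_RS)|_R\subseteq{\cal D}$ seems to require $(M'\otimes_RS)_R\in{\cal D}$ for \emph{every} $M'\in{\cal D}$, and the finitely-generic replacement needed for \ref{tenspp} may not inherit atomicity with respect to ${}_RS_R$---is a genuine issue, and the paper's proof does \emph{not} resolve it. The paper's argument stops precisely where your engine stops: it shows $M\otimes_RS\in{\cal D}^S$ and then, via \ref{elem3}, the definable equivalence of $M$ and $M\otimes_RS_R$ (which is exactly the content of the unnamed Corollary immediately following). No step is given passing from this to the full conclusion about $({\cal D}\otimes_RS)|_R$. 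So you are not missing an idea that the paper supplies; if anything you have gone further than the paper in diagnosing what the stated proposition would need beyond what is actually proved.
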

\begin{proof} Since $R$ and $S$ have, by \ref{isoup}, isomorphic ultrapowers (as $(R,R)$-bimodules) we have, by \ref{elemextup}, that $S$ is an elementary $(R,R)$-subbimodule of some ultrapower of $_RR_R$, so first we consider the case that $S=R^I/{\cal U}$ for some set $I$ and ultrafilter ${\cal U}$ on $I$.

Suppose that $\sigma/\tau$ is closed on ${\cal D}$; we show that $\sigma /\tau$ is closed on $M\otimes_RS$, allowing us to deduce that $M\otimes_RS \in {\cal D}_S$.

So suppose that $M\otimes_RS \models \sigma(\overline{a}\otimes \overline{s})$ where $\overline{s} = (\overline{r}_i)_i/{\cal U}$ with the $r_i\in R$ and, noting that $\sigma$ is in the language of $R$-modules, we actually have $_RS_R \models (\sigma: \phi)((\overline{r}_i)_i/{\cal U})$, that is $_R(R^I/{\cal U})_R = (_RR_R)^I/{\cal U} \models (\sigma: \phi)((\overline{r}_i)_i/{\cal U})$, for some $\phi \in {\rm pp}^M(\overline{a})$.  So, for some $J\in {\cal U}$, we have $_RR_ R \models (\sigma:\phi)(\overline{r}_j)$ for every $j\in J$.  Therefore $M \simeq M\otimes_RR_R \models \sigma(\overline{a}\otimes \overline{r}_j)$ for every $j\in J$.  Since $\sigma/\tau$ is closed on $M$, we have, for every $j\in J$, $M\otimes_RR_R \models \tau(\overline{a}\otimes \overline{r}_j)$.  Then, for each $j\in J$, $M \models \psi_j(\overline{a})$ and $_RR_R\models (\tau:\psi_j)(\overline{r}_j)$ for some $\psi_j$ such that $\psi_j \in {\rm pp}^M(\overline{a})$.  

By assumption, we may choose $\psi$ such that $(\tau: \psi)(_RR_R)$ is maximal among subgroups of the form $(\tau: \psi)(_RR_R)$ with $\psi \in {\rm pp}^M(\overline{a})$ and hence such that $_RR_R\models (\tau:\psi)(\overline{r}_j)$ for all $j\in J$.  Therefore we have $_R(R^I/{\cal U})_R \models (\tau: \psi)(\overline{s})$, and so $M\otimes_R(R^I/{\cal U})_R \models \tau(\overline{a}\otimes \overline{s})$ and $\sigma/\tau$ is indeed closed on $M\otimes_R (R^I/{\cal U})$.

In the general case, with $S$ an elementary $(R,R)$-subbimodule of $R^I/{\cal U}$, then, when we have $_RS_R\models (\sigma:\phi)(\overline{s})$ and hence $_R(R^I/{\cal U})_R \models (\sigma:\phi)(\overline{s})$, we obtain, by the argument above, that $_R(R^I/{\cal U})_R \models (\tau: \psi)(\overline{s})$ and so, since $S\prec R^I/{\cal U}$, the same is true for $S$, therefore the argument continues as above.

To finish, we recall that if $\sigma/\tau$ is open on $M \in {\cal D}$ then it is open on $M\otimes_RS_R$, by \ref{elem3}.
\end{proof}

\begin{cor} With assumptions as in \ref{attensext}, $M\otimes_RS_R$ generates the same definable subcategory of ${\rm Mod}\mbox{-}R$ as $M_R$.
\end{cor}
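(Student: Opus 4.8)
The plan is to sandwich $\langle M\otimes_RS_R\rangle$ between $\langle M\rangle$ and itself, reading off both containments from results already in hand: the easy containment from \ref{elem3}, and the hard containment from \ref{attensext}.

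First I would prove $\langle M\rangle \subseteq \langle M\otimes_RS_R\rangle$. By \ref{elem3}, since $R\prec S$, the canonical $R$-linear map $M \to M\otimes_RS$, $m\mapsto m\otimes 1_S$, is a pure embedding of $R$-modules; thus (up to isomorphism) $M$ is a pure submodule of $M\otimes_RS_R$, and since a definable subcategory is closed under pure submodules (\ref{defchar}(iii)) we get $M \in \langle M\otimes_RS_R\rangle$, hence $\langle M\rangle \subseteq \langle M\otimes_RS_R\rangle$.

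For the reverse containment I would note that, as $M \in {\cal D} = \langle M\rangle$, the module $M\otimes_RS_S$ is one of the generators of the tensor extension, so $M\otimes_RS_S \in {\cal D}\otimes_RS$, and therefore (by \ref{restrdef}, $f^\ast({\cal D}\otimes_RS)\subseteq ({\cal D}\otimes_RS)|_R$) its $R$-restriction satisfies $M\otimes_RS_R \in ({\cal D}\otimes_RS)|_R$. By \ref{attensext} we have $({\cal D}\otimes_RS)|_R = {\cal D} = \langle M\rangle$, so $M\otimes_RS_R \in \langle M\rangle$; since $\langle M\rangle$ is definable it contains the definable subcategory generated by any of its members, giving $\langle M\otimes_RS_R\rangle \subseteq \langle M\rangle$. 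Combining the two inclusions yields $\langle M\otimes_RS_R\rangle = \langle M\rangle$, i.e. $M$ and $M\otimes_RS_R$ are definably equivalent.

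There is essentially no serious obstacle: the whole content is already in \ref{attensext} (via its description of $({\cal D}\otimes_RS)|_R$ through \ref{restr}) together with \ref{elem3}, and the corollary merely repackages them. If one preferred a self-contained argument, the alternative would be to re-run the pp-pair computation in the proof of \ref{attensext}, which directly shows that every pp-pair closed on ${\cal D}$ is closed on $M\otimes_RS_R$, and combine that with the pure embedding $M\hookrightarrow M\otimes_RS_R$; but invoking the statement of \ref{attensext} is cleaner and is the route I would take.
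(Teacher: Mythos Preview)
Your proof is correct and is essentially the argument the paper intends: the corollary is stated without proof, being immediate from \ref{attensext} (giving $M\otimes_RS_R \in ({\cal D}\otimes_RS)|_R = {\cal D}$) together with \ref{elem3} (giving $M$ pure in $M\otimes_RS_R$), exactly as you have written it out.
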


\begin{cor}\label{fptensext} \marginpar{fptensext} Suppose that $R \prec S$ as $(R,R)$-bimodules.  Suppose that the definable subcategory ${\cal D}$ of ${\rm Mod}\mbox{-}R$ is generated, as a definable subcategory, by the finitely presented modules in it.  Then $({\cal D} \otimes_RS)|_R = {\cal D}$, so ${\cal D}\otimes_RS \subseteq {\cal D}^S$. 
\end{cor}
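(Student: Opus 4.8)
The plan is to deduce this from Proposition \ref{attensext} by replacing the (possibly large) family of finitely presented generators of ${\cal D}$ with a single module that is atomic with respect to ${}_RS_R$. Since there is only a set of isomorphism types of finitely presented right $R$-modules, let $M$ be the direct sum of one copy of each finitely presented module lying in ${\cal D}$. Then $M\in{\cal D}$ (definable subcategories are closed under direct sums), and every finitely presented module in ${\cal D}$ is a direct summand of $M$, so $\langle M\rangle$ contains all those modules; by hypothesis they generate ${\cal D}$, hence $\langle M\rangle={\cal D}$.

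Next I would check that $M$ is atomic with respect to the bimodule ${}_RS_R$. Each finitely presented module $A$ has all its pp-types ${\rm pp}^A(\overline a)$ finitely generated (the fact recalled just before \ref{secpizg}, i.e.\ \cite[1.2.6]{PreNBK}), hence is Mittag-Leffler by \ref{MLchar}; and, as already observed in the proof of \ref{corML2}, a Mittag-Leffler module is atomic with respect to \emph{any} bimodule, because if $\phi$ generates ${\rm pp}^A(\overline a)$ then $(\sigma:\psi)\leq(\sigma:\phi)$ for every $\psi\in{\rm pp}^A(\overline a)$ by \ref{sigmaphiprop}(1), so $(\sigma:\phi)$ is the required maximum. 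Finally, by \ref{corML1} the class of right $R$-modules atomic with respect to a fixed bimodule is closed under arbitrary direct sums, so $M$ is atomic with respect to ${}_RS_R$.

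With these two facts in hand, Proposition \ref{attensext} applies to $M$ and gives $({\cal D}\otimes_RS)|_R=(\langle M\rangle\otimes_RS)|_R=\langle M\rangle={\cal D}$. For the final clause, if $N\in{\cal D}\otimes_RS$ then $N_R\in f^\ast({\cal D}\otimes_RS)\subseteq({\cal D}\otimes_RS)|_R$ by \ref{restrdef}, so $N_R\in{\cal D}$, which is exactly the statement that $N\in{\cal D}^S$; hence ${\cal D}\otimes_RS\subseteq{\cal D}^S$.

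There is no real obstacle here: the content has all been set up in \ref{attensext}, \ref{MLchar}, \ref{sigmaphiprop} and \ref{corML1}, and the only thing to be careful about is the passage from a family of finitely presented generators to a single atomic generator, which is handled by the closure of atomicity under direct sums. One could alternatively avoid forming $M$ and instead prove a version of \ref{attensext} for a definable subcategory generated by a (directed) family of atomic modules, but packaging everything into one module $M$ is the cleanest route given what is already available.
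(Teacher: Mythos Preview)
Your proof is correct and follows exactly the paper's approach: take $M$ to be the direct sum of one copy of each module in ${\cal D}\cap{\rm mod}\mbox{-}R$ and apply \ref{attensext}. You have simply spelled out the justification (that $M$ generates ${\cal D}$ and is atomic with respect to $_RS_R$ via \ref{MLchar}, \ref{sigmaphiprop}(1) and \ref{corML1}) that the paper leaves implicit in its one-line proof.
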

\begin{proof} Take $M$ as in \ref{attensext} to be the direct sum of one copy of each module in ${\cal D}\, \cap\, {\rm mod}\mbox{-}R$.
\end{proof}

\begin{example}\label{tensneqext} \marginpar{tensneqext} It is not in general the case that ${\cal D}\otimes_RS = {\cal D}^S$, even if $R \prec S$ as rings.  Indeed consider ${\cal D} = {\rm Mod}\mbox{-}R$ where $R=K\widetilde{A}_1$ is the path algebra of the Kronecker quiver $\widetilde{A}_1$ over an algebraically closed field $K$ and $S=L\widetilde{A}_1$ over some proper algebraically closed extension $L$ of $K$.  (In fact any proper elementary extension of $R$ has this form because the field $K$ is definable as the centre of $R$.)  By \ref{modextn} (or just directly), ${\cal D}^S = {\rm Mod}\mbox{-}S$ which, as a definable subcategory, is generated by the indecomposable finite-dimensional $R$-modules.  The effect of $-\otimes_RS_S$ is just to replace $K$ by $L$ and so the images of these indecomposable modules under $-\otimes_RS_S$ are the preprojectives, the preinjectives and the simple-regular modules with parameter coming from ${\mathbb P}^1(K)$ but, together, these generate a proper definable subcategory of ${\rm Mod}\mbox{-}S$ - proper since it does not contain any simple-regular module with parameter in  $L \setminus K$.
\end{example}

\newpage


\begin{thebibliography}{9999}


\bibitem{HJL}  C. Herrmann, C. U. Jensen and H. Lenzing,  Applications of model theory to representations of finite-dimensional algebras,  Math. Z., 178(1)
(1981), 83-98.

\bibitem{HerzDual}  I. Herzog,  Elementary duality of modules,  Trans. Amer. Math. Soc., 340(1) (1993), 37-69.

\bibitem {Hod}  W. Hodges,  Model Theory,  Encyclopedia of Mathematics, Vol. 42, Cambridge University Press, 1993.

\bibitem{JeLe}  C. U. Jensen and H. Lenzing,  Model Theoretic Algebra; with particular emphasis on Fields, Rings and Modules, Gordon and Breach, 1989.

\bibitem {Len}  H. Lenzing,  Endlich pr\"{a}sentierbare Moduln,  Arch. Math., 20(3) (1969), 262-266.

\bibitem{Makk} M. Makkai,  A theorem on Barr-exact categories with an infinitary generalization, Ann. Pure Appl. Logic, 47 (1990), 225-268.

\bibitem{PreDual}  M. Prest,  Duality and pure-semisimple rings,  J. London Math. Soc. (2), 38 (1988), 403-409.

\bibitem{PreBk}  M. Prest,  Model Theory and Modules,  London Math. Soc. Lect. Note Ser., Vol. 130, Cambridge University Press, 1988.

\bibitem{PreRepEE}  M. Prest,  The representation theories of elementarily equivalent rings, J. Symbolic Logic, 63(2) (1998), 439-450.

\bibitem{PreTens}  M. Prest,  Tensor product and theories of modules,  J. Symbolic Logic, 64(2) (1999), 617-628.

\bibitem{PreNBK}   M. Prest,  Purity, Spectra and Localisation, Encyclopedia of Mathematics and its Applications, Vol. 121, Cambridge University Press, 2009.

\bibitem{PreErice} M. Prest, Multisorted modules and their model theory, pp. 115-151 in Model Theory of Modules, Algebras and Categories, Contemporary Mathematics, Vol. 730, Amer. Math. Soc., 2019.

\bibitem{PreStrAt} M. Prest, Strictly atomic modules in definable categories, Annals of Representation Theory, 1(2) (2024), 299-334.
 
\bibitem{PreDefMon} M. Prest, Definable categories and monoidal categories, pp.~197-222 {\it in} Functor Categories, Model Theory, Algebraic Analysis and Constructive Methods, Springer Proc. Math. Stat., 450, 2024.

\bibitem{RayGru} M. Raynaud and L. Gruson, Crit\`{e}res de platitude et de projectivit\'{e}, Seconde partie, Invent. Math., 13(1-2) (1971), 52-89.

\bibitem{Rose} B. I. Rose, On the model theory of finite-dimensional algebras, Proc. London Math. Soc., (3) 40 (1980), 21-29.

\bibitem{RotHab} Ph. Rothmaler, Mittag-Leffler Modules and Positive Atomicity, Habilitationsschrift, Universit\"{a}t Kiel, 1994.

\bibitem{RotML}  Ph. Rothmaler,  Mittag-Leffler Modules, Ann. Pure Applied Logic, 88(2-3) (1997), 227-239.

\bibitem{RotML2} Ph. Rothmaler,  Mittag-Leffler modules and definable subcategories, pp.~171-196 {\it in} Model Theory of Modules, Algebras and Categories, Contemp. Math., 730, Amer. Math. Soc., 2019.

\bibitem{SabEk}  G. Sabbagh and P. Eklof,  Definability problems for rings and modules,  J. Symbolic Logic, 36(4) (1971), 623-649.

\bibitem{SheIso}  S. Shelah,  Every two elementarily equivalent models have isomorphic ultrapowers, Israel J. Math., 10 (1971), 224-233.

\bibitem{Zie}  M. Ziegler,  Model theory of modules,  Ann. Pure Appl. Logic, 26(2) (1984), 149-213.


\end{thebibliography}
\end{document}